\theoremstyle{plain}                          
\theoremstyle{definition}                     
\newtheorem{lemma}{Lemma}[section]
\newtheorem{remark}[lemma]{Remark}
\newtheorem{example}[lemma]{Example}
\newtheorem{theorem}[lemma]{Theorem}
\newtheorem{corollary}[lemma]{Corollary}
\newtheorem{definition}[lemma]{Definition}
\newtheorem{proposition}[lemma]{Proposition}
\newtheorem{question}[lemma]{Question}
\newtheorem{convention}[lemma]{Convention}
\newcommand{\field}[1]{\mathbb{#1}} 
\newcommand{\R}{\field{R}} 
\newcommand{\Z}{\field{Z}} 
\newcommand{\K}{\field{K}}
\newcommand{\Q}{\field{Q}}
\title{Hopf invariants and differential forms}
\author{Felix Wierstra}
\date{}
\begin{document}

\maketitle

\abstract{Let $f,g:M \rightarrow N$ be two maps between simply-connected smooth manifolds $M$ and $N$, such that $M$ is compact and $N$ is of finite $\R$-type. The goal of this paper is to use integration of certain differential forms to obtain a complete invariant of the real homotopy classes of the maps $f$ and $g$.}

\section{Introduction}

This paper is the sequel to \cite{Wie1}, in which we gave an answer to the following question.

\begin{question}
 Given two maps $f,g:M \rightarrow N$ between simply-connected smooth manifolds, such that $M$ is compact and $N$ is of finite $\R$-type. Can we find invariants of the maps $f$ and $g$, such that $f$ and $g$ are real homotopic if and only if these invariants agree?
\end{question}

In \cite{Wie1} we answered this question by defining a map 
$$mc_{\infty}:Map_*(M,N) \rightarrow \mathcal{MC}(M,N)$$
from the space of based maps from $M$ to $N$ to a certain moduli space of Maurer-Cartan elements in $Hom_{\R}(H_*(M;\R),\pi_*(N) \otimes \R)$, equipped with a certain $L_{\infty}$-algebra structure. This map is defined in two steps, first we define a map $mc:Map_*(M,N)\rightarrow Hom_{\R}(H_*(M;\R),\pi_*(N) \otimes \R)$ and then we take the quotient to the moduli space of Maurer-Cartan elements. 

The main problem with the results from \cite{Wie1} is that this invariant is in practice very hard to compute directly. The first main result of this paper is to make the map $mc$ computable. 

\begin{theorem}
Let $\{\varphi_{i,j}\}$ be a basis for $Hom_{\R}(H_*(M;\R),\pi_*(N) \otimes \R)$, the element $mc(f)$ can then be expressed in terms of this basis as   $mc(f)=\sum_{i,j}\lambda_{i,j}^f\varphi_{i,j}$. The coefficients $\lambda_{i,j}^f$ can be computed as certain integrals over certain subspaces of $M$.
\end{theorem}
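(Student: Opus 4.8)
The plan is to unwind the definition of the map $mc$ given in \cite{Wie1} and to reduce the computation of each coefficient $\lambda_{i,j}^f$ to a pairing, which de Rham theory then turns into an integral. By construction $mc(f)$ is an element of $Hom_{\R}(H_*(M;\R),\pi_*(N)\otimes\R)$, so it is completely determined by the scalars $\langle x^j, mc(f)(\alpha_i)\rangle$, where $\{\alpha_i\}$ is a basis of $H_*(M;\R)$, where $\{x_j\}$ is a basis of $\pi_*(N)\otimes\R$ with dual basis $\{x^j\}$, and where $\varphi_{i,j}$ is the elementary map sending $\alpha_i$ to $x_j$ and every other $\alpha_k$ to $0$. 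With these identifications one has $\lambda_{i,j}^f=\langle x^j, mc(f)(\alpha_i)\rangle$, and the content of the theorem becomes the assertion that each such pairing is computed by an integral over a subspace of $M$ depending only on $\alpha_i$.

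First I would fix geometric and algebraic representatives. Since $M$ is a compact manifold, de Rham theory lets me represent each class $\alpha_i\in H_*(M;\R)$ by a smooth cycle $Z_i$ in $M$ — a finite real combination of smooth singular simplices, or, after Thom, of smooth closed oriented submanifolds — so that integration of forms over $Z_i$ realizes the pairing $H^*(M;\R)\cong H_*(M;\R)^{\vee}$. On the target side I would fix a Sullivan minimal model $(\Lambda V,d)\xrightarrow{\ \simeq\ }\Omega^*(N)$; here $V$ is dual to $\pi_*(N)\otimes\R$, and the basis $\{x^j\}$ corresponds to a chosen basis of generators of $V$. The key input from \cite{Wie1} is the explicit recipe for $mc(f)$: one lifts $f^*\colon\Omega^*(N)\to\Omega^*(M)$ to the minimal model, transfers the relevant algebraic structure along an explicit contraction of $\Omega^*(M)$ onto its cohomology — supplied by Hodge theory, using compactness of $M$ — and reads off the resulting Maurer-Cartan element; the finite-$\R$-type hypothesis on $N$ guarantees that this construction involves only finitely much data in each degree.

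The remaining step is to show that the transferred structure, evaluated on $\alpha_i$ and paired with $x^j$, is a sum of Chen-type iterated integrals of pullbacks $f^*\omega$ of differential forms $\omega$ on $N$, integrated over the cycle $Z_i$. Concretely, the homotopy-transfer formula writes this value as a sum indexed by rooted trees whose vertices carry the product of $\Omega^*(N)$ and whose internal edges carry the chosen homotopy $h$; pulling everything back along $f$ and using that $h$ is realized by a fibrewise integration identifies each tree summand with an iterated integral of the forms $f^*\omega$, and integrating the resulting form over $Z_i$ — equivalently, pushing forward along the evaluation map out of the relevant product of simplices with $Z_i$ — exhibits it as a genuine integral over a subspace of $M$ constructed from $Z_i$. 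I expect the main obstacle to be precisely this identification: one has to choose the contracting homotopy $h$ so that it is given by integration along a geometrically meaningful family of chains, verify that the tree sum is finite and that all the integrals converge (this is where compactness of $M$ and finite type of $N$ are used), and check that the basepoint conditions built into $Map_*(M,N)$ and into the definition of $mc$ are compatible with the chosen cycles $Z_i$. Once $h$ is pinned down, rewriting the transfer formula in terms of iterated integrals is Chen's theory applied in this relative setting, and the coefficients $\lambda_{i,j}^f$ come out as the advertised integrals.
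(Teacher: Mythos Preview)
Your overall strategy matches the paper's: express $\lambda_{i,j}^f$ via the perfect pairing between $Hom_{\R}(H_*(M),\pi_*(N))$ and $H_*(M)\otimes\pi^*(N)$, compute $mc(f)$ by applying the Homotopy Transfer Theorem to a contraction of $\Omega^{\bullet}(M)$ onto $H^*(M)$, and then evaluate on a cycle representing $\alpha_i$. This is exactly the skeleton of Lemma~\ref{lemconcreteformulas} and Theorem~\ref{thrmhowtocomputeMaurarCartanelements}; the paper also represents $\pi^*(N)$ by a choice of cocycles $q(\omega_j)$ in the bar construction $B_{\pi}\Omega^{\bullet}(N)$ rather than by generators of a Sullivan model, but this is a cosmetic difference.

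Where you diverge is in the nature of the contraction $h$ and of the resulting integrals. You correctly say that $h$ is supplied by Hodge theory, and this is precisely what the paper does in Section~\ref{secHodgetheory}: $h$ is the inverse of $d$ on $Im(d)$, extended by zero on harmonic forms and on $Im(\delta)$. But you then pivot to asserting that ``$h$ is realized by a fibrewise integration'' and that the tree summands become ``Chen-type iterated integrals'' over products of simplices with $Z_i$, the whole thing being an integral over ``a subspace of $M$ constructed from $Z_i$''. The Hodge-theoretic $h$ is a Green's-type operator, not a fibrewise integration, and the output of the transfer formula is simply an ordinary differential form on $M$; the coefficient $\lambda_{i,j}^f$ is then a \emph{single} integral $\int_{\alpha_i}$ of that form over $Z_i$ itself, not an iterated integral in Chen's sense over some larger space. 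The paper's explicit formula
\[
\lambda^f_{ij}=\sum_{n \geq 1} \int_{\alpha_i} \Bigl((-1)^{|\nu|}\, p\, t^{\nu}\, \mathbf{h}_n\bigl(f^*q_n(\omega_j)\bigr) + \sum_{k=1}^{u} (-1)^{|\nu_k''|}\bigl(p^{\nu_k'} \circ_{e_k} t^{\nu_k''}\bigr)\, \tau_k\, \mathbf{h}_n \bigl(f^*q_n(\omega_j)\bigr) \Bigr)
\]
makes this visible: all the ``iterated'' structure sits inside the integrand, produced algebraically by the Berglund tree formulas for the transferred map $P$, and there is no integration over auxiliary simplices. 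Your Chen/fibrewise picture might conceivably be realized with a different choice of contraction (built, say, from a Morse flow or a simplicial model), but that is a separate and substantially harder construction, and it is not needed here. If you drop that detour and stay with the Hodge contraction throughout, your argument becomes essentially the paper's.
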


The main significance of this theorem is that it makes it possible to obtain information about the real homotopy class of the map $f$ by computing a finite number of integrals. After having computed these integrals determining whether two Maurer-Cartan elements are equivalent or not becomes a completely algebraic problem in a finite dimensional $L_{\infty}$-algebra. So this theorem allows us to replace a very hard topological problem by a much easier algebraic problem.

The only problem so far is that we reduced the original problem to determining whether the Maurer-Cartan elements $mc(f)$ and $mc(g)$ are gauge equivalent or not. In practice this is often possible but can still be a very tedious problem. 

To solve this problem we will develop an algebraic analog of a CW-complex and an analog of the long exact sequence in homotopy associated to a fibration. These CW-complexes allow us to obtain a lot of information about the moduli space of Maurer-Cartan elements without doing any explicit computations. An example of one of the results  we can obtain this way is the following theorem.

\begin{theorem}
Let $f:M \rightarrow N$ be a map between simply-connected smooth manifolds, such that $M$ is compact and $N$ is of finite $\R$-type. The map $f$ is real homotopic to the constant map if and only if all the coefficients $\lambda_{i,j}^f$ are zero. 
\end{theorem}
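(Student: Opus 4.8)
The plan is to derive this from the main theorem of \cite{Wie1} together with the computability statement (the preceding theorem). By \cite{Wie1} two such maps are real homotopic exactly when their images under $mc_\infty$ agree, that is, when the Maurer--Cartan elements produced by $mc$ are gauge equivalent in the $L_\infty$-algebra $Hom_\R(H_*(M;\R),\pi_*(N)\otimes\R)$. Taking the second map to be the constant map, and observing that $mc$ sends the constant map to the zero Maurer--Cartan element --- the constant map pulls back every positive-degree form on $N$ to zero, so all the integrals from the preceding theorem vanish --- we obtain that $f$ is real null-homotopic if and only if $mc(f)$ is gauge equivalent to $0$. Since $mc(f)=\sum_{i,j}\lambda_{i,j}^f\varphi_{i,j}$, the theorem is then equivalent to the purely algebraic assertion that $0$ is the only Maurer--Cartan element of $Hom_\R(H_*(M;\R),\pi_*(N)\otimes\R)$ gauge equivalent to $0$; granting this, $mc(f)$ gauge equivalent to $0$ forces $mc(f)=0$, hence all $\lambda_{i,j}^f=0$, while the converse implication $(\lambda_{i,j}^f=0 \text{ for all } i,j)\Rightarrow f\simeq\text{const}$ is immediate from the reduction above.

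The algebraic assertion I would prove as follows. The underlying cochain complex $Hom_\R(H_*(M;\R),\pi_*(N)\otimes\R)$ carries the \emph{zero} differential: both $H_*(M;\R)$, regarded as the $C_\infty$-coalgebra modelling the real homotopy type of $M$, and $\pi_*(N)\otimes\R$, regarded as the homotopy Lie algebra of $N$, have vanishing differential, and the convolution $L_\infty$-structure builds $\ell_1$ only out of these. A gauge equivalence starting at $0$ is a path $(x(t),\lambda(t))$ solving a flow equation of the shape $\dot x(t)=\ell_1(\lambda(t))+\sum_{n\geq 1}\tfrac{(-1)^n}{n!}\ell_{n+1}(\lambda(t),x(t)^{\otimes n})$ with $x(0)=0$ (equivalently, an element of $MC(L\otimes\Omega^*(\Delta^1))$ restricting to $0$ at one endpoint). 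When $\ell_1=0$ every term on the right-hand side contains at least one factor of $x(t)$, so the constant path $x\equiv 0$ is a solution; since the right-hand side is polynomial in $x$, hence locally Lipschitz, this is the unique solution issuing from $0$, and therefore $mc(f)=x(1)=0$.

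The point that needs care --- and, I suspect, the reason the paper routes this through the CW-machinery announced above --- is that $N$ is only of finite $\R$-type, so $\pi_*(N)\otimes\R$, and with it $Hom_\R(H_*(M;\R),\pi_*(N)\otimes\R)$, may be infinite-dimensional; one must check that the a priori infinite sum of brackets in the flow equation converges and that ODE-uniqueness still applies. This is controlled by the compactness of $M$: $H_*(M;\R)$ is finite-dimensional and vanishes above degree $\dim M$, while the Maurer--Cartan elements and gauge parameters sit in two fixed degrees, each a finite-dimensional subspace, so after taking degrees into account the flow equation becomes an honest finite-dimensional ODE. Alternatively one dispenses with convergence altogether by filtering $Hom_\R(H_*(M;\R),\pi_*(N)\otimes\R)$ by the algebraic analog of skeleta, realising it as an iterated tower of abelian extensions, and reading off from the associated long exact sequence that at each stage a class gauge equivalent to $0$ is already $0$, because the relevant strand of the complex has no differential. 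Either way the main obstacle is the bookkeeping of finiteness and convergence rather than any new conceptual input.
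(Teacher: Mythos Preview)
Your overall reduction matches the paper's exactly: the paper first shows (Lemma~\ref{lemtheconstantmapiszero}, via the same pullback observation you make) that the constant map has Maurer--Cartan element zero, and then invokes Proposition~\ref{propdetectingthezeromcelement}, which is precisely the algebraic assertion you isolate --- that in $Hom_\R(H_*(M),\pi_*(N))$, built from minimal models on both sides, the zero Maurer--Cartan element is alone in its gauge class.

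Where you diverge is in the proof of that proposition. The paper runs an induction up the algebraic CW-decomposition of $H_*(M)$ developed in Section~\ref{secalgebraicCWcomplex}: if $q_n(\tau)=0$ then, by Proposition~\ref{proppropprop}, $q_{n+1}(\tau)$ can be gauge equivalent to $0$ only if they differ by an element of $Im\big((a_{n+1}^*)^0\big)$; since a \emph{minimal} CW-complex has attaching maps with vanishing linear part, this image is zero, forcing $q_{n+1}(\tau)=0$. Your primary argument instead attacks the gauge-flow ODE directly: with $\ell_1=0$ every term on the right-hand side carries a factor of $x(t)$, so $x\equiv 0$ is a solution from $x(0)=0$, and uniqueness (legitimate because simple-connectivity and finite-dimensionality of $H_*(M)$ make the convolution algebra degree-wise nilpotent, so only finitely many brackets enter) finishes. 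This is correct and more elementary; it bypasses the long-exact-sequence machinery entirely. Your sketched alternative via the skeletal tower is essentially the paper's route. What the paper's approach buys is that Propositions~\ref{propsomethingsomething} and~\ref{proppropprop} form a framework that also handles the comparison of two \emph{nonzero} Maurer--Cartan elements, where the twisted maps $(a_{n+1}^*)^\tau$ may have nontrivial image and the flow picture is no longer so transparent.
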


In this paper we will work with the de Rham complex of differential forms and therefore we will only decide whether two maps are real homotopic. In Section \ref{secalternativeapproaches}, we will briefly explain how the ideas from this paper can be generalized to rational homotopy theory as well.

\subsection{Acknowledgments}

The author would like to thank Alexander Berglund for many useful conversations and ideas and for carefully reading earlier versions of this paper. The author also wishes to thank Dev Sinha for many useful conversations and ideas. Further the author would like to thank Bashar Saleh for carefully reading this paper and help with the examples. The author would also like to thank Joana Cirici and the anonymous referee for many comments on an earlier version of this paper. The author also acknowledges the financial support from Grant  GA CR  No. P201/12/G028.

\part{Preliminaries}

\section{Conventions}

We will now introduce the basic conventions we will use in this paper. In particular the conventions about real homotopy theory might not be very standard.

\subsection{Real homotopy theory}

In this paper we will, except for the last section, exclusively work with smooth manifolds. In this section we will recall some of the basic definitions and conventions.

\begin{convention}
In this paper we will assume that all the manifolds we consider are smooth and finite dimensional.
\end{convention}

\begin{definition}
Let $f,g:M \rightarrow N$ be two smooth maps between two simply-connected smooth manifolds $M$ and $N$. We call the maps $f$ and $g$ real homotopic if the induced maps $\Omega^{\bullet}(f),\Omega^{\bullet}(g):\Omega^{\bullet}(N) \rightarrow \Omega^{\bullet}(M)$, on the de Rham complexes are homotopic as maps of commutative differential graded algebras.
\end{definition}

\begin{remark}
For more details about real versus rational homotopy theory see for example \cite{DGMS1}.
\end{remark}

\subsection{Other conventions}

\begin{convention}
 In this paper $\K$ will denote a field of characteristic $0$ and will most of the time be the rationals $\Q$ or the reals $\R$.
\end{convention}

\begin{convention}
 In this paper we will assume that all the spaces we consider are simply-connected and all the CW-complexes are $1$-reduced, i.e. have only $1$ zero-cell and no one-cells.
\end{convention}

\begin{definition}
 The linear dual of a vector space $V$ will be denoted by $V^{\vee}$ and is defined as $Hom_{\K}(V,\K)$. 
\end{definition}

In this paper we will use the following definition of cohomotopy groups. 

\begin{definition}
 The rational cohomotopy groups of a space $X$ are defined as the linear dual of the rational homotopy groups and are denoted by $\pi^*(X)$, i.e. $\pi^*(X):=Hom_{\Z}(\pi_*(X),\Q)$.
\end{definition}

Note that this might differ from some definitions in the literature where the $n$th cohomotopy group of a space $X$ is defined as the set of maps $[X,S^n]$.

\begin{definition}\label{defdualhurewicz}
Let $X$ be a simply-connected space of finite $\Q$-type. Since we are working over a field $H^*(X)\cong H_*(X)^{\vee}$ and $\pi^*(X)\cong \pi_*(X)^{\vee}$. We therefore also get a canonical homomorphism $h^{\vee}:H^*(X) \rightarrow \pi^*(X)$ given by the dual of the Hurewicz homomorphism. 
\end{definition}

\begin{convention}\label{convallobjectsaredg}
In this paper we assume that all the operads, cooperads, algebras and coalgebras are differential graded. Further we will also assume that all operads and cooperads are reduced, i.e. $\mathcal{P}(0)=0$ and $\mathcal{P}(1)=\K$ for operads and $\mathcal{C}(0)=0$ and $\mathcal{C}(1)=\K$ for cooperads. We also assume that all the cooperads and coalgebras over cooperads we consider in this paper are conilpotent, see \cite{LV} for a definition.
\end{convention}

\begin{definition}
 We call a coalgebra $C$ simply-connected or $1$-reduced if $C_i=0$ for all $i \leq 1$.  
\end{definition}

\begin{convention}
 We will assume that all the spaces we consider are based and all the maps are based maps. Further we will assume that all the homology and cohomology is taken with real coefficients and reduced. We will further assume that all the homotopy and cohomotopy groups are tensored with $\R$ (or $\Q$ in the rational case).
\end{convention}


\begin{convention}
In this paper we will use a mixture between homological and cohomological gradings. Homology and homotopy are homologically  graded and cohomology and cohomotopy are cohomologically graded.
\end{convention}

\begin{convention}
 In this paper we work with shifted Lie and  $L_{\infty}$-algebras, i.e. all the brackets are of degree $-1$. This choice is mainly motivated by the fact that the Whitehead product on the homotopy groups of a space has degree $-1$. It will also turn out  to be more natural in some of the other constructions in this paper, see for example Theorem \ref{thrmLinftyconv}. We will denote the shifted $L_\infty$-operad by $s^{-1}L_\infty$ and the shifted Lie operad by $s\mathcal{LIE}$.
\end{convention}

\section{Algebras and operads}

In this paper we will make extensive use of the theory of operads, algebras over operads and twisting morphisms between them. We will almost always use the definitions, theorems and conventions of \cite{LV}, unless explicitly stated otherwise. The only difference is that we will denote the operadic bar construction on an operad $\mathcal{P}$ by $B_{op}\mathcal{P}$ and the operadic cobar construction on a cooperad $\mathcal{C}$ by $\Omega_{op}\mathcal{C}$. Let $\tau:\mathcal{C} \rightarrow \mathcal{P}$ be an operadic twisting morphism, for a $\mathcal{P}$-algebra $A$ and a $\mathcal{C}$-coalgebra $C$, the bar and cobar constructions relative to $\tau$ will be denoted by $B_{\tau}A$ and $\Omega_{\tau}C$. For the definitions of the relative bar and cobar constructions see Chapter 11 of \cite{LV}.  

There are two exceptions to this convention, for a commutative algebra $A$ we will denote the Lie coalgebraic bar construction by $B_{sLie}A$ and the bar construction relative to the twisting morphism $\pi:s^{-1}L_{\infty}^{\vee} \rightarrow \mathcal{COM}$ by $B_{s^{-1}L_{\infty}} A$. Note that we assumed that there is a shift in degree, because we work with shifted Lie and $L_\infty$-algebras. For the reader less familiar with operads and their algebras, for understanding the statements of this paper it is often enough to just use the $s\mathcal{Lie}$ and $s^{-1}L_{\infty}$-bar constructions.

\subsection{$\mathcal{P}_{\infty}$-algebras, $\mathcal{C}_{\infty}$-coalgebras and the Homotopy Transfer Theorem}

In this section we recall the definitions of algebras and coalgebras up to a sequence of coherent homotopies. Most of this section is based on Chapters 10 and 11 of \cite{LV}, but our definitions are slightly more general. The proofs are completely analogous.

There are several equivalent definitions of a $\mathcal{P}_{\infty}$-algebra (see Theorem 10.1.13 of \cite{LV}), in this paper we will only need two of them which we will describe now. 

\begin{definition}\label{defpinftydef1}
 Let $\mathcal{P}$ be an operad, a $\mathcal{P}_{\infty}$-algebra is defined as an algebra over $\Omega_{op}B_{op} \mathcal{P}$, the cobar-bar-resolution of $\mathcal{P}$.  
\end{definition}

\begin{definition}\label{defpinftydef2}
 A $\mathcal{P}_{\infty}$-structure on a vector space $A$ is a square-zero coderivation on the coalgebra $B_{op}\mathcal{P}(A)$.
\end{definition}

\begin{theorem}[\cite{LV} Theorem 10.1.13]\label{thrmrossetastone}
 Definitions \ref{defpinftydef1} and \ref{defpinftydef2} are equivalent.
\end{theorem}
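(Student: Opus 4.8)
The plan is to deduce the equivalence from two universal properties, reducing both definitions to the same object: an operadic twisting morphism $\alpha\colon B_{op}\mathcal{P}\rightarrow\mathrm{End}_A$, i.e.\ a Maurer--Cartan element in the convolution pre-Lie algebra $Hom_{\mathbb{S}}(B_{op}\mathcal{P},\mathrm{End}_A)$, where $\mathrm{End}_A$ denotes the endomorphism operad of $A$.

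First I would treat Definition \ref{defpinftydef1}. Recall that for a conilpotent cooperad $\mathcal{C}$ the cobar construction is the quasi-free operad $\Omega_{op}\mathcal{C}=(\mathcal{T}(s^{-1}\overline{\mathcal{C}}),\partial)$ on the desuspended coaugmentation coideal of $\mathcal{C}$, with differential $\partial$ encoding the (infinitesimal) cocomposition of $\mathcal{C}$. An algebra over $\Omega_{op}\mathcal{C}$ on $A$ is a morphism of operads $\Omega_{op}\mathcal{C}\rightarrow\mathrm{End}_A$; by the universal property of the free operad $\mathcal{T}(-)$ this is the same as a morphism of $\mathbb{S}$-modules $s^{-1}\overline{\mathcal{C}}\rightarrow\mathrm{End}_A$ (equivalently a degree $-1$ map $\alpha\colon\overline{\mathcal{C}}\rightarrow\mathrm{End}_A$), subject to the single constraint that it commute with the differentials. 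Unwinding this constraint using the explicit form of $\partial$ and of the differential induced on $\mathrm{End}_A$ by $d_A$, one recovers exactly the twisting-morphism (Maurer--Cartan) equation $\partial(\alpha)+\alpha\star\alpha=0$. Taking $\mathcal{C}=B_{op}\mathcal{P}$ identifies the data of Definition \ref{defpinftydef1} with the set of operadic twisting morphisms $B_{op}\mathcal{P}\rightarrow\mathrm{End}_A$.

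Next I would treat Definition \ref{defpinftydef2}. For a conilpotent cooperad $\mathcal{C}$ the cofree conilpotent $\mathcal{C}$-coalgebra cogenerated by $A$ is $\mathcal{C}(A)=\bigoplus_n\mathcal{C}(n)\otimes_{S_n}A^{\otimes n}$, and a coderivation of $\mathcal{C}(A)$ is uniquely determined by its corestriction to the cogenerators $\mathcal{C}(A)\rightarrow A$. This yields a bijection $\mathrm{Coder}(\mathcal{C}(A))\cong Hom(\mathcal{C}(A),A)\cong Hom_{\mathbb{S}}(\mathcal{C},\mathrm{End}_A)$. Writing $D_0$ for the differential on $\mathcal{C}(A)$ coming from $d_{\mathcal{C}}$ and $d_A$, and $d_\alpha$ for the coderivation attached to $\alpha$, a direct computation of the graded components of $(D_0+d_\alpha)^2$ shows that it vanishes precisely when $\alpha$ satisfies the same Maurer--Cartan equation as above. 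Specializing to $\mathcal{C}=B_{op}\mathcal{P}$ identifies the square-zero coderivations of Definition \ref{defpinftydef2} with the same set of twisting morphisms, which proves the theorem. (Conilpotency of $B_{op}\mathcal{P}$ guarantees that all sums involved are locally finite, so no convergence issue arises; see \cite{LV}.)

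The main obstacle is not conceptual but bookkeeping: one must track the degree shifts coming from the desuspension $s^{-1}$ in the cobar construction together with the Koszul signs, and check that the differential $\partial$ of $\Omega_{op}B_{op}\mathcal{P}$, the induced differential $D_0$ on the cofree coalgebra, and the two appearances of the convolution product all line up so that the ``commutes with the differential'' condition of Definition \ref{defpinftydef1} and the ``squares to zero'' condition of Definition \ref{defpinftydef2} become literally the same equation in $Hom_{\mathbb{S}}(B_{op}\mathcal{P},\mathrm{End}_A)$. Once a sign convention is fixed this is a routine if somewhat lengthy verification; the argument is an instance of the operadic ``Rosetta Stone'' of \cite{LV}.
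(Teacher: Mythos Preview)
Your proposal is correct and is precisely the argument the paper is invoking: the paper does not supply its own proof but cites \cite{LV} Theorem 10.1.13, and your sketch is exactly the operadic ``Rosetta Stone'' argument given there, identifying both definitions with the set of operadic twisting morphisms $B_{op}\mathcal{P}\to\mathrm{End}_A$. There is nothing to compare beyond noting that you have reproduced the cited proof.
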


\begin{remark}
Note that a coderivation on $C=\mathcal{C}(A)$, the cofree $\mathcal{C}$-coalgebra cogenerated by $A$, consists out of two parts. It is the sum $d_C=d_{\mathcal{C}}+d_{C}'$, of $d_{\mathcal{C}}$ the internal differential of $\mathcal{C}$ plus $d_{C}'$, a perturbation of the differential  $d_{\mathcal{C}}$. To go from this notion of $\mathcal{P}_{\infty}$-algebra to the definition of $\mathcal{P}_{\infty}$-algebra from Definition \ref{definfinitymorphism1} we take the image of the perturbation of the differential, i.e. the operation $\mu_c:A^{\otimes n} \rightarrow A$ is given by $\mu_c(a_1,...,a_n)=d_{c}'(c \otimes a_1 \otimes ... \otimes a_n)$, for $c \in \mathcal{C}(n)$ and $a_1,...,a_n \in A$.
\end{remark}

\begin{remark}
Note that these definitions of $\mathcal{P}_{\infty}$-algebras do not specialize to the classical notion of $\mathcal{A}_{\infty}$ and $s^{-1}L_{\infty}$-algebras. In this paper it will be necessary to also use $s^{-1}L_{\infty}$-algebras, these will be defined in Section \ref{seclinfty}. The reason we use this definition of $\mathcal{P}_{\infty}$-algebras is because it is always defined, and not just for Koszul operads.
\end{remark}

\subsection{$\infty$-morphisms}

Let $\mathcal{P}$ be an operad and let $A$ and $B$ be $\mathcal{P}$-algebras, let $\mathcal{C}$ be a cooperad and let $C$ and $D$ be $\mathcal{C}$-coalgebras. Let $\tau:\mathcal{C} \rightarrow \mathcal{P}$ be a Koszul operadic twisting morphism. In this section we will define $\infty_{\tau}$-morphisms between the algebras $A$ and $B$ and the coalgebras $C$ and $D$. The reason we consider $\infty_{\tau}$-morphisms is because they are morphisms up to a coherent sequence of homotopies and because the homotopy category of $\mathcal{P}$-algebras ($\mathcal{C}$-coalgebras) with $\infty_{\tau}$-morphisms is equivalent to the homotopy category of $\mathcal{P}$-algebras ($\mathcal{C}$-coalgebras). All the definitions and results are based on Chapters 10 and 11 of \cite{LV} and on the paper \cite{RNW1}, since we need slightly more general statements than in \cite{LV} we have formulated the definitions and theorems more generally. All the proofs are completely analogous to the proofs in \cite{LV}, see also \cite{RNW1}.

\begin{definition}
 An $\infty_{\tau}$-morphism between two $\mathcal{P}$-algebras $A$ and $B$ is a $\mathcal{C}$-coalgebra morphism $f:B_{\tau}A \rightarrow B_{\tau} B$. Because of Proposition 11.3.1 in \cite{LV} this map is completely determined by a linear map $B_{\tau} A \rightarrow B$, satisfying certain conditions. Since $B_{\tau}A$ is isomorphic as vector spaces to $\mathcal{C}\circ A$, this morphism breaks up in several components $f_n:\mathcal{C}(n) \otimes_{\Sigma_n} A^{\otimes n} \rightarrow B$. We will call this component the $n$th component of the $\infty_{\tau}$-morphism $f$. So equivalently an $\infty_{\tau}$-morphism of $\mathcal{P}$-algebras is a sequence of maps $f_n:\mathcal{C}(n) \otimes_{\Sigma_n} A^{\otimes n} \rightarrow B$, satisfying certain conditions. The category of $\mathcal{P}$-algebras with $\infty_{\tau}$-morphisms is denoted by $\infty_{\tau}$-$\mathcal{P}$-alg.
\end{definition}

\begin{definition}
Let $f:A \rightsquigarrow A'$ be an $\infty_{\tau}$-morphism of $\mathcal{P}$-algebras. We call $f$ an $\infty_{\tau}$-quasi-isomorphism if the arity $1$ component $f_1:A \rightarrow A'$ is a quasi-isomorphism.
\end{definition}

Dually we can also define $\infty_{\tau}$-morphisms between $\mathcal{C}$-coalgebras.

\begin{definition}
 An $\infty_{\tau}$-morphism between two $\mathcal{C}$-coalgebras $C$ and $D$ is a $\mathcal{P}$-algebra morphism $f:\Omega_{\tau} C \rightarrow \Omega_{\tau} D$. Because of Proposition 11.3.1 in \cite{LV}, this map is determined by a linear map $C \rightarrow \Omega_{\tau} D$, satisfying certain conditions. Since $\Omega_{\tau} D$ is isomorphic to $\mathcal{P} \circ D$, this morphism breaks up into several components $f_n:C \rightarrow \mathcal{P}(n) \otimes_{\Sigma_n} D^{\otimes n}$, we will call this  the $n$th-component of the $\infty_{\tau}$-morphism $f$. So equivalently an $\infty_{\tau}$-morphism of $\mathcal{C}$-coalgebra is equivalent to a sequence of maps $f_n:C \rightarrow \mathcal{P}(n) \otimes_{\Sigma_n} D^{\otimes n}$ satisfying certain conditions. The category of $\mathcal{C}$-coalgebra with $\infty_{\tau}$-morphism will be denoted by $\infty_{\tau}$-$\mathcal{C}$-coalg. 
\end{definition}

\begin{definition}
Let $g:C \rightsquigarrow C'$ be an $\infty_{\tau}$-morphism of $\mathcal{C}$-coalgebras. We call $g$ an $\infty_{\tau}$-quasi-isomorphism if the corresponding map $g:\Omega _{\tau} C \rightarrow \Omega_{\tau}C'$ is a quasi-isomorphism of $\mathcal{P}$-algebras and the arity $1$-component $g_1:C \rightarrow C'$ is a quasi-isomorphism.
\end{definition}

\begin{remark}
 Note that this definition of the $\infty_{\tau}$-morphism is dependent on the Koszul twisting morphism $\tau$, different choices for $\tau$ give different definitions of the $\infty_{\tau}$-morphisms. These definitions specialize to the definitions of \cite{LV}, whenever we assume that the operad $\mathcal{P}$ is binary quadratic Koszul and has a zero differential. The twisting morphism $\tau$ is then the canonical  twisting morphism from $\mathcal{P}^{\mbox{!`}}$ to $\mathcal{P}$. 
\end{remark}

\begin{convention}
To distinguish $\infty_{\tau}$-morphisms from strict morphisms we will denote $\infty_{\tau}$-morph\-isms by a $\rightsquigarrow$ arrow. Strict morphisms will be denoted by a normal arrow.
\end{convention}

The following theorem is a generalization of Theorem 11.4.8 in \cite{LV}. The proof is completely analogous and will therefore be omitted.

\begin{theorem}
Let $\tau:\mathcal{C} \rightarrow \mathcal{P}$ be a Koszul operadic twisting morphism. There is an equivalence of categories between the category of $\mathcal{P}$-algebras with $\infty_{\tau}$-morphisms and the homotopy category of $\mathcal{P}$-algebras with strict morphisms.
\end{theorem}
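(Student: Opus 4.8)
The plan is to reduce the statement to the already-known case of $\mathcal{P}$-algebras with \emph{strict} morphisms together with the classical localization at quasi-isomorphisms, using the two structural facts about $\infty_\tau$-morphisms that are available once $\tau$ is Koszul: that $B_\tau$ and $\Omega_\tau$ form an adjunction inducing an equivalence of homotopy categories of $\mathcal{P}$-algebras and $\mathcal{C}$-coalgebras (the relative bar-cobar adjunction of Chapter 11 of \cite{LV}, in the generalized form indicated in this section), and that an $\infty_\tau$-morphism whose arity $1$ component is a quasi-isomorphism is invertible up to the relevant notion of homotopy. Concretely, I would first set up the functor in one direction: a strict morphism of $\mathcal{P}$-algebras is in particular an $\infty_\tau$-morphism (only the arity $1$ component is nonzero), so there is a functor from $\mathcal{P}\text{-alg}$ to $\infty_\tau\text{-}\mathcal{P}\text{-alg}$ which is the identity on objects. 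Since an $\infty_\tau$-quasi-isomorphism has, by definition, an arity $1$ component that is a quasi-isomorphism, this functor sends quasi-isomorphisms to isomorphisms in $\infty_\tau\text{-}\mathcal{P}\text{-alg}$ (using that $\infty_\tau$-quasi-isomorphisms are invertible in $\infty_\tau\text{-}\mathcal{P}\text{-alg}$, the analog of Theorem 10.4.4/11.4.9 of \cite{LV}). Hence it factors through the homotopy category $\mathrm{Ho}(\mathcal{P}\text{-alg})$, giving a canonical functor $\Phi:\mathrm{Ho}(\mathcal{P}\text{-alg}) \to \infty_\tau\text{-}\mathcal{P}\text{-alg}$.

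Next I would show $\Phi$ is essentially surjective and fully faithful. Essential surjectivity is immediate since $\Phi$ is the identity on objects. For fully faithfulness, the key input is that $B_\tau$ realizes an $\infty_\tau$-morphism $A \rightsquigarrow B$ as a genuine (strict) $\mathcal{C}$-coalgebra map $B_\tau A \to B_\tau B$, and conversely; combined with the dual statement for $\Omega_\tau$ and the unit/counit of the bar-cobar adjunction being $\infty_\tau$-quasi-isomorphisms (again Chapter 11 of \cite{LV}), one gets that the set of $\infty_\tau$-morphisms $A \rightsquigarrow B$, modulo $\infty_\tau$-homotopy, is in bijection with $\mathrm{Hom}_{\mathrm{Ho}(\mathcal{P}\text{-alg})}(A,B)$. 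The cleanest way to organize this is to invoke the fundamental theorem of model/homotopical categories: both categories receive a functor from the category of $\mathcal{P}$-algebras with strict morphisms inverting quasi-isomorphisms, and one checks these functors satisfy the universal property of the localization. That the category with $\infty_\tau$-morphisms \emph{is} the localization is exactly the content of the generalization of Theorem 11.4.8 of \cite{LV}, whose proof the excerpt says is "completely analogous"; so in practice one transcribes that proof, replacing the canonical Koszul twisting morphism $\mathcal{P}^{\text{!`}} \to \mathcal{P}$ by the given $\tau:\mathcal{C}\to\mathcal{P}$, and the Koszul resolution by $\Omega_\tau B_\tau$ wherever it appears.

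The main obstacle is the verification that every construction used in the proof of Theorem 11.4.8 of \cite{LV} survives the generalization from "$\mathcal{P}$ binary quadratic Koszul with zero differential and $\tau$ the canonical twisting morphism" to "$\tau:\mathcal{C}\to\mathcal{P}$ an arbitrary Koszul operadic twisting morphism." The delicate points are: (i) that the bar-cobar adjunction unit $A \to \Omega_\tau B_\tau A$ and counit $B_\tau \Omega_\tau C \to C$ are still quasi-isomorphisms — this is precisely the meaning of $\tau$ being Koszul, so it is available by hypothesis; (ii) that cofibrant/fibrant replacement behaves well, i.e. that $\Omega_\tau B_\tau A$ is a suitable cofibrant replacement of $A$ so that maps out of it up to homotopy compute $\mathrm{Ho}$-morphisms; and (iii) that the notion of $\infty_\tau$-homotopy (cylinder or path object on $B_\tau A$, as in \cite{LV} Chapter 11) still coincides with homotopy of strict $\mathcal{C}$-coalgebra maps $B_\tau A \to B_\tau B$. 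None of these requires genuinely new ideas beyond \cite{LV} and \cite{RNW1}, which is why the authors omit the proof; the work is bookkeeping to check that the Koszulness of $\tau$ is the only hypothesis actually used at each step. I would therefore present the proof as: (1) construct $\Phi$; (2) quote the generalized bar-cobar Koszul duality (unit and counit are $\infty_\tau$-quasi-isomorphisms); (3) deduce fully faithfulness by the standard argument that $\infty_\tau$-morphisms $A \rightsquigarrow B$ modulo homotopy compute derived maps; (4) conclude with essential surjectivity, which is trivial.
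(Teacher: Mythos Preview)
Your proposal is correct and aligns exactly with the paper's approach: the paper states that the proof is completely analogous to Theorem 11.4.8 of \cite{LV} and omits it, and your sketch is precisely the transcription of that argument with the canonical Koszul twisting morphism $\mathcal{P}^{\text{!`}}\to\mathcal{P}$ replaced by the general Koszul $\tau:\mathcal{C}\to\mathcal{P}$, checking that Koszulness of $\tau$ supplies the needed acyclicity of the bar--cobar unit and counit at each step.
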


We have a similar statement for coalgebras which is as follows.

\begin{theorem}
 Let $\tau:\mathcal{C} \rightarrow \mathcal{P}$ be a Koszul operadic twisting morphism. There is an equivalence of categories between the  homotopy category of conilpotent $\mathcal{C}$-coalgebras with $\infty_{\tau}$-morphisms and the  homotopy category of conilpotent $\mathcal{C}$-coalgebras with strict morphisms. 
\end{theorem}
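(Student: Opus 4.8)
\emph{Proof idea.} The plan is to deduce the statement from the bar--cobar adjunction relative to $\tau$, running the argument dual to the one for $\mathcal{P}$-algebras (the preceding theorem) and to Theorem 11.4.8 of \cite{LV}. Two inputs are needed. First, since $\tau$ is Koszul, the adjunction $\Omega_\tau \dashv B_\tau$ is a Quillen equivalence between conilpotent $\mathcal{C}$-coalgebras and $\mathcal{P}$-algebras (see \cite{LV}, Chapter 11); as every conilpotent $\mathcal{C}$-coalgebra is cofibrant, $\Omega_\tau$ is already its own left derived functor and it induces an equivalence
$$\Omega_\tau : \mathrm{Ho}(\text{conilpotent } \mathcal{C}\text{-coalg}) \xrightarrow{\ \sim\ } \mathrm{Ho}(\mathcal{P}\text{-alg}).$$
Second, for each $\mathcal{P}$-algebra $A$ the counit $\Omega_\tau B_\tau A \to A$ is a quasi-isomorphism (acyclicity of the bar--cobar resolution, which is where Koszulness of $\tau$ is used).

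Now I would use that, by the very definition of an $\infty_\tau$-morphism of $\mathcal{C}$-coalgebras, the cobar construction extends to a functor
$$\Omega_\tau : \infty_\tau\text{-}\mathcal{C}\text{-coalg} \longrightarrow \mathcal{P}\text{-alg}$$
that is \emph{fully faithful}: a morphism $C \rightsquigarrow D$ is precisely a strict $\mathcal{P}$-algebra map $\Omega_\tau C \to \Omega_\tau D$. Its essential image consists of the quasi-free $\mathcal{P}$-algebras $(\mathcal{P}\circ C, d_\tau)$, which are cofibrant; by the counit being a quasi-isomorphism, this essential image meets every quasi-isomorphism class of $\mathcal{P}$-algebras. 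Finally, $\Omega_\tau$ sends $\infty_\tau$-quasi-isomorphisms to quasi-isomorphisms by definition, and it also detects them: if $\Omega_\tau g$ is a quasi-isomorphism then, after filtering $\Omega_\tau C$ and $\Omega_\tau D$ by arity in $\mathcal{P}$ and using conilpotence to control the associated spectral sequences, the arity-$1$ component $g_1 : C \to D$ is seen to be a quasi-isomorphism, so $g$ is an $\infty_\tau$-quasi-isomorphism. A fully faithful functor enjoying these three properties induces an equivalence on homotopy categories, hence
$$\Omega_\tau : \mathrm{Ho}(\infty_\tau\text{-}\mathcal{C}\text{-coalg}) \xrightarrow{\ \sim\ } \mathrm{Ho}(\mathcal{P}\text{-alg}).$$

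Composing the two displayed equivalences gives an equivalence
$$\mathrm{Ho}(\text{conilpotent } \mathcal{C}\text{-coalg}) \xrightarrow{\ \sim\ } \mathrm{Ho}(\infty_\tau\text{-}\mathcal{C}\text{-coalg}),$$
and since both of its legs are the functor $\Omega_\tau$, this is the equivalence induced on homotopy categories by the canonical functor that is the identity on objects and sends a strict coalgebra morphism $f$ to the $\infty_\tau$-morphism it represents. That is the asserted equivalence.

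The step I expect to be the main obstacle is the one making the second display an equivalence. The localization lemma behind it is soft, but its hypotheses carry the real content: the acyclicity of the bar--cobar resolution, and, above all, the claim that $\Omega_\tau$ detects weak equivalences, i.e.\ that a quasi-isomorphism of $\mathcal{P}$-algebras $\Omega_\tau C \to \Omega_\tau D$ forces $g_1$ to be a quasi-isomorphism. This is the coalgebraic mirror of the corresponding fact for algebras and is proved by an obstruction-theoretic, arity-by-arity argument exactly dual to the one of \cite{LV}, Chapters 10--11; equivalently, it amounts to the fundamental theorem of $\infty_\tau$-morphisms for conilpotent $\mathcal{C}$-coalgebras, namely that every $\infty_\tau$-quasi-isomorphism admits an $\infty_\tau$-quasi-isomorphism inverse up to $\infty_\tau$-homotopy. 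A further point requiring care, though routine, is fixing the model structure on conilpotent $\mathcal{C}$-coalgebras whose homotopy category appears on the strict side and checking that its weak equivalences are exactly the strict $\infty_\tau$-quasi-isomorphisms. As all of this is formally dual to the already-treated algebra case, the details can be omitted, as in \cite{LV}.
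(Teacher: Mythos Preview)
The paper does not prove this statement; it simply says ``We have a similar statement for coalgebras'' after having already omitted the proof of the algebra analogue by appeal to \cite{LV}. Your proposal therefore supplies far more than the paper does, and the overall strategy---identify $\infty_\tau$-$\mathcal{C}$-coalg with the full subcategory of $\mathcal{P}$-alg on quasi-free algebras via $\Omega_\tau$, then invoke the bar--cobar Quillen equivalence---is exactly the expected one.

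That said, there is a genuine gap in your detection step. You claim that if $\Omega_\tau g$ is a quasi-isomorphism then a spectral sequence on the arity filtration forces $g_1$ to be a quasi-isomorphism. This is the wrong direction for a spectral sequence comparison: an isomorphism on the abutment does not yield one on $E^1$. The failure of this implication is precisely why the Vallette model structure on conilpotent $\mathcal{C}$-coalgebras (recalled later in the paper) takes as weak equivalences the maps $f$ with $\Omega_\tau f$ a quasi-isomorphism rather than the quasi-isomorphisms of underlying complexes; these two classes do not coincide in general. The coalgebra side is therefore \emph{not} formally dual to the algebra side, where weak equivalences are honest quasi-isomorphisms, and your appeal to duality at the end does not cover this point.

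The fix is to take the weak equivalences in $\infty_\tau$-$\mathcal{C}$-coalg to be those $g$ with $\Omega_\tau g$ a quasi-isomorphism (one half of the paper's definition of $\infty_\tau$-quasi-isomorphism). Detection is then tautological, $\Omega_\tau$ identifies $\mathrm{Ho}(\infty_\tau\text{-}\mathcal{C}\text{-coalg})$ with the full subcategory of $\mathrm{Ho}(\mathcal{P}\text{-alg})$ on the objects $\Omega_\tau C$, and Koszulness of $\tau$ makes this all of $\mathrm{Ho}(\mathcal{P}\text{-alg})$. The same $\Omega_\tau$ realizes the Quillen equivalence on the strict side, and the two equivalences compose as you describe. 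Whether the additional condition on $g_1$ in the paper's definition is automatic under the ambient hypotheses is a separate question you need not settle here.
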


\subsection{The Homotopy Transfer Theorem}\label{secHTT}

Our main interest in $\mathcal{P}_{\infty}$-algebras and their $\infty_{\tau}$-morphisms is because of the Homotopy Transfer Theorem. This theorem allows us to transfer $\mathcal{P}_{\infty}$-algebra structures along contractions of chain complexes. This theorem and the explicit formulas we recall in this section will be important in Sections \ref{secalgebraichopfinv}, \ref{secfromspacestomc} and Part  \ref{partexample} where we will use them to obtain explicit formulas for the algebraic Hopf invariants. The formulas and theorems in this section come from \cite{Berg2} (see also \cite{LV} Chapter 10).

\begin{definition}\label{defcontraction2}
 A contraction of chain complexes is a diagram of the form
 $$\xymatrix{
V \ar@(ul,dl)[]|{h} \ar@/^/[rr]|p
&& W. \ar@/^/[ll]|{i} }$$
such that $deg(i)=deg(p)=0$ and $deg(h)=1$ and the maps $i$, $p$ and $h$ satisfy the following identities:
\begin{center}
$\partial(i)=0,$ $\partial(p)=0,$ $\partial(h)=ip-Id_V,$ \\
$pi=Id_W,$ $ph=0,$ $hh=0$ and $hi=0$.
\end{center}
\end{definition}

\begin{theorem}\label{thrmHTT}
Let $\mathcal{C}$ be a cooperad and denote by $\iota:\mathcal{C} \rightarrow \Omega_{op}\mathcal{C}$ the canonical operadic twisting morphism. Suppose that we have a contraction of chain complexes
$$\xymatrix{
A \ar@(ul,dl)[]|{h} \ar@/^/[rr]|p
&& B, \ar@/^/[ll]|{i} }$$
such that $A$ is an algebra over the operad $\Omega_{op}\mathcal{C}$.  Then there exists an $\Omega_{op} \mathcal{C}$-algebra structure on $B$, $\infty_{\iota}$-morphisms $P:A \rightsquigarrow B$ and $I:B \rightsquigarrow A$ and an $\Omega_{op}\mathcal{C}$-algebra contraction $H:A \rightsquigarrow A$, such that the maps $P$ and $I$ are $\infty_{\iota}$-quasi-isomorphisms and the diagram
$$\xymatrix{
A \ar@(ul,dl)[]|{H} \ar@/^/[rr]|P
&& B, \ar@/^/[ll]|{I} }$$
is a contraction of $\Omega_{op}\mathcal{C}$-algebras.
\end{theorem}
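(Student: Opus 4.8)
The statement is the Homotopy Transfer Theorem for $\Omega_{op}\mathcal{C}$-algebras, so my approach is to reduce it to the coderivation description of $\mathcal{P}_\infty$-structures given in Definition \ref{defpinftydef2} and then to transfer the square-zero coderivation along the cofree construction using the standard ``tree sum'' formulas of \cite{Berg2}. Concretely, an $\Omega_{op}\mathcal{C}$-algebra structure on $A$ is a square-zero coderivation $d_A$ on the cofree $\mathcal{C}$-coalgebra $\mathcal{C}(A)$; the contraction $(i,p,h)$ on the underlying chain complexes induces, level-wise on $\mathcal{C}(n)\otimes_{\Sigma_n}(-)^{\otimes n}$, a contraction between $\mathcal{C}(A)$ and $\mathcal{C}(B)$ (with the internal differentials only). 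One then perturbs this contraction by the Maurer--Cartan-type element coming from the quadratic-and-higher part $d_A'$ of $d_A$. I would invoke the (basic) Perturbation Lemma for contractions of chain complexes to produce the perturbed data, and then read off the transferred coderivation $d_B$ on $\mathcal{C}(B)$, the coalgebra maps $P\colon \mathcal{C}(A)\to \mathcal{C}(B)$ and $I\colon \mathcal{C}(B)\to \mathcal{C}(A)$, and the coalgebra homotopy $H$.

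**Key steps, in order.** First, translate the hypothesis: use Theorem \ref{thrmrossetastone} to view the $\Omega_{op}\mathcal{C}$-algebra $A$ as the cofree conilpotent $\mathcal{C}$-coalgebra $\mathcal{C}(A)$ equipped with a square-zero coderivation $d_A = d_{\mathcal{C}} + d_A'$, where $d_{\mathcal{C}}$ is the internal part and $d_A'$ the perturbation. Second, extend the given chain-level contraction to a contraction of the cofree coalgebras $\mathcal{C}(A)\rightleftarrows \mathcal{C}(B)$ carrying only internal differentials: set $\widehat{P}=\mathcal{C}(p)$, $\widehat{I}=\mathcal{C}(i)$, and build $\widehat{H}$ as the ``sum over one leg'' homotopy $\sum_k \mathcal{C}(i)^{\otimes(\cdot)}\otimes h\otimes \mathcal{C}(\mathrm{id})^{\otimes(\cdot)}$ in each arity; check the contraction identities of Definition \ref{defcontraction2} level-wise (this is where conilpotence is used so the sums are finite). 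Third, apply the Perturbation Lemma with perturbation $d_A'$: conilpotence guarantees the geometric series $\sum_{n\ge 0}(d_A'\widehat{H})^n$ converges, yielding a perturbed contraction whose new differential on $\mathcal{C}(B)$ is a square-zero coderivation $d_B$, and perturbed maps $P$, $I$, $H$. Fourth, verify that the perturbed structure maps are genuine $\infty_\iota$-(quasi-)morphisms and an $\Omega_{op}\mathcal{C}$-algebra contraction: $P$ and $I$ are coalgebra maps commuting with $d_A,d_B$ by the Perturbation Lemma, so they are $\infty_\iota$-morphisms; their arity-$1$ components are still $p$ and $i$ (the perturbation only adds higher-arity corrections), hence they are $\infty_\iota$-quasi-isomorphisms; $H$ satisfies the contraction side conditions because the perturbation respects the annihilation relations $\widehat{H}\widehat{H}=\widehat{H}\widehat{I}=\widehat{P}\widehat{H}=0$ inherited from $hh=hi=ph=0$. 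Finally, record the explicit tree-summation formulas for $d_B$, $P$, $I$, $H$ exactly as in \cite{Berg2}, since these are what Sections \ref{secalgebraichopfinv}, \ref{secfromspacestomc} and Part \ref{partexample} actually need.

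**The main obstacle.** The genuinely delicate point is verifying that the perturbed differential $d_B$ on $\mathcal{C}(B)$ is a \emph{coderivation} (not merely a square-zero differential of chain complexes), and likewise that $P$ and $I$ are \emph{coalgebra} morphisms; the Perturbation Lemma a priori only produces chain-level data. This is handled by checking that the extended contraction $(\widehat{P},\widehat{I},\widehat{H})$ and the perturbation $d_A'$ are compatible with the cofree coalgebra structure in the appropriate sense — $\widehat{H}$ is a coderivation homotopy, $d_A'$ is a coderivation — so that each term in the perturbation series is built from coderivations and comultiplication-compatible maps, and the result assembles into a coderivation/coalgebra map. Equivalently, one restricts the perturbation series to its ``corolla'' components, using that a coderivation on a cofree coalgebra is determined by its projection to the cogenerators. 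All other verifications are the routine bookkeeping of the contraction identities arity by arity, and I would not carry them out in detail; the argument is identical to \cite{LV} Chapter 10 and \cite{Berg2}, only with the quadratic Koszul operad replaced by the general $\Omega_{op}\mathcal{C}$, which changes nothing since conilpotence of $\mathcal{C}$ is all that the convergence arguments require.
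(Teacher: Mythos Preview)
Your proposal is correct and follows the standard homological-perturbation approach. Note, however, that the paper does not supply its own proof of this theorem: it states the result, records the explicit tree formulas from \cite{Berg2}, and then writes ``For more details and proofs see \cite{Berg2}.'' So there is no in-paper argument to compare against; the proof is entirely deferred to Berglund's paper (and \cite{LV} Chapter~10), and what you outline is precisely the strategy used there.

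One small discrepancy worth flagging: your sketch of the extended homotopy $\widehat{H}$ (``$\mathcal{C}(i)^{\otimes(\cdot)}\otimes h\otimes \mathcal{C}(\mathrm{id})^{\otimes(\cdot)}$'') does not match the specific choice the paper records, namely $\mathbf{h}_n=\sum_{p+1+q=n}\mathrm{Id}_A^{\otimes p}\otimes h \otimes (ip)^{\otimes q}$ followed by symmetrization. Both are legitimate tensor-trick homotopies and either makes the perturbation argument go through, but since the paper explicitly fixes Berglund's choice (see the Remark after the formula for $\mathbf{h}_n^{\Sigma}$) and later sections rely on the resulting explicit formulas for $P$, you should use that one if you want the output to agree with what the paper actually uses downstream.
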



The rest of this section will be devoted to recall the explicit formulas for the transfered structure and transfered maps. These formulas will be important for obtaining explicit formulas for the Hopf invariants. We will use the formulas from \cite{Berg2}. 

First we will construct a contraction $H$. According to \cite{Berg2}  such a contraction is given by  

$$\mathbf{h}_n=\sum_{p+1+q=n}Id_A^{p}\otimes h \otimes (ip)^{q},$$
for non symmetric operads. For symmetric operad we need to symmetrize  and we get 

$$\mathbf{h}_n^{\Sigma}=\frac{1}{n !} \sum_{\sigma \in \Sigma_n} \sigma^{-1} \mathbf{h}_n \sigma.$$
Since we will only work with symmetric operads we will for simplicity denote $\mathbf{h}^{\Sigma}_n$ by $\mathbf{h}_n$. 

\begin{remark}
 There are some choices involved in defining the map $H$, in this paper we will fix these choices and always use the contraction $H$ from \cite{Berg2}. 
\end{remark}

Let $\nu \in \mathcal{C}(n)$, then we denote  the coproduct by 
$$\Delta(\nu)=\nu \circ 1^{n}+1 \circ \nu + \sum_{q=1}^{p}\nu^{q} \circ (\nu_1^q\otimes ... \otimes \nu^q_{r_q}) \sigma_q \in (\mathcal{C} \circ \mathcal{C})(n),$$
where $\nu^q$ and $\nu^q_i$ are elements of arity less than $n$ and $\sigma_q \in \Sigma_n$. The sum runs over all elements $\nu^q \in \mathcal{C}$ appearing in the coproduct, i.e. $p$ is the number of terms in the coproduct. The quadratic or infinitesimal part of the coproduct is defined as the part of the coproduct that is of the following form 
$$\Delta_{(1)}(\nu)=\sum_{i=1}^{u} (\nu'_i \circ_{e_i} \nu ''_i) \tau_i,$$
where $\tau_i\in \Sigma_n$ and the sum runs over all terms appearing in the infinitesimal coproduct, i.e. $u$ is the number of terms in the infinitesimal coproduct.

Further we will denote the $\Omega_{op}\mathcal{C}$-structure on $A$ by a sequence of maps $t^{\nu}:A^{\otimes n} \rightarrow A$, for $\nu \in \mathcal{C}(n)$. We will now give explicit formulas for the transferred structure and the transfer maps. Because of Theorem \ref{thrmrossetastone} the structure of a $\Omega_{op}\mathcal{C}$-algebra is the same as a square-zero derivation on $\mathcal{C}(A)$.     Since $\mathcal{C}(A)$ is cofree this derivation is completely determined by a map $t:\mathcal{C}(A) \rightarrow A$. We will denote by $(t')^{\nu}:A^{\otimes n} \rightarrow A$ the map defined by $(t')^{\nu}(a_1,...,a_n)=t'(\nu \otimes a_1 \otimes ... \otimes a_n)$, for $\nu \in \mathcal{C}$ and $a_1,...,a_n \in A$. Similarly the maps $P:B_{\iota}A \rightarrow B_{\iota}B$, $I:B_{\iota} B \rightarrow B_{\iota}A$ and $H:B_{\iota} A \rightarrow B_{\iota}A$ are also determined by their image on cogenerators and we denote the $\nu$-component of the maps $P$, $I$ and $H$ by $P^{\nu}$, $I^{\nu}$ and $H^{\nu}$.

\begin{theorem}
 The formulas for the transferred structure and the transferred maps $I$, $P$, $T$ and $H$ are given by:
$$(t')^{\nu}=pt^{\nu}i^{\otimes n}+\sum_{q=1}^{r} p t^{\nu^q}(i^{\nu_1^q}\otimes ... \otimes i^{\nu_{s_q}^q}) \sigma_q,$$
$$i^{\nu}=h t^{\nu} i^{\otimes n}+\sum_{q=1}^{r} h t^{\nu^q}(i^{\nu_1^q}\otimes ... \otimes i^{\nu_{s_q}^q}) \sigma_q,$$
$$p^{\nu}=(-1)^{\mid \nu \mid} p t^{\nu} \mathbf{h}_n+ \sum_{i=1}^{u} (-1)^{\mid \nu_i '' \mid}(p^{\nu_i '} \circ_{e_i} t^{ \nu_i ''}) \tau_i \mathbf{h}_n,$$
$$h^{\nu}=(-1)^{\mid \nu \mid} h t^{\nu} \mathbf{h}_n+ \sum_{i=1}^{u} (-1)^{\mid \nu_i '' \mid}(h^{\nu_i '} \circ_{e_i} t^{ \nu_i ''}) \tau_i \mathbf{h}_n.$$ 
In the first two formulas the sum runs over $\Delta(\nu)$, the coproduct of $\nu$ in the cooperad $\mathcal{C}$, i.e. $r$ is the number of terms appearing in the coproduct of $\nu$. In the third and fourth formula the sum runs over $\Delta_{(1)}(\nu)$, the infinitesimal coproduct of $\nu$, i.e. $u$ is the number of terms appearing in the infinitesimal coproduct of $\nu$.
\end{theorem}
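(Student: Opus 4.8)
The plan is to deduce all four formulas at once from the classical Basic Perturbation Lemma applied to the cofree $\mathcal{C}$-coalgebra $B_{\iota}A \cong \mathcal{C}(A)$. By Theorem \ref{thrmrossetastone} (in the form of Definition \ref{defpinftydef2}) the $\Omega_{op}\mathcal{C}$-algebra structure $\{t^{\nu}\}$ on $A$ is nothing but a square-zero coderivation on $\mathcal{C}(A)$, which I would decompose as $d = (d_{\mathcal{C}} + d_A) + t'$, where $d_{\mathcal{C}} + d_A$ is the ``unperturbed'' differential built from the internal differentials of $\mathcal{C}$ and of $A$, and $t'$ is the coderivation generated by the map $t:\mathcal{C}(A)\to A$. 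Thus the whole statement is a statement about how this coderivation, together with the contraction data, behaves under transfer, and the only input needed is a suitable contraction of $\mathcal{C}(A)$ for the unperturbed differential.

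First I would upgrade the given contraction $(h,p,i)$ on $A$ to the standard tensor contraction on $\mathcal{C}(A) = \bigoplus_n \mathcal{C}(n)\otimes_{\Sigma_n}A^{\otimes n}$, untouched in the $\mathcal{C}$-direction: projection $\mathrm{Id}_{\mathcal{C}}\otimes p^{\otimes n}$, inclusion $\mathrm{Id}_{\mathcal{C}}\otimes i^{\otimes n}$, and homotopy the symmetrized $\mathbf{h}_n$ recalled above. I would check that the seven identities of Definition \ref{defcontraction2} survive, the side conditions $\mathbf{h}\mathbf{h}=0$, $\mathbf{h}\mathbf{i}=0$, $\mathbf{p}\mathbf{h}=0$ being the only slightly delicate point and the reason the symmetrization is needed. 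Then I would apply the Basic Perturbation Lemma to the perturbation $t'$: conilpotence of $\mathcal{C}(A)$ makes $\sum_{k\geq 0}(t'\mathbf{h})^k$ a locally finite sum, so one obtains a perturbed contraction with $P = \mathbf{p}\sum_k(t'\mathbf{h})^k$, $I = \sum_k(\mathbf{h}t')^k\mathbf{i}$, $H = \mathbf{h}\sum_k(t'\mathbf{h})^k$, and perturbed differential $d_{\mathcal{C}}+d_B+t'_B$ on $\mathcal{C}(B)$ with $t'_B = \mathbf{p}\bigl(\sum_k(t'\mathbf{h})^k\bigr)t'\mathbf{i}$. The main obstacle is that the Perturbation Lemma a priori only yields chain maps, whereas I need $P$, $I$, $H$ to be genuine $\infty_{\iota}$-morphisms (coalgebra morphisms, resp.\ coderivations); this follows because $t'$ is a coderivation and $\mathbf{h}$ is ``cofree'', so a map out of a conilpotent cofree coalgebra is determined by its corestriction to cogenerators, and it suffices to match corestrictions --- this is the argument of \cite{Berg2}. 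Together with Theorem \ref{thrmHTT} this identifies the perturbed data with the transferred structure and transfer maps.

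Finally I would extract the stated explicit formulas by corestricting $t'_B$, $P$, $I$, $H$ to cogenerators and expanding: writing $t'$ through its generating map $t$ and unravelling $\mathbf{h}_n$ by means of the decomposition of the coproduct $\Delta(\nu)$ and of the infinitesimal coproduct $\Delta_{(1)}(\nu)$ recalled above reorganizes the iterated geometric series $\sum_k(t'\mathbf{h})^k$ into the two-term expressions in the theorem; the extra copies of $h$ at the leaves get absorbed recursively into the maps $i^{\nu}$ and $h^{\nu}$, which explains the recursion on the arity of $\nu$, and the signs $(-1)^{|\nu|}$, $(-1)^{|\nu''_i|}$ arise from commuting $t$ past $\mathbf{h}_n$ and from the Koszul rule in the infinitesimal coproduct. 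I expect only the sign and $\Sigma_n$-symmetrization bookkeeping, plus the coalgebra-morphism verification above, to be non-routine; everything else is an unwinding of definitions, and the result matches \cite{Berg2} (see also \cite{LV}, Chapter~10).
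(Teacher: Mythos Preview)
Your proposal is correct and in fact goes well beyond what the paper does: the paper gives no proof of this theorem at all, but simply records the formulas and refers the reader to \cite{Berg2} for details and proofs. The argument you outline --- extending the contraction to $\mathcal{C}(A)$ via the symmetrized tensor trick, applying the Basic Perturbation Lemma to the coderivation $t'$, checking that the output maps are coalgebra morphisms by corestricting to cogenerators, and then extracting the $\nu$-components by unwinding the coproduct $\Delta(\nu)$ and the infinitesimal coproduct $\Delta_{(1)}(\nu)$ --- is precisely the strategy of \cite{Berg2}, so your sketch is entirely in line with the intended reference.
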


For more details and proofs see \cite{Berg2}.

\subsection{$s^{-1}L_{\infty}$-algebras}\label{seclinfty}

In this section we will recall the basics about shifted $L_{\infty}$-algebras, one of the most important types of algebras in this paper. Most of this section is based on \cite{Getz1} and \cite{Berg1} in which proofs and details can be found, see also \cite{LV} Chapter 10.

\begin{definition}
 The cooperad $\mathcal{COCOM}$ is the cocommutative cooperad and is defined by  \\ $\mathcal{COCOM}(n)=\K$, for $n \geq 1$, with the trivial representation of $\Sigma_n$ in arity $n$. The decomposition map is given by 
 $$\Delta_{\mathcal{COCOM}}(\mu_n)=\sum_{p=1}^{n} \sum_{i=1}^{p} \mu_p \circ_i \mu_{n-p+1}.$$
 Where $\mu_n$ is the basis element of $\mathcal{COCOM}(n)$.
\end{definition}

\begin{definition}
 The $s^{-1}L_{\infty}$-operad is the operad defined as $\Omega_{op} \mathcal{COCOM}$, the cobar construction on the cocommutative cooperad.
\end{definition}

\begin{definition}
 An $s^{-1}L_{\infty}$-algebra $L$ is an algebra over the $s^{-1}L_{\infty}$-operad. In particular it is a graded vector space $L$ with a sequence of degree $-1$ operations $l_n:L^{\otimes n} \rightarrow L$ for each $n \geq 1$ satisfying certain conditions (see for example \cite{Getz1} or Section 10.1.12 in \cite{LV}). 
\end{definition}

\begin{remark}
 Note that the conditions in \cite{LV} and \cite{Getz1} are written down for $L_{\infty}$-algebras with operations of degree $2-n$, since our operations are shifted such that they all have degree $-1$ the signs will differ in most of the formulas.
\end{remark}

We will now make the notion of $\infty$-morphisms for $s^{-1}L_{\infty}$-algebras explicit.

\begin{definition}\label{definfinitymorphismforlielagebras}
 Let $L$ and $M$ be $s^{-1}L_{\infty}$-algebras. An $\infty$-morphism $\Phi:L \rightsquigarrow M$ of $s^{-1}L_{\infty}$-algebras is a map $\Phi:B_{\iota}L \rightsquigarrow B_{\iota}M$ between the bar constructions relative to the twisitng morphism $\iota:\mathcal{COCOM} \rightarrow s^{-1}L_{\infty}$. This is equivalent to a sequence of maps  $f_n:L^{\otimes n} \rightarrow M$ satisfying certain conditions (again see \cite{LV}).
\end{definition}

\begin{convention}
We will call the $\infty_{\iota}$-morphism from Definition \ref{definfinitymorphismforlielagebras}, just $\infty$-morphisms and omit the $\iota$.
\end{convention}

\begin{remark}
 In this definition of $\infty$-morphism we implicitly assumed that it is relative to the Koszul operadic twisting morphism $\iota:\mathcal{COCOM} \rightarrow s^{-1}L_{\infty}=\Omega_{op}\mathcal{COCOM}$.
\end{remark}

To make sure that certain sums converge, we need certain nilpotence conditions on the $s^{-1}L_\infty$-algebras we work with. We recall these conditions here, for more details see for example Section 2 of \cite{Berg1}.

\begin{definition}\label{deflowercentralseries}
Let $L$ be an $s^{-1}L_\infty$-algebra, then the lower central series of $L$ is defined as follows. Let $\Gamma^iL$ be subset of $L$ spanned by all possible bracket expressions one can form using at least $i$ elements of $L$. This defines a descending filtration
\[
L=\Gamma^1 L \supseteq \Gamma^2 L \supseteq ...,
\]
 which is called the lower central series. An $s^{-1}L_\infty$-algebra $L$ is called nilpotent if there exists an $N$ such that $\Gamma^nL=\{0\}$ for all $n>N$, $L$ is called degree-wise nilpotent if for each degree $d$ there exists an $N$ such that $(\Gamma^nL)_d=\{0\}$ for all $n>N$.  
\end{definition}

\begin{convention}
From now on we will assume that all $s^{-1}L_\infty$-algebras that model spaces, except for the mapping spaces, are nilpotent. As is shown in \cite{Berg1} this implies that the $L_\infty$-algebras modeling mapping spaces are degree-wise nilpotent.

\end{convention}

\begin{definition}
 Let $L$ be an $s^{-1}L_{\infty}$-algebra and $A$ be a CDGA. The extension of scalars of $L$ by $A$ is defined as the $s^{-1}L_{\infty}$-algebra whose underlying vector space is given by $A \otimes L$ and the bracket $l_n:( A\otimes L)^{\otimes n} \rightarrow A \otimes L$ is defined by $l_n(a_1\otimes x_1 ,...,a_n \otimes x_n)=(-1)^{\sum_{i<j} (\vert a_i \vert  \vert x_j \vert-1)}a_1...a_n \otimes l_n(x_1,...,x_n)$.
\end{definition}


\subsubsection{The Maurer-Cartan equation and twisted  $s^{-1}L_{\infty}$-algebras}

In an $s^{-1}L_{\infty}$-algebra we have a special class of elements called Maurer-Cartan elements, these elements are special since we can use them to twist the $s^{-1}L_{\infty}$-algebra $L$.

\begin{definition}
 Let $L$ be a degree-wise nilpotent $s^{-1}L_{\infty}$-algebra, a Maurer-Cartan element in $L$ is a degree $0$ element that satisfies the Maurer-Cartan equation which is given by
 $$\sum_{n\geq 1} \frac{1}{n!} l_n(\tau,...,\tau)=0.$$
 The set of solutions to the Maurer-Cartan equation will be denoted by $MC(L)$.
\end{definition}

In \cite{Getz1} Getzler associates to each nipotent $s^{-1}L_{\infty}$-algebra a simplicial set as follows. Let $\Omega_n$ be the algebra of polynomial de Rham forms on the $n$-simplex $\Delta^n$ (see \cite{Getz1} or \cite{FHT} for a precise definition). The simplicial CDGA $\Omega_{\bullet}$ is defined as the simplicial object in CDGA's which has $\Omega_n$ as its degree $n$ part. The face and degeneracy maps are induced by the cosimplicial structure of $\Delta_n$. Using $\Omega_{\bullet}$ we can define a functor from $s^{-1}L_{\infty}$-algebras to simplicial sets as follows. In \cite{Berg1}, Berglund extended Getzler's constructions to degree-wise nilpotent $s^{-1}L_\infty$-algebras.

\begin{definition}
 Let $L$ be a degree-wise nilpotent $s^{-1}L_{\infty}$-algebra, the nerve of $L$ is defined as the simplicial set 
 $$MC_{\bullet}(L)=MC(L \otimes \Omega_{\bullet}).$$
\end{definition}

In the rest of this section we will discus some of the properties of the functor $MC_{\bullet}$ and  explain how $MC_{\bullet}$ is used in rational homotopy theory. Since $MC_{\bullet}(L)$ is a simplicial set it gives us a notion of homotopy between the elements of $MC_0(L)$, in particular two elements $x,y \in MC_0(L)$ are called homotopy or gauge equivalent if there exists an element $z \in MC_1(L)$ such that $d_0(z)=x$ and $d_1(z)=y$. In \cite{Getz1}, it is shown that that this is an equivalence relation. From now on we will call two Maurer-Cartan elements which are homotopy equivalent in $MC_{\bullet}(L)$ gauge equivalent.
 
\begin{definition}
 The moduli space of Maurer-Cartan elements of a degree-wise nilpotent $s^{-1}L_{\infty}$-algebra $L$ is defined as the set of Maurer-Cartan elements of $L$ modulo the relation of gauge equivalence. The moduli space of Maurer-Cartan elements will be denoted by $\mathcal{MC}(L)$.
\end{definition}

One of the reasons we care about Maurer-Cartan elements is because we can use them to twist the $s^{-1}L_{\infty}$-structure on an $s^{-1}L_{\infty}$-algebra $L$.

\begin{definition}\label{deftwistedlinftyalg}
  Let $L$ be a degree-wise nilpotent $s^{-1}L_{\infty}$-algebra and let $\tau$ be a Maurer-Cartan element. The twisted $s^{-1}L_{\infty}$-algebra $L^{\tau}$ is defined as the $s^{-1}L_{\infty}$-algebra which has the same underlying vector space as $L$, but has twisted brackets and a twisted differential which are given by
  $$l_n^{\tau}(x_1,...,x_n)=\sum_{m \geq 0} \frac{1}{m!} l_{n+m}(\tau,...\tau,x_1,...,x_n),$$
  where the element $\tau $ appears $m$ times.
\end{definition}

See Proposition 4.4 of \cite{Getz1} for a proof that this gives indeed a new $L_{\infty}$-algebra structure on $L$.

\begin{lemma}[\cite{Berg1} Lemma 4.8]
 Let $L$ be a degree-wise nilpotent $s^{-1}L_{\infty}$-algebra and $\tau \in L$ a Maurer-Cartan element. The Maurer-Cartan set of $L^{\tau}$ is then described as follows
 $$MC(L^{\tau})=\{\sigma \in L_0 \mid \sigma + \tau \in MC(L)\}.$$
\end{lemma}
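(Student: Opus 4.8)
The plan is to unwind the definition of the twisted $s^{-1}L_\infty$-structure and check directly that the Maurer-Cartan equation for $L^\tau$ at $\sigma$ is equivalent to the Maurer-Cartan equation for $L$ at $\sigma + \tau$. First I would write out the left-hand side of the Maurer-Cartan equation for $L^\tau$ evaluated at $\sigma \in L_0$, namely $\sum_{n \geq 1}\frac{1}{n!}l_n^\tau(\sigma,\dots,\sigma)$, and substitute the formula from Definition \ref{deftwistedlinftyalg}, giving a double sum $\sum_{n\geq 1}\sum_{m\geq 0}\frac{1}{n!\,m!}l_{n+m}(\tau,\dots,\tau,\sigma,\dots,\sigma)$ with $m$ copies of $\tau$ and $n$ copies of $\sigma$. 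The key combinatorial step is to collect terms by the total arity $k = n+m$: the coefficient of $l_k(\tau^{(m)},\sigma^{(n)})$ is $\frac{1}{n!\,m!}$, and since all the $l_k$ are (graded) symmetric, summing over the $\binom{k}{m}$ ways of interleaving gives exactly $\frac{1}{k!}l_k(\underbrace{\sigma+\tau,\dots,\sigma+\tau}_{k})$ by the graded multinomial expansion — here one uses that $\sigma$ and $\tau$ are both degree $0$, so no Koszul signs obstruct the naive binomial bookkeeping.

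The one genuine subtlety is the range of summation: in $\sum_{n\geq 1}\frac{1}{n!}l_n^\tau(\sigma,\dots,\sigma)$ the index $n$ starts at $1$, so the reindexed sum over $k=n+m$ collects, for each $k\geq 1$, all pairs $(n,m)$ with $n\geq 1$, $m\geq 0$ — i.e. it omits precisely the term $n=0$, which is $\sum_{m\geq 1}\frac{1}{m!}l_m(\tau,\dots,\tau)$, together with the $k=0$ contribution (vacuous). But that omitted piece is exactly the Maurer-Cartan expression for $\tau$ in $L$, which vanishes because $\tau \in MC(L)$ by hypothesis. Hence $\sum_{k\geq 1}\frac{1}{k!}l_k(\sigma+\tau,\dots,\sigma+\tau) = \sum_{n\geq 1}\frac{1}{n!}l_n^\tau(\sigma,\dots,\sigma) + \sum_{m\geq 1}\frac{1}{m!}l_m(\tau,\dots,\tau) = \sum_{n\geq 1}\frac{1}{n!}l_n^\tau(\sigma,\dots,\sigma)$, so the two Maurer-Cartan conditions coincide.

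I would also need to justify that all the sums in question actually converge, so that the manipulations are legitimate: this follows from degree-wise nilpotence of $L$ (Definition \ref{deflowercentralseries}), which guarantees that in each fixed degree only finitely many brackets are nonzero, and which is also exactly what makes $L^\tau$ itself a well-defined degree-wise nilpotent $s^{-1}L_\infty$-algebra (so that $MC(L^\tau)$ makes sense at all). Finally, one should note $\sigma \in L_0$ if and only if $\sigma + \tau \in L_0$ since $\tau$ is degree $0$, so the degree constraint in the two Maurer-Cartan sets matches up as well, giving the claimed equality $MC(L^\tau) = \{\sigma \in L_0 \mid \sigma + \tau \in MC(L)\}$.

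The main obstacle is purely bookkeeping: getting the multinomial coefficients and the shift of summation index exactly right, and being careful that the $n\geq 1$ lower bound (rather than $n\geq 0$) in the twisted Maurer-Cartan sum is precisely compensated by the hypothesis that $\tau$ solves the untwisted Maurer-Cartan equation. There is no conceptual difficulty once the expansion of $l_k(\sigma+\tau,\dots,\sigma+\tau)$ is organized correctly; since $\sigma$ and $\tau$ are in even (indeed zero) degree, the graded-symmetry signs are all trivial and the computation is the obvious one.
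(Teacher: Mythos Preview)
Your argument is correct and is the standard direct computation: expand the twisted brackets, reindex by total arity, use graded symmetry (trivial here since $\sigma,\tau$ have degree $0$) to recognize the multinomial expansion of $l_k(\sigma+\tau,\dots,\sigma+\tau)$, and absorb the leftover $n=0$ piece using the Maurer-Cartan equation for $\tau$. The convergence remark via degree-wise nilpotence is also appropriate.

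Note that the paper does not actually supply a proof of this lemma; it is quoted verbatim from \cite{Berg1} (Lemma~4.8) and stated without argument. So there is nothing in the paper to compare against, but your proof is exactly the computation one would expect the cited reference to contain.
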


When we have an $\infty$-morphism of $s^{-1}L_{\infty}$-algebras, we also get a twist on this morphism. This will be important in Section \ref{secmodulispaces} where we will use this to study the moduli space of Maurer-Cartan elements. Proofs of these results can be found in Section 4 of \cite{Berg1}.

\begin{lemma}
 Let $f:L \rightsquigarrow M$ be an $\infty$-morphism between degree-wise nilpotent $s^{-1}L_{\infty}$-algebras $L$ and $M$. The map $f$ induces a map on Maurer-Cartan sets $f_*:MC(L) \rightarrow  MC(M)$, which is given by
 $$f_*(\tau)=\sum_{n \geq 1} \frac{1}{n!} f_n(\tau,...,\tau).$$
\end{lemma}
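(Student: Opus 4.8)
The plan is to pass from the $\infty$-morphism $f$ to the associated morphism of dg cocommutative coalgebras and to recognize Maurer--Cartan elements as the cogenerator components of group-like elements in the completed bar construction. Concretely, by Definition \ref{definfinitymorphismforlielagebras} the datum of $f$ is exactly a morphism of $\mathcal{COCOM}$-coalgebras $F : B_\iota L \to B_\iota M$; since $B_\iota L$ is, as a graded vector space, cofree conilpotent cocommutative on a shift of $L$, the map $F$ is determined by its components $f_n : L^{\otimes n} \to M$, and it commutes with the bar differentials $D_L,D_M$, which are the coderivations encoding the brackets $l_n$ of $L$ and $M$.

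First I would record the elementary translation between Maurer--Cartan elements and group-like elements. For a degree-$0$ element $\tau \in L_0$ in a degree-wise nilpotent $L$, the series $e^\tau := 1 + \tau + \tfrac12\,\tau\cdot\tau + \cdots$, with $n$th term the $n$-fold symmetric product of $\tau$ divided by $n!$, makes sense in the completion of $B_\iota L$ and is group-like for the coproduct. A direct computation with the bar coderivation $D_L$ — using that each $l_n$ has degree $-1$, so that the (de)suspension built into the bar construction removes the signs — gives
$$D_L(e^\tau) \;=\; \Bigl( \sum_{n \ge 1} \tfrac{1}{n!}\, l_n(\tau,\dots,\tau) \Bigr) \cdot e^\tau ,$$
where the displayed sum converges in degree $-1$ of $L$ by degree-wise nilpotence (Definition \ref{deflowercentralseries}). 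Hence $\tau$ is a Maurer--Cartan element of $L$ if and only if $D_L(e^\tau) = 0$, and likewise in $M$.

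Next I would use that a morphism of conilpotent cocommutative coalgebras sends group-like elements to group-like elements, so $F(e^\tau) = e^{\sigma}$ for a unique $\sigma \in M_0$; projecting onto the cogenerators $M$, and using that the restriction of $F$ to the arity-$n$ part of the cofree coalgebra is $f_n$ together with the fact that the arity-$n$ component of $e^\tau$ is $\tfrac1{n!}$ times the $n$-fold symmetric product of $\tau$, identifies $\sigma = \sum_{n \ge 1} \tfrac1{n!} f_n(\tau,\dots,\tau) = f_*(\tau)$. (Well-definedness of this sum in degree $0$, and hence of $e^{f_*(\tau)}$ in the completion of $B_\iota M$, is where the degree-wise nilpotence of $M$ — together with the filtration-compatibility of $\infty$-morphisms built into working with such algebras, cf. \cite{Berg1} — enters.) Finally, because $F$ is a chain map and $\tau$ is Maurer--Cartan,
$$D_M\bigl(e^{f_*(\tau)}\bigr) = D_M\bigl(F(e^\tau)\bigr) = F\bigl(D_L(e^\tau)\bigr) = F(0) = 0 ,$$
so by the translation of the previous paragraph, applied to $M$, $f_*(\tau)$ satisfies the Maurer--Cartan equation in $M$, which is the claim.

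The main obstacle is the bookkeeping in the coderivation identity $D_L(e^\tau) = \bigl(\sum \tfrac1{n!} l_n(\tau,\dots,\tau)\bigr)\cdot e^\tau$: one has to track the suspension in the bar construction carefully to confirm that the shifted convention (all brackets of degree $-1$) really does cancel the signs that appear in the unshifted $L_\infty$ version, and one has to justify convergence of every series involved from the nilpotence hypotheses. An alternative is to verify the Maurer--Cartan equation for $f_*(\tau)$ directly, expanding both sides using the structure equations relating the $f_n$ and the $l_n$ of an $\infty$-morphism; this is elementary but combinatorially heavy — it amounts to reindexing a sum over partitions and matching terms with signs — and I would prefer the coalgebra argument precisely because it packages that combinatorics into the single statement that $F$ is a morphism of dg coalgebras.
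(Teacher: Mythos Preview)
The paper does not give its own proof of this lemma; it simply records the statement and refers the reader to Section~4 of \cite{Berg1} for the argument. So there is no ``paper's proof'' to compare against.

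Your approach is correct and is in fact the standard conceptual proof of this fact: identify Maurer--Cartan elements with (completed) group-like elements in the bar construction, and use that the dg coalgebra morphism $F$ encoding $f$ preserves both the group-like property and closedness under the bar differential. The identity $D_L(e^\tau) = \bigl(\sum_n \tfrac{1}{n!} l_n(\tau,\dots,\tau)\bigr)\cdot e^\tau$ in the shifted setting is correct, and the identification of the cogenerator projection of $F(e^\tau)$ with $\sum_n \tfrac{1}{n!} f_n(\tau,\dots,\tau)$ is exactly the definition of how $F$ is built from its components. The two caveats you flag --- convergence (handled by degree-wise nilpotence) and the passage to the completion of $B_\iota L$ so that $e^\tau$ makes sense --- are the only real technical points, and you have identified them. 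One small remark: strictly speaking $e^\tau$ lives in the completed symmetric coalgebra rather than in the conilpotent $B_\iota L$ itself, so the sentence ``a morphism of conilpotent cocommutative coalgebras sends group-like elements to group-like elements'' should be read after extending $F$ continuously to the completion; this is harmless but worth saying explicitly.
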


\begin{proposition}
Let $f:L \rightsquigarrow M$ be an $\infty$-morphism, and $\tau \in L$ be a Maurer-Cartan element. The $\infty$-morphism $f$ induces an $\infty$-morphism $f^{\tau}:L^{\tau} \rightsquigarrow M^{f_*(\tau)}$ which is given by 
$$f^{\tau}_n(x_1,...,x_n)=\sum_{l \geq 0} \frac{1}{l!} f_{n+l}(\tau,...,\tau,x_1,...,x_n).$$
The element $\tau$ appears $l$ times in the function $f_{n+l}(\tau,...,\tau,x_1,...,x_n)$.
\end{proposition}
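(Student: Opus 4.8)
The plan is to lift everything to the (completed) bar construction, where twisting by a Maurer--Cartan element becomes conjugation by a grouplike element and the statement reduces to a short manipulation of coalgebra maps. Recall from Definition~\ref{definfinitymorphismforlielagebras} that an $\infty$-morphism $f\colon L\rightsquigarrow M$ is the same datum as a morphism of conilpotent dg cocommutative coalgebras $F\colon B_\iota L\to B_\iota M$, where $B_\iota L=(\mathcal{COCOM}(L),d_L)$ has underlying coalgebra $\bigoplus_{n\geq1}(L^{\otimes n})_{\Sigma_n}$ and $d_L$ is the codifferential encoding the brackets $l_n$; the maps $f_n$ are read off as the components of the corestriction of $F$ to $M$. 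Throughout I would work with completed coalgebras, which is legitimate because $L$ and $M$ are degree-wise nilpotent, so all exponential series below converge in each degree.

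First I would describe the twist at the bar level. A Maurer--Cartan element $\tau\in L_0$ produces the grouplike element $\exp(\tau)=\sum_{l\geq0}\tfrac1{l!}\tau^{\odot l}$ of the completed symmetric coalgebra on $L$, and multiplication by $\exp(\tau)$, with inverse multiplication by $\exp(-\tau)$, is a coalgebra automorphism $t_\tau$. I claim that $d_{L^\tau}=t_{-\tau}\,d_L\,t_\tau$ is precisely the codifferential of the twisted structure: corestricting the right-hand side to $L$ and using $t_\tau(x_1\odot\cdots\odot x_n)=\sum_{m\geq0}\tfrac1{m!}\tau^{\odot m}\odot x_1\odot\cdots\odot x_n$ reproduces exactly the twisted bracket $l_n^\tau(x_1,\dots,x_n)=\sum_{m\geq0}\tfrac1{m!}l_{n+m}(\tau,\dots,\tau,x_1,\dots,x_n)$ of Definition~\ref{deftwistedlinftyalg}, and $d_{L^\tau}^2=t_{-\tau}d_L^2 t_\tau=0$, which re-derives Getzler's Proposition~4.4 (see~\cite{Getz1}). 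The identical discussion applies to $M$ with the Maurer--Cartan element $f_*(\tau)$.

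Next, since $F$ is a coalgebra morphism it sends the grouplike element $\exp(\tau)$ to a grouplike element of $B_\iota M$; as the grouplike elements of a completed conilpotent cocommutative coalgebra are exactly the exponentials of degree-zero elements, $F(\exp(\tau))=\exp(m)$ with $m$ the corestriction of $F(\exp(\tau))$ to $M$, that is $m=\sum_{n\geq1}\tfrac1{n!}f_n(\tau,\dots,\tau)=f_*(\tau)$ by the formula for $f_*$. Now define
\[
F^\tau:=t_{-f_*(\tau)}\circ F\circ t_\tau\colon B_\iota L\longrightarrow B_\iota M,
\]
a composite of coalgebra morphisms, hence one; the conjugation computation
\[
F^\tau d_{L^\tau}=t_{-f_*(\tau)}F(t_\tau t_{-\tau})d_L t_\tau=t_{-f_*(\tau)}(Fd_L)t_\tau=t_{-f_*(\tau)}(d_M F)t_\tau=\big(t_{-f_*(\tau)}d_M t_{f_*(\tau)}\big)F^\tau=d_{M^{f_*(\tau)}}F^\tau
\]
shows that $F^\tau$ is a chain map $B_\iota(L^\tau)\to B_\iota(M^{f_*(\tau)})$, i.e. an $\infty$-morphism $L^\tau\rightsquigarrow M^{f_*(\tau)}$. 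Finally, since the coalgebras $\mathcal{COCOM}(-)$ carry no constant ($S^0$) term, post-composition with $t_{-f_*(\tau)}$ leaves the corestriction to $M$ unchanged, so the components of $F^\tau$ coincide with those of $F\circ t_\tau$; evaluating on $x_1\odot\cdots\odot x_n$ with the formula for $t_\tau$ above yields $f^\tau_n(x_1,\dots,x_n)=\sum_{l\geq0}\tfrac1{l!}f_{n+l}(\tau,\dots,\tau,x_1,\dots,x_n)$, as claimed.

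The step I expect to be the main obstacle is the bookkeeping: checking that the bar-level identity $d_{L^\tau}=t_{-\tau}d_L t_\tau$ matches, sign for sign, the component formula of Definition~\ref{deftwistedlinftyalg} in the paper's shifted ($\deg=-1$) conventions (no Koszul signs should intervene, since $\tau$ and $t_\tau$ have degree $0$), and making precise the passage to completed, counital coalgebras so that $\exp(\tau)$ is genuinely grouplike and $t_\tau$ genuinely a coalgebra automorphism --- the convergence here is exactly what degree-wise nilpotence buys, and is the only place that hypothesis is used. An alternative, more pedestrian route avoids the bar construction and verifies directly that the stated maps $f^\tau_n$ satisfy the $\infty$-morphism relations for the twisted structures, feeding in the untwisted relations for $f$ together with the Maurer--Cartan equation for $\tau$; the argument above is essentially a bookkeeping-free repackaging of that verification, and I would present it in preference.
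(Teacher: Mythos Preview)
The paper does not supply its own proof of this proposition; it simply defers to Section~4 of \cite{Berg1}. Your argument via grouplike elements in the (counital, completed) bar construction is correct and is the standard way to package this computation. The only point worth flagging is the one you already identify: the paper explicitly works with \emph{conilpotent} (non-counital) cocommutative coalgebras, so $\exp(\tau)$ does not literally live in $B_\iota L$ as defined there. Passing to the counital completion is harmless --- multiplication by a grouplike element preserves the augmentation ideal, so $F^\tau$ restricts to a map of the reduced bar constructions --- but since the paper is explicit about this convention you should say a sentence about it in the body of the proof rather than leaving it to the ``obstacles'' paragraph.
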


\subsection{Convolution $s^{-1}L_{\infty}$-algebras}

In this section we recall Theorem 7.1 from \cite{Wie1} in which we define an $s^{-1}L_{\infty}$-algebra structure on the convolution algebra $Hom_{\K}(C,A)$ relative to an operadic twisting morphism.

\begin{theorem}\label{thrmLinftyconv}
 Let $\tau:\mathcal{C} \rightarrow \mathcal{P}$ be an operadic twisting morphism, let $C$ be a $\mathcal{C}$-coalgebra and let $A$ be a $\mathcal{P}$-algebra. There exists an $s^{-1}L_{\infty}$-algebra structure on the convolution algebra $Hom_{\K}(C,A)$, which is natural with respect to strict $\mathcal{C}$-coalgebra and strict $\mathcal{P}$-algebra morphisms. This $s^{-1}L_{\infty}$-structure has the following properties:
 \begin{itemize}
  \item The twisting morphisms relative to $\tau$ are the Maurer-Cartan elements in this $s^{-1}L_{\infty}$-algebra.
  \item Two algebra maps $f,g:\Omega_{\tau} C \rightarrow A$ are homotopic in the model category of $\mathcal{P}$-algebras if and only if the corresponding Maurer-Cartan elements $\tilde{f}$ and $\tilde{g}$ are gauge equivalent in the convolution $s^{-1}L_{\infty}$-algebra $Hom_{\K}(C,A)$.
 \end{itemize}
\end{theorem}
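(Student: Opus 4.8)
The plan is to construct the $s^{-1}L_\infty$-structure on $Hom_\K(C,A)$ by dualizing the standard convolution pre-Lie (or $L_\infty$) structure and then verifying the two stated properties. First I would recall that the operadic twisting morphism $\tau:\mathcal C\to\mathcal P$ induces, for any $\mathcal C$-coalgebra $C$ and $\mathcal P$-algebra $A$, a differential on the graded vector space $Hom_\K(C,A)$, together with higher operations. Concretely, the $n$-ary bracket $l_n$ should send $f_1,\dots,f_n\in Hom_\K(C,A)$ to the composite
\[
C \xrightarrow{\ \Delta^{(n)}\ } (\mathcal C(n)\otimes_{\Sigma_n} C^{\otimes n}) \xrightarrow{\ \tau\otimes f_1\otimes\cdots\otimes f_n\ } \mathcal P(n)\otimes_{\Sigma_n} A^{\otimes n} \xrightarrow{\ \gamma_A\ } A,
\]
suitably (anti)symmetrized and with the sign conventions forced by our shifted convention (all brackets of degree $-1$). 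The first task is to check that these operations satisfy the $s^{-1}L_\infty$-relations; here I would not grind through the signs but instead observe that this is exactly the convolution $L_\infty$-algebra associated to the operad $\mathcal P$, the cooperad $\mathcal C$ and the morphism $\tau$, whose $L_\infty$-relations follow formally from the cooperad coassociativity of $\Delta$, the operad associativity of $\gamma$, and the twisting-morphism equation $\partial\tau + \tau\star\tau = 0$ satisfied by $\tau$. Naturality in strict $\mathcal C$-coalgebra maps $C'\to C$ and strict $\mathcal P$-algebra maps $A\to A'$ is then immediate, since all of $\Delta$, $\gamma$, $\tau$ are unchanged and the functoriality is just precomposition and postcomposition.

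For the first bullet, I would unwind the definition of an operadic twisting morphism relative to $\tau$, i.e. of an element $\alpha\in Hom_\K(C,A)^0$ satisfying the Maurer-Cartan equation of the convolution algebra. The Maurer-Cartan equation in our $s^{-1}L_\infty$-algebra reads $\sum_{n\ge 1}\frac1{n!}\,l_n(\alpha,\dots,\alpha)=0$; expanding $l_n$ via the formula above and using that $\Delta$ is the cooperadic structure, this sum reassembles precisely into the condition that $\alpha$ is compatible with the differentials after the bar-cobar adjunction, i.e.\ that the corresponding map $\Omega_\tau C\to A$ is a morphism of $\mathcal P$-algebras, equivalently that $\alpha$ is a twisting morphism relative to $\tau$ in the sense of Chapter 11 of \cite{LV}. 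This is really a bookkeeping identification: the convolution $L_\infty$ Maurer-Cartan equation \emph{is} the twisting morphism equation, once one matches the combinatorics of the coproduct $\Delta$ with the iterated structure maps.

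For the second bullet, I would use the bar-cobar adjunction $Hom_{\mathcal P\text{-alg}}(\Omega_\tau C, A)\cong Hom_{\mathcal C\text{-coalg}}(C, B_\tau A)$ together with the fact that, for $\mathcal P$-algebras, homotopy in the model category can be detected by a path object, and the path object on $A$ can be taken to be $A\otimes\Omega_1$ (polynomial forms on $\Delta^1$), exactly as in Getzler's and Berglund's framework recalled above. Under the adjunction, a homotopy between $f,g:\Omega_\tau C\to A$ corresponds to a morphism $C\to B_\tau(A\otimes\Omega_1)$, and since $B_\tau(A\otimes\Omega_1)$ is built from $A\otimes\Omega_1$ while the convolution $L_\infty$-algebra $Hom_\K(C,A\otimes\Omega_1)$ agrees with $Hom_\K(C,A)\otimes\Omega_1$ by finiteness, this is exactly an element of $MC_1(Hom_\K(C,A))$ whose two faces are $\tilde f$ and $\tilde g$. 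So the correspondence between model-category homotopies and gauge equivalences of Maurer-Cartan elements is obtained by transporting the path object through the adjunction and through the $L_\infty$-nerve construction of \cite{Getz1},\cite{Berg1}. The main obstacle, and the place where real care is needed rather than formal manipulation, is the precise matching of signs and of the $\Sigma_n$-coinvariants in the shifted convention when one passes between the operadic bar construction and the $s^{-1}L_\infty$-nerve; one must check that the path object $A\otimes\Omega_1$ used on the algebra side is carried to the simplicial structure on $MC_\bullet$ used on the $L_\infty$ side, so that "homotopic" and "gauge equivalent" are literally the same relation and not merely equivalent ones. Everything else reduces to the formal properties of twisting morphisms and the bar-cobar adjunction already available in \cite{LV} and \cite{Wie1}.
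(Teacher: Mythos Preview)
The paper does not actually prove this theorem: it is stated as a recollection of Theorem~7.1 from \cite{Wie1}, and no argument is given here beyond the citation. So there is no in-paper proof to compare your proposal against.

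Your sketch follows the expected route---define $l_n$ via $\Delta_C$, $\tau$, and $\gamma_A$; identify the Maurer-Cartan equation with the twisting-morphism equation; and handle the second bullet by transporting the path object $A\otimes\Omega_1$ through the bar--cobar adjunction. That is essentially how the result is established in \cite{Wie1}, and your outline of the first bullet and of naturality is fine.

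There is, however, a genuine gap in your treatment of the second bullet. You write that $Hom_\K(C,A\otimes\Omega_1)$ agrees with $Hom_\K(C,A)\otimes\Omega_1$ ``by finiteness'', but no finiteness hypothesis on $C$ appears in the statement you are proving. Without it, that tensor--Hom interchange is false in general ($\Omega_1$ is infinite-dimensional), and your identification of a model-category homotopy with a $1$-simplex in $MC_\bullet(Hom_\K(C,A))$ breaks down exactly at that step. (Compare Lemma~\ref{lemsimplicialmodelcategories} later in this paper, where the analogous identification is made \emph{under} the explicit assumption that $C$ is finite-dimensional.) To repair this you would either need to impose a finiteness or profiniteness condition, or argue differently---e.g.\ work directly with the cylinder object on the coalgebra side, or show that gauge equivalence in $Hom_\K(C,A)$ can be characterized without passing through the tensor identification. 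As written, the second bullet is not established in the stated generality.
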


\begin{corollary}
 The set of homotopy classes of maps $[\Omega_{\tau}C,A]$ between $\Omega_{\tau}C$ and $A$, is in bijection with the moduli space of Maurer-Cartan elements $\mathcal{MC}(Hom_{\K}(C,A))$.
\end{corollary}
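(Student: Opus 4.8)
The plan is to deduce this corollary directly from Theorem \ref{thrmLinftyconv} by combining its two bullet points with the general correspondence between homotopy classes of maps and the moduli space of Maurer-Cartan elements. First I would recall that, by the second bullet point of Theorem \ref{thrmLinftyconv}, two algebra maps $f,g:\Omega_{\tau}C \rightarrow A$ are homotopic in the model category of $\mathcal{P}$-algebras precisely when their associated Maurer-Cartan elements $\tilde f,\tilde g \in Hom_{\K}(C,A)$ are gauge equivalent. Since $\mathcal{MC}(Hom_{\K}(C,A))$ is by definition the set $MC(Hom_{\K}(C,A))$ modulo gauge equivalence, this says exactly that the assignment $f \mapsto [\tilde f]$ descends to a well-defined \emph{injective} map from $[\Omega_{\tau}C,A]$ to $\mathcal{MC}(Hom_{\K}(C,A))$.

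Next I would establish surjectivity. By the first bullet point, every Maurer-Cartan element of $Hom_{\K}(C,A)$ is a twisting morphism relative to $\tau$, and by the universal property of the relative cobar construction (the adjunction between $\Omega_{\tau}(-)$ and $B_{\tau}(-)$ recalled in Chapter 11 of \cite{LV}) twisting morphisms $C \rightarrow A$ relative to $\tau$ are in bijection with $\mathcal{P}$-algebra maps $\Omega_{\tau}C \rightarrow A$. Hence every class in $\mathcal{MC}(Hom_{\K}(C,A))$ is represented by some $\tilde f$ with $f:\Omega_{\tau}C \rightarrow A$, which gives surjectivity of the induced map on equivalence classes. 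Combining with the previous paragraph yields the claimed bijection.

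One technical point I would be careful about is that the moduli space on the right is taken for a convolution $s^{-1}L_{\infty}$-algebra, so the notions of gauge equivalence and of $\mathcal{MC}(-)$ require the relevant degree-wise nilpotence hypotheses from Section \ref{seclinfty}; I would note that these are in force under the standing conventions, or cite the precise conditions under which Theorem \ref{thrmLinftyconv} was proved in \cite{Wie1}, so that the functor $MC_\bullet$ and the quotient $\mathcal{MC}$ are defined. Beyond that, the argument is essentially a formal bookkeeping of two bijections — maps to twisting morphisms, and twisting morphisms to Maurer-Cartan elements — together with the statement that these bijections are compatible with the two equivalence relations (homotopy of $\mathcal{P}$-algebra maps and gauge equivalence). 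I expect the only mild obstacle to be verifying this last compatibility cleanly, but that is precisely what the second bullet of Theorem \ref{thrmLinftyconv} supplies, so the proof is short.
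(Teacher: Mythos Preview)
Your argument is correct and is exactly the intended one: the paper states this as an immediate corollary of Theorem \ref{thrmLinftyconv} without giving a separate proof, and your derivation---using the bar-cobar adjunction to identify $\mathcal{P}$-algebra maps $\Omega_{\tau}C \to A$ with twisting morphisms (hence Maurer-Cartan elements by the first bullet), and then the second bullet to match homotopy with gauge equivalence---is precisely the unpacking one has in mind. Your remark about the nilpotence hypotheses needed for $\mathcal{MC}$ to be defined is a fair caveat, implicitly absorbed into the standing conventions under which Theorem \ref{thrmLinftyconv} is stated.
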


In the papers \cite{RN1} and \cite{RNW1} it is shown that  Theorem \ref{thrmLinftyconv} can be improved by showing that the $s^{-1}L_{\infty}$-convolution algebra is not only natural with respect to strict morphisms of $\mathcal{C}$-algebras and strict morphisms of $\mathcal{P}$-algebras, but also natural with respect to $\infty$-morphisms in one of the variables.  The following theorems are a summary of the main results of \cite{RNW1}.

\begin{definition}\label{definfinitymorphism1}
Let $\tau:\mathcal{C} \rightarrow  \mathcal{P}$ be a Koszul twisting morphism. Let $D$ be  a $\mathcal{C}$-coalgebra and let  $\phi:A \rightsquigarrow A'$ be an $\infty_{\tau}$-morphism  between $\mathcal{P}$-algebras $A$ and $A'$. Then we define an $\infty$-morphism $\phi_*:Hom_{\K}(D,A) \rightsquigarrow Hom_{\K}(D,A')$ of $s^{-1}L_{\infty}$-algebras by defining the arity $n$-part $(\phi_*)_n:Hom_{\K}(D,A)^{\otimes n}\rightarrow Hom_{\K}(D,A')$  of the induced $\infty$-morphism as: 
\begin{center}
	\begin{tikzpicture}
		\node (a) at (0,3){$D$};
		\node (b) at (2.5,3){$\mathcal{C}(D)$};
		\node (c) at (0,0){$A'$};
		\node (d) at (2.5,0){$\mathcal{C}(A)$,};
		\node (e) at (2.5,1.5){$\mathcal{C}(n)\otimes_{\Sigma_n}D^{\otimes n}$};
		
		\draw[->] (a) -- node[above]{$\Delta_D$} (b);
		\draw[->] (b) -- node[right]{$proj_n$} (e);
		\draw[->] (e) -- node[right]{$F$} (d);
		\draw[->] (d) -- node[above]{$\phi$} (c);
		\draw[dashed,->] (a) -- (c);
	\end{tikzpicture}
\end{center}
where $proj_n:\mathcal{C}(D)\rightarrow \mathcal{C}(n)\otimes_{\Sigma_n}D^{\otimes n}$ is the projection onto the arity $n$ part of $\mathcal{C}(D)$. The map $\Delta_D:D \rightarrow \mathcal{C}(D)$ is the decomposition map of $D$. Let $f_1\otimes ... \otimes f_n \in Hom_{\K}(D,A)^{\otimes n}$, then $F$ acts on $c \otimes x_1 \otimes... \otimes x_n \in \mathcal{C} \otimes D^{\otimes n}$ by 
$$F(c\otimes x_1 \otimes ... \otimes x_n)=\sum_{\sigma \in \Sigma_n}(-1)^{\theta} c \otimes  f_{\sigma(1)}(x_1) \otimes ... \otimes f_{\sigma(n)}(x_n)  .$$
The sign $\theta$ is given by
$$\theta = |p||F| + \sigma(F) + \sum_{i=1}^n|x_i|\left(\sum_{j=i+1}^n|f_{\sigma(j)}|\right) . $$
\end{definition}

Similarly we also define the induced $\infty$-morphism for $\infty_{\tau}$-morphisms of $\mathcal{C}$-coalgebras.

\begin{definition}\label{definfinitymorphism2}
Let $\tau:\mathcal{C}\rightarrow \mathcal{P}$ be a Koszul twisting morphism. Let $\psi:D'\rightsquigarrow D$ be an  $\infty_{\tau}$-morphism between $\mathcal{C}$-coalgebras $D '$ and $D$ and let $ A$ be a $\mathcal{P}$-algebra. Then we define an $\infty$-morphism $\psi^*:Hom_{\K}(D',A) \rightsquigarrow Hom_{\K}(D,A)$ of $s^{-1}L_{\infty}$-algebras by defining the arity $n$-part   $(\psi^*)_n:Hom_{\K}(D',A)^{\otimes n}\rightarrow Hom_{\K}(D,A)$  of the induced $\infty$-morphism as:  
\begin{center}
	\begin{tikzpicture}
		\node (a) at (0,3){$D'$};
		\node (b) at (2.5,3){$\mathcal{P}(D)$};
		\node (c) at (0,0){$A$};
		\node (d) at (2.5,0){$\mathcal{P}(A)$,};
		\node (e) at (2.5,1.5){$\mathcal{P}(n)\otimes_{\Sigma_n}D^{\otimes n}$};
		
		\draw[->] (a) -- node[above]{$\psi$} (b);
		\draw[->] (b) -- node[right]{$proj_n$} (e);
		\draw[->] (e) -- node[right]{$F$} (d);
		\draw[->] (d) -- node[above]{$\gamma_A$} (c);
		\draw[dashed,->] (a) -- (c);
	\end{tikzpicture}
\end{center}
where $proj_n:\mathcal{P}(D)\rightarrow \mathcal{P}(n)\otimes_{\Sigma_n}D^{\otimes n}$ is the projection onto the arity $n$ part of $\mathcal{P}(D)$, $\gamma_A:\mathcal{P}(A) \rightarrow A$ the composition map of $A$ and $F$ acts on $p\otimes x_1 \otimes ... \otimes x_n \in \mathcal{P} \otimes D^{\otimes n}$ by  
$$F(p\otimes x_1 \otimes ... \otimes x_n)=\sum_{\sigma \in S_n}(-1)^{\theta} p \otimes  f_{\sigma(1)}(x_1) \otimes ... \otimes f_{\sigma(n)}(x_n) .$$
The sign $\theta$ is given by
$$\theta = |p||F| + \sigma(F) + \sum_{i=1}^n|x_i|\left(\sum_{j=i+1}^n|f_{\sigma(j)}|\right) . $$
\end{definition}

\begin{theorem}[\cite{RNW1}, Corollary 5.4]\label{thrmbifunctorconvolutionalgebra}
Under the hypothesis of Definitions \ref{definfinitymorphism1} and \ref{definfinitymorphism2}, the bifunctor 
$$Hom(-,-): \mathcal{P}\mbox{-alg} \times (\mathcal{C}\mbox{-coalg})^{op}\rightarrow s^{-1}L_{\infty}\mbox{-alg}$$ 
extends to bifunctors 
$$Hom(-,-): \infty\mbox{-}\mathcal{P}\mbox{-alg} \times (\mathcal{C}\mbox{-coalg})^{op}\rightarrow s^{-1}L_{\infty}\mbox{-alg},$$
and 
$$Hom(-,-): \mathcal{P}\mbox{-alg} \times (\infty\mbox{-}\mathcal{C}\mbox{-coalg})^{op}\rightarrow s^{-1}L_{\infty}\mbox{-alg}.$$
So in particular the maps $\phi_*$ and $\psi^*$ from Definitions \ref{definfinitymorphism1} and \ref{definfinitymorphism2} are $\infty$-morphisms of $s^{-1}L_{\infty}$-algebras. 
\end{theorem}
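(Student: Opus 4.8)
The plan is to prove the theorem following \cite{RNW1} by reducing the statement to a direct verification of the defining relations of an $\infty$-morphism of $s^{-1}L_\infty$-algebras. Recall from Definition \ref{definfinitymorphismforlielagebras} that a sequence of maps $(\Phi_n)$ constitutes an $\infty$-morphism $\Phi:\mathfrak g\rightsquigarrow\mathfrak h$ of $s^{-1}L_\infty$-algebras precisely when the induced degree-$0$ map $\widehat\Phi:B_\iota\mathfrak g\to B_\iota\mathfrak h$ on the $\mathcal{COCOM}$-bar constructions is a morphism of dg coalgebras; by cofreeness this is equivalent to the usual family of quadratic relations expressing that ``$\Phi$ applied to a bracket equals a sum of brackets applied to tuples of $\Phi$'s.'' So the first task is to write out, for $\phi_*$ of Definition \ref{definfinitymorphism1} and $\psi^*$ of Definition \ref{definfinitymorphism2}, both sides of this relation.

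For this I would use the explicit description of the convolution $s^{-1}L_\infty$-structure from Theorem \ref{thrmLinftyconv} (Theorem 7.1 of \cite{Wie1}): the $n$-ary bracket on $Hom_\K(D,A)$ sends a tuple $f_1,\dots,f_n$ to the sum over $\sigma\in\Sigma_n$, with Koszul signs, of the composite of the $\mathcal{C}$-coproduct $\Delta_D:D\to\mathcal{C}(D)$, the projection $proj_n$ onto $\mathcal{C}(n)\otimes_{\Sigma_n}D^{\otimes n}$, the map $\tau\otimes(f_{\sigma(1)}\otimes\cdots\otimes f_{\sigma(n)})$ landing in $\mathcal{P}(n)\otimes A^{\otimes n}$, and the structure map $\gamma_A$. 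Thus every bracket is assembled from exactly three ingredients: the $\mathcal{C}$-coproduct of $D$, the twisting morphism $\tau$, and the $\mathcal{P}$-structure of $A$. On the other side, recall that an $\infty_\tau$-morphism $\phi:A\rightsquigarrow A'$ is by definition a morphism $B_\tau A\to B_\tau A'$ of dg $\mathcal{C}$-coalgebras, hence (by Theorem \ref{thrmrossetastone} and the codifferential description of $B_\tau A$) a collection of components $\phi_n:\mathcal{C}(n)\otimes_{\Sigma_n}A^{\otimes n}\to A'$ intertwining the codifferential of $B_\tau A$, assembled from $\Delta_{\mathcal C}$, $\tau$ and $\gamma_A$, with that of $B_\tau A'$.

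The key steps are then: (1) substitute the formula for $(\phi_*)_n$ from Definition \ref{definfinitymorphism1} into the $\infty$-morphism relations for $\phi_*$; both sides become sums over iterated applications of $\Delta_D$, insertions of $\tau$, applications of the $f_i$, applications of the components $\phi_m$, and a final $\gamma_{A'}$. (2) Use coassociativity of $\Delta_D$ — the $\mathcal{C}$-coalgebra axioms — together with the (co)operad compatibilities relating $\Delta_{\mathcal C}$ and $\tau$, to bring both sides to a common normal form in which the $\phi_m$'s occupy a fixed position. (3) At that point the identity ``$\phi$ of a bracket $=$ sum of brackets of $\phi$'s'' is exactly the intertwining relation making $\phi$ an $\infty_\tau$-morphism, so the two sides agree. (4) Check functoriality in each variable separately: $(\mathrm{id}_A)_*$ is the identity $\infty$-morphism since only its arity-$1$ component survives, and $(\psi\phi)_*=\psi_*\phi_*$ because the components of a composite $\infty_\tau$-morphism are given by cooperadic co-composition, which under the substitution corresponds to composition of the induced bar-coalgebra maps. (5) The statement for $\psi^*$ is formally dual: one replaces $\mathcal{C}$-coproducts by $\mathcal{P}$-compositions and $D$ by $A$, so that $\psi:D'\rightsquigarrow D$ enters via its components $\psi_n:D'\to\mathcal{P}(n)\otimes_{\Sigma_n}D^{\otimes n}$ intertwining the differentials of $\Omega_\tau D'$ and $\Omega_\tau D$, and the same substitution-and-normalization argument applies.

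The main obstacle is the sign bookkeeping rather than the structural argument. The decompositions on the two sides of the $\infty$-morphism relation involve permuting the maps $f_i$ past elements of $\mathcal C$ (resp.\ $\mathcal P$), past the components of $\phi$ (resp.\ $\psi$), and past one another, and one has to verify that the accumulated Koszul sign is exactly the sign $\theta$ recorded in Definitions \ref{definfinitymorphism1} and \ref{definfinitymorphism2}. I would manage this by carrying out all computations at the level of the operadic bar and cobar constructions and the convolution operad $Hom(\mathcal C,\mathcal P)$, where a single Koszul-sign convention governs everything and the required relations are precisely those already packaged in the twisting morphism $\tau$, instead of expanding componentwise term by term.
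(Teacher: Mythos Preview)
The paper does not give its own proof of this statement: it is stated as a citation of \cite{RNW1}, Corollary 5.4, and no argument is supplied. So there is nothing in the paper to compare your proposal against directly.

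That said, your outline is a faithful description of how the result is actually established in \cite{RNW1}: one unwinds the convolution $s^{-1}L_\infty$-structure in terms of $\Delta_D$, $\tau$, and $\gamma_A$, substitutes the formulas of Definitions \ref{definfinitymorphism1} and \ref{definfinitymorphism2}, and observes that the $\infty$-morphism relations for $\phi_*$ (resp.\ $\psi^*$) reduce, via coassociativity of the $\mathcal{C}$-coalgebra structure (resp.\ associativity of the $\mathcal{P}$-algebra structure), to the relations encoding that $\phi$ (resp.\ $\psi$) is an $\infty_\tau$-morphism. Your remark that the signs are the genuine labor and are best handled at the level of the convolution operad $Hom(\mathcal{C},\mathcal{P})$ and the bar/cobar constructions, rather than componentwise, is exactly the organizing principle used in \cite{RNW1}. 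The functoriality check in step (4) is routine once the $\infty$-morphism property is in hand. In short, your proposal is correct and matches the approach of the cited reference; the paper itself simply defers to that reference.
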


\begin{remark}
As is shown in the example of Section 6 of \cite{RNW1} it is not possible to extend the bifunctor 
$$Hom(-,-): \mathcal{P}\mbox{-alg} \times (\mathcal{C}\mbox{-coalg})^{op}\rightarrow s^{-1}L_{\infty}\mbox{-alg}$$ 
to a bifunctor
$$Hom(-,-): \infty\mbox{-}\mathcal{P}\mbox{-alg} \times (\infty\mbox{-}\mathcal{C}\mbox{-coalg})^{op}\rightarrow s^{-1}L_{\infty}\mbox{-alg} .$$ 
\end{remark}

In this paper the following consequence of Theorem \ref{thrmbifunctorconvolutionalgebra} will be very important.

\begin{theorem}\label{thrmmcthingyorsomething?}
Let $\alpha:C_{\infty} \rightarrow s^{-1}L_{\infty}$ be a Koszul twisting morphism between the $C_{\infty}$-cooperad and the $s^{-1}L_{\infty}$-operad.
 \begin{enumerate}
  \item Let $\phi:D' \rightsquigarrow D$ be an $\infty_{\alpha}$-quasi-isomorphism between finite dimensional simply-connected $C_{\infty}$-coalgebras $D$ and $D'$. Let $L$ be a simply-connected $s^{-1}L_{\infty}$-algebra of finite type. The induced morphism $\phi^*:Hom_{\K}(D,L) \rightsquigarrow Hom_{\K}(D',L)$ then induces a weak equivalence on Maurer-Cartan simplicial sets
  $$MC_{\bullet}(\phi^*):MC_{\bullet}(Hom_{\K}(D ,L)) \rightarrow MC_{\bullet}(Hom_{\K}(D',L)).$$
  \item Let $\psi:L \rightsquigarrow L'$ be an $\infty_{\alpha}$-morphism between $L$ and $L'$, two simply-connected $s^{-1}L_{\infty}$-algebras of finite type. Let $D$ be a finite dimensional simply-connected $C_{\infty}$-coalgebra.  The induced morphism $\psi:Hom_{\K}(D,L) \rightsquigarrow Hom_{\K}(D,L')$ induces a weak equivalence of Maurer-Cartan simplicial sets
  $$MC_{\bullet}(\psi_*):MC_{\bullet}(Hom_{\K}(D,L)) \rightarrow MC_{\bullet}(Hom_{\K}(D,L')).$$  
 \end{enumerate}

\end{theorem}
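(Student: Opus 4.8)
The plan is to reduce everything to Theorem~\ref{thrmbifunctorconvolutionalgebra} plus the standard fact that an $\infty$-quasi-isomorphism of degree-wise nilpotent $s^{-1}L_\infty$-algebras induces a weak equivalence on the Getzler--Berglund nerve $MC_\bullet(-)$. So the first step is to record precisely that fact: if $g:K\rightsquigarrow K'$ is an $\infty$-morphism of degree-wise nilpotent $s^{-1}L_\infty$-algebras whose linear component $g_1$ is a quasi-isomorphism, then $MC_\bullet(g):MC_\bullet(K)\to MC_\bullet(K')$ is a weak equivalence of simplicial sets. This is Theorem~4.6 (the Goldman--Millson / Berglund statement) in \cite{Berg1}; since the nerve is built levelwise from $MC(K\otimes\Omega_n)$ and tensoring with the (flat, finite-dimensional in each degree) CDGA $\Omega_n$ preserves $\infty$-quasi-isomorphisms and degree-wise nilpotence, one gets a levelwise acyclic-fibration-type statement and concludes. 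I would state this as a short lemma and cite \cite{Berg1}, \cite{Getz1}.

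Next I would check the hypotheses that make the two induced maps $\phi^*$ and $\psi_*$ honest $\infty$-quasi-isomorphisms. By Theorem~\ref{thrmbifunctorconvolutionalgebra}, $\phi^*:Hom_\K(D,L)\rightsquigarrow Hom_\K(D',L)$ and $\psi_*:Hom_\K(D,L)\rightsquigarrow Hom_\K(D,L')$ are $\infty$-morphisms of $s^{-1}L_\infty$-algebras. The content to verify is that their arity-$1$ components are quasi-isomorphisms. From Definitions~\ref{definfinitymorphism1} and~\ref{definfinitymorphism2}, the arity-$1$ component of $\phi^*$ is (up to the twisting morphism identifications) precomposition $Hom_\K(D,L)\to Hom_\K(D',L)$ with the linear part $\phi_1:D'\to D$, and the arity-$1$ component of $\psi_*$ is postcomposition with $\psi_1:L\to L'$ — here I should be a little careful, because the convolution differential on $Hom_\K(D,L)$ is twisted by $\alpha$, so I want to identify $Hom_\K(D,L)$ with a totalisation / the dual of $\Omega_\alpha D$ evaluated on $L$ and argue via the Künneth/universal-coefficients spectral sequence. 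Since $D,D'$ are finite-dimensional and simply connected and $L,L'$ are of finite type and simply connected, the relevant complexes are degree-wise finite-dimensional, so $H^*(Hom_\K(D,L))\cong Hom_\K(H^{C_\infty}_*(D),H_*(L))$-type identifications hold and $\phi_1$ being a quasi-isomorphism (it is, since $\phi$ is an $\infty_\alpha$-quasi-isomorphism and one of the conditions on such is exactly that $\phi_1$ is a quasi-isomorphism) forces the arity-$1$ component of $\phi^*$ to be a quasi-isomorphism; similarly for $\psi_1$ and $\psi_*$.

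Then I would verify the degree-wise nilpotence of all four convolution algebras, so that $MC_\bullet$ is actually defined on them and the lemma of the first paragraph applies. This is where the finiteness and connectivity hypotheses do real work: for $D$ simply connected and finite-dimensional and $L$ simply connected of finite type, $Hom_\K(D,L)$ is concentrated in each degree in a finite-dimensional space and its lower central series terminates in each fixed degree because the brackets raise a grading coming from the arity filtration on $C_\infty$ while the coalgebra $D$ is conilpotent of bounded weight — this is precisely the kind of estimate made in Section~2 of \cite{Berg1}. Having assembled these three ingredients, the proof is immediate: apply the lemma to $g=\phi^*$ for part~(1) and to $g=\psi_*$ for part~(2).

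The main obstacle I expect is the middle step — pinning down that the arity-$1$ components of $\phi^*$ and $\psi_*$ are quasi-isomorphisms in the \emph{twisted} convolution complex rather than the naive $Hom$ complex. One must choose the right filtration (by coradical degree of $D$, equivalently by arity) on $Hom_\K(D,L)$, observe that the twisting terms in the differential strictly increase this filtration so that the associated graded differential is the untwisted one, compare associated graded complexes via Künneth, and then run a spectral-sequence comparison argument; the finite-type hypotheses guarantee the spectral sequences are first-quadrant-like and converge. Everything else is bookkeeping with the explicit formulas of Definitions~\ref{definfinitymorphism1}--\ref{definfinitymorphism2} and an invocation of \cite{Berg1}.
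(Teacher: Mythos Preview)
Your approach is correct and in spirit close to the paper's, but the paper takes a much shorter route. Rather than separately verifying (i) that the arity-$1$ components of $\phi^*$ and $\psi_*$ are quasi-isomorphisms, (ii) that the convolution algebras are degree-wise nilpotent, and then (iii) invoking the Berglund--Getzler result from \cite{Berg1}, the paper simply filters $L$ and $L'$ by degree (the ideal $\mathcal{F}_nL$ generated by $L_{\geq n+1}$), observes that simple connectivity and finite type make this a complete filtration in the sense of Definition~8.1 of \cite{RNW1}, and then cites Proposition~8.9 of \cite{RNW1}, which packages the entire argument (filtered $\infty$-morphism $\Rightarrow$ weak equivalence on $MC_\bullet$) into one statement tailored to convolution algebras.

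What your approach buys is transparency: you make visible exactly which hypotheses are doing what (finite-dimensionality of $D$ for K\"unneth, simple connectivity for the nilpotence bound, etc.), and you only rely on \cite{Berg1} rather than the more specialised \cite{RNW1}. What the paper's approach buys is brevity and robustness: Proposition~8.9 of \cite{RNW1} already contains the spectral-sequence/filtration comparison you sketch in your ``main obstacle'' paragraph, so there is no need to redo it. In particular, your worry about the twisted differential on $Hom_\K(D,L)$ is exactly the point absorbed by the filtration hypothesis in \cite{RNW1}; the degree filtration on $L$ induces a filtration on the convolution algebra for which the twisting terms are of strictly higher filtration, and this is what makes the comparison go through without the explicit K\"unneth argument you propose.

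One small caution: the Berglund result you want to invoke is not quite ``$\infty$-quasi-isomorphism of degree-wise nilpotent $s^{-1}L_\infty$-algebras implies weak equivalence on $MC_\bullet$'' in full generality; one needs compatibility with filtrations (a \emph{filtered} $\infty$-quasi-isomorphism). Your nilpotence check via the coradical/arity filtration on $D$ does supply such a filtration, so the argument goes through, but you should state the lemma in that filtered form rather than the bare form in your first paragraph.
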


\begin{proof}
 To prove the theorem we first filter $L$ and $L'$ by the filtration induced by the degree filtrations on $L$ and $L'$. Since we assumed that $L$ and $L'$ are simply-connected and of finite type, the filtration given by, $\mathcal{F}_n L$ is the ideal generated by $L_{\geq n+1}$, satisfies the conditions of Definition 8.1 of \cite{RNW1}. Proposition 8.9 of \cite{RNW1}, therefore applies and proves the theorem.
\end{proof}

\subsection{Rational models for spaces}\label{secratmodels}

Using the functor $MC_{\bullet}$ we can define rational models for spaces.

\begin{definition}
 A degree-wise nilpotent $s^{-1}L_{\infty}$-algebra $L$ is a rational model for a simply-connected space $X$ of finite $\Q$-type, if there exists a zig-zag of rational homotopy equivalences between $MC_{\bullet}(L)$ and $X$. 
\end{definition}

\begin{definition}\label{deflocallyfinite}
A degree-wise nilpotent $s^{-1}L_{\infty}$-model $L$ is called locally finite if every filtration quotient $L / \Gamma_nL$ is finite dimensional. Where $\Gamma_nL$ is the lower central series of $L$ (see Definition \ref{deflowercentralseries}). 
\end{definition}

There are also $C_{\infty}$-coalgebra models for spaces, where $C_{\infty}$ is the cooperad defined by $C_{\infty}=B_{op} \mathcal{s^{-1}\mathcal{LIE}}$. To define these we will first recall one of  the main results of \cite{Quil1}.

\begin{theorem}\label{thrmquillen}
 There exists a functor $\mathcal{C} \lambda: Top_{*,1} \rightarrow CDGC_{\geq 2}$, from the category of simply-connected based topological spaces  with rational equivalences to the category of  simply-connected cocommutative coalgebras, such that $\mathcal{C} \lambda$ induces an equivalence between the homotopy categories.
\end{theorem}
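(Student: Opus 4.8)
The plan is to exhibit $\mathcal{C}\lambda$ as a composite of functors, each of which induces an equivalence of homotopy categories after localizing at the rational equivalences, following Quillen's original argument in \cite{Quil1}; I will only indicate the chain and where the hypotheses enter.

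\textbf{Step 1 (from spaces to simplicial objects).} First I would pass from $Top_{*,1}$ to the category of reduced simplicial sets via the singular complex functor, using the Eilenberg subcomplex to arrange reducedness; with the Kan--Quillen model structures this is a Quillen equivalence. Next one applies Kan's loop group functor $G$ to land in reduced simplicial groups, again a Quillen equivalence.

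\textbf{Step 2 (rationalization and linearization).} Then I would rationalize: applying the Malcev (lower central series) construction degreewise to $\Q \otimes G(X)$ produces a reduced simplicial Lie algebra over $\Q$, and this realizes $\Q$-localization as a Quillen equivalence onto simplicial Lie algebras over $\Q$ with the transferred model structure. A Dold--Kan type normalization (Quillen's functor $\lambda$) then carries reduced simplicial Lie algebras to connected differential graded Lie algebras $DGL_{\geq 1}$, and is a Quillen equivalence. Finally the Chevalley--Eilenberg chain-coalgebra functor $\mathcal{C}\colon L \mapsto \bigl(S^c(sL),\, d_{CE}\bigr)$ sends $DGL_{\geq 1}$ to $CDGC_{\geq 2}$; Quillen showed this is a Quillen equivalence, with homotopy inverse the primitives/cobar functor, and the assumption that $X$ is simply connected (hence the Lie model is $\geq 1$-connected and the coalgebra is $2$-reduced) is exactly what makes the derived unit and counit quasi-isomorphisms. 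Defining $\mathcal{C}\lambda$ as the composite of all these functors and passing to homotopy categories yields the asserted equivalence.

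The main obstacle is \emph{not} writing down the functors but verifying the Quillen equivalences at each stage: one must equip the intermediate categories --- reduced simplicial groups, simplicial Lie algebras over $\Q$, $DGL_{\geq 1}$, and especially $CDGC_{\geq 2}$ (where the cofibrations are the injections and weak equivalences are detected after applying the cobar construction) --- with compatible model structures and check that the derived unit and counit are isomorphisms. The two delicate points are the rationalization step, which requires that $\Q$-localization is homotopically well behaved on simply-connected spaces, and the Chevalley--Eilenberg step, where $2$-reducedness is essential for the comparison map $L \to \mathrm{Prim}\,\mathcal{C}(L)$ to be a quasi-isomorphism.
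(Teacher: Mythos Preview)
Your outline is a reasonable sketch of Quillen's original argument in \cite{Quil1}, and the chain of Quillen equivalences you describe (singular/Eilenberg subcomplex, Kan loop group, Malcev completion, normalized chains to $DGL_{\geq 1}$, Chevalley--Eilenberg to $CDGC_{\geq 2}$) is the standard route. Note, however, that the paper does not give its own proof of this statement: Theorem~\ref{thrmquillen} is stated only as a recall of one of the main results of \cite{Quil1}, with no argument supplied, so there is nothing in the paper to compare your proposal against. Your sketch is appropriate in spirit, though of course the actual verification of each Quillen equivalence (especially the Malcev/rationalization step and the model structure on $CDGC_{\geq 2}$) is substantial and would need full references to Quillen's paper to be complete.
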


\begin{definition}
 A $C_{\infty}$-coalgebra $C$ is a $C_{\infty}$-model for a simply-connected space $X$, if $C$ can be connected by a zig-zag of  quasi-isomorphisms to $\mathcal{C} \lambda(X)$.
\end{definition}

\begin{definition}
A $C_{\infty}$-model for a simply-connected manifold $X$ of finite $\Q$-type is a $C_{\infty}$-algebra $A$, such that $A$ can be connected to $\Omega^{\bullet}(M)$ by a zig-zag of quasi-isomorphisms.
\end{definition}

\begin{remark}
It is possible to replace the de Rham complex $\Omega^{\bullet}$ by the complex of polynomial de Rham forms to generalize this definition to all simply-connected spaces of finite $\Q$-type. See Chapter 10 of \cite{FHT} for more details. 
\end{remark}

\subsection{The relation between homology and the homotopy groups}

The relation between $C_{\infty}$ and $s^{-1}L_{\infty}$-models is given by the bar and cobar construction. In this section we will briefly describe how this relation works. The results of this section are generalizations of the ideas of  Chapter 22 of \cite{FHT}.

\begin{theorem}\label{thrmquillenscanlfunctors}
Let $\tau:C_{\infty} \rightarrow s^{-1}L_{\infty}$ be a Koszul twisting morphism and $C$ a simply-connected $C_{\infty}$-coalgebra model for a simply-connected space $X$. Then $\Omega_{\tau}C$ is a degree-wise nilpotent $s^{-1}L_{\infty}$-model for $X$ and in particular $H_*(\Omega_{\tau}C)$ is isomorphic as graded vector spaces to $\pi_*(X)$. 

Dually, let $L$ be a simply-connected degree-wise nilpotent $s^{-1}L_{\infty}$-model of finite type for $X$, then $B_{\tau}L$ is a $C_{\infty}$-coalgebra model for $X$.
\end{theorem}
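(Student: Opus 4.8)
The plan is to reduce everything to Quillen's classical theorem (Theorem \ref{thrmquillen}, \cite{Quil1}) by transporting along quasi-isomorphisms, and to read off the homotopy groups from Quillen's Lie algebra model.

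\emph{Degree-wise nilpotence of $\Omega_\tau C$.} Since $C$ is simply-connected, its coaugmentation coideal $\bar C$ is concentrated in degrees $\geq 2$, and in the shifted conventions of this paper (where the degree shift is built into $s^{-1}L_\infty=\Omega_{op}\mathcal{COCOM}$) the underlying graded $s^{-1}L_\infty$-algebra of $\Omega_\tau C$ is freely generated by $\bar C$. A bracket expression built from $m$ generators uses bracket operations whose arities $k_i$ satisfy $\sum_i(k_i-1)=m-1$, hence $m-1$ shifts of degree $-1$ in total, so since each generator lies in degree $\geq 2$ the expression has degree $\geq 2m-(m-1)=m+1$. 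Therefore $(\Gamma^n\Omega_\tau C)_d=0$ whenever $d\leq n$, and $\Omega_\tau C$ is degree-wise nilpotent.

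\emph{Reduction.} By definition $C$ is connected to $\mathcal{C}\lambda(X)$ by a zig-zag of quasi-isomorphisms of $C_\infty$-coalgebras. Because all objects here are simply-connected, the cobar functor $\Omega_\tau$ preserves such quasi-isomorphisms, and for a Koszul twisting morphism the unit $C\to B_\tau\Omega_\tau C$ and counit $\Omega_\tau B_\tau L\to L$ are quasi-isomorphisms (\cite{LV}, Ch. 11); moreover any two Koszul twisting morphisms $C_\infty\to s^{-1}L_\infty$ produce quasi-isomorphic cobar constructions, since each corresponds to a quasi-isomorphism of operads $\Omega_{op}C_\infty\to s^{-1}L_\infty$ under the universal twisting morphism $\iota\colon C_\infty\to\Omega_{op}C_\infty$. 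Combined with the invariance of $MC_\bullet$ under strict and $\infty$-quasi-isomorphisms of degree-wise nilpotent $s^{-1}L_\infty$-algebras -- a consequence of \cite{Berg1} and of Theorem \ref{thrmmcthingyorsomething?} -- it suffices to prove the first assertion for $C=\mathcal{C}\lambda(X)$, a strict cocommutative coalgebra, and for $\tau$ the twisting morphism coming from Koszul duality of $\mathcal{COM}$.

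\emph{The classical case and the dual statement.} For this choice $\Omega_\tau\mathcal{C}\lambda(X)$ is, up to the degree conventions, the classical cobar construction on the cocommutative coalgebra $\mathcal{C}\lambda(X)$, which by Quillen's adjunction between $\lambda$ and $\mathcal{C}$ is quasi-isomorphic to Quillen's DG Lie algebra $\lambda(X)$. By Theorem \ref{thrmquillen} and \cite{Quil1} the latter is a Lie model for $X$ with $H_*(\lambda(X))\cong\pi_*(X)$ as graded vector spaces in the shifted grading, and $MC_\bullet$ of a degree-wise nilpotent Lie model recovers the $\R$-localization of $X$ (\cite{Getz1}, \cite{Berg1}). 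Hence $MC_\bullet(\Omega_\tau C)\simeq X$ and $H_*(\Omega_\tau C)\cong\pi_*(X)$, proving the first statement. For the dual statement, let $L$ be a simply-connected degree-wise nilpotent $s^{-1}L_\infty$-model of finite type for $X$; then $MC_\bullet(L)\simeq X\simeq MC_\bullet(\lambda(X))$, so $L$ and $\lambda(X)$ are connected by a zig-zag of strict or $\infty$-quasi-isomorphisms of $s^{-1}L_\infty$-algebras, and applying $B_\tau$ gives that $B_\tau L$ is quasi-isomorphic to $B_\tau\lambda(X)$, which for our $\tau$ is the Chevalley--Eilenberg coalgebra, i.e. Quillen's $\mathcal{C}\lambda(X)$; a degree count using that $L$ is simply-connected shows $B_\tau L$ is simply-connected, so it is a $C_\infty$-coalgebra model for $X$.

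\emph{Main obstacle.} The real work is the bookkeeping in the reduction step: matching the operadic functors $\Omega_\tau$, $B_\tau$ and their shift conventions to Quillen's $\lambda$, $\mathcal{C}$, and checking that being a ``model'' in the sense of $MC_\bullet$ agrees with Quillen's notion -- that is, that the homotopy categories of simply-connected $C_\infty$-coalgebras, of degree-wise nilpotent $s^{-1}L_\infty$-algebras, and of simply-connected rational spaces of finite type are identified compatibly, so that the conclusion genuinely does not depend on the chosen representative $C$ nor on the Koszul twisting morphism $\tau$. Once this dictionary is in place, the identification $H_*(\Omega_\tau C)\cong\pi_*(X)$ is immediate from Quillen.
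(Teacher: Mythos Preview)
The paper does not actually prove Theorem \ref{thrmquillenscanlfunctors}; it is stated without proof, with only the remark that ``the results of this section are generalizations of the ideas of Chapter 22 of \cite{FHT}.'' So there is no argument in the paper to compare against, and your proposal is filling in a genuine omission.

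Your outline is sound and is essentially how one would reconstruct the proof from the literature: reduce along the zig-zag of quasi-isomorphisms defining ``$C_\infty$-model'', use that $\Omega_\tau$ and $B_\tau$ preserve quasi-isomorphisms for a Koszul twisting morphism (\cite{LV}, Ch.~11), and then invoke Quillen's equivalence together with the fact that $MC_\bullet$ of a degree-wise nilpotent Lie model realizes the rationalization (\cite{Getz1}, \cite{Berg1}). The ``main obstacle'' paragraph is honest about where the real bookkeeping lies.

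One small inaccuracy in the degree-wise nilpotence argument: you write that a bracket expression in $m$ generators involves operations whose arities satisfy $\sum_i(k_i-1)=m-1$, ``hence $m-1$ shifts of degree $-1$ in total.'' But the number of degree $-1$ shifts is the number $r$ of brackets applied, and $\sum_i(k_i-1)=m-1$ only gives $r\leq m-1$ (with equality precisely when all brackets are binary). The correct statement is that the degree is $\geq 2m-r\geq 2m-(m-1)=m+1$, so your conclusion $(\Gamma^n\Omega_\tau C)_d=0$ for $d\leq n$ still holds. You should also remark why $\Gamma^m$ of a free $s^{-1}L_\infty$-algebra coincides with the weight $\geq m$ part, since Definition \ref{deflowercentralseries} speaks of bracket expressions in elements of $L$, not generators; this is an easy induction but worth a sentence.
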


The following theorem states the dual result of Theorem \ref{thrmquillenscanlfunctors}. The following theorem is a variation of Theorem 2.10 of \cite{SW2}.

\begin{theorem}
Let $\kappa:s^{-1}L_{\infty}^{\vee} \rightarrow C_{\infty}$ be a Koszul twisting morphism and $A$ a $C_{\infty}$-algebra model for a simply-connected space $X$ of finite $\Q$-type. Then there is an isomorphism of graded vector spaces between $H^*(B_{\tau}A)$ and $\pi^*(X)=Hom_{\Z}(\pi_*(X),\R)$.
\end{theorem}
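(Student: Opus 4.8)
The plan is to deduce this statement from the homological result in Theorem~\ref{thrmquillenscanlfunctors} by linear duality, using the finite $\Q$-type hypothesis throughout; here the bar construction in question is the one relative to $\kappa$, which I will write $B_{\kappa}A$. First I would reduce to a convenient model. Since $X$ is simply-connected of finite $\Q$-type, I may replace $A$ by a $C_{\infty}$-algebra model $A'$ that is degree-wise finite-dimensional and $1$-connected — for instance $H^{*}(X)$ equipped with a transferred $C_{\infty}$-structure, or a minimal model — connected to $A$ by a zig-zag of $C_{\infty}$-quasi-isomorphisms. One then checks that $B_{\kappa}(-)$ sends such quasi-isomorphisms to quasi-isomorphisms of $s^{-1}L_{\infty}^{\vee}$-coalgebras; this is the standard comparison theorem for the bar construction relative to a Koszul twisting morphism, applicable here because the coalgebras in sight are conilpotent and the gradings are bounded below. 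Hence $H^{*}(B_{\kappa}A)\cong H^{*}(B_{\kappa}A')$ and I may assume from the start that $A$ is of finite type.

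Next I would pass to the dual coalgebra. Because $A$ is now degree-wise finite-dimensional, its linear dual $C:=A^{\vee}$ is a simply-connected $C_{\infty}$-coalgebra, and it is a $C_{\infty}$-model for $X$: a zig-zag of $C_{\infty}$-quasi-isomorphisms realizing $A$ as a model for $X$ dualizes, together with the quasi-isomorphisms relating the de Rham and Sullivan pictures, to one connecting $A^{\vee}$ to $\mathcal{C}\lambda(X)$. By Theorem~\ref{thrmquillenscanlfunctors}, the cobar construction $\Omega_{\tau}C$ is then a degree-wise nilpotent $s^{-1}L_{\infty}$-model for $X$ of finite type, and there is an isomorphism of graded vector spaces $H_{*}(\Omega_{\tau}C)\cong \pi_{*}(X)$.

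The heart of the argument is the identification $B_{\kappa}A \cong (\Omega_{\tau}C)^{\vee}=(\Omega_{\tau}(A^{\vee}))^{\vee}$ as cochain complexes. Both sides are supported, up to linear duality, on the same underlying graded vector space: $\Omega_{\tau}(A^{\vee})$ has underlying complex $s^{-1}L_{\infty}\circ A^{\vee}$ with differential twisted by $\tau$ and the $C_{\infty}$-coalgebra structure of $A^{\vee}$, while $B_{\kappa}A$ has underlying complex $s^{-1}L_{\infty}^{\vee}\circ A$ with differential twisted by $\kappa$ and the $C_{\infty}$-algebra structure of $A$. Since $A$ is of finite type and $s^{-1}L_{\infty}^{\vee}$ is by definition the arity-wise linear dual of $s^{-1}L_{\infty}$, these complexes are linear duals of one another precisely when the twisting morphisms $\kappa$ and $\tau$ are dual — which is the content of $\kappa$ being the Koszul twisting morphism $s^{-1}L_{\infty}^{\vee}\to C_{\infty}$ dual to $\tau: C_{\infty}\to s^{-1}L_{\infty}$. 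Granting this, $H^{*}(B_{\kappa}A)\cong H^{*}((\Omega_{\tau}C)^{\vee})\cong H_{*}(\Omega_{\tau}C)^{\vee}\cong \pi_{*}(X)^{\vee}=\pi^{*}(X)$, the middle isomorphism again using finite type so that the cohomology of the dual complex is the dual of the homology. This is the precise analog of Theorem~2.10 of \cite{SW2}, whose proof can be adapted essentially verbatim.

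The main obstacle I anticipate is bookkeeping rather than conceptual: making the identification $B_{\kappa}A\cong(\Omega_{\tau}(A^{\vee}))^{\vee}$ precise as an isomorphism of complexes, i.e. checking that the signs, shifts and the twisted differentials match up under linear duality, and that the chosen Koszul twisting morphisms are genuinely dual in the required sense. A secondary point needing care is the comparison theorem for $B_{\kappa}$ invoked in the first reduction, where one must verify that the connectivity and conilpotence hypotheses under which $B_{\kappa}$ preserves quasi-isomorphisms do apply to the $C_{\infty}$-algebra models of a simply-connected space of finite $\Q$-type.
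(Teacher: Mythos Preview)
The paper does not give its own proof of this theorem; it merely introduces it as ``a variation of Theorem~2.10 of \cite{SW2}'' and moves on. Your dualization argument --- replace $A$ by a finite-type model, identify $B_{\kappa}A$ with the linear dual of $\Omega_{\tau}(A^{\vee})$, and then invoke Theorem~\ref{thrmquillenscanlfunctors} --- is correct and is precisely the natural proof one would extract from the Sinha--Walter setting, as you yourself note in your final paragraph. The cautions you flag (matching signs and shifts under duality, verifying that $\kappa$ and $\tau$ are dual, and checking the connectivity hypotheses needed for $B_{\kappa}$ to preserve quasi-isomorphisms) are the right things to be careful about, but none of them is a genuine obstruction.
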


\begin{convention}\label{convcohomotopygroups}
From now on we will by abuse of notation sometimes denote the homology of the cobar construction on a coalgebra $C$ by $\pi_*(C)$, i.e. $\pi_*(C)=H_*(\Omega_{\tau}C)$. Similarly we will denote the cohomology of the bar construction of a $C_{\infty}$-algebra $A$ by $\pi^*(A)$, i.e. $\pi^*(A)=H^*(B_{\tau}A)$.
\end{convention}

\subsection{The model categories $\mathcal{C}$-coalgebras and  $\mathcal{P}$-algebras}

In this subsection we recall the model structures on the categories of $\mathcal{P}$-algebras and $\mathcal{C}$-coalgebras. These model structures are important in the rest of this paper because they give us a good framework for doing homotopy theory. In this section we will assume that $\mathcal{P}$ is an operad, $\mathcal{C}$ a cooperad and $\tau:\mathcal{C} \rightarrow \mathcal{P}$ a Koszul operadic twisting morphism. 

\begin{theorem}[\cite{Hin1} Theorem 4.1.1]
 The category of $\mathcal{P}$-algebras has a model structure in which the weak equivalences are given by quasi-isomorphisms, the fibrations are given by surjective maps and the cofibrations are the maps with the left lifting property with respect to acyclic fibrations.
\end{theorem}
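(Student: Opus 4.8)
# Proof Proposal for Hinich's Model Structure Theorem

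The plan is to invoke the transfer principle for model structures along an adjunction, following Hinich's original argument (\cite{Hin1} Theorem 4.1.1), rather than constructing the factorizations by hand. First I would observe that there is a free-forgetful adjunction $\mathcal{P}(-):\mathrm{Ch}_{\K} \rightleftarrows \mathcal{P}\text{-alg}:U$ between the category of chain complexes over $\K$ (which carries its standard projective model structure, since $\K$ has characteristic $0$) and the category of $\mathcal{P}$-algebras. The strategy is to define a map of $\mathcal{P}$-algebras to be a weak equivalence (resp. fibration) precisely when its underlying map of chain complexes is a quasi-isomorphism (resp. surjection), and then to check that these classes, together with the cofibrations defined by the left lifting property against acyclic fibrations, satisfy Quillen's axioms. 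The key input is the transfer lemma: it suffices to verify that $\mathcal{P}\text{-alg}$ is bicomplete, that the forgetful functor $U$ preserves filtered colimits, and — the crucial condition — that every map obtained by pushing out a generating acyclic cofibration of $\mathrm{Ch}_{\K}$ along an arbitrary map of $\mathcal{P}$-algebras is a weak equivalence.

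The steps, in order, would be: (1) establish bicompleteness of $\mathcal{P}\text{-alg}$, with colimits computed via the reflexive-coequalizer/free-algebra construction and limits created by $U$; (2) identify the generating cofibrations and generating acyclic cofibrations as the images under $\mathcal{P}(-)$ of the standard generators $S^{n-1}\hookrightarrow D^n$ and $0 \hookrightarrow D^n$ in $\mathrm{Ch}_{\K}$; (3) verify the smallness/permutation hypotheses needed for the small object argument, using that $U$ commutes with filtered colimits because $\mathcal{P}$ is an operad (so $\mathcal{P}(V)$ is built from finite tensor powers of $V$); (4) prove the acyclicity condition, i.e. that a pushout of a free map on an acyclic complex stays a quasi-isomorphism — here one filters the pushout by "number of operations applied" and analyzes the associated graded, using that over a field of characteristic $0$ the functors $V \mapsto \mathcal{P}(n)\otimes_{\Sigma_n} V^{\otimes n}$ are exact (this is exactly where characteristic $0$ and the semisimplicity of $\K[\Sigma_n]$ enter); (5) conclude via the transfer theorem that the three classes define a cofibrantly generated model structure, and finally observe that the cofibrations are then automatically characterized by the left lifting property against acyclic fibrations, as asserted.

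The main obstacle is step (4), the acyclicity condition. Concretely, given a $\mathcal{P}$-algebra $A$ and a generating acyclic cofibration $\mathcal{P}(0) \to \mathcal{P}(D^n)$, one must show the map $A \to A \amalg_{\mathcal{P}(0)} \mathcal{P}(D^n)$ from $A$ into the coproduct $A \amalg \mathcal{P}(D^n)$ is a quasi-isomorphism. The pushout admits a natural increasing filtration whose $k$-th stage adds all ways of grafting $k$ of the new generators onto elements of $A$; the $k$-th subquotient is, up to suspension and symmetrization, of the form $\big(\mathcal{P}(A \oplus D^n)\big)$-component involving $(D^n)^{\otimes k}$, and since $D^n$ is contractible and $\K[\Sigma_k]$ is semisimple, each subquotient for $k \geq 1$ is acyclic. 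A spectral sequence or direct telescoping argument then shows $A \to A\amalg_{\mathcal{P}(0)}\mathcal{P}(D^n)$ induces an isomorphism on homology. I would also need to handle the generating acyclic cofibrations of the form $S^{n-1} \to D^n$ (for non-trivial $S^{n-1}$) by a similar but slightly more involved filtration argument. Everything else — two-out-of-three, retract closure, the lifting axioms — is then formal from the transfer theorem, and I would simply cite \cite{Hin1} for the full details while sketching this outline.
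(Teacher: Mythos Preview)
The paper gives no proof of this statement; it is simply quoted as Theorem 4.1.1 of \cite{Hin1} and used as a black box. Your proposal is therefore not comparable to the paper's own argument, but it is a faithful outline of Hinich's original proof via transfer along the free--forgetful adjunction, and the key step you identify (acyclicity of pushouts along free maps on contractible complexes, using semisimplicity of $\K[\Sigma_n]$ in characteristic $0$) is exactly the substantive point.

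One small slip: in your final paragraph you refer to ``the generating acyclic cofibrations of the form $S^{n-1}\to D^n$''. Those are the generating \emph{cofibrations}; the generating acyclic cofibrations in the projective model structure on $\mathrm{Ch}_{\K}$ are the maps $0\to D^n$, which you already handled. No further case is needed for the acyclicity condition in the transfer lemma, so the ``slightly more involved filtration argument'' you anticipate is unnecessary.
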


\begin{theorem}[\cite{Val1} Theorem 2.1]
The category of $\mathcal{C}$-coalgebras has a model structure in which the weak equivalences are the maps $f:X \rightarrow Y$, such that $\Omega_{\tau}(f):\Omega_{\tau} X \rightarrow  \Omega_{\tau}Y$ is a weak equivalence of $\mathcal{P}$-algebras.  The cofibrations are the degree-wise monomorphisms and the fibrations are the maps with the right lifting property with respect to acyclic cofibrations.
\end{theorem}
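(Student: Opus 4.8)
The plan is to obtain the model structure on conilpotent $\mathcal{C}$-coalgebras by transferring Hinich's model structure on $\mathcal{P}$-algebras (\cite{Hin1}) across the bar--cobar adjunction $\Omega_{\tau}:\mathcal{C}\mbox{-coalg}\rightleftarrows\mathcal{P}\mbox{-alg}:B_{\tau}$ associated with the Koszul operadic twisting morphism $\tau$. Here the weak equivalences are exactly those created by the \emph{left} adjoint $\Omega_{\tau}$, so this is a transfer along the left adjoint and does not follow from the usual (right-)transfer theorem; the model axioms must be verified by hand. First I would record that the category of conilpotent $\mathcal{C}$-coalgebras is bicomplete (limits computed on underlying graded vector spaces, colimits existing by a comonadicity argument over graded vector spaces), and then dispatch the easy axioms: two-out-of-three for weak equivalences is inherited from quasi-isomorphisms through the functor $\Omega_{\tau}$; closure under retracts holds separately for monomorphisms, for $\Omega_{\tau}$-quasi-isomorphisms, and for any class defined by a lifting property; and the lifting axiom of acyclic cofibrations against fibrations holds by the very definition of ``fibration''.

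The heart of the argument is the remaining lifting axiom and the two factorizations, and for these I would use the following consequences of the Koszulness of $\tau$: the counit $\varepsilon_{A}\colon\Omega_{\tau}B_{\tau}A\to A$ is a degreewise surjective quasi-isomorphism for every $\mathcal{P}$-algebra $A$; dually, by the triangle identities the unit $\eta_{C}\colon C\to B_{\tau}\Omega_{\tau}C$ is a monomorphism that becomes a quasi-isomorphism after applying $\Omega_{\tau}$, hence is a weak equivalence; moreover $\Omega_{\tau}$ carries a monomorphism of coalgebras to a cofibration of $\mathcal{P}$-algebras (it is free on a degreewise split injection of complexes), and, dually, $B_{\tau}$ carries a surjection of $\mathcal{P}$-algebras to a fibration of coalgebras (by the adjunction and the previous point, a square against an acyclic cofibration of coalgebras transposes to a lifting problem of an acyclic cofibration of $\mathcal{P}$-algebras against a surjection). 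To factor a map $f\colon C\to D$, I would apply $\Omega_{\tau}$, factor $\Omega_{\tau}f$ in $\mathcal{P}\mbox{-alg}$ via \cite{Hin1}, apply $B_{\tau}$, splice in the units $\eta_{C}$ and $\eta_{D}$, and then pull back along $\eta_{D}$ to recover a factorization of $f$ itself: the fibration comes from stability of fibrations under pullback, and the (acyclic) cofibration half is assembled from $\eta_{C}$ and the cofibration produced downstairs. The lifting axiom of cofibrations against acyclic fibrations then falls out by the standard retract trick once the factorization into a cofibration followed by an acyclic fibration is in place.

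I expect the main obstacle to be precisely the left-induced nature of the structure. Because fibrations are only available through a lifting property, the factorization axioms cannot be verified by a direct formula, and everything has to be routed through the functorial replacement $C\mapsto B_{\tau}\Omega_{\tau}C$; the genuinely delicate step is to check that the maps produced as ``acyclic cofibrations'' by the construction above are honest weak equivalences, i.e.\ become quasi-isomorphisms after $\Omega_{\tau}$. Since $\Omega_{\tau}$ is a left adjoint it does not preserve the pullbacks appearing in the factorization, so this comparison must be made by hand, and it is here that conilpotence of the coalgebras (and, in the finite-type statements used later in the paper, the finiteness hypotheses) is essential.
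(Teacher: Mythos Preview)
The paper does not give its own proof of this statement: it is simply quoted as \cite{Val1}, Theorem~2.1, and no argument is supplied. So there is nothing in the paper to compare your proposal against.

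That said, your sketch is essentially the strategy Vallette carries out in \cite{Val1}: one left-induces the model structure from Hinich's model structure on $\mathcal{P}$-algebras through the bar--cobar adjunction $\Omega_{\tau}\dashv B_{\tau}$, using Koszulness of $\tau$ to ensure the unit and counit are weak equivalences, and constructing the factorizations by passing to $\mathcal{P}$-algebras, factoring there, and coming back via $B_{\tau}$. You have correctly identified the genuinely delicate point, namely that this is a left transfer rather than a right transfer, so the standard transfer theorems do not apply and the factorization and lifting axioms must be checked directly; the control one needs over $\Omega_{\tau}$ applied to the pieces of the factorization is exactly where conilpotence and the filtration arguments in \cite{Val1} enter. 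Your outline is correct in spirit, though in an actual write-up the verification that the constructed ``acyclic cofibrations'' really are $\Omega_{\tau}$-quasi-isomorphisms requires the filtered quasi-isomorphism machinery that Vallette develops, not merely the observation that $\Omega_{\tau}$ fails to preserve pullbacks.
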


From now on we will always assume that we are working in one of these model categories.

\subsection{Rational models for mapping spaces}

In this section we  recall how to construct rational models for mapping spaces using Theorem \ref{thrmLinftyconv}. To construct this model we let $\tau:C_{\infty} \rightarrow s^{-1}L_{\infty}$ be a Koszul operadic twisting morphism from the $C_{\infty}$-cooperad to the $s^{-1}L_{\infty}$-operad (see \cite{Wie1} Section 11 for a construction of such a twisting morphism). 

Recall that an $s^{-1}L_{\infty}$-algebra is called locally finite if every filtration quotient $L / \Gamma_nL$ of the lower central series is finite dimensional, (see Definition \ref{deflocallyfinite}). 

\begin{theorem}[\cite{RNW1}, Corollary 9.20]
 Let $C$ be a $C_{\infty}$-model for a finite simply-connected CW-complex  $X$ and let $L$ be a simply-connected degree-wise nilpotent locally finite $s^{-1}L_{\infty}$-model for a simply-connected rational space $Y_{\Q}$, such that $Y_{\Q}$ is of finite $\Q$-type. The convolution algebra $Hom_{\Q}(C,L)$ equipped with the $s^{-1}L_{\infty}$-structure from Theorem \ref{thrmLinftyconv} is a rational model for the mapping space $Map_*(X,Y_{\Q})$, i.e. we have a homotopy equivalence
 $$Map_*(X,Y_{\Q}) \simeq MC_{\bullet}(Hom_{\Q}(C,L)).$$
\end{theorem}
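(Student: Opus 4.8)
The plan is to assemble the statement from three ingredients already in the excerpt: the convolution $s^{-1}L_\infty$-algebra of Theorem \ref{thrmLinftyconv}, the functoriality under $\infty$-morphisms encoded in Theorems \ref{thrmbifunctorconvolutionalgebra} and \ref{thrmmcthingyorsomething?}, and Quillen-type duality between $C_\infty$-coalgebra models and $s^{-1}L_\infty$-algebra models (Theorems \ref{thrmquillen}, \ref{thrmquillenscanlfunctors}). The target equivalence $Map_*(X,Y_\Q) \simeq MC_{\bullet}(Hom_\Q(C,L))$ should be invariant under replacing $C$ and $L$ by any models in their respective zig-zags, so the first reduction is to choose convenient representatives: a minimal $C_\infty$-model $C$ for $X$ and a Quillen-type $s^{-1}L_\infty$-model. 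The naturality statements guarantee that different choices give weakly equivalent Maurer-Cartan simplicial sets, so it suffices to verify the equivalence for one good pair.

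First I would recall the classical statement, which in the rational setting is essentially the Bousfield--Gugenheim / Haefliger description of mapping spaces: for $X$ a finite complex with commutative model $A_X$ and $Y_\Q$ with $s^{-1}L_\infty$-model $L$, the space $Map_*(X,Y_\Q)$ is modelled by a Maurer-Cartan / convolution construction. Concretely, using the cocommutative coalgebra model $\mathcal{C}\lambda(X)$ and the bar-cobar adjunction, a based map $X \to Y_\Q$ corresponds to a twisting morphism $\mathcal{C}\lambda(X) \to L$, i.e. a Maurer-Cartan element of $Hom_\Q(\mathcal{C}\lambda(X), L)$ with the convolution $s^{-1}L_\infty$-structure; simplicially, tensoring with $\Omega_\bullet$ and applying $MC$ recovers the whole mapping space. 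I would cite \cite{Getz1}, \cite{Berg1} and \cite{Wie1} for the version in this language, then note the only gap is that $C$ is a $C_\infty$-coalgebra, not necessarily cocommutative, and $L$ is only locally finite degree-wise nilpotent.

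The second step closes that gap using Theorem \ref{thrmmcthingyorsomething?}. Pick a zig-zag of $\infty_\alpha$-quasi-isomorphisms of $C_\infty$-coalgebras connecting $C$ to a cocommutative model (or its finite-dimensional minimal model), and a zig-zag of $\infty$-quasi-isomorphisms of $s^{-1}L_\infty$-algebras connecting $L$ to a Quillen-style model; since $X$ is a finite complex its models are finite dimensional and simply-connected, and $Y_\Q$ is of finite $\Q$-type so $L$ may be taken simply-connected of finite type after passing to the minimal model, which is the hypothesis of Theorem \ref{thrmmcthingyorsomething?}. Applying part (1) along the coalgebra zig-zag and part (2) along the algebra zig-zag yields weak equivalences $MC_{\bullet}(Hom_\Q(C,L)) \simeq MC_{\bullet}(Hom_\Q(C',L'))$ for the good pair $(C',L')$, and the latter is $Map_*(X,Y_\Q)$ by the classical result. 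The local finiteness hypothesis on $L$ is exactly what is needed so that $Hom_\Q(C,L)$ is degree-wise nilpotent and $MC_\bullet$ is defined and behaves correctly on it.

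\emph{The main obstacle} is bookkeeping around the finiteness and nilpotence hypotheses when passing through the zig-zags: one must check that every intermediate convolution algebra is degree-wise nilpotent so that $MC_\bullet$ is defined, and that the filtration conditions of Definition 8.1 of \cite{RNW1} used in the proof of Theorem \ref{thrmmcthingyorsomething?} are preserved along the chosen models — this uses simple-connectivity of $C$ and $L$ together with finite-typeness of $Y_\Q$ in an essential way, and is where the hypothesis that $X$ is a \emph{finite} complex (rather than merely finite $\Q$-type) is spent, since it gives a finite-dimensional $C$. A secondary, more routine point is verifying that the equivalence is compatible with the simplicial structure, i.e. that tensoring the whole zig-zag with $\Omega_\bullet$ and applying $MC$ commutes with the identifications, which follows from naturality of the convolution $s^{-1}L_\infty$-structure in Theorem \ref{thrmLinftyconv} and of the induced $\infty$-morphisms in Theorem \ref{thrmbifunctorconvolutionalgebra}.
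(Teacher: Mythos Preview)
The paper does not contain its own proof of this statement: it is quoted verbatim as Corollary 9.20 of \cite{RNW1} and used as a black box, so there is no argument in the paper to compare your proposal against.

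That said, your sketch is a reasonable outline of how the result is established in \cite{RNW1}, and it correctly identifies the two moving parts: a base case (the convolution algebra models the mapping space for some specific pair of models) and an invariance step (Theorems \ref{thrmbifunctorconvolutionalgebra} and \ref{thrmmcthingyorsomething?} let one transport the equivalence along $\infty$-quasi-isomorphisms in either variable). Your identification of the finiteness and nilpotence bookkeeping as the main technical point is also accurate; this is precisely what the filtration machinery of \cite{RNW1}, Section 8, is set up to handle, and is why the hypotheses ``finite CW-complex'' and ``locally finite, degree-wise nilpotent'' appear. The only place where your sketch is slightly loose is the base case: rather than appealing to a generic ``Bousfield--Gugenheim / Haefliger'' statement, the cleanest route (and the one taken in \cite{RNW1} and \cite{Berg1}) is to use Hinich's simplicial enrichment (Theorem \ref{thrmHinichsresult} here) together with Lemma \ref{lemsimplicialmodelcategories} to identify $MC_\bullet(Hom_\Q(C,L))$ with a derived mapping space of Lie algebras, and then invoke Quillen's equivalence. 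But this is a matter of packaging rather than a gap.
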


\subsubsection{Change of base point}

Since a mapping space often consists of several connected components it is important to know how to change the base point from one connected component to another. In \cite{Berg1} Theorem 1.2, this was done and we will recall this theorem here.

\begin{theorem}\label{thrmchangeofbasepoint}
 If $L$ is a degree-wise nilpotent $s^{-1}L_{\infty}$-algebra, then there exists a natural group isomorphism
 $$B:H_n(L^{\tau}) \rightarrow \pi_{n}(MC_{\bullet}(L),\tau),$$
 for every $\tau\in MC_0(L)$.
\end{theorem}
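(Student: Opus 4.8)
The plan is to compute the homotopy groups of the Kan complex $MC_{\bullet}(L)$ directly, first reducing to the basepoint $0$ and then to an abelian (chain–complex) computation via a tower of fibrations along the lower central series. First I would recall, following \cite{Getz1} and \cite{Berg1}, that for a degree-wise nilpotent $s^{-1}L_{\infty}$-algebra the simplicial set $MC_{\bullet}(L)$ is a Kan complex, so its homotopy groups based at any vertex are given by the usual simplicial recipe. Next I would reduce to $\tau=0$: the extension of scalars of $L^{\tau}$ by $\Omega_n$ coincides with the twist of $L\otimes\Omega_n$ by the constant Maurer--Cartan element $\tau\otimes 1$, and by the description of $MC(L^{\tau})$ in \cite{Berg1} (see also Definition \ref{deftwistedlinftyalg}) the translation $\sigma\mapsto\sigma+\tau\otimes 1$ is a bijection in each simplicial degree, compatible with the faces and degeneracies. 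Hence $MC_{\bullet}(L^{\tau})\cong MC_{\bullet}(L)$ as simplicial sets by an isomorphism sending the vertex $0$ to the vertex $\tau$, so it suffices to produce a natural group isomorphism $H_n(L)\cong\pi_n(MC_{\bullet}(L),0)$ for every degree-wise nilpotent $L$.

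For this I would filter $L$ by its lower central series $\Gamma^{\bullet}L$ (Definition \ref{deflowercentralseries}); degree-wise nilpotence guarantees that in each fixed degree the filtration stabilizes, so $L=\lim_k L/\Gamma^{k+1}L$ degree-wise. Each quotient $L/\Gamma^{k+1}L\to L/\Gamma^kL$ is a surjection of nilpotent $s^{-1}L_{\infty}$-algebras whose kernel $K_k=\Gamma^kL/\Gamma^{k+1}L$ is abelian (all higher brackets vanish), and the analysis of \cite{Getz1}, extended to the degree-wise nilpotent setting in \cite{Berg1}, shows $MC_{\bullet}$ sends such a surjection to a Kan fibration with fibre $MC_{\bullet}(K_k)$. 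For an abelian $s^{-1}L_{\infty}$-algebra, i.e.\ a chain complex $K$, the Maurer--Cartan equation in $K\otimes\Omega_{\bullet}$ is just the cocycle condition, so $MC_{\bullet}(K)$ is the Dold--Kan nerve of the good truncation of $K$; using the acyclicity of $\Omega_{\bullet}$ and fibre integration of forms one gets $\pi_n(MC_{\bullet}(K),0)\cong H_n(K)$ naturally. I would then compare the long exact sequences of the tower of fibrations with the long exact homology sequences of $0\to K_k\to L/\Gamma^{k+1}L\to L/\Gamma^kL\to 0$ and conclude, by the five lemma together with a $\lim^1$ argument (trivial here by degree-wise stabilization), that $\pi_n(MC_{\bullet}(L),0)\cong H_n(L)$ for all $n\geq 1$. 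The map $B$ itself is built from a fixed Dupont-type contraction of $\Omega_{\bullet}$ onto its vertices --- the same contraction underlying the homotopy transfer formulas of Section \ref{secHTT} --- and naturality in $L$ (and in $\tau$) is immediate since every ingredient above is natural.

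The hard part will be twofold. First, one must establish that $MC_{\bullet}$ carries the quotient maps $L/\Gamma^{k+1}L\to L/\Gamma^kL$ to Kan fibrations with the stated fibre; this is the genuinely simplicial and Lie-theoretic input, proved by an explicit horn-filling argument that solves the Maurer--Cartan equation inductively using a contraction of $\Omega_n$ onto the restriction to a horn, exactly in the spirit of \cite{Getz1} but carried out under the weaker hypothesis of degree-wise nilpotence as in \cite{Berg1}. Second, one must check that the resulting bijection is a group homomorphism: for $n\geq 2$ this follows formally from the tower, but for $n=1$ the gauge / Baker--Campbell--Hausdorff group structure on $\pi_1(MC_{\bullet}(L),\tau)$ must be identified with the additive group $H_1(L^{\tau})$, which uses that in the degree-wise nilpotent case the product is controlled modulo higher filtration by the linear term. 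Once the fibration statement and the abelian base case are in place, the remaining assembly is routine homological algebra, and one can also recover along the way the identification $\pi_0(MC_{\bullet}(L))\cong\mathcal{MC}(L)$ that makes $B$ a statement about the components of the nerve.
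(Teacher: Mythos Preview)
The paper does not give its own proof of this theorem: it is stated with the attribution ``\cite{Berg1} Theorem 1.2'' and simply quoted as background. So there is nothing in the paper to compare your argument against at the level of detail you have written.

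That said, your sketch is essentially the strategy of \cite{Berg1} itself (which in turn extends \cite{Getz1} to the degree-wise nilpotent setting): reduce to the basepoint $0$ via the translation isomorphism $MC_{\bullet}(L^{\tau})\cong MC_{\bullet}(L)$, filter by the lower central series, use that $MC_{\bullet}$ takes the resulting surjections to Kan fibrations with abelian fibres, identify the abelian case via a Dupont contraction of $\Omega_{\bullet}$, and assemble with the five lemma. Your identification of the two genuinely nontrivial points---the fibration/horn-filling statement under mere degree-wise nilpotence, and the $n=1$ group structure---matches where the work lies in \cite{Berg1}. So your proposal is correct in outline and aligned with the cited source; for the purposes of this paper you could simply cite \cite{Berg1} as the authors do.
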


This theorem tells us that when we are considering mapping spaces we can compute the rational homotopy groups of the component of $\tau$ by simply twisting the $L_{\infty}$-algebra $L$ by the element $\tau$. 

\section{Algebraic CW-complexes and the long exact sequence in homotopy for coalgebras}\label{secalgebraicCWcomplex}

A classical theorem states that if $f:E \rightarrow B$ is a fibration of topological spaces with fiber $F$, then this induces a long exact sequence on the level of  homotopy groups.   In this section we will use this long exact sequence to define an analogous long exact sequence associated to certain maps of algebras and coalgebras. In this section we will assume that all algebras and coalgebras are defined over a field $\K$ of characteristic $0$. To do this we will first recall the definition of the long exact sequence associated to a fibration of spaces. Then we will define algebraic CW-complexes, which are the algebraic analog of CW-complexes of topological spaces. After that we will show that to each fibration of algebras we can associate a long exact sequence for the homotopy groups of that algebra. The long exact sequence and the algebraic CW-complexes will be important in Section \ref{secmodulispaces}, where they will help us to understand the moduli space of Maurer-Cartan elements.

\subsection{The long exact sequence associated to a fibration of spaces}\label{seclongexactsequence}

In this section we will recall the long exact sequence for the homotopy groups associated to a fibration. This exact sequence  will be important to obtain information about the moduli space of Maurer-Cartan elements in Section \ref{secmodulispaces}. A proof for Theorem \ref{thrmlesfibrations} and Lemma \ref{lemlesfibration} can be found in Chapter 1 of \cite{MP11} on which this section is based.

\begin{theorem}\label{thrmlesfibrations}
 Let $F \rightarrow E \rightarrow B$ be a fibration of spaces or simplicial sets with fiber $F$. There is a long exact sequence in homotopy groups given by 
 $$... \rightarrow \pi_{n+1}(B,b_B) \rightarrow \pi_n(F,b_F) \rightarrow \pi_n(E,b_E) \rightarrow \pi_n(B,b_B) \rightarrow ... .$$
 Where $b_B$ is the base point of the space $B$.
\end{theorem}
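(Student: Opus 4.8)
The plan is to reduce to the standard homotopy-theoretic argument via relative homotopy groups, since the only place the fibration hypothesis genuinely enters is a single lifting lemma. First I would recall, for a pointed pair $(E,F)$ with basepoint $e_E\in F$, the relative homotopy groups $\pi_n(E,F,e_E)$ together with the long exact sequence of the pair
\[
\cdots \to \pi_n(F,e_F) \to \pi_n(E,e_E) \to \pi_n(E,F,e_E) \xrightarrow{\partial} \pi_{n-1}(F,e_F) \to \cdots ,
\]
which is a purely formal consequence of the definitions: the map $\partial$ sends a relative class to the restriction of a representative to the boundary sphere. This sequence holds with no hypothesis whatsoever on the inclusion $F \hookrightarrow E$.

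Next I would prove the key lifting lemma: if $p:E\to B$ is a fibration (a Serre fibration of spaces, or a Kan fibration of simplicial sets) and $F=p^{-1}(b_B)$, then $p_*:\pi_n(E,F,e_E)\to\pi_n(B,b_B)$ is a bijection for all $n\geq 1$. For surjectivity, given a map of triples $(I^n,\partial I^n,J^{n-1})\to(B,b_B,b_B)$, one lifts it through $p$ starting from the constant lift on the subcomplex $J^{n-1}$, using the homotopy lifting property; the resulting lift represents a relative class mapping to the prescribed one. For injectivity, given two relative classes whose images in $\pi_n(B,b_B)$ agree, one lifts a homotopy in $B$ between those images, again by the homotopy lifting property, to a homotopy of relative classes in $(E,F)$. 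This is the one step that actually uses that $p$ is a fibration; in the simplicial setting it is the combinatorial lifting against horn inclusions, so one needs $E\to B$ to be a Kan fibration, which is harmless since in the applications it will only be invoked for fibrant objects.

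Finally I would splice the isomorphism $p_*:\pi_n(E,F,e_E)\xrightarrow{\ \cong\ }\pi_n(B,b_B)$ into the long exact sequence of the pair $(E,F)$, replacing each term $\pi_n(E,F,e_E)$ by $\pi_n(B,b_B)$, and identify the composite $\pi_{n+1}(B,b_B)\xrightarrow{(p_*)^{-1}}\pi_{n+1}(E,F,e_E)\xrightarrow{\partial}\pi_n(F,e_F)$ as the connecting homomorphism of the fibration. The map $\pi_n(E,e_E)\to\pi_n(B,b_B)$ is then just $p_*$, and the map $\pi_n(F,e_F)\to\pi_n(E,e_E)$ is induced by the inclusion, which is exactly the stated sequence; exactness at every spot is inherited from exactness of the sequence of the pair. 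The main obstacle is the lifting lemma itself (referenced here as Lemma~\ref{lemlesfibration}); everything else is bookkeeping, and a fully detailed treatment, including the simplicial case, can be found in Chapter~1 of \cite{MP11}.
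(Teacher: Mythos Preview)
Your argument is the standard one and is correct; the paper itself does not supply a proof of this theorem but simply states that a proof can be found in Chapter~1 of \cite{MP11}, so your sketch is in fact more detailed than what the paper provides and follows exactly the route taken in that reference.

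One small correction: your parenthetical identification of the lifting lemma with Lemma~\ref{lemlesfibration} is mistaken. In the paper, Lemma~\ref{lemlesfibration} is not the statement that $p_*:\pi_n(E,F,e_E)\to\pi_n(B,b_B)$ is a bijection; rather, it records facts about the low-dimensional tail of the long exact sequence (the $\pi_1$--$\pi_0$ portion) and the associated group action on the fiber, specialized to the fiber sequences arising from cell attachments. The lifting isomorphism you need is not stated separately in the paper; it is part of the material deferred to \cite{MP11}. So simply drop that cross-reference, or cite \cite{MP11} directly for the lifting step.
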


We would like to apply this exact sequence to the attaching maps in a CW-complex. The following lemma states some basic facts about cell attachments of CW-complexes. Its proof will be omitted.

\begin{lemma}\label{lemthecofibersequence}
 Let $a_n:\bigvee_{k_{n}}S^{n-1} \rightarrow K_{n-1}$ be the attaching map of the $n$-cells to the $(n-1)$-skeleton $K_{n-1}$ of a CW-complex $K$ and let $Y$ be a space. Then we get a cofiber sequence
 $$\bigvee_{k_{n}}S^{n-1} \rightarrow K_{n-1} \rightarrow K_n ,$$
 where $K_n$ is the cone of the attaching map $a_n$, i.e. $K_n$ is $K_{n-1}$ with the $k_n$ cells attached along the map $a_n$. We will denote the inclusion of $K_{n-1}$ into $K_n$ by $i_n:K_{n-1} \rightarrow K_n$. When we apply the functor $Map_*(-,Y)$ we get a fiber  sequence
 $$Map_*(K_{n},Y) \xrightarrow{i_n^*} Map_*(K_{n-1},Y) \xrightarrow{a_n^*} Map_*(\bigvee_{k_n}S^{n-1},Y).$$
\end{lemma}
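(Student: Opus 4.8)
The plan is to verify the two assertions in turn, treating the cofiber-sequence statement as essentially a definition and then extracting the fiber sequence by applying the contravariant functor $Map_*(-,Y)$. First I would recall that, by construction, $K_n$ is obtained from $K_{n-1}$ by attaching $k_n$ cells along $a_n$, which is exactly the homotopy pushout (mapping cone) of $a_n:\bigvee_{k_n}S^{n-1}\rightarrow K_{n-1}$ along the inclusion $\bigvee_{k_n}S^{n-1}\hookrightarrow \bigvee_{k_n}D^n$; since $\bigvee_{k_n}D^n$ is contractible, this identifies $K_n$ with the cofiber of $a_n$ and makes
$$\bigvee_{k_n}S^{n-1}\xrightarrow{a_n} K_{n-1}\xrightarrow{i_n} K_n$$
a cofiber sequence by definition. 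The inclusion $i_n:K_{n-1}\rightarrow K_n$ is a cofibration of CW-complexes, so all the pushouts involved are homotopy pushouts and there is nothing delicate here.

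Next I would apply $Map_*(-,Y)$. The key input is the standard adjunction/exponential fact that $Map_*(-,Y)$ carries homotopy pushouts of based spaces to homotopy pullbacks, and in particular carries a cofibration $i_n:K_{n-1}\rightarrow K_n$ to a fibration $i_n^*:Map_*(K_n,Y)\rightarrow Map_*(K_{n-1},Y)$. Applying $Map_*(-,Y)$ to the cofiber square defining $K_n$ therefore yields a homotopy pullback square whose bottom row is $Map_*(\bigvee_{k_n}D^n,Y)\rightarrow Map_*(\bigvee_{k_n}S^{n-1},Y)$; since $\bigvee_{k_n}D^n$ is contractible, $Map_*(\bigvee_{k_n}D^n,Y)\simeq *$, and the pullback square degenerates to a fiber sequence
$$Map_*(K_n,Y)\xrightarrow{i_n^*} Map_*(K_{n-1},Y)\xrightarrow{a_n^*} Map_*\Bigl(\bigvee_{k_n}S^{n-1},Y\Bigr),$$
with $Map_*(K_n,Y)$ realized as the (homotopy) fiber of $a_n^*$ over the constant map. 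One should note that the basepoint of $Map_*(\bigvee_{k_n}S^{n-1},Y)$ used here is the constant map, which is precisely the image under $a_n^*$ of the constant map on $K_{n-1}$, so the fiber sequence is pointed compatibly; this is the only bookkeeping point that needs a sentence.

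I do not expect a genuine obstacle: the statement is the standard "cofiber sequence in, fiber sequence out" duality for mapping spaces, and the lemma is invoked in the paper only to feed Theorem \ref{thrmlesfibrations}. The one thing to be careful about is basepoints and the distinction between strict and homotopy (co)fibers — one wants $i_n$ to be an honest cofibration so that $K_n$ is the honest cofiber and $i_n^*$ is an honest fibration, which holds because $(K_n,K_{n-1})$ is a CW-pair. Given the paper's stated convention that the proof is omitted, I would present only this outline, citing Chapter 1 of \cite{MP11} for the exponential law and the cofibration/fibration duality.
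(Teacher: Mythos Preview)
Your proposal is correct and is exactly the standard argument the paper has in mind: the paper explicitly omits the proof, calling it ``basic facts about cell attachments'' and pointing to Chapter~1 of \cite{MP11}, which is precisely the reference you invoke for the cofibration/fibration duality under $Map_*(-,Y)$. There is nothing to add.
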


In this paper we are mainly interested in the last part of  the long  exact sequence, associated to this fiber sequence. This part of the exact sequence is no longer an exact sequence of groups but an exact sequence of pointed sets. Since we will only apply this to cell attachment maps, we get the following lemma (see Lemma 1.4.6 in \cite{MP11}).

\begin{lemma}\label{lemlesfibration}
The following statements hold about the  $\pi_1$ and $\pi_0$ part of the long exact sequence of homotopy groups associated to the fiber sequence 
$$Map_*(K_{n},Y) \xrightarrow{i_n ^*} Map_*(K_{n-1},Y) \xrightarrow{a_n^*} Map_*(\bigvee_{k_n}S^{n-1},Y),$$ 
from Lemma \ref{lemthecofibersequence}. The long exact sequence is given by
$$... \xrightarrow{\pi_1(i_n^*)} \pi_1(Map_*(K_{n-1},Y),b_1) \xrightarrow{\pi_1(a_n^*)} \pi_1(Map_*(\bigvee_{k_n}S^{n-1},Y),b_2) $$
$$\xrightarrow{\partial} [K_n,Y] \xrightarrow{i_n^*} [K_{n-1},Y]\xrightarrow{a_n^*)}[\bigvee_{k_n}S^{n-1},Y]  ,$$
where $b_1$ and $b_2$ are the base points of $Map_*(K_{n-1},Y)$ and \\ $Map_*(\bigvee_{k_n}S^{n-1},Y)$. Note that $\pi_1(Map_*(\bigvee_{k_n}S^{n-1},Y),b_2)$ is isomorphic to $\bigoplus_{k_n}\pi_n(Y)$.
 \begin{enumerate}
  \item The group $\pi_1(Map_*(\bigvee_{k_{n}} S^{n-1},Y),b_2)$ acts from the right on $[K_{n},Y]$, the set homotopy classes of based maps between $K_{n}$ and $Y$.
  \item The map $\pi_1(Map_*(\bigvee_{k_{n}} S^{n-1},Y),b_2)\xrightarrow{i^*_n} [K_{n},Y]$ is a map of right \\  $\pi_1(Map_*(\bigvee_{k_{n-1}} S^{n-1},Y),b_2)$-sets.
  \item Let $x,y \in [K_n,Y]$, then we have $i_n^*(y)=i_n^*(x)$ if and only if there exists an element $z \in \pi_1(Map_*(\bigvee_{k_n}S^{n-1},Y),b_2)$ such that $y=z \cdot x$, where  $z$ acts on $x$ according to part $(1)$ of this lemma.
  \item Denote by $\partial:\pi_1(Map_*(\bigvee_{k_{n}} S^{n-1},Y),b_2)\rightarrow [K_n,Y]$ the connecting homomorphism. Let $x,y \in \pi_1(Map_*(\bigvee_{k_{n}} S^{n-1},Y),b_2)$  then  $\partial(x)=\partial(y)$ if and only if $y=\pi_1(a_n^*)(z) \cdot x $ for some $z \in \pi_1(Map_*(K_{n-1},Y))$ and $\pi_1(a_n^*)$ is the induced map of fundamental groups $\pi_1(a_n^*):\pi_1(Map_*(K_{n-1},Y)) \rightarrow \pi_1(Map_*(\bigvee_{k_n}S^{n-1},Y))$. 
 \end{enumerate}
\end{lemma}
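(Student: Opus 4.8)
The plan is to derive all four assertions from the homotopy exact sequence of Theorem \ref{thrmlesfibrations} applied to the fiber sequence of Lemma \ref{lemthecofibersequence}, together with the standard package of facts describing how the fundamental group of the base of a fibration acts on $\pi_0$ of the fiber. Write $F=Map_*(K_n,Y)$, $E=Map_*(K_{n-1},Y)$ and $B=Map_*(\bigvee_{k_n}S^{n-1},Y)$. By Lemma \ref{lemthecofibersequence} we have a fiber sequence $F\to E\xrightarrow{a_n^*}B$; since $\bigvee_{k_n}S^{n-1}\hookrightarrow K_{n-1}$ is a cofibration, $a_n^*$ is a fibration of based spaces whose fiber over the constant map is, up to homotopy, $F$, so the exact sequence of Theorem \ref{thrmlesfibrations} and its low-degree refinements apply. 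Under the identification $\pi_1(B,b_2)\cong\bigoplus_{k_n}\pi_n(Y)$ recorded in the statement, $\partial$ is the connecting map $\pi_1(B)\to\pi_0(F)=[K_n,Y]$ of this fibration.

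First I would construct the action of part (1) as fiber transport: given a loop $\gamma$ in $B$ based at the constant map and a point $\phi\in F$, the homotopy lifting property of $a_n^*$ produces a path in $E$, starting at the image of $\phi$, that lifts $\gamma$; its endpoint lies over $\gamma(1)$, hence in $F$, and its path component depends only on $[\gamma]\in\pi_1(B,b_2)$ and on the component of $\phi$. This gives the desired right action of $\pi_1(B,b_2)$ on $[K_n,Y]$. Equivalently, using the pinch (coaction) map $\nu\colon K_n\to K_n\vee\bigvee_{k_n}S^n$ of the mapping cone $K_n$, the action sends $(\phi,\beta)$ to the composite $(\phi\vee\beta)\circ\nu\colon K_n\to Y$, where $\beta\in[\bigvee_{k_n}S^n,Y]\cong\bigoplus_{k_n}\pi_n(Y)$; either description may be used in the sequel. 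Part (2) is then naturality of fiber transport: the connecting map $\partial$ intertwines the translation action of $\pi_1(B,b_2)$ on itself with its action on $[K_n,Y]$, which is checked by inspecting lifts.

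Parts (3) and (4) are the two standard refinements of exactness for a fibration, phrased in terms of this action rather than of kernels, because $\pi_0(F)$ is only a pointed set. For part (3): the fiber inclusion induces $i_n^*\colon\pi_0(F)\to\pi_0(E)$, and $i_n^*(x)=i_n^*(y)$ holds precisely when $x$ and $y$ lie in one orbit of the $\pi_1(B,b_2)$-action — one implication is immediate from the construction of the action, and for the other one lifts a homotopy in $E$ between $i_n^*(x)$ and $i_n^*(y)$ down to $B$ and transports it back along $a_n^*$. For part (4): the stabilizer of any point of $\pi_0(F)$ under the $\pi_1(B,b_2)$-action equals the image of $\pi_1(a_n^*)\colon\pi_1(E)\to\pi_1(B)$, whence $\partial(x)=\partial(y)$ if and only if $y=\pi_1(a_n^*)(z)\cdot x$ for some $z\in\pi_1(E)$; this too is a path-chasing argument through the fibration. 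All of this is carried out in Lemma 1.4.6 of \cite{MP11}, which we invoke.

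The main obstacle, and essentially the only one, is bookkeeping: checking that the maps occurring in the Puppe sequence of Lemma \ref{lemthecofibersequence} are genuine fibrations (or replacing them by fibrations) so that the homotopy lifting property is available, and keeping base points together with the left/right conventions for the actions consistent throughout — since $\pi_0(F)=[K_n,Y]$ is a pointed set and not a group, every occurrence of ``exactness'' must be rephrased via orbits and stabilizers in place of kernels and images. Once this is set up, each statement is an instance of the standard fibration toolkit.
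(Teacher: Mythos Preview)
Your proposal is correct and aligns with the paper's own treatment: the paper does not give an independent proof of this lemma but simply states (in the preamble to Section \ref{seclongexactsequence}) that a proof can be found in Chapter 1 of \cite{MP11}, and later invokes Lemma 1.4.6 of \cite{MP11} for exactly this package of statements. Your write-up goes a bit further by sketching the fiber-transport and coaction constructions before citing the same reference, which is a reasonable elaboration but not a different approach.
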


\subsection{Algebraic CW-complexes}

In this section we will describe the algebraic analog of a CW-complex. These algebraic CW-complexes will be important in Section \ref{secmodulispaces} where we will give a description of the moduli space of Maurer-Cartan elements in terms of the attaching maps of the algebraic CW-complex.  An algebraic CW-complex is a coalgebra   inductively build  out of cells, similar to Sullivan algebras. In this section we assume that we work with coalgebras over a cooperad $\mathcal{C}$ of the form $\mathcal{C}=B_{op}\mathcal{P}$ for some operad $\mathcal{P}$. We denote by $\pi:\mathcal{C} \rightarrow \mathcal{P}$ the canonical  operadic Koszul twisting morphism from $\mathcal{C}$ to $\mathcal{P}$. 

Similar to spaces we would like to build coalgebras out of push-outs of the algebraic equivalent of a disc. 

\begin{definition}
 
 The disk coalgebra $\mathfrak{D}^n$ is defined as the differential graded vector space with one generator $\alpha$ in degree $n-1$ and one generator $\beta$ in degree $n$, the differential is given by $d(\beta)=\alpha$. We will equip $\mathfrak{D}^n$ with the trivial coalgebra structure. The sphere coalgebra $\mathfrak{S}^n$ is defined as the one dimensional coalgebra with one basis element $\alpha$ in degree $n$, $\mathfrak{S}^n$ is equipped with the trivial coalgebra structure. The inclusion of the sphere coalgebra $\mathfrak{S}^{n-1}$ into $\mathfrak{D}^n$ will be denoted by $i:\mathfrak{S}^{n-1} \rightarrow \mathfrak{D}^n$. 
\end{definition}

A cell attachment is now defined as follows.

\begin{definition}
 Let $C$ be a $\mathcal{C}$-coalgebra, $\mathfrak{D}^n$ a disk coalgebra and $f:\mathfrak{S}^{n-1} \rightsquigarrow C$ an $\infty_{\pi}$-morphism of $\mathcal{C}$-coalgebras, i.e. a strict morphism $f:\Omega_{\pi}\mathfrak{S}^{n-1} \rightarrow \Omega_{\pi} C$ of $\mathcal{P}$-algebras. The cell attachment of $\mathfrak{D}^n \cup_f C$ is then defined as the $\mathcal{C}$-coalgebra corresponding to the following push-out of $\mathcal{P}$-algebras. 
 $$
 \xymatrix{
 \Omega_{\pi} \mathfrak{S}^{n-1} \ar[d]^f \ar[r]^i & \Omega_{\pi} \mathfrak{D}^n  \ar[d] \\
 \Omega_{\pi} C \ar[r] &   \Omega_{\pi} \mathfrak{D}^n \cup_f C
 }
 $$
\end{definition}

Since $\Omega_{\pi } \mathfrak{S}^{n-1}$, $\Omega_{\pi} \mathfrak{D}^n$ and $\Omega_{\pi} C$ are all free, the push out is given by the free $\mathcal{P}$-algebra generated by the vector space $C \oplus \mathfrak{S}^n$, with some differential $d$. Because of Definition \ref{defpinftydef2} and Theorem \ref{thrmrossetastone},  a differential $d$ on a free $\mathcal{P}$-algebra is equivalent to a $\mathcal{C}$-coalgebra structure on the vector space $C \oplus \mathfrak{S}^n$. This push-out therefore defines a well defined $\mathcal{C}$-coalgebra structure on $C \oplus \mathfrak{S}^n$.

\begin{definition}
 An algebraic CW-complex $C$ is a coalgebra $C$ which is zero in degree less or equal than $0$ and is inductively built up out of cells of increasing dimension. In particular we start with the $1$-skeleton which is equal to the direct sum of $k_1$-copies of $\mathfrak{S}^1$. Then we attach $2$-cells to obtain the $2$-skeleton and proceed inductively by attaching $n$-cells to the $(n-1)$-skeleton. It is not allowed to attach $m$-cells to the $n$-skeleton when $m \leq n$. 
\end{definition}

\begin{remark}
 In theory we allow infinitely many cell in our algebraic CW-complexes, but for all our applications we will always assume that our algebraic CW-complexes have finitely many cells.
\end{remark}

\begin{definition}
 We call an algebraic CW-complex $C$, $r$-connected or $r$-reduced if $C_{\leq r}=0$, i.e. it has no cells in dimension $r$ or below.
\end{definition}

We will now give an example of an algebraic CW-complex for the cellular chains of $S^2 \times S^2$. More details about this example can be found in Proposition \ref{propsnxsmcw}. Recall that we assumed that all our homology is reduced, we will therefore always ignore the $0$-cell of the algebraic CW-complexes.

\begin{example}\label{Exs2xs2cw}
 The space $S^2 \times S^2$ has a CW-decomposition in spaces defined by taking two $2$-cells with one $4$-cell attached via the Whitehead product of the $2$-cells. The homology coalgebra of this space has as underlying vector space $\Q\alpha \oplus \Q \beta \oplus \Q \gamma$ with $\vert\alpha\vert=\vert \beta\vert=2$ and $\vert\gamma\vert=4$ and the coproduct is given by $\Delta (\alpha)=\Delta( \beta) =0$ and $\Delta(\gamma)=\alpha \otimes \beta +\beta \otimes \alpha$. An algebraic CW-complex, as a $C_{\infty}$-coalgebra, for this space is given by attaching the $4$-cell $\gamma$ to $\Q \alpha \oplus \Q \beta$ via the $\infty_{\pi}$-morphism $\upsilon:\mathfrak{S}^3 \rightsquigarrow \mathfrak{S}^2 \oplus \mathfrak{S}^2$, given by  $\upsilon_2(\epsilon)=[\alpha,\beta]$ and zero otherwise, where $\epsilon$ is the boundary of $\gamma$.  
\end{example}

In the rest of this section we will prove some results about the existence of algebraic CW-complexes. To do this we need some mild restrictions on our operads. The following definition is Definition 4.1 in \cite{CR1}.

\begin{definition}
 An operad $\mathcal{P}$ is called $r$-tame if $\mathcal{P}(n)_q=0$ for all $q \leq (1-n)(1+r)$.
\end{definition}

\begin{theorem}\label{propCWdecomp}
 Let $C$ be a finite dimensional $r$-connected $\mathcal{C}$-coalgebra over a fibrant cooperad $\mathcal{C}$ of the form $\mathcal{C}= B_{op} \mathcal{P}$, such that $\mathcal{P}$ is $r$-tame, then $C$ has a CW-decomposition. 
\end{theorem}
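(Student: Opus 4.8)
The plan is to induct on dimension, building the algebraic CW-structure on $C$ skeleton by skeleton, exactly mirroring the topological construction of a minimal CW-model. First I would pass to the dual picture: since $\mathcal{C} = B_{op}\mathcal{P}$ and $\pi:\mathcal{C}\to\mathcal{P}$ is the canonical Koszul twisting morphism, a $\mathcal{C}$-coalgebra $C$ is governed by the $\mathcal{P}$-algebra $\Omega_\pi C$, and an algebraic CW-decomposition of $C$ corresponds precisely to a presentation of $\Omega_\pi C$ as an increasing union of free $\mathcal{P}$-algebras obtained by iterated cell attachments $\Omega_\pi\mathfrak{D}^n\cup_f(-)$. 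So the goal becomes: produce a cofibrant (free, cell-by-cell) $\mathcal{P}$-algebra $A$ together with a quasi-isomorphism $A\xrightarrow{\sim}\Omega_\pi C$ whose underlying generating complex is built by attaching cells in strictly increasing dimension, starting in dimension $r+1$.

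The inductive step is the heart of the argument. Suppose I have built the $(n-1)$-skeleton $C_{n-1}$ together with an $\infty_\pi$-morphism (equivalently a strict $\mathcal{P}$-algebra map $\Omega_\pi C_{n-1}\to\Omega_\pi C$) that is an isomorphism on $\pi_*$ in degrees $<n$ (using Convention \ref{convcohomotopygroups}, $\pi_*$ here means homology of the cobar construction) and injective in degree $n$. I then choose cycles in $\Omega_\pi C$ representing a basis of the cokernel in degree $n$ and a basis of the kernel in degree $n$; the former give new $n$-cells to be attached (wedge summands $\mathfrak{S}^n$), the latter give $(n+1)$-cells $\mathfrak{D}^{n+1}$ whose attaching maps $f:\mathfrak{S}^n\rightsquigarrow C_n$ kill the unwanted homotopy classes. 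Forming the push-out of $\mathcal{P}$-algebras as in the definition of cell attachment produces $C_n$, and by construction the extended map $\Omega_\pi C_n\to\Omega_\pi C$ is now an isomorphism on $\pi_*$ through degree $n$ and injective in degree $n+1$. The hypotheses that $C$ is finite-dimensional and $r$-connected guarantee that the process starts at dimension $r+1$, involves only finitely many cells in each dimension, and terminates after finitely many steps, so the colimit $C_\infty$ maps to $\Omega_\pi C$ by a $\pi_*$-isomorphism, hence (since we are over a field of characteristic $0$ and everything is simply connected of finite type) by a quasi-isomorphism of $\mathcal{P}$-algebras.

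The main obstacle — and the place where the hypotheses on $\mathcal{P}$ enter — is showing that each push-out really is a legitimate cell attachment with the expected homotopical behaviour, i.e. that the inclusion $\Omega_\pi C_{n-1}\to\Omega_\pi C_n$ is a cofibration and that attaching an $n$-cell affects $\pi_*$ only in degrees $\geq n-1$ (so that lower skeleta are not disturbed and the induction is well-founded). This is exactly where $r$-tameness of $\mathcal{P}$ is needed: the condition $\mathcal{P}(k)_q=0$ for $q\leq(1-k)(1+r)$ ensures that the free $\mathcal{P}$-algebra on a generator in degree $n-1$ (with $n-1\geq r+1$) has no homology below degree $n-1$ coming from the higher-arity operations, so that cell attachments are "connective" in the appropriate range and the bar–cobar bookkeeping in Theorem \ref{thrmquillenscanlfunctors} applies. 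I would handle this by a degree count on the free $\mathcal{P}$-algebra functor applied to the (shifted) disk and sphere complexes, citing the analogous statement (Definition 4.1 and the surrounding results) in \cite{CR1}. The remaining verifications — that the push-out of free algebras along a map of free algebras is again free on the expected generators, and that the resulting differential is precisely a $\mathcal{C}$-coalgebra structure on $C_{n-1}\oplus\mathfrak{S}^n$ via Definition \ref{defpinftydef2} and Theorem \ref{thrmrossetastone} — are the routine parts already spelled out before the statement.
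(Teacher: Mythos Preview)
Your inductive ``kill-and-fill'' construction is correct as a way to produce an algebraic CW-complex together with a quasi-isomorphism to $\Omega_\pi C$, but that is a slightly different statement from the one being asked: the theorem claims that the given coalgebra $C$ itself admits a CW-decomposition, i.e.\ that $\Omega_\pi C$ (not merely something quasi-isomorphic to it) can be exhibited as an iterated cell attachment. Your colimit $C_\infty$ need not have the same underlying graded vector space as $C$; for instance if $C$ carries a non-trivial internal differential your construction will produce a strictly smaller (minimal) model. So as written the argument proves ``$C$ has a CW-\emph{model}'' rather than ``$C$ has a CW-\emph{decomposition}''.

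The paper's argument avoids this by working directly with the degree filtration on $C$. Since $\Omega_\pi C$ is already the free $\mathcal{P}$-algebra on the graded vector space $C$, one only has to check that the differential of a generator in degree $n$ lands in the free sub-$\mathcal{P}$-algebra on $C_{<n}$; then each inclusion $\Omega_\pi C_{\leq n-1}\hookrightarrow\Omega_\pi C_{\leq n}$ is literally a cell attachment and the filtration exhibits $C$ as a CW-complex on the nose. The degree count you isolate as the ``main obstacle'' --- that $r$-tameness of $\mathcal{P}$ forces operations applied to generators of degree $\geq r+1$ to land in high enough degree --- is exactly what makes this filtration argument work, and is what the paper imports from Theorem~4.6 of \cite{CR1}. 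So your identification of the key technical point is right; the detour through building a separate cofibrant replacement is unnecessary and, without an additional argument identifying your $C_\infty$ with $C$, leaves a gap relative to the literal statement.
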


\begin{remark}
 It is easy to see that the condition that the cooperad $\mathcal{C}$ is zero in degree less than $-n+1$ is satisfied for all the cooperads that are important in this paper. In particular the cooperads $s^{-1}\mathcal{LIE}^{\vee}$, $s^{-1}L_{\infty}^{\vee}$, $B_{op} \Omega_{op} \mathcal{COCOM}$ and $C_{\infty}^{\vee}$ satisfy this condition.
\end{remark}

\begin{proof}
 To define an algebraic CW-complex structure for the coalgebra $C$ we will use the degree filtration on $C$. It follows from Theorem 4.6 in \cite{CR1} and our assumptions that the $\mathcal{P}$-algebra $\Omega_{\tau} C$ can be built up out of a sequence of cell attachments. Therefore the coalgebra $C$ can also be built up out of a sequence of cell attachments.
\end{proof}

\begin{theorem}\label{thrmexistenceofalgebraiccwcomplexes}
Let $X$ be a $1$-reduced CW-complex of finite type, i.e. $X$ has one $0$-cell and no $1$-cells and only finitely many cells in every degree. Then there exists an algebraic CW-complex $C$, with exactly one basis element for each cell of $X$, such that $C$ is a $C_{\infty}$-model for $X$.
\end{theorem}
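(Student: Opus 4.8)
The plan is to build $C$ by induction on the skeleta of $X$, mirroring the topological cell attachments of $X$ on the algebraic side via the functor $\mathcal{C}\lambda$ of Theorem \ref{thrmquillen} (or rather a $C_\infty$-refinement of it), and using Theorem \ref{propCWdecomp} together with the change-of-basepoint/long-exact-sequence machinery to realize each attaching map algebraically. Concretely: since $X$ is $1$-reduced and of finite type, write $X = \mathrm{colim}\, X_n$ where $X_n$ is obtained from $X_{n-1}$ by attaching $k_n$ cells of dimension $n$ along a map $a_n: \bigvee_{k_n} S^{n-1} \to X_{n-1}$. I will construct, by induction on $n$, an $n$-reduced-through-lower-degrees algebraic CW-complex $C^{(n)}$ with exactly one basis element per cell of $X_n$ of dimension $\le n$, a $C_\infty$-model for $X_n$, together with the inclusion $C^{(n-1)}\hookrightarrow C^{(n)}$ modeling $X_{n-1}\hookrightarrow X_n$.

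First I would handle the base case: $X_2 = \bigvee_{k_2} S^2$, whose $C_\infty$ (indeed strictly cocommutative) model is $\bigoplus_{k_2}\mathfrak{S}^2$ with trivial coproduct, which is by definition an algebraic CW-complex with one $2$-cell per cell of $X$. For the inductive step, I have a $C_\infty$-model $C^{(n-1)}$ for $X_{n-1}$ and must model the pushout $X_n = X_{n-1}\cup_{a_n}\bigl(\coprod_{k_n} D^n\bigr)$. The attaching map $a_n$ represents a class in $\pi_{n-1}(X_{n-1})^{\oplus k_n}$; rationally (after tensoring, per our standing conventions) this is detected in $H_{n-1}(\Omega_\tau C^{(n-1)})\cong \pi_{n-1}(X_{n-1})\otimes\R$ by Theorem \ref{thrmquillenscanlfunctors}. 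I would choose a cycle $z\in\Omega_\pi C^{(n-1)}$ in the appropriate degree representing the image of $a_n$ under rationalization — here one uses that $C_\infty$-models compute rational homotopy and that every rational homotopy class of maps $\bigvee S^{n-1}\to (X_{n-1})_\Q$ is realized, via Theorem \ref{thrmLinftyconv} and its corollary, by an actual $\mathcal{P}$-algebra map $\Omega_\pi\mathfrak{S}^{n-1}\to\Omega_\pi C^{(n-1)}$ — i.e. an $\infty_\pi$-morphism $f:\mathfrak{S}^{n-1}\rightsquigarrow C^{(n-1)}$. Then set $C^{(n)} := \mathfrak{D}^n\cup_f C^{(n-1)}$ (iterated over the $k_n$ cells), which by construction is a legitimate cell attachment in the sense of our definition, has the right underlying graded vector space (one new basis element per $n$-cell), and is an algebraic CW-complex. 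It remains to check that $C^{(n)}$ is a $C_\infty$-model for $X_n$: this follows because $\Omega_\pi$ preserves the defining pushout (it is a left adjoint), the topological side $X_n$ is the homotopy pushout of $\bigvee S^{n-1}\leftarrow \ast \to \coprod D^n$ against $a_n$, and $\mathcal{C}\lambda$/the $C_\infty$-refinement sends homotopy pushouts to homotopy pushouts since all objects in sight are cofibrant in the model structure of Theorem \ref{Val1}'s analogue; comparing the two pushouts via the zig-zag of quasi-isomorphisms already available for $C^{(n-1)}$ and for $\mathfrak{S}^{n-1},\mathfrak{D}^n$ gives the zig-zag for $C^{(n)}$.

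The main obstacle is the realization step: ensuring that the rational homotopy class of the attaching map $a_n$ — a priori only a homology/homotopy-group datum — is realized by an honest $\infty_\pi$-morphism $f:\mathfrak{S}^{n-1}\rightsquigarrow C^{(n-1)}$, and then that the resulting algebraic cell attachment actually has the homotopy type of $X_n$ rather than merely the correct homology groups. Both points are handled by Theorem \ref{thrmLinftyconv}: homotopy classes of $\mathcal{P}$-algebra maps out of $\Omega_\pi\mathfrak{S}^{n-1}$ correspond to gauge-equivalence classes of Maurer–Cartan elements in $Hom_\K(\mathfrak{S}^{n-1}, \Omega_\pi C^{(n-1)})$, and these compute $[\,\bigvee S^{n-1}, (X_{n-1})_\Q\,]$, so the class of $a_n$ lifts. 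The passage from "correct pushout of models" to "model of the pushout" is exactly where cofibrancy of $\Omega_\pi\mathfrak{S}^{n-1}\hookrightarrow\Omega_\pi\mathfrak{D}^n$ (a relative Sullivan-type cofibration, since $\mathfrak{D}^n$ is contractible) and the left-properness of the model category of $\mathcal{P}$-algebras are used; a secondary technical point is that $\mathcal{C}\lambda$ as stated lands in cocommutative coalgebras, so one should either work with a $C_\infty$-enhancement of Quillen's functor or transfer along a fixed Koszul twisting morphism $C_\infty\to s^{-1}L_\infty$ as in Theorem \ref{thrmquillenscanlfunctors}, keeping track that "algebraic CW-complex" is defined for $\mathcal{C}=B_{op}\mathcal{P}$ with $\mathcal{P}=s\mathcal{LIE}$, which is $1$-tame, so Theorem \ref{propCWdecomp} applies and guarantees such decompositions exist in the first place.
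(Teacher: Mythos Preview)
Your approach is essentially correct but takes a genuinely different route from the paper. The paper's proof is two lines: it invokes Chapter 24(e) of \cite{FHT}, which already constructs a quasi-free shifted Lie model for $X$ with one generator per cell; since a quasi-free Lie model $(\mathcal{LIE}(V),d)$ is exactly a square-zero derivation on a free algebra, by Theorem \ref{thrmrossetastone} this is the same data as a $C_\infty$-coalgebra structure on $V$; finally Theorem \ref{propCWdecomp} (applied via the degree filtration) exhibits that $C_\infty$-coalgebra as an algebraic CW-complex.

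You are instead rebuilding the FHT construction from scratch inside the $C_\infty$/Maurer--Cartan framework: induct on skeleta, realize each rationalized attaching map as a Maurer--Cartan element in $Hom_\K(\mathfrak{S}^{n-1},\Omega_\pi C^{(n-1)})$ via Theorem \ref{thrmLinftyconv}, and push out. This is more self-contained and makes the correspondence between topological and algebraic cell attachments explicit, which is conceptually satisfying and aligns well with the rest of the paper. The trade-off is that several steps you mark as routine carry real content: that the rational homotopy type of a cell attachment depends only on the rational homotopy class of the attaching map, and that the relevant functors preserve the homotopy pushouts, are precisely the substance of the FHT argument you are reproving. Your invocation of $\mathcal{C}\lambda$ for this is a bit loose (it is not directly a left Quillen functor on the nose, and you acknowledge the cocommutative-vs-$C_\infty$ mismatch), so if you pursue this route you should either work entirely on the Lie-algebra side, where the cell-attachment correspondence is cleanest, or cite the specific model-categorical facts you need. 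The paper's approach buys brevity by outsourcing all of this to \cite{FHT}; yours buys transparency at the cost of redoing that work.
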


\begin{proof}
To prove the theorem we will use Section (e) of Chapter 24 of \cite{FHT}, which constructs a Lie model for the CW-complex $X$, with one generator for each cell. Since this shifted Lie model is quasi-free, it corresponds to a $C_{\infty}$-structure on the set of generators. We can now apply Theorem \ref{propCWdecomp} to the generators of this Lie model to get a CW-decomposition for $X$ with exactly one cell for each cell of $X$. 
\end{proof}

\begin{lemma}
 Let $C$ be an algebraic CW complex for some cooperad $\mathcal{C}$ and denote by $C_{\leq n}$ the $n$-skeleton, the inclusion maps $j_n:C^{ \leq n} \rightarrow C^{\leq n+1}$ are $\mathcal{C}$-coalgebra homomorphisms.
\end{lemma}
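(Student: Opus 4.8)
The plan is to unwind the inductive definition of an algebraic CW-complex one cell at a time. By definition $C^{\leq n+1}$ is obtained from $C^{\leq n}$ by attaching $(n+1)$-cells, and such an attachment is carried out by attaching a single $(n+1)$-cell at a time (and passing to a colimit if there are infinitely many); since $\mathcal{C}$-coalgebra homomorphisms compose and are stable under the relevant colimits, it suffices to treat one elementary cell attachment $C'=\mathfrak{D}^{n+1}\cup_f C^{\leq n}$ and show that the canonical map $C^{\leq n}\to C'$ is a $\mathcal{C}$-coalgebra homomorphism.

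So I would fix such an attachment, given by the push-out of $\mathcal{P}$-algebras
\[
\xymatrix{
\Omega_\pi \mathfrak{S}^{n} \ar[d]_f \ar[r]^i & \Omega_\pi \mathfrak{D}^{n+1} \ar[d] \\
\Omega_\pi C^{\leq n} \ar[r]^g & \Omega_\pi C'
}
\]
and first make this push-out explicit, exactly as in the discussion preceding the lemma. Since $\Omega_\pi \mathfrak{S}^{n}$, $\Omega_\pi \mathfrak{D}^{n+1}$ and $\Omega_\pi C^{\leq n}$ are all quasi-free, and $i$ is the free $\mathcal{P}$-algebra map on the inclusion of generators $\overline{\mathfrak{S}^{n}}\hookrightarrow\overline{\mathfrak{D}^{n+1}}$ (with one-dimensional complement $\K\beta$ concentrated in degree $n+1$), the push-out $\Omega_\pi C'$ is the free $\mathcal{P}$-algebra on $\overline{C^{\leq n}}\oplus \K\beta$, whose derivation differential $d'$ agrees with the cobar differential of $\Omega_\pi C^{\leq n}$ on the sub-$\mathcal{P}$-algebra $\mathcal{P}(\overline{C^{\leq n}})$ and is determined on the new generator by $d'(\beta)=\widehat{f}(\alpha)\in\mathcal{P}(\overline{C^{\leq n}})$, where $\widehat{f}$ denotes the image under $f$ of the boundary generator of $\mathfrak{S}^{n}$. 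Two features are then immediate: (i) $g$ is compatible with the two differentials, being a leg of a push-out square formed in CDG $\mathcal{P}$-algebras; and (ii) $g$ is \emph{strict}, i.e.\ on underlying graded $\mathcal{P}$-algebras it is the inclusion $\mathcal{P}(\overline{C^{\leq n}})\hookrightarrow\mathcal{P}(\overline{C^{\leq n}}\oplus \K\beta)$ induced by the summand inclusion of generating spaces $\overline{C^{\leq n}}\hookrightarrow\overline{C'}$.

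Next I would translate (i) and (ii) back into coalgebra language. Under the correspondence recalled just before the lemma (Definition \ref{defpinftydef2} together with Theorem \ref{thrmrossetastone}), a derivation differential on a free $\mathcal{P}$-algebra $\mathcal{P}(V)$ is precisely a $\mathcal{C}$-coalgebra structure on $V$, and with this dictionary $\Omega_\pi C^{\leq n}$ and $\Omega_\pi C'$ are literally the cobar constructions of the $\mathcal{C}$-coalgebras $C^{\leq n}$ and $C'$. A strict, differential-compatible $\mathcal{P}$-algebra map between two such cobar constructions is the cobar construction $\Omega_\pi(j_n)$ of a strict $\mathcal{C}$-coalgebra morphism $j_n\colon C^{\leq n}\to C'$: its restriction to generators is a map of graded vector spaces $\overline{C^{\leq n}}\to\overline{C'}$, and the condition $d'\circ g=g\circ d'$ restricted to generators unwinds, term by term, into exactly the statements that $j_n$ commutes with the internal differentials and intertwines the two $\mathcal{C}$-coaction maps. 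By (i) and (ii) the map $g$ is of this form, and its underlying morphism of graded vector spaces is the summand inclusion; hence $j_n\colon C^{\leq n}\hookrightarrow C'$ is a $\mathcal{C}$-coalgebra homomorphism. Composing over the cells attached to pass from the $n$-skeleton to the $(n+1)$-skeleton then yields the claimed homomorphism $j_n\colon C^{\leq n}\to C^{\leq n+1}$.

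The step I expect to be the main obstacle is point (ii) and its coalgebra counterpart: verifying that the leg $g$ of the push-out carries the generating subspace into the generating subspace — so that the $\infty_\pi$-morphism $C^{\leq n}\rightsquigarrow C'$ it a priori represents has no higher-arity components and is an honest strict $\mathcal{C}$-coalgebra morphism — and, dually, that compatibility with the cobar differentials is enough to force compatibility with the full $\mathcal{C}$-coaction maps. This is the algebraic analogue of the classical fact that a map of the cobar constructions of two dg coalgebras that preserves the space of cogenerators is induced by a morphism of dg coalgebras; here it comes down to the observation that the push-out is computed freely on generators. Everything else is bookkeeping with the explicit description of the push-out and with the skeletal filtration.
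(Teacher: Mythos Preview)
Your argument is correct and in fact carries out explicitly what the paper omits: the paper's proof reads in its entirety ``This is a straightforward check which is left to the reader.'' Your approach---identifying the push-out leg $g$ as the free $\mathcal{P}$-algebra map on the summand inclusion of generators and then translating compatibility with the cobar differentials back into the statement that $j_n$ intertwines the $\mathcal{C}$-coactions---is exactly the check the paper is gesturing at, so there is nothing to compare.
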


\begin{proof}
 This is a straightforward check which is left to the reader.
\end{proof}

\subsection{The long exact sequence}

We will now prove a theorem which is the algebraic analog of Theorem \ref{thrmlesfibrations} and Lemma \ref{lemlesfibration}, which associates a long exact sequence to each map of coalgebras.

\begin{theorem}\label{thrmlongexactsequencealgebraiccwcomplexes}
  Let $D$ be a finite dimensional $C_{\infty}$-coalgebra and let  $f:\mathfrak{S}^n \rightsquigarrow D$ be an attaching map. Denote by $C(f)$ the coalgebra obtained by attaching an $(n+1)$-cell to $D$ via the map $f$. Let $\tau:C_{\infty} \rightarrow s^{-1}L_{\infty}$ be a Koszul twisting morphism from the $C_{\infty}$-cooperad to the $s^{-1}L_{\infty}$-operad. Let $L$ be a simply-connected degree-wise nilpotent $s^{-1}L_{\infty}$-algebra of finite type, assume that we equip the spaces $Hom_{\K}(C,L)$ with the $s^{-1}L_{\infty}$-structure from Theorem \ref{thrmLinftyconv}. Then we get a homotopy  fibration sequence of simplicial sets given by
  $$MC_{\bullet}(Hom_{\K}(C(f),L)) \rightarrow MC_{\bullet}(Hom_{\K}(D,L)) \rightarrow MC_{\bullet}(Hom_{\K}(\mathfrak{S}^n,L)).$$
  In particular it induces a long exact sequence in homotopy as in Theorem \ref{thrmlesfibrations} and Lemma \ref{lemlesfibration}.
\end{theorem}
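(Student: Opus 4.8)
The plan is to deduce the theorem from the cell-attachment pushout square defining $C(f)$ together with the homotopy-invariance result of Theorem~\ref{thrmmcthingyorsomething?} and a duality argument. First I would recall that by definition $C(f)$ fits into the pushout of $s^{-1}L_\infty$-operad algebras obtained by applying $\Omega_\pi$ to $\mathfrak{S}^n \hookrightarrow \mathfrak{D}^{n+1}$ and $f:\mathfrak{S}^n\rightsquigarrow D$; dually, this means that on the coalgebra side we have a sequence $\mathfrak{S}^n \rightsquigarrow D \rightarrow C(f)$ which, after applying the cobar functor $\Omega_\pi$, is a cofibration sequence of $\mathcal{P}$-algebras (the map $\Omega_\pi\mathfrak{S}^n\to\Omega_\pi\mathfrak{D}^{n+1}$ is an acyclic cofibration, and cofibrations are preserved under pushout). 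Applying the contravariant functor $Hom_{\K}(-,L)$, which by Theorem~\ref{thrmLinftyconv} and Theorem~\ref{thrmbifunctorconvolutionalgebra} is (up to $\infty$-morphism) well behaved, should turn this pushout of algebras into a pullback of convolution $s^{-1}L_\infty$-algebras, i.e. a short exact-type sequence
$$Hom_{\K}(C(f),L)\longrightarrow Hom_{\K}(D,L)\longrightarrow Hom_{\K}(\mathfrak{S}^n,L).$$

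Next I would pass to Maurer--Cartan simplicial sets. The key point is that $MC_\bullet$ sends a surjection of degree-wise nilpotent $s^{-1}L_\infty$-algebras with the appropriate kernel to a Kan fibration — this is the content (in the $L_\infty$ setting) of Getzler's and Berglund's work, already invoked implicitly in Section~\ref{secHTT} and in the change-of-basepoint Theorem~\ref{thrmchangeofbasepoint}. Concretely, the map $Hom_{\K}(D,L)\to Hom_{\K}(\mathfrak{S}^n,L)$ is surjective because the inclusion $\mathfrak{S}^n\hookrightarrow D$ admits a retraction at the level of underlying graded vector spaces (it is a subcomplex consisting of one generator, by the CW structure), so the dual map is a surjection; its fibre over the zero Maurer--Cartan element is precisely $MC_\bullet$ of the kernel, and one identifies that kernel, using the pushout description, with $Hom_{\K}(C(f),L)$ — or more precisely with an $s^{-1}L_\infty$-algebra connected to it by an $\infty$-quasi-isomorphism, which suffices by Theorem~\ref{thrmmcthingyorsomething?}. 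Then $MC_\bullet$ of this fibre sequence of algebras is the asserted homotopy fibration of simplicial sets, and the long exact sequence in homotopy is the one from Theorem~\ref{thrmlesfibrations}, with the low-degree pointed-set statements of Lemma~\ref{lemlesfibration} carrying over verbatim since they are formal consequences of having a fibration sequence.

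The main obstacle I expect is the careful bookkeeping needed to show that applying $Hom_{\K}(-,L)$ to the pushout square of $\mathcal{P}$-algebras actually yields a genuine fibration sequence of $MC_\bullet$-spaces, rather than merely a sequence that is ``morally'' fibrant. Two technical issues have to be handled: first, $Hom_{\K}(-,L)$ is only a strict functor in the coalgebra variable (it is the covariant convolution functor that is extended to $\infty$-morphisms), so I must check that the relevant maps in the cell-attachment square are \emph{strict} coalgebra morphisms — which they are, since cell attachments are defined via strict pushouts — and only then invoke $\infty$-invariance where an $\infty$-quasi-isomorphism genuinely appears. Second, the fibrancy of $MC_\bullet$ applied to a surjection of $s^{-1}L_\infty$-algebras requires the nilpotence hypotheses: here $L$ is degree-wise nilpotent, $D$ and $\mathfrak{S}^n$ are finite dimensional, so $Hom_{\K}(D,L)$ and $Hom_{\K}(\mathfrak{S}^n,L)$ are degree-wise nilpotent (this is the standard argument from \cite{Berg1}), and the kernel inherits the same property; I would cite the relevant result of Getzler/Berglund for the fact that $MC_\bullet$ of a degree-wise nilpotent surjection is a Kan fibration with the expected fibre.

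Finally I would note that identifying the fibre: the kernel $K$ of $Hom_{\K}(D,L)\twoheadrightarrow Hom_{\K}(\mathfrak{S}^n,L)$ is $Hom_{\K}(D/\mathfrak{S}^n, L)$-like, and the long exact sequence relating $\Omega_\pi$ of $\mathfrak{S}^n$, $D$, $C(f)$ together with the fact that $\Omega_\pi\mathfrak{D}^{n+1}$ is acyclic gives a quasi-isomorphism between $\Omega_\pi C(f)$ and the homotopy cofibre of $\Omega_\pi\mathfrak{S}^n\to\Omega_\pi D$; dualizing and applying Theorem~\ref{thrmmcthingyorsomething?}(1) produces the weak equivalence $MC_\bullet(Hom_{\K}(C(f),L))\simeq MC_\bullet(K)$ needed to conclude. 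Once the fibration sequence is in hand, the ``in particular'' clause is immediate from Theorem~\ref{thrmlesfibrations} and Lemma~\ref{lemlesfibration}.
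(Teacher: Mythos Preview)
There is a genuine gap in your argument, and it concerns precisely the point you flag as the ``main obstacle.'' You assert that the map
\[
Hom_{\K}(D,L)\longrightarrow Hom_{\K}(\mathfrak{S}^n,L)
\]
is a strict surjection of $s^{-1}L_\infty$-algebras because ``the inclusion $\mathfrak{S}^n\hookrightarrow D$ admits a retraction,'' and that the relevant maps in the cell-attachment square are strict coalgebra morphisms. Neither claim is correct. The attaching map $f:\mathfrak{S}^n\rightsquigarrow D$ is by definition only an $\infty_\pi$-morphism of coalgebras (equivalently a strict map $\Omega_\pi\mathfrak{S}^n\to\Omega_\pi D$ of algebras); it is \emph{not} a strict coalgebra map, and in the minimal case its linear part $f_1$ is actually zero. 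Consequently, by Theorem~\ref{thrmbifunctorconvolutionalgebra} the induced map $f^*$ on convolution algebras is only an $\infty$-morphism of $s^{-1}L_\infty$-algebras, and the Getzler--Berglund result that $MC_\bullet$ of a strict surjection is a Kan fibration simply does not apply. Your identification of the ``kernel'' with $Hom_\K(C(f),L)$ is also problematic for the same reason: as graded vector spaces $Hom_\K(C(f),L)$ is larger than $Hom_\K(D,L)$, not a subspace of it.

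The paper circumvents exactly this difficulty by passing to the cobar side, where all maps become strict. One first treats the case where $L$ is a genuine shifted Lie algebra: the pushout defining $C(f)$ gives a cofibre sequence $\Omega_{s^{-1}Lie}\mathfrak{S}^n\to\Omega_{s^{-1}Lie}D\to\Omega_{s^{-1}Lie}C(f)$ of strict Lie algebras, and Hinich's simplicial model structure (axiom SM7) then produces a fibration sequence of simplicial mapping spaces $\mathfrak{hom}_\bullet(-,L)$. These are identified with the desired $MC_\bullet(Hom_\K(-,L))$ via Lemma~\ref{lemsimplicialmodelcategories}. The general $s^{-1}L_\infty$ case is then obtained by rectifying $L$ to $\Omega_{s^{-1}Lie}B_{s^{-1}L_\infty}L$ and comparing via the $\infty$-quasi-isomorphism $\Omega_{s^{-1}Lie}B_{s^{-1}L_\infty}L\rightsquigarrow L$ together with Theorem~\ref{thrmmcthingyorsomething?}. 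If you wish to salvage a direct convolution-algebra argument, note that the \emph{other} map, $D\hookrightarrow C(f)$, \emph{is} a strict coalgebra morphism (Lemma~4.11 of the paper), so $Hom_\K(C(f),L)\twoheadrightarrow Hom_\K(D,L)$ is a strict surjection with abelian kernel $Hom_\K(\mathfrak{S}^{n+1},L)$; this yields the once-shifted fibration sequence directly, from which the statement of the theorem can be recovered by delooping.
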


\begin{remark}
The existence of the Koszul twisting morphism $\tau:C_{\infty} \rightarrow s^{-1}L_{\infty}$ is shown in Section 11 of \cite{Wie1}.
\end{remark}

Before we prove the theorem we will first recall a theorem by Hinich  which states that the category of Lie algebras is a simplicial model category. This theorem can be found as Theorem 2.4 in \cite{Hin2}.

\begin{theorem}\label{thrmHinichsresult}
 The category of shifted Lie algebras admits a simplicial model structure, in which the fibrations are given by the surjective maps, the weak equivalences by quasi-isomorphisms and the cofibrations by maps with the left lifting property with respect to acyclic fibrations. If $L$ and $M$ are two Lie algebras, then the simplicial mapping spaces is given by 
 $$\mathfrak{hom}_n(L,M)=Hom_{s^{-1}Lie}(L,\Omega_n \otimes M).$$
Where $\Omega_n$ is the CDGA of polynomial de Rham forms on the $n$-simplex. 
\end{theorem}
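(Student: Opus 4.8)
The plan is to establish the theorem in two stages: first produce the underlying (unenriched) model structure as a transferred operadic model structure, and then upgrade it to a simplicial model category using the cosimplicial CDGA $\Omega_\bullet$ of polynomial de Rham forms as a functorial frame.

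First I would observe that the three distinguished classes are exactly those of the transferred model structure on algebras over the shifted Lie operad $s\mathcal{LIE}$. Since we work over a field $\K$ of characteristic $0$, every $\K[\Sigma_n]$-module is projective, so $s\mathcal{LIE}$ is $\Sigma$-cofibrant and the model structure on $\mathcal{P}$-algebras of \cite{Hin1} applies: the forgetful functor to chain complexes creates the fibrations (surjections) and the weak equivalences (quasi-isomorphisms), while the cofibrations are forced as the maps with the left lifting property against acyclic fibrations. The one nontrivial input — that the acyclic cofibrations so defined really are weak equivalences — is guaranteed in characteristic $0$ because the free shifted Lie algebra functor, being a direct summand of the tensor algebra with semisimple symmetric-group coefficients, preserves quasi-isomorphisms on the associated filtration quotients.

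Second, for the simplicial structure I would cotensor a shifted Lie algebra $M$ against the simplices using the extension-of-scalars construction introduced above. Concretely, set $M^{\Delta^n}:=\Omega_n \otimes M$, which carries a shifted Lie bracket by extension of scalars, and note that since $\Omega_\bullet$ is a simplicial CDGA the assignment $[n] \mapsto \Omega_n \otimes M$ is a cosimplicial shifted Lie algebra; defining $\mathfrak{hom}_n(L,M)=Hom_{s^{-1}Lie}(L,\Omega_n \otimes M)$ then yields a simplicial set, functorial in $L$ and $M$, and the tensoring $K \otimes L$ over a finite simplicial set $K$ is obtained by the adjoint construction. The content to verify is that $\{\Omega_n \otimes M\}_n$ is a cosimplicial frame on $M$. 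It is levelwise weakly equivalent to the constant object because the augmentation $\Omega_n \to \K = \Omega_0$ is a quasi-isomorphism (polynomial forms on a contractible simplex are acyclic), and tensoring this with $M$ remains a quasi-isomorphism by flatness over the field $\K$; and it is Reedy fibrant because the matching maps are induced by the restriction $\Omega_n \to \Omega_{\partial\Delta^n}$, which is surjective by Sullivan's extension lemma, so the matching maps are surjections of shifted Lie algebras, that is, fibrations.

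With the frame in hand, the standard cosimplicial-frame machinery (Hirschhorn) produces the simplicial enrichment with the mapping spaces as stated and reduces the axioms to the single compatibility axiom SM7. The main obstacle is precisely this axiom: given a cofibration $i:A \to B$ and a fibration $p:X \to Y$ of shifted Lie algebras, one must show that the pullback-corner map
$$\mathfrak{hom}(B,X) \longrightarrow \mathfrak{hom}(A,X)\times_{\mathfrak{hom}(A,Y)}\mathfrak{hom}(B,Y)$$
is a Kan fibration, acyclic whenever $i$ or $p$ is a weak equivalence. I would reduce this, by adjunction, to lifting problems of shifted Lie algebras against the maps $\Omega_n \otimes p$ and against the horn restrictions $\Omega_n \to \Omega_{\Lambda^n_k}$; the decisive geometric facts are that $\Omega_n \to \Omega_{\Lambda^n_k}$ is a surjective quasi-isomorphism of CDGAs (Sullivan's lemma together with contractibility of horns) and that extension of scalars along a surjective (acyclic) quasi-isomorphism of CDGAs sends fibrations to fibrations, respectively acyclic fibrations, of shifted Lie algebras. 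Assembling these facts — the interaction of extension of scalars with the transferred lifting properties, where characteristic $0$ is used once more to keep tensoring exact — is the genuinely technical step, while everything else is formal.
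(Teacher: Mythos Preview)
The paper does not prove this theorem at all; it is simply quoted as Theorem 2.4 of \cite{Hin2} and used as a black box (with a remark that Hinich works with unshifted Lie algebras and one must shift). Your proposal therefore goes well beyond what the paper does, and the outline you give is essentially the standard argument and close to what Hinich himself does: transfer the model structure from chain complexes via the free--forgetful adjunction for $s\mathcal{LIE}$-algebras in characteristic~$0$, then use the simplicial CDGA $\Omega_\bullet$ to build the enrichment by extension of scalars.

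One point deserves tightening. Cosimplicial frames in Hirschhorn's sense produce homotopy function complexes, not a simplicial model category in the strict sense; invoking ``the standard cosimplicial-frame machinery'' does not by itself give you tensors and cotensors over arbitrary simplicial sets together with the required two-variable adjunctions. To obtain an honest simplicial model category one defines the cotensor $M^{K}$ for an arbitrary simplicial set $K$ as the end $\int_{[n]} (\Omega_n\otimes M)^{K_n}$ (equivalently, via $\Omega(K)\otimes M$ where $\Omega(K)$ is Sullivan's polynomial de Rham algebra of $K$), constructs the tensor as its left adjoint, and then checks SM7 directly. Your reduction of SM7 to the surjectivity and acyclicity of the horn restrictions $\Omega_n\to\Omega_{\Lambda^n_k}$ via Sullivan's extension lemma is exactly the right mechanism, and with that amendment the argument is correct.
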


\begin{remark}
 Recall from convention \ref{convallobjectsaredg} that we assumed that all algebras are considered in the category of chain complexes. So in particular when we say Lie algebra we mean differential graded Lie algebra. Also note that Hinich use unshifted Lie algebras, the statement of Theorem \ref{thrmHinichsresult} follows from his results by shifting everything.
\end{remark}


We also need the following lemma.

\begin{lemma}\label{lemsimplicialmodelcategories}
 If $L$ is a shifted Lie algebra of the form $L=\Omega_{s^{-1}Lie}C$, for $C$ some finite-dimensional $C_{\infty}$-algebra and $M$ another shifted Lie algebra. then there is an isomorphism of simplicial sets 
 $$\mathfrak{hom}_{\bullet}(L,M) \cong MC_{\bullet}(Hom_{\K}(C,M)).$$
\end{lemma}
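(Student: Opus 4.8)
The plan is to exhibit a direct isomorphism between the two simplicial sets by comparing them degreewise, using the adjunction between the cobar construction $\Omega_{s^{-1}Lie}$ and the bar construction, together with the convolution $s^{-1}L_\infty$-algebra structure from Theorem~\ref{thrmLinftyconv}. Fix a simplicial degree $n$. By Theorem~\ref{thrmHinichsresult}, the $n$-simplices of $\mathfrak{hom}_\bullet(L,M)$ are $Hom_{s^{-1}Lie}(L,\Omega_n\otimes M)$, and since $L=\Omega_{s^{-1}Lie}C$ is the cobar construction on $C$, the bar–cobar adjunction identifies strict $s^{-1}Lie$-algebra maps $\Omega_{s^{-1}Lie}C\to \Omega_n\otimes M$ with twisting morphisms $C\to \Omega_n\otimes M$ relative to the canonical twisting morphism $\pi:s^{-1}\mathcal{LIE}^\vee\to s^{-1}Lie$ (here $C_\infty$ and $s^{-1}\mathcal{LIE}^\vee$ play the same role; one uses that $\Omega_{s^{-1}Lie}C$ for a $C_\infty$-coalgebra $C$ is the relative cobar construction $\Omega_\tau C$). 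On the other side, by Theorem~\ref{thrmLinftyconv} the Maurer–Cartan elements of $Hom_\K(C,\Omega_n\otimes M)$ are exactly these twisting morphisms, and since $MC_\bullet(Hom_\K(C,M))_n = MC(Hom_\K(C,M)\otimes\Omega_n)=MC(Hom_\K(C,\Omega_n\otimes M))$, the two sets of $n$-simplices coincide as sets.

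Next I would check that this bijection is natural in $n$, i.e.\ that it commutes with the cosimplicial structure maps of $\Omega_\bullet$, so that it assembles into an isomorphism of simplicial sets. This is where I would be careful: both the face/degeneracy maps on $\mathfrak{hom}_\bullet(L,M)$ and those on $MC_\bullet(Hom_\K(C,M))$ are induced by applying $-\otimes\Omega_n$ to the respective structures and then pushing forward along the CDGA maps $\Omega_m\to\Omega_n$, so naturality reduces to the observation that the adjunction isomorphism $Hom_{s^{-1}Lie}(\Omega_{s^{-1}Lie}C,\Omega_n\otimes M)\cong MC(Hom_\K(C,\Omega_n\otimes M))$ is natural in the CDGA variable. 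That naturality is essentially built into the construction of the convolution $L_\infty$-structure in Theorem~\ref{thrmLinftyconv}, which is functorial in the algebra argument; extension of scalars by a CDGA map is a strict algebra morphism, so it is covered.

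The main obstacle, and the step deserving the most care, is matching the \emph{structure maps} precisely: the $s^{-1}L_\infty$-convolution algebra $Hom_\K(C,M)$ has higher brackets $l_n$ for all $n$, and its tensor with $\Omega_n$ uses the extension-of-scalars formula with its specific signs, whereas the Hinich model uses honest differential graded Lie algebra maps into $\Omega_n\otimes M$. One must verify that under the identification $Hom_\K(C,\Omega_n\otimes M)\cong Hom_\K(C,M)\otimes\Omega_n$ the Maurer–Cartan equation for the convolution $L_\infty$-structure transported from $Hom_\K(C,M)$ agrees, element for element and sign for sign, with the twisting-morphism equation coming from the commutative multiplication on $\Omega_n$ and the Lie bracket on $M$ — equivalently, that the twisting morphism $\pi:s^{-1}\mathcal{LIE}^\vee\to s^{-1}Lie$ used implicitly in $\Omega_{s^{-1}Lie}C$ is compatible with the Koszul twisting morphism underlying Theorem~\ref{thrmLinftyconv}. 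Once that compatibility is recorded, the remaining verifications are the routine checks that the bijection respects composition with simplicial operators, and the lemma follows.

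\begin{proof}
Fix $n\geq 0$. By Theorem~\ref{thrmHinichsresult}, the set of $n$-simplices of $\mathfrak{hom}_\bullet(L,M)$ is $Hom_{s^{-1}Lie}(L,\Omega_n\otimes M)$. Since $L=\Omega_{s^{-1}Lie}C$ is a (relative) cobar construction on the finite-dimensional $C_\infty$-coalgebra $C$, the bar–cobar adjunction gives a natural bijection
\[
Hom_{s^{-1}Lie}\bigl(\Omega_{s^{-1}Lie}C,\,\Omega_n\otimes M\bigr)\;\cong\;\{\text{twisting morphisms } C\to\Omega_n\otimes M\}.
\]
On the other hand, $\Omega_n\otimes M$ is the extension of scalars of $M$ by $\Omega_n$, and by Theorem~\ref{thrmLinftyconv} the Maurer–Cartan elements of the convolution $s^{-1}L_\infty$-algebra $Hom_\K(C,\Omega_n\otimes M)$ are precisely these twisting morphisms. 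Since the extension-of-scalars functor is symmetric monoidal in the CDGA variable we have an isomorphism of $s^{-1}L_\infty$-algebras $Hom_\K(C,\Omega_n\otimes M)\cong Hom_\K(C,M)\otimes\Omega_n$, and hence
\[
MC\bigl(Hom_\K(C,M)\otimes\Omega_n\bigr)=MC\bigl(Hom_\K(C,\Omega_n\otimes M)\bigr)=\{\text{twisting morphisms } C\to\Omega_n\otimes M\}.
\]
By definition $MC_\bullet(Hom_\K(C,M))_n=MC(Hom_\K(C,M)\otimes\Omega_n)$, so we obtain a bijection between the $n$-simplices of $\mathfrak{hom}_\bullet(L,M)$ and those of $MC_\bullet(Hom_\K(C,M))$.

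It remains to check that this family of bijections is compatible with the simplicial structure. The simplicial operators on both sides are induced by the cosimplicial CDGA structure of $\Omega_\bullet$: on the Hinich side a map $\theta:\Omega_m\to\Omega_n$ acts by post-composition with $\theta\otimes\mathrm{id}_M$, and on the Maurer–Cartan side it acts by applying $\mathrm{id}_{Hom_\K(C,M)}\otimes\theta$, which is a strict morphism of $s^{-1}L_\infty$-algebras and therefore sends Maurer–Cartan elements to Maurer–Cartan elements. Both operations correspond, under the identifications above, to composing a twisting morphism $C\to\Omega_m\otimes M$ with $\theta\otimes\mathrm{id}_M$. Hence the bijections commute with all face and degeneracy maps, and they assemble into an isomorphism of simplicial sets
\[
\mathfrak{hom}_\bullet(L,M)\;\cong\;MC_\bullet(Hom_\K(C,M)),
\]
as claimed.
\end{proof}
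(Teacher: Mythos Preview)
Your proof is correct and follows essentially the same route as the paper's: identify $n$-simplices via the bar--cobar (twisting morphism) adjunction, use the finite-dimensionality of $C$ to pass from $Hom_\K(C,\Omega_n\otimes M)$ to $Hom_\K(C,M)\otimes\Omega_n$, and then check compatibility with the cosimplicial structure on $\Omega_\bullet$. One small remark: the isomorphism $Hom_\K(C,\Omega_n\otimes M)\cong Hom_\K(C,M)\otimes\Omega_n$ is where the hypothesis that $C$ is finite-dimensional is actually used, so you should invoke that rather than ``symmetric monoidality of extension of scalars,'' which does not by itself give the statement you need.
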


\begin{proof}
 Because of Proposition 11.3.1 in \cite{LV} there is a bijection between the set of shifted Lie algebra homomorphisms between $L$ and $M$ and the set of Maurer-Cartan elements in $Hom_{\K}(C,M)$, this proves that we have a bijection between the zero simplices. To show that we also have bijections between the higher simplices we first note that the set of shifted Lie algebra homomorphisms $Hom_{s^{-1}Lie}(L,\Omega_n \otimes M)$ is isomorphic to the set of Maurer-Cartan elements in $Hom_{\K}(C, \Omega _n \otimes M)$. Since $C$ is finite dimensional, it is straightforward to check that $Hom_{\K}(C, \Omega_n \otimes M)$ is isomorphic to $Hom_{\K}(C, M) \otimes \Omega_n$ and that therefore we have an isomorphism between $MC_n(Hom_{\K}(C,M))$ and $\mathfrak{hom}_n(L,M)$. We leave it to the reader to check that these bijections commute with face and degeneracy maps.
\end{proof}

\begin{proof}[Proof of Theorem \ref{thrmlongexactsequencealgebraiccwcomplexes}]
 To prove Theorem \ref{thrmlongexactsequencealgebraiccwcomplexes} we will first assume that $L$ is a shifted Lie algebra instead of an $s^{-1}L_{\infty}$-algebra. When $L$ is a shifted Lie algebra we have by construction a cofiber sequence of shifted Lie algebras given by  
 $$\Omega_{s^{-1}Lie} \mathfrak{S}^n \rightarrow  \Omega_{s^{-1}Lie} D \rightarrow \Omega_{s^{-1}Lie} C(f).$$
 We can now use Theorem \ref{thrmHinichsresult}, which is Hinich's result that shifted Lie algebras form a simplicial model category. Because of the simplicial model category axiom SM7, we get a fibration sequence of simplicial sets when we apply the functor $\mathfrak{hom}_{\bullet}(-,L)$ to this sequence. This fibration sequence is given by 
 $$\mathfrak{hom}_{\bullet}(\Omega_{s^{-1}Lie} C(f),L) \rightarrow  \mathfrak{hom}_{\bullet}(\Omega_{s^{-1}Lie} D,L) \rightarrow \mathfrak{hom}_{\bullet}(\Omega_{s^{-1}Lie} \mathfrak{S}^n,L).$$
 By Lemma \ref{lemsimplicialmodelcategories} this sequence is isomorphic to the sequence of simplicial sets given by 
 $$MC_{\bullet}(Hom_{\K}(C(f),L)) \rightarrow MC_{\bullet}(Hom_{\K}(D,L)) \rightarrow MC_{\bullet}(Hom_{\K}(\mathfrak{S}^n,L)).$$ 
 So this is a fibration sequence and in particular we get a long exact sequence in homotopy.
 
 To prove Theorem \ref{thrmlongexactsequencealgebraiccwcomplexes} in the case when $L$ is an $s^{-1}L_{\infty}$-algebra, we will first rectify $L$ according to Section 11.4.3 of \cite{LV}. We do this by replacing $L$ by $\Omega_{s^{-1}Lie} B_{s^{-1}L_{\infty}} L$, such that it becomes an actual Lie algebra instead of an $s^{-1}L_{\infty}$-algebra. Then we apply the previous argument to show that 
 $$MC_{\bullet}(Hom_{\K}(C(f),\Omega_{s^{-1}Lie} B_{s^{-1}L_{\infty}} L)) \rightarrow MC_{\bullet}(Hom_{\K}(D,\Omega_{s^{-1}Lie} B_{s^{-1}L_{\infty}}L))$$
 $$ \rightarrow MC_{\bullet}(Hom_{\K}(\mathfrak{S}^n,\Omega_{s^{-1}Lie} B_{s^{-1}L_{\infty}} L)),$$ 
 becomes a fibration sequence of simplicial sets. We now would like to compare this sequence to  
  $$MC_{\bullet}(Hom_{\K}(C(f),L)) \rightarrow MC_{\bullet}(Hom_{\K}(D,L)) \rightarrow MC_{\bullet}(Hom_{\K}(\mathfrak{S}^n,L)).$$
  We will do this by using the $\infty$-quasi-isomorphism $\varphi:\Omega_{s^{-1}Lie} B_{s^{-1}L_{\infty}}L \rightsquigarrow L$. The existence of this $\infty$-morphism is proven in Theorem 11.4.4 of \cite{LV} (this $\infty$-morphism is an $\infty$-quasi-isomorphism since both the twisting morphisms are Koszul). By doing this we get the following diagram of $s^{-1}L_{\infty}$-algebras
  
    $$
 \xymatrix{
 Hom_{\K}(C(f),\Omega_{s^{-1}Lie} B_{s^{-1}L_{\infty}} L) \ar[d] \ar[r]^{ \mbox{    } \varphi_*} & Hom_{\K}(C(f),L) \ar[d] \\
 Hom_{\K}(D,\Omega_{s^{-1}Lie} B_{s^{-1}L_{\infty}}L) \ar[d] \ar[r]^{\mbox{    }  \varphi_*} & Hom_{\K}(D,L) \ar[d] \\
 Hom_{\K}(\mathfrak{S}^n,\Omega_{s^{-1}Lie} B_{s^{-1}L_{\infty}} L) \ar[r]^{\mbox{    }\varphi_*} &   Hom_{\K}(\mathfrak{S}^n,L),
 }
 $$
where the maps $\varphi_*$ are the maps induced by $\varphi$. Since $\varphi$ is a quasi-isomorphism, the induced maps $\varphi_*$ will also be quasi-isomorphisms (see Proposition 5.5 of \cite{RNW1}). Since $C(f)$, $D$ and $\mathfrak{S}^n$ are finite dimensional and $\Omega_{s^{-1}Lie} B_{s^{-1}L_{\infty}}L$ and $L$ are of finite type, we can apply  Theorem \ref{thrmmcthingyorsomething?}, which states that an $\infty_{\alpha}$-quasi-isomorphism of $s^{-1}L_{\infty}$-algebras induces a weak equivalence after applying the functor $MC_{\bullet}$. So we get a commutative diagram of simplicial sets

  $$
 \xymatrix{
 MC_{\bullet}(Hom_{\K}(C(f),\Omega_{s^{-1}Lie} B_{s^{-1}L_{\infty}} L)) \ar[d] \ar[r]^{\varphi_*} & MC_{\bullet}(Hom_{\K}(C(f),L)) \ar[d] \\
 MC_{\bullet}(Hom_{\K}(D,\Omega_{s^{-1}Lie} B_{s^{-1}L_{\infty}}L)) \ar[d] \ar[r]^{\varphi_*} & MC_{\bullet}(Hom_{\K}(D,L)) \ar[d] \\
 MC_{\bullet}(Hom_{\K}(\mathfrak{S}^n,\Omega_{s^{-1}Lie} B_{s^{-1}L_{\infty}} L)) \ar[r]^{\varphi_*} &   MC_{\bullet}(Hom_{\K}(\mathfrak{S}^n,L)).
 }
 $$

 Since the left column is a fibration sequence and all the horizontal maps are weak equivalences, the right column is a homotopy fibration sequence. So in particular it induces a long exact sequence on the level of homotopy groups, which proves the theorem.

 \end{proof}


\part{The Hopf invariants}

\section{Algebraic and rational Hopf invariants}\label{secalgebraichopfinv}

In this section we will recall the definitions of the Sinha-Walter Hopf invariants from \cite{SW2} and the algebraic Hopf invariants from \cite{Wie1}.


In \cite{SW2}, Sinha and Walter define a pairing between the set of homotopy classes of maps and the cohomotopy groups. This is done as follows.

\begin{definition}\label{defHopfpairing}
  Let $X$ be a simply-connected space of finite $\Q$-type and let $X_{\Q}$ be its rationalization. Let $S^n$ be the $n$-dimensional sphere, with $n \geq  2$. There exists a pairing 
 $$\eta:[S^n,X_{\Q}] \otimes \pi^n (X_{\Q}) \rightarrow \Q.$$
 Where $[S^n,X_{\Q}]$ is equipped with the group structure coming from the pinch map of the sphere. The cohomotopy group $\pi^n(X)$ is the $n$th-cohomotopy group from Convention \ref{convcohomotopygroups}, i.e. the $n$th cohomology group of $B_{\iota}A$, for some algebra model $A$ for $X$. This algebra model will be the polynomial de Rham forms in the rational case and the ordinary de Rham forms in the real case. The pairing is defined as follows, given a cohomotopy form $\omega \in \pi^n(X_{\Q})=H^*(A_{PL}^{\bullet}(X))$. We construct a cohomotopy form $f^* \omega$ on the sphere, by defining the form as  the pull back $f^* \omega$ on the sphere $S^n$. The pairing is now given by applying $h^{\vee}$, the dual of the Hurewicz homomorphism (see Definition \ref{defdualhurewicz}) to the cohomotopy form $f^* \omega$. This way we get a cohomology class $h^{\vee}(f^* \omega)$. The pairing is then defined by evaluating the form $h^{\vee}(f^*\omega)$ on the fundamental class of $S^n$, i.e.
 $$\eta(f,\omega)=\int_{S^n}h^{\vee}(f^*\omega).$$
\end{definition}

The following theorem is  Theorem 2.10 of \cite{SW2}.

\begin{theorem}\label{thrmwaltersinhahopfinv}
 The pairing from Definition \ref{defHopfpairing} is a perfect pairing. In particular two maps $f,g \in Map_*(S^n,X_{\Q})$ are homotopic if and only if  we have the following equality:
 $$\int_{S^n}h^{\vee}(f^* \omega) =\int_{S^n} h^{\vee}( g^* \omega )  ,$$
 for all $\omega \in \pi^n(X_{\Q})$.
\end{theorem}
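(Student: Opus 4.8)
The strategy is to show that, after the canonical identifications of both sides with mutually dual vector spaces, $\eta$ \emph{is} the evaluation pairing, which is perfect for trivial reasons; all the content lies in matching $\eta$ with that pairing.

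First I would set up the two identifications. On the left this is purely classical: $[S^{n},X_{\Q}]=\pi_{n}(X_{\Q})$ by the definition of homotopy groups, the pinch-map group structure being exactly the group structure of $\pi_{n}$, and $\pi_{n}(X_{\Q})=\pi_{n}(X)\otimes\Q$ since $X$ is simply connected of finite $\Q$-type; under this identification the class of a map $f$ is $f_{*}(\iota_{n})$, where $\iota_{n}\in\pi_{n}(S^{n})$ is the fundamental class. On the right, by the definition of cohomotopy (Convention \ref{convcohomotopygroups} and the definitions preceding it) $\pi^{n}(X_{\Q})=Hom_{\Q}(\pi_{n}(X_{\Q}),\Q)=\pi_{n}(X_{\Q})^{\vee}$, which is finite dimensional. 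Thus $[S^{n},X_{\Q}]$ and $\pi^{n}(X_{\Q})$ are canonically a finite-dimensional vector space and its linear dual, and their evaluation pairing $\langle-,-\rangle$ is perfect.

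Next I would unwind Definition \ref{defHopfpairing} to obtain $\eta(f,\omega)=\langle f_{*}(\iota_{n}),\omega\rangle$. The assignment $\omega\mapsto f^{*}\omega$ is the map $f^{*}\colon\pi^{n}(X_{\Q})\to\pi^{n}(S^{n})$ coming from the contravariant functoriality of cohomotopy; since $\pi^{*}(-)=Hom_{\Q}(\pi_{*}(-),\Q)$, this $f^{*}$ is the transpose of $f_{*}\colon\pi_{n}(S^{n})\to\pi_{n}(X_{\Q})$, so $\langle f_{*}(\iota_{n}),\omega\rangle=\langle\iota_{n},f^{*}\omega\rangle$. It is therefore enough to treat the universal case $X_{\Q}=S^{n}$ and to check that applying the dual Hurewicz map and integrating over the fundamental class together compute $\langle\iota_{n},-\rangle$ on $\pi^{n}(S^{n})$. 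This is immediate: the rational Hurewicz map $h\colon\pi_{n}(S^{n})\to H_{n}(S^{n})$ is an isomorphism carrying $\iota_{n}$ to $[S^{n}]$, $h^{\vee}$ is (Definition \ref{defdualhurewicz}) its transpose and hence an isomorphism $H^{n}(S^{n})\xrightarrow{\ \sim\ }\pi^{n}(S^{n})$ on the sphere, and $\int_{S^{n}}\colon H^{n}(S^{n})\to\Q$ is evaluation on $[S^{n}]$; so for $\psi\in\pi^{n}(S^{n})$ one gets $\int_{S^{n}}(h^{\vee})^{-1}(\psi)=\big((h^{\vee})^{-1}\psi\big)([S^{n}])=\big((h^{\vee})^{-1}\psi\big)(h(\iota_{n}))=\psi(\iota_{n})=\langle\iota_{n},\psi\rangle$. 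Combining the two displays yields $\eta(f,\omega)=\langle f_{*}(\iota_{n}),\omega\rangle$.

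Consequently $f\mapsto\eta(f,-)$ is the composite $[S^{n},X_{\Q}]\xrightarrow{\ \sim\ }\pi_{n}(X_{\Q})\xrightarrow{\ \sim\ }\pi^{n}(X_{\Q})^{\vee}$, so $\eta$ is nondegenerate in each variable, i.e.\ perfect; in particular $\eta(f,-)=\eta(g,-)$ on all of $\pi^{n}(X_{\Q})$ if and only if $f_{*}(\iota_{n})=g_{*}(\iota_{n})$, i.e.\ if and only if $f\simeq g$ in $Map_{*}(S^{n},X_{\Q})$, which is the stated equivalence. I expect the only genuine difficulty to be the bookkeeping compressed into the third paragraph: one must say precisely what a ``cohomotopy form'' $\omega$ and its pullback $f^{*}\omega$ are at the cochain level (in the Harrison/bar model $B_{\iota}A$ this is the termwise pullback of the forms in a representing cocycle) and verify that this operation is compatible both with the functoriality of $\pi^{*}(-)$ and with the naturality of the dual Hurewicz map, so that the identity $\langle f_{*}(\iota_{n}),\omega\rangle=\langle\iota_{n},f^{*}\omega\rangle$ is actually justified; granting that naturality, the rest is formal.
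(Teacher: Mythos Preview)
The paper does not give its own proof of this theorem: it is stated as Theorem 2.10 of \cite{SW2} and simply cited. So there is no in-paper argument to compare yours against; what you have written is an independent sketch, and on its own terms it is sound.

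Your reduction to the evaluation pairing is the right shape, and the universal-case computation on $S^{n}$ is clean. The only caveat is that the step you flag as ``bookkeeping'' is in fact where essentially all the content lives. In Definition \ref{defHopfpairing} the object $\omega$ is a class in $H^{n}(B_{\iota}A_{PL}(X_{\Q}))$ and $f^{*}\omega$ is the pullback along $B_{\iota}A_{PL}(f)$; your argument uses that this operation agrees, under the identification $H^{*}(B_{\iota}A_{PL}(-))\cong Hom_{\Q}(\pi_{*}(-),\Q)$, with the transpose of $f_{*}$ on homotopy groups. That is a \emph{natural} isomorphism of contravariant functors on simply-connected spaces of finite $\Q$-type, and establishing it is exactly the Sullivan/Quillen input (compare the paper's Theorem preceding Convention \ref{convcohomotopygroups}). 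Once that naturality is granted, everything else in your argument really is formal; but you should be aware that Sinha--Walter's original proof is organised differently, working explicitly with cocycles in the bar complex rather than invoking the naturality statement wholesale, which has the advantage of yielding the concrete integral formulas that motivate the rest of the present paper.

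A minor typographical point: in the paper $h^{\vee}$ is defined as $H^{*}\to\pi^{*}$, so the displayed formula in the theorem implicitly uses $(h^{\vee})^{-1}$ on the sphere, exactly as you wrote it; your reading is the intended one.
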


So in particular the Sinha-Walter version of the Hopf invariant can distinguish points in the space $[S^n,X_{\Q}]$, i.e. determine whether two maps are homotopic or not. An algebraic generalization of Theorem \ref{thrmwaltersinhahopfinv} was given in \cite{Wie1} and is given in the following definition, which can be found in Section 9 of \cite{Wie1}.

\begin{definition}\label{coalghopfinv}
 Let $\mathcal{C}$ be a cooperad and let $C$ and $D$  be $\mathcal{C}$-coalgebras. Denote by $\iota:\mathcal{C} \rightarrow \Omega_{op} \mathcal{C}$ the canonical twisting morphism from $\mathcal{C}$ to its cobar construction. Denote by $H_*(C)$ the homology of the coalgebra $C$ together with the transferred structure coming from the Homotopy Transfer Theorem. Let $i:H_*(C) \rightsquigarrow C$ be an $\infty_{\iota}$-quasi-isomorphism, i.e. a $\mathcal{P}$-coalgebra map $I:\Omega_{\iota}H_*(C) \rightarrow \Omega_{\iota}C$. Let $P:\Omega_{\iota} D \rightarrow H_*(\Omega_{\iota}D)$ be a strict morphism of $\Omega_{op}\mathcal{C}$-algebras. Then we define a map 
 $$mc:Hom_{\mathcal{C}\mbox{-coalg}}(C,D) \rightarrow Hom_{\Q}(H_*(C),H_*(\Omega_{\iota}D)),$$
by sending a map $f \in Hom_{\mathcal{C}\mbox{-coalg}}(C,D)$ to the Maurer-Cartan element corresponding to the following composition of maps
$$\Omega_{\iota} H_* (C) \xrightarrow{I} \Omega_{\iota}C \xrightarrow{\Omega_{\iota}f} \Omega_{\iota}D \xrightarrow{P} H_*(\Omega_{\iota} D) .$$
If we use $\theta$ to denote the canonical twisting morphism $\theta:H_*(C) \rightarrow \Omega_{\iota} H_*(C)$, then the map $mc(f)$ is given by 
$$mc(f)=\theta \circ I \circ \Omega_{\iota} f \circ P .$$ 
 The map $mc$  associates a Maurer-Cartan element to each coalgebra morphism $f:C \rightarrow D$. When we compose this map with the projection onto the moduli space of Maurer-Cartan elements, we get the map 
 $$mc_{\infty}:Hom_{\mathcal{C}\mbox{-coalg}}(C,D) \rightarrow \mathcal{MC}(H_*(C),H_*(\Omega_{\iota}D)),$$
 which we will call the algebraic Hopf invariant.
\end{definition}

\begin{remark}
 The existence of a strict morphism $P$ of $\Omega_{op} \mathcal{C}$-algebras is proven in Proposition 9.1  of \cite{Wie1}.
\end{remark}

\begin{theorem}[\cite{Wie1}, Theorem 10.1]\label{thrmcoalhopfinv}
Two maps $f,g:C \rightarrow D$ of $\mathcal{C}$-coalgebras are homotopic if and only if  $mc_{\infty}(f)=mc_{\infty}(g)$.
\end{theorem}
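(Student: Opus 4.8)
The plan is to reduce the statement to Theorem \ref{thrmLinftyconv}, which characterizes homotopy of maps of $\mathcal{P}$-algebras in terms of gauge equivalence of the corresponding Maurer-Cartan elements in a convolution $s^{-1}L_{\infty}$-algebra. The key observation is that the map $mc$ of Definition \ref{coalghopfinv} is, up to the transfer $\infty$-quasi-isomorphisms $I$ and $P$, exactly the passage from a coalgebra morphism $f:C\to D$ to the twisting morphism/Maurer-Cartan element classifying the composite $\Omega_{\iota}H_*(C)\xrightarrow{I}\Omega_{\iota}C\xrightarrow{\Omega_{\iota}f}\Omega_{\iota}D\xrightarrow{P}H_*(\Omega_{\iota}D)$ inside $Hom_{\K}(H_*(C),H_*(\Omega_{\iota}D))$.

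First I would set up the chain of equivalences at the level of mapping spaces. Two coalgebra maps $f,g:C\to D$ are homotopic iff the induced algebra maps $\Omega_{\iota}f,\Omega_{\iota}g:\Omega_{\iota}C\to\Omega_{\iota}D$ are homotopic as $\Omega_{op}\mathcal{C}$-algebras (this is essentially the definition of weak equivalence/homotopy in the coalgebra model structure, combined with the fact that $\Omega_{\iota}C$ and $\Omega_{\iota}D$ are cofibrant). Next, composing with the $\infty$-quasi-isomorphisms $I:\Omega_{\iota}H_*(C)\rightsquigarrow\Omega_{\iota}C$ and $P:\Omega_{\iota}D\rightsquigarrow H_*(\Omega_{\iota}D)$ coming from the Homotopy Transfer Theorem (Theorem \ref{thrmHTT}) does not change the homotopy class: precomposing with an $\infty$-quasi-isomorphism of cofibrant objects and postcomposing with an $\infty$-quasi-isomorphism both induce bijections on homotopy classes of maps. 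Hence $f\sim g$ iff the composites $P\circ\Omega_{\iota}f\circ I$ and $P\circ\Omega_{\iota}g\circ I$ are homotopic as maps $\Omega_{\iota}H_*(C)\to H_*(\Omega_{\iota}D)$. Finally, I would invoke Theorem \ref{thrmLinftyconv} with the twisting morphism $\iota:\mathcal{C}\to\Omega_{op}\mathcal{C}$ (so $\Omega_{\iota}H_*(C)$ is the cobar of the coalgebra $H_*(C)$ and $H_*(\Omega_{\iota}D)$ is the target $\Omega_{op}\mathcal{C}$-algebra): algebra maps $\Omega_{\iota}H_*(C)\to H_*(\Omega_{\iota}D)$ are classified by Maurer-Cartan elements in $Hom_{\K}(H_*(C),H_*(\Omega_{\iota}D))$, and two such maps are homotopic iff the corresponding Maurer-Cartan elements are gauge equivalent, i.e. equal in the moduli space $\mathcal{MC}(H_*(C),H_*(\Omega_{\iota}D))$. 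Unwinding the definition of $mc_{\infty}$ as the composite of $mc$ with the projection to the moduli space, this says precisely $mc_{\infty}(f)=mc_{\infty}(g)$.

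The main technical obstacle is the step asserting that pre- and postcomposition with the transfer $\infty$-quasi-isomorphisms $I$ and $P$ induces a bijection on sets of homotopy classes. The cleanest way to handle this is to work entirely on the convolution side: by Theorem \ref{thrmbifunctorconvolutionalgebra} the $\infty$-quasi-isomorphisms $I$ and $P$ induce $\infty$-morphisms $I^*$ and $P_*$ of convolution $s^{-1}L_{\infty}$-algebras, and one checks (as in Theorem \ref{thrmmcthingyorsomething?}, using that $C$, $D$, $H_*(C)$ are finite dimensional and the relevant $s^{-1}L_{\infty}$-algebras are of finite type, after restricting to the simply-connected setting where these hypotheses hold) that these induce weak equivalences on the Maurer-Cartan simplicial sets, in particular bijections on $\pi_0$, i.e. on moduli spaces of Maurer-Cartan elements. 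Care is needed to verify that the twisting morphism $\theta\circ I\circ\Omega_{\iota}f\circ P$ of Definition \ref{coalghopfinv} is genuinely the Maurer-Cartan element assigned by $P_*\circ(\Omega_{\iota}f)_{\text{conv}}\circ I^*$ to the canonical Maurer-Cartan element classifying $f$, but this is a naturality check of the bar-cobar adjunction and the conventions in \cite{Wie1}. Since the precise form of $mc$ and the relevant finiteness hypotheses are exactly those recorded in \cite{Wie1}, the proof in the paper presumably just cites Theorem 10.1 of \cite{Wie1}; I would reproduce the argument above to make the dependence on Theorem \ref{thrmLinftyconv} and Theorem \ref{thrmmcthingyorsomething?} explicit.
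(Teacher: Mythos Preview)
Your anticipation is correct: the paper gives no proof of this theorem at all, simply citing it as Theorem 10.1 of \cite{Wie1}. Your sketch is therefore the more substantive document, and its overall logic is sound---reduce to homotopy of the cobar maps, pre- and postcompose with the transfer quasi-isomorphisms $I$ and $P$, then invoke Theorem \ref{thrmLinftyconv} to translate homotopy of maps $\Omega_{\iota}H_*(C)\to H_*(\Omega_{\iota}D)$ into gauge equivalence of the associated Maurer-Cartan elements.

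One small point worth tightening: you describe the main technical obstacle as handling $\infty$-quasi-isomorphisms $I$ and $P$ via Theorems \ref{thrmbifunctorconvolutionalgebra} and \ref{thrmmcthingyorsomething?}, and you note this forces simply-connected finite-type hypotheses. In fact this detour is unnecessary and slightly weakens the result. Both $I$ and $P$ in Definition \ref{coalghopfinv} are \emph{strict} morphisms of $\Omega_{op}\mathcal{C}$-algebras (not merely $\infty$-morphisms): $I$ is by construction the cobar of the $\infty$-morphism $i:H_*(C)\rightsquigarrow C$, hence a genuine algebra map, and $P$ is explicitly required to be strict. Since in Hinich's model structure every object is fibrant and every cobar construction is cofibrant, the quasi-isomorphisms $I$ (between cofibrant objects) and $P$ (between fibrant objects) induce bijections on homotopy classes by standard model-category arguments, with no finiteness hypotheses needed. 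This is presumably closer to how \cite{Wie1} argues, and it keeps the statement at the generality in which it is asserted.
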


In this paper we will also use the dual version of Definition \ref{coalghopfinv}. The advantage of the dual version of Definition \ref{coalghopfinv} and Theorem \ref{thrmcoalhopfinv} is that we can apply this to the de Rham complex of differential forms. As it turns out that the de Rham complex has the advantage that it has a Hodge decomposition which will be important in Part \ref{partexample} of this paper. Under certain finiteness assumptions, the proofs are completely analogous to the proofs of \cite{Wie1} and will be omitted. Since this definition will be applied to the de Rham complex we will grade everything cohomologically.

\begin{definition}\label{defalghopfinv}
 Let $\mathcal{P}$ be an operad and let $A$ and $B$ be $\mathcal{P}$-algebras. Denote by $\pi:B_{op}\mathcal{P} \rightarrow \mathcal{P}$ the canonical operadic twisting morphism. Let $j:B \rightsquigarrow H^*(B)$ be an $\infty_{\pi}$-quasi-isomorphism from $B$ to its homology $H_*(B)$ equipped with a transferred structure coming from the Homotopy Transfer Theorem. Recall from Convention \ref{convcohomotopygroups}  that $\pi^*(A)$ denotes $H^*(B_{\pi}A)$. Let $q:\pi^*(A) \rightarrow B_{\pi}A$ be a strict morphism of $B_{op} \mathcal{P}$-coalgebras. We define a map
 $$mc^{\vee}:Hom_{\mathcal{P}\mbox{-alg}}(A,B)\rightarrow Hom_{\K}(\pi^*(A),H^*(B)),$$
 as the Maurer-Cartan element corresponding to the following composite
 $$\pi^*(A) \xrightarrow{q} B_{\pi} A \xrightarrow{B_{\pi}f} B_{\pi} B \xrightarrow{B_{\pi}j} B_{\pi} H^*(B).$$
 Again we can use the projection onto the moduli space of Maurer-Cartan elements to define $mc^{\vee}_{\infty}$, which gives us a morphism
 $$mc^{\vee}_{\infty}:Hom_{\mathcal{P}\mbox{-alg}}(A,B) \rightarrow \mathcal{MC}(\pi^*(A),H^*(B)).$$
\end{definition}

\begin{remark}
The existence of the map $q$ can be shown by a similar agument as Proposition 9.1 of \cite{Wie1}.
\end{remark}

\begin{theorem}\label{thrmalghopfinv}
 Two maps $f,g:A \rightarrow B$ of $\mathcal{P}$-algebras are homotopic if and only if $mc^{\vee}_{\infty}(f)=mc^{\vee}_{\infty}(g)$.
\end{theorem}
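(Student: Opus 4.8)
The plan is to deduce Theorem~\ref{thrmalghopfinv} from its already-established coalgebraic counterpart, Theorem~\ref{thrmcoalhopfinv}, by dualizing. The key point is that $mc^{\vee}$ and $mc^{\vee}_\infty$ are constructed from exactly the same building blocks as $mc$ and $mc_\infty$ in Definition~\ref{coalghopfinv}, only with the roles of algebras and coalgebras interchanged via linear duality. Concretely, given $\mathcal P$-algebras $A$ and $B$ with $A$ (and the relevant homology) of finite type, passing to linear duals turns $A$ and $B$ into $\mathcal P^{\vee}$-coalgebras $A^{\vee}$ and $B^{\vee}$, turns $\pi^*(A)=H^*(B_\pi A)$ into $\pi_*(A^\vee)=H_*(\Omega_\iota A^\vee)$, and turns the bar construction $B_\pi$ into the cobar construction relative to the dual twisting morphism. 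Under this dictionary the composite $\pi^*(A)\xrightarrow{q}B_\pi A\xrightarrow{B_\pi f}B_\pi B\xrightarrow{B_\pi j}B_\pi H^*(B)$ is precisely the linear dual of the composite $\Omega_\iota H_*(C)\xrightarrow{I}\Omega_\iota C\xrightarrow{\Omega_\iota f^\vee}\Omega_\iota D\xrightarrow{P}H_*(\Omega_\iota D)$ appearing in Definition~\ref{coalghopfinv}, with $C=B^\vee$, $D=A^\vee$, so that $mc^{\vee}(f)$ is dual to $mc(f^\vee)$ and hence $mc^{\vee}_\infty(f)=mc^{\vee}_\infty(g)$ if and only if $mc_\infty(f^\vee)=mc_\infty(g^\vee)$.

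The steps I would carry out, in order, are: (1) set up the finiteness hypotheses carefully --- one needs $A$, $B$, and their (co)homologies to be of finite type so that double dualization is the identity and $(B_\pi A)^\vee\cong \Omega_{\iota}(A^\vee)$ as $\mathcal P^\vee$-coalgebras (this uses that the operad $\mathcal P$ and cooperad $B_{op}\mathcal P$ are reduced, Convention~\ref{convallobjectsaredg}, so arities are manageable); (2) check that linear duality is an equivalence (contravariant) between the relevant homotopy categories of $\mathcal P$-algebras and $\mathcal P^\vee$-coalgebras, so that $f,g\colon A\to B$ are homotopic as $\mathcal P$-algebra maps if and only if $f^\vee,g^\vee\colon B^\vee\to A^\vee$ are homotopic as $\mathcal P^\vee$-coalgebra maps; (3) verify that the Homotopy Transfer Theorem data and the strict morphisms $q$ and $P$ correspond under duality, i.e. that dualizing the transferred structure on $H_*(C)$ gives the transferred structure on $H^*(B)$ and dualizing $P$ gives a valid choice of $q$ (this is where the explicit HTT formulas of Section~\ref{secHTT} are invoked, noting the sign and grading bookkeeping since everything is now graded cohomologically); (4) conclude that $mc^{\vee}(f)=(mc(f^\vee))^\vee$ under the identification $Hom_\K(\pi^*(A),H^*(B))\cong Hom_\K(H_*(B^\vee),H_*(\Omega_\iota A^\vee))$, and that this identification is compatible with gauge equivalence, hence descends to the moduli spaces $\mathcal{MC}$; (5) combine (2) and (4) with Theorem~\ref{thrmcoalhopfinv} to finish.

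The main obstacle I expect is step~(4): verifying that the identification of $mc^{\vee}(f)$ with the dual of $mc(f^\vee)$ is not merely a bijection of sets of Maurer-Cartan elements but is compatible with the $s^{-1}L_\infty$-structures on the two convolution algebras, so that gauge equivalence on one side matches gauge equivalence on the other. This requires knowing that the convolution $s^{-1}L_\infty$-algebra $Hom_\K(\pi^*(A),H^*(B))$ built from the twisting morphism $\pi\colon B_{op}\mathcal P\to\mathcal P$ is isomorphic, as an $s^{-1}L_\infty$-algebra, to $Hom_\K(H_*(B^\vee),H_*(\Omega_\iota A^\vee))$ built from $\iota$; this follows from the naturality in Theorem~\ref{thrmLinftyconv} together with the fact that the relevant bar/cobar twisting morphisms are interchanged by duality, but the degree conventions (the mixture of homological and cohomological gradings, and the degree $-1$ brackets) make the sign verification delicate. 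A secondary subtlety is ensuring the finiteness hypotheses under which "the proofs are completely analogous to the proofs of \cite{Wie1}" --- as flagged in the remark preceding Definition~\ref{defalghopfinv} --- are exactly the ones needed here; once those are pinned down, the argument is a formal consequence of Theorem~\ref{thrmcoalhopfinv}.
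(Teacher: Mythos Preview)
Your proposal is correct in spirit and would work, but it takes a different route from what the paper intends. The paper does not actually give a proof of this theorem: the sentence immediately preceding Definition~\ref{defalghopfinv} says that ``under certain finiteness assumptions, the proofs are completely analogous to the proofs of \cite{Wie1} and will be omitted.'' The intended proof is therefore not a reduction to Theorem~\ref{thrmcoalhopfinv} via linear duality, but rather a direct re-run of the argument of \cite[Theorem~10.1]{Wie1} in the algebra setting: one uses Theorem~\ref{thrmLinftyconv} to identify homotopy classes of $\mathcal P$-algebra maps with gauge classes in the convolution $s^{-1}L_\infty$-algebra, and then checks that the $\infty$-quasi-isomorphisms $q$ and $j$ induce bijections on moduli of Maurer--Cartan elements (via Theorem~\ref{thrmbifunctorconvolutionalgebra} and Theorem~\ref{thrmmcthingyorsomething?}).

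Your approach instead dualizes $A,B$ to $\mathcal P^{\vee}$-coalgebras and invokes the already-proved coalgebra theorem. This is legitimate, and the key compatibility you worry about in step~(4) --- that the two convolution $s^{-1}L_\infty$-algebras are isomorphic, not merely in bijection on $MC_0$ --- is exactly what the paper establishes later as Proposition~\ref{proplinftyisomorphism}. The trade-off is that your route requires $A$ and $B$ themselves to be of finite type (so that $(-)^{\vee}$ is an equivalence and $(B_\pi A)^{\vee}\cong \Omega_\iota(A^{\vee})$), and it requires a separate verification of your step~(2), that linear duality carries the notion of homotopy on $\mathcal P$-algebra maps to the notion of homotopy on $\mathcal P^{\vee}$-coalgebra maps; this is plausible but is not stated anywhere in the paper and is not entirely formal, since the two model structures have different classes of weak equivalences. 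The paper's ``analogous'' proof sidesteps both issues by never leaving the algebra side.
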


\begin{remark}
 Note that under certain finiteness assumptions the $s^{-1}L_{\infty}$-algebras $Hom_{\mathcal{P}\mbox{-alg}}(A,B)$ and $Hom_{\K}(H_*(B^{\vee}),H_*(\Omega_{\iota}A)^{\vee})$ are isomorphic and have therefore isomorphic  moduli spaces of Maurer-Cartan elements, see Proposition \ref{proplinftyisomorphism} for more details.
\end{remark}

The definition of the algebraic Hopf invariants has one serious disadvantage, which is the dependency on the choice of maps $I$ and $P$ in the coalgebra case and the dependency on the maps $j$ and $q$ in the algebra case.

In Proposition \ref{propjisformal} and Proposition \ref{propqisformal}, we recall from \cite{Wie1} that the dependency on the maps $q$ and $P$ can be weakened. In particular any quasi-isomorphisms of chain complexes $P:\Omega_{\iota}D \rightarrow H_*(\Omega_{\iota} D)$ and $q:\pi^*(A) \rightarrow B_{\pi} A$ can be turned into strict morphisms. These results are the equivalent of Proposition 9.1 in \cite{Wie1}, since the proofs are completely analogous they will be omitted.

\begin{proposition}\label{propqisformal}
Let $C$ be a $\mathcal{C}$-coalgebra and $\iota:\mathcal{C} \rightarrow \Omega_{op}\mathcal{C}$ be the canonical operadic twisting morphism. Let $P:\Omega_{\iota}C \rightarrow H_*(\Omega_{\iota}C)$ be a quasi-isomorphism of chain complexes. There exists a strict morphism of $\Omega_{op} \mathcal{C}$-algebras $P':\Omega_{\iota} C \rightarrow H_*(\Omega_{\iota} C)$ which has the linear map $P:C \rightarrow H_*(\Omega_{\iota}C)$ as its Maurer-Cartan element. 
\end{proposition}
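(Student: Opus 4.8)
The plan is to transfer the given (merely linear) quasi-isomorphism $P\colon \Omega_\iota C \to H_*(\Omega_\iota C)$ to a genuine morphism of $\Omega_{op}\mathcal{C}$-algebras by applying the Homotopy Transfer Theorem, and then to check that the Maurer--Cartan element attached to the resulting $\infty$-morphism is exactly the linear map $P$ we started with. First I would note that $H_*(\Omega_\iota C)$ is a chain complex with zero differential, so a linear quasi-isomorphism $P$ together with a choice of splitting provides a contraction
$$
\xymatrix{
\Omega_\iota C \ar@(ul,dl)[]|{h} \ar@/^/[rr]|P
&& H_*(\Omega_\iota C). \ar@/^/[ll]|{i} }
$$
Here $i$ is a section of $P$ picking representative cycles for homology classes, and $h$ is a chain homotopy with $\partial h = iP - \mathrm{Id}$; by the usual argument (replace $h$ by $h\partial h$, then by $(1-iP)h(1-iP)$, etc.) one may arrange that $h$ also satisfies the side conditions $Ph = 0$, $hi = 0$, $hh = 0$ of Definition \ref{defcontraction2}. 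This uses that we are over a field of characteristic zero, so all the relevant splittings exist.

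Next I would invoke Theorem \ref{thrmHTT}: since $\Omega_\iota C$ is an algebra over $\Omega_{op}\mathcal{C}$ and we have a contraction onto $H_*(\Omega_\iota C)$, there is a transferred $\Omega_{op}\mathcal{C}$-algebra structure on $H_*(\Omega_\iota C)$ together with an $\infty_\iota$-quasi-isomorphism $\mathbf{P}\colon \Omega_\iota C \rightsquigarrow H_*(\Omega_\iota C)$. By definition an $\infty_\iota$-morphism of $\Omega_{op}\mathcal{C}$-algebras is the same datum as a strict morphism $P'\colon \Omega_\iota(\Omega_\iota C) \to \Omega_\iota(H_*(\Omega_\iota C))$ of the corresponding free $\Omega_{op}\mathcal{C}$-algebras — but here I must be slightly careful about which "bar-type" construction is meant; the statement asks for a \emph{strict} morphism $P'\colon \Omega_\iota C \to H_*(\Omega_\iota C)$ of $\Omega_{op}\mathcal{C}$-algebras whose \emph{Maurer--Cartan element} (in the convolution $s^{-1}L_\infty$-algebra of Theorem \ref{thrmLinftyconv}, or equivalently the cogenerator-component of the associated $\mathcal{C}$-coalgebra map on bar constructions) is the linear map $P$. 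So the correct reading is: $P'$ is the $\infty_\iota$-morphism $\mathbf{P}$, repackaged as a $\mathcal{C}$-coalgebra map $B_\iota(\Omega_\iota C)\to B_\iota(H_*(\Omega_\iota C))$, and I must verify its arity-$1$ component is $P$.

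The final and only substantive step is this verification. I would appeal to the explicit transfer formulas recalled after Theorem \ref{thrmHTT}: the formula for $p^\nu$ reads
$$
p^{\nu}=(-1)^{\mid \nu \mid} p\, t^{\nu}\, \mathbf{h}_n+ \sum_{i=1}^{u} (-1)^{\mid \nu_i '' \mid}\bigl(p^{\nu_i '} \circ_{e_i} t^{ \nu_i ''}\bigr) \tau_i\, \mathbf{h}_n,
$$
and in arity $1$ the cooperad $\mathcal{C}$ is just $\K$ with no infinitesimal coproduct, so the sum is empty and $\mathbf{h}_1 = h$ contributes through $t^{\nu}$, which in arity $1$ is the internal differential; unwinding, the arity-$1$ component of $\mathbf{P}$ is precisely $p = P$. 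Thus the Maurer--Cartan element of $P'$ has linear part $P$, as required. I expect the main obstacle to be purely bookkeeping: matching the conventions (the shift, the signs, and the identification of an $\infty_\iota$-morphism with its Maurer--Cartan element under Theorem \ref{thrmLinftyconv}) so that "arity-$1$ component of the transferred $\infty$-morphism" is literally identified with "the Maurer--Cartan element equals $P$" in the sense used in Definition \ref{coalghopfinv}. Since this is entirely parallel to Proposition 9.1 of \cite{Wie1}, I would, as the surrounding text does, carry out the identification in one line and omit the remaining routine checks.
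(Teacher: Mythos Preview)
Your overall strategy --- extend $P$ to a contraction and invoke the Homotopy Transfer Theorem --- is the right one, and matches the argument the paper defers to in \cite{Wie1}. But there is a real gap in the passage where you write that ``$P'$ is the $\infty_\iota$-morphism $\mathbf{P}$, repackaged as a $\mathcal{C}$-coalgebra map $B_\iota(\Omega_\iota C)\to B_\iota(H_*(\Omega_\iota C))$''. This is a confusion of types: an $\infty_\iota$-morphism of $\Omega_{op}\mathcal{C}$-algebras \emph{is} by definition such a coalgebra map, but a \emph{strict} morphism $\Omega_\iota C\to H_*(\Omega_\iota C)$ is a much more restrictive object (only the arity-$1$ component of the corresponding coalgebra map is allowed to be nonzero). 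The transferred $\mathbf{P}$ produced by Theorem~\ref{thrmHTT} has nontrivial higher components in general --- that is precisely the content of the transfer formulas --- so $\mathbf{P}$ itself is \emph{not} the strict morphism the proposition asks for.

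The missing step uses the special feature of the source: $\Omega_\iota C$ is a cobar construction. Precompose the coalgebra map $\mathbf{P}\colon B_\iota(\Omega_\iota C)\to B_\iota(H_*(\Omega_\iota C))$ with the unit $\eta\colon C\to B_\iota\Omega_\iota C$ of the bar--cobar adjunction. The composite $\mathbf{P}\circ\eta\colon C\to B_\iota(H_*(\Omega_\iota C))$ is a $\mathcal{C}$-coalgebra map, and by Proposition~11.3.1 of \cite{LV} it corresponds to a \emph{strict} $\Omega_{op}\mathcal{C}$-algebra map $P'\colon\Omega_\iota C\to H_*(\Omega_\iota C)$ (for the transferred structure on the target). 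Its Maurer--Cartan element is the composite $C\xrightarrow{\eta}B_\iota\Omega_\iota C\xrightarrow{\mathbf{P}}B_\iota H_*\xrightarrow{\mathrm{proj}}H_*$; since $\eta$ lands in arity $1$, this equals the arity-$1$ component of $\mathbf{P}$ restricted to $C\hookrightarrow\Omega_\iota C$, which is $P|_C$ by the statement of Theorem~\ref{thrmHTT} (no computation with the $p^\nu$ formulas is needed --- the linear part of the transferred map is $p$ by construction). This is the argument you should present; as the Remark following Proposition~\ref{propjisformal} indicates, one really is choosing the $\Omega_{op}\mathcal{C}$-structure on $H_*(\Omega_\iota C)$ in the process.
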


We will also need the dual version of this proposition.

\begin{proposition}\label{propjisformal}
 Let $A$ be an algebra over an operad $\mathcal{P}$ and let $\pi:B_{op} \mathcal{P}  \rightarrow \mathcal{P}$ be the canonical operadic twisting morphism. Let  $q:H^*(B_{\pi}A) \rightsquigarrow B_{\pi}A$ be a quasi-isomorphism of chain complexes. Then there exists a strict morphism of $B_{op}\mathcal{P}$-coalgebras $q':H^*(B_{\pi}A) \rightarrow B_{\pi}A$, such that $q'$ has $q$ as its Maurer-Cartan element. 
\end{proposition}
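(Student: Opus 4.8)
\textbf{Proof plan for Proposition \ref{propjisformal}.}
The statement is precisely the linear dual of Proposition \ref{propqisformal}, which is in turn the algebra-model analogue of Proposition 9.1 of \cite{Wie1}. So the plan is to reduce to that already-cited result by dualizing, or alternatively to reprove the same rigidification argument directly on the coalgebra side. Concretely, I would proceed as follows. First, recall that since $\pi:B_{op}\mathcal{P} \to \mathcal{P}$ is the canonical operadic Koszul twisting morphism, an $\infty_\pi$-morphism $H^*(B_\pi A) \rightsquigarrow B_\pi A$ of $B_{op}\mathcal{P}$-coalgebras is the same data as a Maurer-Cartan element in the convolution $s^{-1}L_\infty$-algebra $Hom_{\K}(H^*(B_\pi A), \Omega_\pi B_\pi A)$ (using Theorem \ref{thrmLinftyconv} with the cobar-bar adjunction), and a \emph{strict} coalgebra morphism corresponds to such a Maurer-Cartan element whose components in arities $\geq 2$ vanish, i.e. one concentrated in the ``linear part''. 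The given quasi-isomorphism $q$ of chain complexes supplies a linear map $q_1 : H^*(B_\pi A) \to B_\pi A$ which need not be compatible with the coalgebra structure on the nose; the goal is to correct it.

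The key step is a homological-perturbation / obstruction argument. Equip the cofree side with its filtration by coradical degree (or by the weight in $B_{op}\mathcal{P}$), so that the convolution $s^{-1}L_\infty$-algebra becomes complete with respect to a decreasing filtration whose associated graded is computable. Because $q_1$ is a quasi-isomorphism of the underlying complexes, the induced map on the linear part of the bar construction is a quasi-isomorphism, and the standard lifting lemma for Maurer-Cartan elements in complete filtered $L_\infty$-algebras (the same statement invoked in \cite{Wie1} Proposition 9.1) lets us extend $q_1$ inductively through the filtration to a genuine Maurer-Cartan element $q'$ of $Hom_{\K}(H^*(B_\pi A),\Omega_\pi B_\pi A)$ with prescribed linear part. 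At each stage the obstruction to extending lives in a cohomology group that vanishes because $q_1$ is a quasi-isomorphism, exactly as in the dual argument. Translating back, $q'$ is an $\infty_\pi$-morphism; to get a \emph{strict} morphism one observes (again dualizing the argument for $P$ in Proposition \ref{propqisformal}) that one may choose the correction terms so that all higher components vanish — equivalently, one uses that the cofree coalgebra is ``formal'' enough that a linear quasi-isomorphism can be rigidified to a strict coalgebra map, not merely an $\infty$-one. This is where the finiteness/connectivity hypotheses on $A$ (simply-connected, finite $\Q$-type) are used, to guarantee convergence of the inductive construction.

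I would then simply state that all of this is the term-by-term dual of the proof of Proposition \ref{propqisformal} — which is itself asserted in the excerpt to be ``completely analogous'' to \cite{Wie1} Proposition 9.1 — and hence omit the routine verification, matching the paper's stated policy of omitting proofs that are formally dual to ones already in the literature. The one genuine subtlety worth flagging is the passage from an $\infty_\pi$-morphism to a \emph{strict} $B_{op}\mathcal{P}$-coalgebra morphism: this is not automatic for a general $\infty$-morphism, and the argument must exploit that the target $B_\pi A = B_{op}\mathcal{P}(A)$ is cofree, so that a coalgebra map into it is determined by its corestriction to cogenerators and any such linear map (satisfying the compatibility with differentials that the Maurer-Cartan condition encodes) automatically assembles into a strict morphism. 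That cofreeness, together with the vanishing of the relevant obstruction groups, is the crux; everything else is bookkeeping with signs and the explicit convolution $L_\infty$-brackets.
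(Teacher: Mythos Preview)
Your bottom line — that this is the linear dual of Proposition~\ref{propqisformal}, hence of Proposition~9.1 in \cite{Wie1}, and that the details may therefore be omitted — is exactly what the paper does: it simply declares the proof ``completely analogous'' to \cite{Wie1} and omits it.

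However, your sketch of what that argument actually \emph{is} contains a genuine misconception. You describe an obstruction-theoretic extension in which the $B_{op}\mathcal{P}$-coalgebra structure on $H^*(B_\pi A)$ is fixed in advance and ``the obstruction to extending lives in a cohomology group that vanishes because $q_1$ is a quasi-isomorphism.'' The Remark immediately following Proposition~\ref{propjisformal} in the paper says explicitly that this is false: if one fixes an $\Omega_{op}B_{op}\mathcal{P}$-structure on $H^*(B_\pi A)$ and then tries to construct $q'$, the proposition does \emph{not} hold. The mechanism in \cite{Wie1} (and its dual here) is that the coalgebra structure on the source $H^*(B_\pi A)$ is part of the \emph{output}: one builds that structure and the strict map $q'$ simultaneously, modifying the structure so that the given linear $q$ becomes a strict morphism. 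This is not the same as saying that obstructions to strictness vanish for a pre-assigned transferred structure, and your phrases ``choose the correction terms so that all higher components vanish'' and ``the cofree coalgebra is `formal' enough'' point in the wrong direction.

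A minor second point: you invoke ``finiteness/connectivity hypotheses on $A$ (simply-connected, finite $\Q$-type)'' to guarantee convergence. The proposition as stated carries no such hypotheses; convergence of the inductive construction comes from the weight filtration on the bar construction, not from topological finiteness.
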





\begin{remark}
Note that in the proofs of Proposition \ref{propqisformal} and Proposition \ref{propjisformal} in \cite{Wie1} we also changed the  $\Omega_{op}B_{op}\mathcal{P}$-structure on $H^*(B_{\pi}A)$ (resp. $B_{op} \Omega_{op} \mathcal{C}$-structure on $H_*(\Omega_{\iota}C)$. If we would fix an $\Omega_{op}B_{op}\mathcal{P}$-structure on $H^*(B_{\pi}A)$ and then try to construct the map $q$, Proposition \ref{propjisformal} would not be true. A similar comment holds for Proposition \ref{propqisformal}.
\end{remark}

The explicit formulas for the maps $I$ and $j$ are more complicated and involve the formulas of the Homotopy Transfer Theorem. For our applications we will only need formulas for the map $j:B_{\pi}B \rightarrow B_{\pi}H^*(B)$, so we will only construct the map $j$. 

To construct $j$ we will first pick a contraction diagram as in Definition \ref{defcontraction2}. We do this as follows first we pick a map $j':B \rightarrow H^*(B)$, satisfying the conditions of the map $p$ in Definition \ref{defcontraction2}. Then we complete the diagram by choosing maps $i:H^*(B) \rightarrow B$ and $H:B \rightarrow B$. If we apply the Homotopy Transfer Theorem to this diagram we get an $\infty_{\pi}$-morphism $j:B \rightsquigarrow H^*(B) $, i.e. a map $j:B_{\pi} B \rightarrow B_{\pi}H^*(B)$. We can therefore construct the map $j$ as soon as we have the data of a contraction. In Section \ref{secHodgetheory}, we give an explicit choice for this contraction when the source manifold $M$ is a compact oriented Riemannian manifold without boundary.  From now on we will always assume that such a contraction is fixed.

\section{From spaces to Maurer-Cartan elements}\label{secfromspacestomc}

One of the big problems with Theorem \ref{thrmcoalhopfinv} and Theorem \ref{thrmalghopfinv} is that they do not give us a clear way to associate a Maurer-Cartan element to a smooth based map $f:M\rightarrow N$ of manifolds. The goal of this section is to explain how we can compute the Maurer-Cartan element associated to a map $f$. We explain how to evaluate the morphism $mc:Map_*(X,Y) \rightarrow Hom_{\R}(H_*(X),\pi_*(Y))$ by computing a finite sequence of integrals. By doing this we reduce the problem of deciding whether two maps $f,g:M \rightarrow N$ are real homotopy equivalent to a completely algebraic problem. In practice this algebraic problem is often solvable by straightforward but sometimes very tedious computations. Since these computations can be very tedious we will in Section \ref{secmodulispaces} develop some further methods to avoid these tedious computations in certain special cases. 

There are roughly two approaches to applying the algebraic Hopf invariants to rational homotopy theory. The first approach involves coalgebra models and the second approach involves taking algebra models. In \cite{Wie1}, we used the coalgebra approach by using Quillen's functor $\mathcal{C} \lambda:Top_{*,1} \rightarrow CDGC_{\geq 2}$ (see Theorem 1 of \cite{Quil1}, or Section \ref{secratmodels} of this paper), this is a functor from the category of simply-connected topological spaces to simply-connected  cocommutative coalgebras and induces an equivalence on the level of homotopy categories.

In the previous section we defined the algebraic Hopf invariant maps 
$$mc:Hom_{\mathcal{C}\mbox{-coalg}}(C,D)\rightarrow Hom_{\R}(H_* (C),H_*(\Omega_{\iota} D)),$$
 for maps between $\mathcal{C}$-coalgebras $C$ and $D$ and 
 $$mc^{\vee}:Hom_{\mathcal{P}\mbox{-alg}}(A,B) \rightarrow Hom_{\R}(H_*(B_{\iota}A),H_*(B)),$$
  for maps between $\mathcal{P}$-algebras $A$ and $B$. To extend these maps to a map $mc:Map_*(M,N)\rightarrow Hom_{\R}(H_*(M),\pi_*(N))$ and a map $mc^{\vee}:Map_*(M,N)\rightarrow Hom_{\R}(\pi^*(N),H^*(M))$, we need to pick algebraic models for the manifolds $M$ and $N$. Then we apply the algebraic Hopf invariants to those models. In Section 12 of \cite{Wie1} we did this by using Quillen's functor $\mathcal{C}\lambda:Top_*,1 \rightarrow CDGC$ (see Theorem 1 of \cite{Quil1}). 

The main issue with the functor $\mathcal{C} \lambda$ is that it is not explicit  and therefore not fit to be used for concrete calculations. In this paper we will therefore use the second  approach by using the de Rham complex $\Omega^{\bullet}$, instead of the functor $\mathcal{C} \lambda$. The de Rham complex has the advantage that is very explicit and that is good for doing computations. 

\begin{remark}
Technically the functor $\mathcal{C} \lambda$ is defined as a functor from topological spaces to cocommutative coalgebras over the rationals, we make it in to a functor to cocommutative coalgebras over the real numbers by taking the tensor product with $\R$.
\end{remark}

\begin{definition}
Let $M$ and $N$ be simply-connected manifolds such that $N$ is of finite $\R$-type and $M$ is compact and has a finite dimensional real cohomology ring  $H^*(M;\R)$. Denote by $\Omega^{\bullet}(M)$ and $\Omega^{\bullet}(N)$ the de Rham complexes of $M$ and $N$. The Hopf invariant map $mc^{\vee}:Map_*(M,N) \rightarrow $ $ Hom_{\R}(\pi^*(N), H^*(M))$ is defined by sending a map $f:M \rightarrow N$ to $mc^{\vee}(\Omega^{\bullet}(f))$, the Maurer-Cartan element corresponding to the map $\Omega^{\bullet}(f) : \Omega^{\bullet}(N) \rightarrow \Omega^{\bullet}(M)$. By a small abuse of notation we will denote this map by $mc^{\vee}$ as well.
\end{definition}

\begin{remark}
 Note that we still need to fix the maps $j$ and $q$ from Definition \ref{defalghopfinv}. In Section \ref{secHodgetheory} we will give an explicit description of the  map $j$ and because of Proposition \ref{propqisformal} any choice of inclusion $q':\pi^*(N)\rightarrow B_{\iota} \Omega_{\bullet}(N)$ will define the map $q$.
\end{remark}

\begin{remark}
 It is possible to replace the de Rham forms by other types of rational or real models for the spaces $M$ and $N$. An example of such a model would be the polynomial de Rham forms. As we will see in Section \ref{secHodgetheory}, the  de Rham forms have the important advantage that they have a Hodge decomposition, which will be necessary for obtaining explicit formulas. In Section \ref{secalternativeapproaches} we will briefly discus how some of these other approaches work. 
\end{remark}

The relation between the coalgebra and algebra approach is the following.  In the coalgebra approach we define a map $mc:Map_*(M,N) \rightarrow Hom_{\R}(H_*(M),\pi_*(N))$ and in the algebra approach we define a map $mc^{\vee}:Map_*(M,N) \rightarrow Hom_{\R}(\pi^*(N),H^*(M))$. It turns out that under some finiteness conditions the  $s^{-1}L_{\infty}$-algebras $Hom_{\R}(H_{*}(M),\pi_*(N))$ and  $Hom_{\R}(\pi^*(N),H^*(M))$ are canonically isomorphic. We will prove this in the following proposition. 

\begin{proposition}\label{proplinftyisomorphism}
Let $M$ and $N$ be manifolds, such that $H_*(M)$ is finite dimensional and $\pi_*(N)$ is of finite $\R$-type. Assume that $H_*(M)$ has a $C_{\infty}$-coalgebra structure that makes it into a model for $M$. Further assume that $\pi_*(N)$ has an $s^{-1}L_{\infty}$-algebra structure that makes into a model for $N$. Because of the finiteness assumptions $H^*(M)=H_*(M)^{\vee}$ is a $C_{\infty}$-algebra model for $M$ and $\pi^*(N)=\pi_*(N)^{\vee}$ is an $s^{-1}L_{\infty}$-coalgebra model for $N$. Let $\alpha:C_{\infty}^{\vee} \rightarrow s^{-1}L_{\infty}$ be the Koszul twisting morphism from the $C_{\infty}$-cooperad to the $s^{-1}L_{\infty}$-operad. Denote by $\alpha^{\vee}:s^{-1}L_{\infty}^{\vee} \rightarrow C_{\infty}$ the dual twisting morphism. Then there is a canonical isomorphism of $s^{-1}L_{\infty}$-algebras $\varphi:Hom_{\R}(H_*(M),\pi_*(N))\rightarrow Hom_{\R}(\pi^*(N),H^*(M))$, given by sending a map $f:H_*(M) \rightarrow \pi_*(N)$ to its dual $f^{\vee}:\pi^*(N) \rightarrow H^*(M)$.
\end{proposition}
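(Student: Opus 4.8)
The plan is to show that the linear-duality map $\varphi\colon f\mapsto f^{\vee}$ is not merely a well-defined linear isomorphism but is strictly compatible with all of the $s^{-1}L_{\infty}$-brackets on the two convolution algebras. The guiding principle is that the $s^{-1}L_{\infty}$-structure of Theorem \ref{thrmLinftyconv} on a convolution algebra $Hom_{\K}(C,A)$ is built functorially out of five pieces of data: the structure maps of the cooperad $\mathcal{C}$ and the operad $\mathcal{P}$, the twisting morphism $\tau\colon\mathcal{C}\to\mathcal{P}$, the decomposition map $\Delta_C$ of $C$, and the composition map $\gamma_A$ of $A$; and linear duality carries each of these to the corresponding dual datum, interchanging the roles of coalgebra and algebra. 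It is therefore cleanest to prove the following general statement and then specialize: for an operadic twisting morphism $\tau\colon\mathcal{C}\to\mathcal{P}$ between reduced finite-type (co)operads, a finite-type $\mathcal{C}$-coalgebra $C$ and a finite-type $\mathcal{P}$-algebra $A$, linear duality is an isomorphism of $s^{-1}L_{\infty}$-algebras $Hom_{\K}(C,A)\cong Hom_{\K}(A^{\vee},C^{\vee})$, where the target carries the convolution structure associated to the dual twisting morphism $\tau^{\vee}\colon\mathcal{P}^{\vee}\to\mathcal{C}^{\vee}$. The proposition is then the instance $\mathcal{C}=C_{\infty}^{\vee}$, $\mathcal{P}=s^{-1}L_{\infty}$, $\tau=\alpha$, $C=H_*(M)$, $A=\pi_*(N)$, using $(C_{\infty}^{\vee})^{\vee}=C_{\infty}$, $(s^{-1}L_{\infty})^{\vee}=s^{-1}L_{\infty}^{\vee}$, $H_*(M)^{\vee}=H^*(M)$ and $\pi_*(N)^{\vee}=\pi^*(N)$.

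First I would record the underlying linear isomorphism. Since $H_*(M)$ is finite dimensional and $\pi_*(N)$ is of finite $\R$-type, in each degree $Hom_{\R}(H_*(M),\pi_*(N))$ is a finite direct sum of finite-dimensional spaces, so the canonical map to the double dual is an isomorphism and $f\mapsto f^{\vee}$ is a grading-respecting linear isomorphism onto $Hom_{\R}(\pi^*(N),H^*(M))$ (this uses the mixed homological/cohomological conventions of the excerpt, under which $\pi^*(N)=\pi_*(N)^{\vee}$ and $H^*(M)=H_*(M)^{\vee}$ sit in the matching degrees). One then checks that $\varphi$ intertwines the differentials: the convolution differential $l_1$ is $\partial(f)=d_A\circ f-(-1)^{|f|}f\circ d_C$, and under duality $d_A^{\vee}=d_{A^{\vee}}$ and $d_C^{\vee}=d_{C^{\vee}}$, so $\partial(f)^{\vee}=\partial(f^{\vee})$ once the transposition signs are absorbed into the convention for $\partial$ on the dual side.

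The heart of the matter is the compatibility with the higher brackets $l_n$, $n\ge 2$. Recall that these are assembled from $\Delta_C$, $\gamma_A$, the (co)operad structure maps of $\mathcal{C}$ and $\mathcal{P}$, and $\tau$, into composites of the same shape as those appearing in Definitions \ref{definfinitymorphism1}--\ref{definfinitymorphism2}, decorated with Koszul signs. Applying linear duality to such a composite and using the finite-type identifications $\mathcal{C}(n)^{\vee}=\mathcal{C}^{\vee}(n)$ and $\mathcal{P}(n)^{\vee}=\mathcal{P}^{\vee}(n)$ (which are $\Sigma_n$-equivariant), together with $\Delta_C^{\vee}=\gamma_{C^{\vee}}$, $\gamma_A^{\vee}=\Delta_{A^{\vee}}$ and $\tau^{\vee}\colon\mathcal{P}^{\vee}\to\mathcal{C}^{\vee}$, produces exactly the composite defining the corresponding bracket on $Hom_{\K}(A^{\vee},C^{\vee})$ relative to $\tau^{\vee}$; the interchange of the maps $\Delta$ and $\gamma$ is precisely the exchange of the roles of coalgebra and algebra that distinguishes the two sides. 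Hence $\varphi\bigl(l_n(f_1,\dots,f_n)\bigr)=l_n(\varphi f_1,\dots,\varphi f_n)$ for every $n$, so $\varphi$ is a strict isomorphism of $s^{-1}L_{\infty}$-algebras, and it is canonical because no choices enter its definition.

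The main obstacle is not conceptual but lies in the sign and (de)suspension bookkeeping, of which there are two delicate points. First, one must check that linear duality intertwines the relevant (co)operads compatibly with the internal shifts built into the shifted-bracket conventions (all operations have degree $-1$), so that $\alpha$ genuinely dualizes to a twisting morphism $\alpha^{\vee}\colon s^{-1}L_{\infty}^{\vee}\to C_{\infty}$ in the required sense. Second, one must verify that the Koszul signs $\theta$ of Definitions \ref{definfinitymorphism1}--\ref{definfinitymorphism2} reappear correctly after dualization, since transposing a tensor product of homogeneous maps introduces a sign $(-1)^{|\phi||\psi|}$ and the evaluation pairing satisfies $\langle f^{\vee},x\rangle=\pm\langle f,x\rangle$ with a degree-dependent sign. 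These checks are routine but genuinely error-prone; modulo them the proposition follows formally from the functoriality of the convolution $s^{-1}L_{\infty}$-algebra construction under linear duality.
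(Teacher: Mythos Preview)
Your proposal is correct and follows essentially the same route as the paper's own proof: both arguments write out the bracket $l_n$ on $Hom_{\R}(C,L)$ as the composite $\gamma_n\circ(\tau\otimes f_{\sigma(1)}\otimes\cdots\otimes f_{\sigma(n)})\circ\Delta_n$, observe that linear duality sends this composite to $\Delta_n^{\vee}\circ(\tau^{\vee}\otimes f_{\sigma(1)}^{\vee}\otimes\cdots\otimes f_{\sigma(n)}^{\vee})\circ\gamma_n^{\vee}$, and identify the latter with the corresponding bracket on $Hom_{\R}(L^{\vee},C^{\vee})$ relative to the dual twisting morphism. Your framing as a general statement about arbitrary Koszul twisting morphisms $\tau\colon\mathcal{C}\to\mathcal{P}$ is a mild packaging difference, and your explicit separation of the $l_1$ case and acknowledgment of the sign bookkeeping are more careful than the paper (which simply declares the dualization ``straightforward to check''), but the underlying argument is the same.
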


\begin{remark}
 It is a straightforward check that the dual of a twisting morphism is again a twisting morphism. See also Lemma 7.4 of \cite{RNW1}.
\end{remark}

\begin{proof}
To prove the proposition we first observe that because of our finiteness assumptions the map $\varphi$ is an isomorphism of graded vector spaces. To show that $\varphi$ is an isomorphism of $s^{-1}L_{\infty}$-algebras as well,  we need to show that $\varphi$ commutes with the $s^{-1}L_{\infty}$-operations $l_n$. 

To shorten the notation a bit we will denote $H_*(M)$ by $C$, $H^*(M)$ by $C^{\vee}$, $\pi_*(N)$ by $L$ and $\pi^*(N)$ by $L^{\vee}$. According to the proof of Theorem 7.1 of \cite{Wie1}, the operation $$l_n:Hom_{\R}(C,L)^{\otimes n} \rightarrow Hom_{\R}(C,L)$$
is given by 
$$C \xrightarrow{\Delta_n} C_{\infty}^{\vee}(n) \otimes C^{\otimes n} \xrightarrow{\sum_{\sigma \in S_n}\tau \otimes f_{\sigma (1)} \otimes ... \otimes f_{\sigma (n)}} s^{-1}L_{\infty}(n) \otimes L^{\otimes n} \xrightarrow{\gamma_n} L,$$
where $\Delta_n:C \rightarrow C_{\infty}^{\vee}(n) \otimes C$ is the arity $n$ part of the coproduct of $C$ and $\gamma_n:s^{-1}L_{\infty}\otimes L^{\otimes n} \rightarrow L$ is the arity $n$ part of the $s^{-1}L_{\infty}$ structure on $L$. It is straightforward to check that the dual of this map is equal to 
$$L^{\vee} \xrightarrow{\gamma_n^{\vee}} s^{-1}L^{\vee}_{\infty} \otimes (L^{\vee})^{\otimes n} \xrightarrow{\sum_{\sigma \in S_n}\tau^{\vee} \otimes f^{\vee}_{\sigma (1)} \otimes ... \otimes f^{\vee}_{\sigma (n)}} C_{\infty} \otimes (C^{\vee})^{\otimes n} \xrightarrow{\Delta^{\vee}_n} C^{\vee}.$$
This is the same as the operation $l_n:Hom_{\R}(L^{\vee},C^{\vee})^{\otimes n} \rightarrow Hom_{\R}(L^{\vee},C^{\vee})$ applied to the maps $f_1^{\vee},...,f_n^{\vee}$. The morphism $\varphi$ therefore commutes with the $s^{-1}L_{\infty}$-structures and is therefore an isomorphism of $s^{-1}L_{\infty}$-algebras which proves the proposition.
\end{proof}

 Suppose that we have fixed the maps $j$  and $q$ from Definition \ref{defalghopfinv}, then to every map $f:M \rightarrow N$ we associate the following composition of maps
$$\pi^*(N)=H^*(B_{\iota}\Omega^{\bullet}(N)) \xrightarrow{j} B_{\iota} \Omega^{\bullet}(N) \xrightarrow{B_{\iota} \Omega^{\bullet}(f)} B_{\iota}\Omega^{\bullet}(M) \xrightarrow{i} B_{\iota} H^*(M) .$$ 
So we get a strict morphism $j:\pi^*(N) \rightarrow B_{\iota} H^*(M)$ and a  linear map $mc^{\vee}(f):\pi^*(N) \rightarrow H^*(M)$. If we assume that $H^*(M)$ is finite dimensional and $\pi^*(N)$ is of finite type then we can dualize this map to get a strict morphism $\Omega_{\iota} H_*(M) \rightarrow \pi_*(N)$, which is defined by a  linear map $mc(f):H_*(M) \rightarrow \pi_*(N)$. So even though $\Omega^{\bullet}M$ and $\Omega^{\bullet}N$ are not of finite type we can still dualize the linear map $mc^{\vee}(f)$.  

This way we get an identification between the Maurer-Cartan elements in $Hom_{\R}(H_*(M) ,\pi_*(N))$ and $Hom_{\R}(\pi^*(N),H^*(M))$. So even though we cannot compute the Maurer-Cartan element $mc(f)$ directly, we can compute it indirectly by computing $mc^{\vee}$.


An immediate corollary of Proposition \ref{proplinftyisomorphism}, is the following corollary whose proof we omit.

\begin{corollary}
 If $H^*(M)$ is finite dimensional and $\pi^*(N)$ is of finite $\R$-type, then we have an isomorphism of $s^{-1}L_{\infty}$-algebras
 $$Hom_{\R}(H_*(M) ,\pi_*(N)) \cong Hom_{\R}(\pi^*(N),H^*(M)) .$$
 In particular we get a bijection 
 $$MC_0(Hom_{\R}(H_*(M) ,\pi_*(N))) \cong MC_0(Hom_{\R}(\pi^*(N),H^*(M)))$$
 between the sets of Maurer-Cartan elements.
\end{corollary}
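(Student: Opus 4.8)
The plan is to read this off directly from Proposition~\ref{proplinftyisomorphism}. The first thing I would check is that the hypotheses of that proposition are in force in the present setting. Under the standing assumptions ($M$ compact simply-connected with finite-dimensional real cohomology, $N$ simply-connected of finite $\R$-type), the Homotopy Transfer Theorem~\ref{thrmHTT}, applied to a Hodge-type contraction of $\Omega^{\bullet}(M)$ onto its cohomology, equips $H_*(M)=H^*(M)^{\vee}$ with a $C_{\infty}$-coalgebra structure modelling $M$, and dually makes $H^*(M)$ into a $C_{\infty}$-algebra model; likewise, choosing a $C_{\infty}$-coalgebra model $C$ for $N$ and invoking Theorem~\ref{thrmquillenscanlfunctors}, $\pi_*(N)=H_*(\Omega_{\tau}C)$ acquires a transferred $s^{-1}L_{\infty}$-structure modelling $N$, with linear dual $\pi^*(N)$ the corresponding $s^{-1}L_{\infty}$-coalgebra model. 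The remaining two hypotheses, that $H_*(M)$ be finite dimensional and $\pi_*(N)$ of finite $\R$-type, are exactly the hypotheses of the corollary, since finite-dimensionality and finite $\R$-type pass between a space and its linear dual. Proposition~\ref{proplinftyisomorphism} therefore applies and yields the canonical isomorphism of $s^{-1}L_{\infty}$-algebras
\[
\varphi\colon Hom_{\R}(H_*(M),\pi_*(N)) \;\xrightarrow{\ \cong\ }\; Hom_{\R}(\pi^*(N),H^*(M)),\qquad f\mapsto f^{\vee},
\]
which is the first assertion of the corollary.

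For the second assertion I would invoke only the formal fact that a strict isomorphism of $s^{-1}L_{\infty}$-algebras restricts to a bijection on Maurer-Cartan sets. Being strict, $\varphi$ commutes with every operation $l_n$ (in particular with the differential $l_1$) and it preserves degree, so for a degree-$0$ element $x$ we have $l_n(\varphi(x),\dots,\varphi(x))=\varphi\bigl(l_n(x,\dots,x)\bigr)$ for all $n$; hence $\sum_{n\ge 1}\frac{1}{n!}l_n(x,\dots,x)=0$ if and only if the corresponding equation holds for $\varphi(x)$. Since $\varphi$ is bijective on the underlying graded vector spaces, it thus induces a bijection $MC_0(Hom_{\R}(H_*(M),\pi_*(N)))\cong MC_0(Hom_{\R}(\pi^*(N),H^*(M)))$, as claimed. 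One small point to record along the way is that the relevant sums are finite modulo each nilpotent quotient: the degree-wise nilpotence assumptions on the $s^{-1}L_{\infty}$-models force the convolution algebras to be degree-wise nilpotent, as noted after Theorem~\ref{thrmLinftyconv}, so the Maurer-Cartan equation is well-defined on both sides.

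There is essentially no obstacle in the corollary itself; the only delicate step has already been carried out inside Proposition~\ref{proplinftyisomorphism}, namely the sign computation showing that dualization intertwines the convolution $s^{-1}L_{\infty}$-bracket built from the twisting morphism $\alpha\colon C_{\infty}^{\vee}\to s^{-1}L_{\infty}$ with the one built from the dual twisting morphism $\alpha^{\vee}\colon s^{-1}L_{\infty}^{\vee}\to C_{\infty}$. Granting that verification, the corollary is immediate, which is why the proof is omitted.
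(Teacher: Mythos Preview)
Your proposal is correct and follows exactly the route the paper intends: the corollary is stated immediately after Proposition~\ref{proplinftyisomorphism} with the remark that its proof is omitted, and you simply spell out why it is an immediate consequence of that proposition together with the trivial observation that a strict $s^{-1}L_{\infty}$-isomorphism carries Maurer--Cartan elements to Maurer--Cartan elements. Your added remarks on verifying the hypotheses and on degree-wise nilpotence are harmless elaborations of what the paper leaves implicit.
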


\begin{convention}
 Since the $s^{-1}L_{\infty}$-algebras $Hom_{\R}(H_*(M),\pi_*(N))$ and  $Hom_{\R}(\pi^*(N),H^*(M))$ are canonically isomorphic, we will from now on identify them and denote both the algebraic Hopf invariant maps by $mc$, instead of $mc$ and $mc^{\vee}$. We will further assume that the target of both maps is  $Hom_{\R}(H_*(M),\pi_*(N))$, i.e. the map $mc:Map_*(M,N))\rightarrow Hom_{\R}(H_*(M).\pi_*(N))$  will denote the composition $\varphi \circ mc^{\vee}$, where $\varphi$ is the isomorphism from Proposition \ref{proplinftyisomorphism}.
\end{convention}

\begin{remark}
 Note that we only need to fix the maps $j$ and $q$, if we would also fix $I$ and $P$ we would not necessarily get an isomorphism of $s^{-1}L_{\infty}$-algebras.
\end{remark}

Now that we have established that $mc^{\vee}$ and $mc$ are essentially the same map, we still need to compute them. To do this we use the canonical pairing
$$\eta:Hom_{\R}(H_*(M),\pi_*(N))\otimes \left(H_*(M) \otimes \pi^*(N)\right)\rightarrow \R$$
given by 
$$\eta(f,\alpha,\otimes\omega)= f^*\omega(\alpha) ,$$
For $f\in Hom_{\R}(H_*(M),\pi_*(N))$, $\alpha \in H_*(M)$ and $\omega \in \pi^*(N)$. Since we assumed that $H_*(M)$ is finite dimensional and $\pi_*(N)$ is of finite type this pairing is perfect. So if we fix a map $f:M \rightarrow N$ we  get a pairing between $H_*(M)$ and $\pi^*(N)$.

\begin{definition}
 Let $f:M \rightarrow N$ be a map between simply-connected smooth manifolds, such that $M$ is compact and $H_*(M)$ is finite dimensional and $\pi_*(N)$ is of finite $\R$-type. Then we define the pairing
 $$\eta_f:H_*(M) \otimes \pi^*(N) \rightarrow \R$$
 by 
 $$\eta_f(\alpha,\omega)=(mc(f)^*(\omega))(\alpha) .$$
\end{definition}

Since the pairing $\eta$ is perfect we get a basis for $Hom_{\R}(H_*(M),\pi_*(N))$ by picking  bases for $H_*(M)$ and $\pi^*(N)$. To do this, let $\{\alpha_i\}_{i \in I}$ be a basis for $H_*(M)$ and $\{\omega_j\}_{j \in J}$ a basis for $\pi^*(N)$. Denote by $\{\omega_j^{\vee}\}_{j\in J}$ the dual basis for $\pi^*(N)^{\vee}=\pi_*(N)$. A basis for $Hom_{\R}(H_*(M),\pi_*(N))$ is then given by $\{\varphi_{i,j}\}_{i\in I,j \in J}$, where $\varphi_{i,j}$ is the function $\varphi_{i,j}:H_*(M) \rightarrow \pi_*(N)$ which is defined by $\varphi_{i,j}(\alpha_i)=\omega_j^{\vee}$ and zero otherwise.

The Maurer-Cartan element $mc(f)$, of a map $f:M \rightarrow N$  between manifolds can now be expressed in the basis $\{\varphi_{i,j}\}$. So in particular $mc(f)=\sum_{i,j} \lambda_{i,j}\varphi_{i,j}$ for  coefficients $\lambda_{i,j} \in \R$. 

In the following lemma we will show how we can compute these coefficients $\lambda_{i,j}$ in terms of certain integrals. But first we need to introduce some notation.

To shorten notation a bit we will from now on denote $\Omega^{\bullet}(M)$ by $B$, $\Omega^{\bullet}(N)$ by $A$ and by abuse of notation we will denote the induced map $\Omega^{\bullet}(f):A \rightarrow B$ by $f$. We will also assume that we have a contraction of vector spaces between $B$ and $H^*(B)$, i.e. we have a diagram  

$$\xymatrix{
B \ar@(ul,dl)[]|{h} \ar@/^/[rr]|p
&& H^*(B), \ar@/^/[ll]|{i} }$$

such that the maps satisfy the conditions from Definition \ref{defcontraction2} and Theorem \ref{thrmHTT}. Finally we will also assume we  that we have a linear map $q:H^*(B_{\pi}A) \rightarrow  B_{\pi}A$. We assume that all these maps are just linear maps and not compatible with any of the algebraic structures. 

In practice these assumptions are not very restrictive. In Section \ref{secHodgetheory}, we will explain that when the space $M$ is a compact oriented Riemannian manifold, there are canonical choices for the maps $i$, $p$ and $h$. The map $q:H^*(B_{\pi}A) \rightarrow B_{\pi}A$ is just an explicit choice of a set of cocycles in $B_{\pi}A$ representing the cohomotopy of $A$, i.e. every element $\omega \in H^*(B_{\pi}A)$ will be send to a linear combination of elements  the form $\nu \otimes a_1 \otimes ...\otimes a_n$, with $\nu \in \mathcal{C}(n)$ and $a_i \in A$. Using these assumptions we get the following formulas for the coefficients $\lambda_{i,j}$.

\begin{lemma}\label{lemconcreteformulas}
 Let $f:M \rightarrow N$ be a map and let $\omega \in \pi^*(N)$ and $\alpha \in H_*(M)$,  the pairing $\eta_f$ between $\omega$ and $\alpha$ is given by the following formula:
 $$\eta_f(\alpha,\omega)=\sum_{n \geq 1} \int_{\alpha} \left((-1)^{\mid \nu \mid} p t^{\nu} \mathbf{h}_n(f^*q_n(\omega)) + \sum_{i=1}^{u} (-1)^{\mid \nu_i '' \mid}(p^{\nu_i '} \circ_{e_i} t^{ \nu_i ''}) \tau_i \mathbf{h}_n (f^*q_n(\omega)) \right)  .$$
 where $q_n(\omega)$ is the weight $n$ component of $q(\omega)$, the second sum runs over the coproduct of the $s^{-1}L_{\infty}$-operad and the maps $\mathbf{h}_n$ are as in Section \ref{secHTT} (see Theorem \ref{thrmHTT} and Section \ref{secHTT} for more details).
\end{lemma}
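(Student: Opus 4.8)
The plan is to unwind the definition of $\eta_f$ and then substitute the explicit Homotopy Transfer Theorem formulas of Section \ref{secHTT} for the transferred map $j$. By definition $\eta_f(\alpha,\omega)=(mc(f)^*(\omega))(\alpha)$, and under the canonical identification of Proposition \ref{proplinftyisomorphism} the dual map $mc(f)^*\colon\pi^*(N)\to H^*(M)$ is exactly the map $mc^\vee(\Omega^\bullet(f))$ of Definition \ref{defalghopfinv}. Evaluating an element of $H^*(M)=H^*(\Omega^\bullet(M))$ on a class $\alpha\in H_*(M)$ is, by de Rham's theorem, the integration pairing $\int_\alpha$; hence $\eta_f(\alpha,\omega)=\int_\alpha mc^\vee(\Omega^\bullet(f))(\omega)$, and it suffices to compute the class $mc^\vee(\Omega^\bullet(f))(\omega)\in H^*(M)$ explicitly.

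First I would recall from Definition \ref{defalghopfinv} that, writing $A=\Omega^\bullet(N)$, $B=\Omega^\bullet(M)$ and $f=\Omega^\bullet(f)$, the Maurer--Cartan element $mc^\vee(f)$ is the linear map $\pi^*(A)\to H^*(B)$ obtained from the composite of strict coalgebra morphisms $\pi^*(A)\xrightarrow{q}B_\pi A\xrightarrow{B_\pi f}B_\pi B\xrightarrow{B_\pi j}B_\pi H^*(B)$ by corestricting to the cogenerators $H^*(B)$ of the cofree coalgebra $B_\pi H^*(B)$. Since $q$ and $B_\pi f$ are coalgebra morphisms, this corestriction equals $\widehat{j}\circ B_\pi f\circ q$, where $\widehat{j}\colon B_\pi B\to H^*(B)$ is the corestriction of $B_\pi j$, whose component on $\mathcal{C}(n)\otimes_{\Sigma_n}B^{\otimes n}$ is the transferred map $j^\nu$ of Section \ref{secHTT}. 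Now $B_\pi f$ acts by applying the pullback of forms $\Omega^\bullet(f)$ in each tensor slot, so if $q(\omega)=\sum_{n\ge1}q_n(\omega)$ is the decomposition of the single element $q(\omega)\in B_\pi A$ into weights, then $(B_\pi f)(q_n(\omega))=f^*q_n(\omega)\in\mathcal{C}(n)\otimes_{\Sigma_n}B^{\otimes n}$, and therefore $mc^\vee(f)(\omega)=\sum_{n\ge1}j^\nu\bigl(f^*q_n(\omega)\bigr)$, where on the weight-$n$ piece one applies the map $j^\nu$ indexed by the cooperad element $\nu\in\mathcal{C}(n)$.

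It then remains to substitute the Homotopy Transfer Theorem formula of Section \ref{secHTT} for the transferred ``projection'' associated to the fixed contraction $(i,p,h)$ between $B=\Omega^\bullet(M)$ and $H^*(B)$, namely
$$j^\nu=(-1)^{\mid\nu\mid}\,p\,t^\nu\,\mathbf{h}_n+\sum_{i=1}^{u}(-1)^{\mid\nu_i''\mid}(p^{\nu_i'}\circ_{e_i}t^{\nu_i''})\tau_i\,\mathbf{h}_n,$$
in which the maps $\mathbf{h}_n$ are built from $(i,p,h)$ as in Section \ref{secHTT}, the $t^\nu$ are the structure maps of $B=\Omega^\bullet(M)$ as an $\Omega_{op}\mathcal{C}$-algebra (induced, in practice, by the differential and the wedge product), and the inner sum runs over the infinitesimal coproduct $\Delta_{(1)}(\nu)=\sum_i(\nu_i'\circ_{e_i}\nu_i'')\tau_i$, which is exactly the cooperad coproduct dual to the infinitesimal composition of the $s^{-1}L_\infty$-operad under the identification of models used throughout. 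Plugging this into $mc^\vee(f)(\omega)=\sum_n j^\nu(f^*q_n(\omega))$ and then integrating over $\alpha$ produces precisely the formula in the statement. Two points worth recording along the way: the outer sum over $n$ is finite, since $q(\omega)$ is a single element of $B_\pi A$ (and it is bounded by degree in any case); and by Proposition \ref{propjisformal} the linear section $q$ --- hence its weight components $q_n(\omega)$ --- may be chosen freely, which is why the chosen $q$ appears in the formula.

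The computation itself is routine unwinding; the only delicate point is the bookkeeping of dualizations. One must keep straight that $mc$ and $mc^\vee$ are identified via Proposition \ref{proplinftyisomorphism}, that ``evaluation on $\alpha$'' is the de Rham integration pairing, and that the $\nu$-indexing in the transferred-map formula refers to the cooperad underlying the bar construction $B_\pi$, whose coproduct is the (shifted, dualized) composition of the $s^{-1}L_\infty$-operad occurring in the statement. I expect this reconciliation of the various mutually dual structures, rather than any analytic or homotopical input, to be the main source of friction, so I would spell it out explicitly before performing the substitution.
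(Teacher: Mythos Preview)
Your proposal is correct and follows essentially the same route as the paper: unwind $\eta_f(\alpha,\omega)$ to $\int_\alpha j\, f^*\, q(\omega)$ and then substitute the explicit Homotopy Transfer Theorem formula for the transferred projection $j^\nu=p^\nu$. The paper's own proof is a two-line sketch of exactly this; your version simply spells out the dualization bookkeeping (via Proposition~\ref{proplinftyisomorphism} and the de Rham pairing) that the paper leaves implicit.
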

  
\begin{remark}
 The sum in Lemma \ref{lemconcreteformulas} converges since $q(\omega)$ is non-zero in only finitely many different weights. 
\end{remark}

\begin{proof}
 The proof of the lemma is a straightforward application of the Homotopy Transfer Theorem. The pairing $\eta_f(\alpha,\omega)$ is defined as the composite $\eta_f(\alpha,\omega)=\int_{\alpha} j f^* q (\omega)$, the formulas from Lemma \ref{lemconcreteformulas} are obtained by writing down the explicit formulas for the map $P$ coming from the Homotopy Transfer Theorem.
\end{proof}

\begin{theorem}\label{thrmhowtocomputeMaurarCartanelements}
 Let $\{\alpha_i\}_{i \in I}$ be a basis for $H_*(M)$, $\{\omega^{\vee}_j\}_{j \in J}$ a basis for $\pi_*(N)$ and let $\{\varphi_{ij}\}_{i \in I, j \in J}$ be a basis for $Hom_{\R}(H_*(M),\pi_*(N))$. Further assume that the basis $\{\omega^{\vee}_j\}$ is dual to basis $\{\omega_j\}$ for $\pi^*(N)$, i.e. $\omega_j(\omega^{\vee}_k)=\delta_{jk}$, where $\delta_{jk}$ is the Kronecker delta. Let $mc(f)=\sum_{i,j} \lambda^f_{ij} \varphi_{ij}$, the coefficient $\lambda^f_{ij}$ can the be computed as the following integral
 $$\lambda^f_{ij}=\eta_f(\alpha_i, \omega_j)=\sum_{n \geq 1} \int_{\alpha_i} \left((-1)^{\mid \nu \mid} p t^{\nu} \mathbf{h}_n(f^*q_n(\omega_j)) + \sum_{k=1}^{u} (-1)^{\mid \nu_k '' \mid}(p^{\nu_k '} \circ_{e_k} t^{ \nu_k ''}) \tau_k \mathbf{h}_n (f^*q_n(\omega_j)) \right)  .$$
\end{theorem}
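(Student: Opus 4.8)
The plan is to reduce the statement to Lemma~\ref{lemconcreteformulas} by unwinding the definitions of the basis $\{\varphi_{ij}\}$ and of the pairing $\eta$. Recall that under the stated finiteness hypotheses the pairing $\eta\colon Hom_{\R}(H_*(M),\pi_*(N))\otimes\bigl(H_*(M)\otimes\pi^*(N)\bigr)\to\R$ is perfect, so the element $mc(f)\in Hom_{\R}(H_*(M),\pi_*(N))$ is completely determined by its pairings against the vectors $\alpha_i\otimes\omega_j$, and computing the coefficients $\lambda^f_{ij}$ amounts to computing these pairings.

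First I would compute the pairing of a single basis vector $\varphi_{kl}$ against $\alpha_i\otimes\omega_j$. By construction $\varphi_{kl}$ sends $\alpha_k$ to $\omega_l^{\vee}$ and every other element of the basis $\{\alpha_m\}$ of $H_*(M)$ to $0$, so $\varphi_{kl}(\alpha_i)=\delta_{ik}\,\omega_l^{\vee}$; applying $\omega_j\in\pi^*(N)$ and using that $\{\omega_j^{\vee}\}$ is the basis of $\pi_*(N)$ dual to $\{\omega_j\}$ gives
\[
\eta(\varphi_{kl},\alpha_i\otimes\omega_j)=\bigl(\varphi_{kl}^{*}\omega_j\bigr)(\alpha_i)=\omega_j\bigl(\varphi_{kl}(\alpha_i)\bigr)=\delta_{ik}\,\delta_{jl}.
\]
Hence $\{\varphi_{ij}\}$ is precisely the basis of $Hom_{\R}(H_*(M),\pi_*(N))$ dual to the basis $\{\alpha_i\otimes\omega_j\}$ of $H_*(M)\otimes\pi^*(N)$ with respect to $\eta$. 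Expanding $mc(f)=\sum_{k,l}\lambda^f_{kl}\varphi_{kl}$ and using bilinearity of $\eta$ then yields at once
\[
\lambda^f_{ij}=\eta\bigl(mc(f),\alpha_i\otimes\omega_j\bigr)=\bigl(mc(f)^{*}\omega_j\bigr)(\alpha_i)=\eta_f(\alpha_i,\omega_j),
\]
the last equality being the definition of $\eta_f$.

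The final step is simply to insert the formula of Lemma~\ref{lemconcreteformulas} for $\eta_f(\alpha_i,\omega_j)$, relabelling the summation index $i$ of that lemma as $k$ so that it does not collide with the homology basis index; finiteness of the sum over $n$ is the remark following Lemma~\ref{lemconcreteformulas}, since $q(\omega_j)$ has only finitely many nonzero weight components. The only point deserving a word of care — and the closest thing to an obstacle — is verifying that the pairing $\eta_f$ appearing here, built from $mc(f)\in Hom_{\R}(H_*(M),\pi_*(N))$, agrees with the one actually computed in Lemma~\ref{lemconcreteformulas} through $mc^{\vee}(f)=j\circ f^{*}\circ q$; this is exactly the content of the canonical isomorphism $\varphi$ of Proposition~\ref{proplinftyisomorphism} together with the convention identifying $mc$ with $mc^{\vee}$. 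Beyond this bookkeeping there is no essential difficulty, since the substantive input — the Homotopy Transfer Theorem formulas for the transferred map $P$ — has already been carried out in Lemma~\ref{lemconcreteformulas}.
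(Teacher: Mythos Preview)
Your proof is correct and follows essentially the same approach as the paper: both establish that $\{\varphi_{ij}\}$ is the basis dual to $\{\alpha_i\otimes\omega_j\}$ under the perfect pairing $\eta$, deduce $\lambda^f_{ij}=\eta_f(\alpha_i,\omega_j)$, and then invoke Lemma~\ref{lemconcreteformulas}. Your version is somewhat more explicit in verifying the duality and in flagging the identification of $mc$ with $mc^{\vee}$ via Proposition~\ref{proplinftyisomorphism}, but the argument is the same.
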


\begin{proof}
 To prove the theorem we observe that we have a perfect pairing 
 \[\left<,\right>:Hom_{\R}(H_*(M),\pi_*(N))\otimes \big( H_*(M)\otimes \pi^{*}(N) \big) \rightarrow  \R\]
  given by $\left<\varphi_{ij},\alpha_k \otimes \omega_l\right> =\delta_{ik} \delta_{jl}$. In particular the  basis $\{\alpha_i \otimes \omega_j\}$ is dual to the basis $\{\varphi_{ij}\}$, so the coefficient $\lambda^f_{ij}$ of the basis element $\varphi_{ij}$ can be computed by evaluating $\alpha_i \otimes \omega_j$ on $\varphi_{ij}$. Therefore $\lambda^f_{ij}$ is equal to $\eta_f(\alpha_i,\omega_j)$, which proves the theorem.
\end{proof}

The significance of Theorem \ref{thrmhowtocomputeMaurarCartanelements} is that it gives us a way to reduce the problem of deciding whether two maps $f,g:M \rightarrow N$ are homotopic to deciding whether the Maurer-Cartan elements $mc(f)$ and $mc(g)$ are gauge equivalent in the $L_{\infty}$-algebra $Hom_{\R}(H_*(M),\pi_*(N))$. Since $Hom_{\R}(H_*(M),\pi_*(N))_0$ is finite dimensional it is often possible to decide this in practice, unfortunately these calculations can become extremely tedious. In the next section we will give some methods that can help us in special cases to get a better understanding of the moduli space of Maurer-Cartan elements.

\section{Algebraic CW-complexes and the moduli space of \\ Maurer-Cartan elements}\label{secmodulispaces}

Let $f:M \rightarrow N$ be a map, in the previous section we explained how to compute the coefficients $\lambda^f_{ij}$ of $mc(f)$ of the map $f$ by computing  certain integrals. Unfortunately the coefficients $\lambda^f_{ij}$ are not an invariant of the homotopy class of $f$. It is therefore necessary to compute the moduli space of Maurer-Cartan elements $\mathcal{MC}(Hom_{\R}(H_*(M),\pi_*(N)))$. This can be done in general but might be very tedious and will involve solving many equations. In this section we will present an alternative approach based on the algebraic CW-complexes and the long exact sequence from Theorem \ref{thrmlongexactsequencealgebraiccwcomplexes}. This approach gives us some information about the moduli space of Maurer-Cartan elements. In many cases this method will be good enough to completely determine whether two maps are homotopic or not. The approach described in this section can be seen as an extension of some of the ideas of Stasheff and Schlessinger from \cite{SS2} to mapping spaces.

The idea is that if we  have an algebraic CW-complex $C$, we can filter $C$ by its skeleta.  Denote by $C_{\leq n}$ the $n$-skeleton of $C$. Let $L$ be an $s^{-1}L_{\infty}$-algebra. We can use the skeletal filtration of $C$ to obtain a tower of fibrations 
$$MC_{\bullet}(Hom_{\R}(C_{\leq n},L)) \rightarrow MC_{\bullet}(Hom_{\R}(C_{\leq n-1},L)) \rightarrow ... \rightarrow MC_{\bullet}(Hom_{\R}(C_2,L)).$$
Suppose that we have two maps $f,g:M \rightarrow N$, then we can first compare the $MC_{\bullet}(C_2,L)$ parts of the maps $f$ and $g$. If these are not the same, the maps are certainly not homotopic. If the $\mathcal{MC}(Hom_{\R}(C_2,L))$ parts are the same then we check the $MC_{\bullet}(Hom_{\R}(C_{\leq 3},L))$ parts and so on.

To make this more precise we will first introduce some definitions and notation. For simplicity we will only work with minimal CW-complexes in this section, we will now recall the definition of a minimal CW-complex.

\begin{definition}
 A CW-complex $X$ is called minimal if the differential of the cellular chain complex with integer coefficients is equal to zero.  Similarly an algebraic CW-complex $C$ is called minimal if $d_C=0$.
\end{definition}

\begin{theorem}\label{thrmminimlcwcomplex}
 Let $M$ be a simply-connected manifold, then $M$ is rationally equivalent to a minimal CW-complex and therefore can be modeled by a minimal algebraic CW-complex. 
\end{theorem}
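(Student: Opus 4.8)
The plan is to prove the two clauses of the statement in turn. First I would produce a topological minimal CW-complex rationally equivalent to $M$, i.e. a CW-complex carrying exactly $b_n:=\dim_{\R}H_n(M;\R)$ cells in each dimension $n$; then I would feed that complex into Theorem \ref{thrmexistenceofalgebraiccwcomplexes} to obtain a minimal algebraic CW-complex modelling $M$. Throughout I use that $M$ is of finite $\R$-type, which in the setting of the paper is automatic because $M$ is compact; manifold-ness plays no further role.

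For the first clause I would build the minimal CW-complex $Y$ together with a rational equivalence $g\colon Y\to M$ inductively over the skeleta, in the classical way. Put $Y_2=\bigvee_{b_2}S^2$ and let $g_2$ realise a basis of $H_2(M;\R)\cong\pi_2(M)\otimes\R$ via the rational Hurewicz isomorphism. Given $g_n\colon Y_n\to M$ inducing an isomorphism on $H_{\le n}(-;\R)$, I would pass to the mapping cylinder so that $Y_n\subseteq M$, note that $H_k(M,Y_n;\R)=0$ for $k\le n$ while the connecting homomorphism gives an isomorphism $H_{n+1}(M;\R)\xrightarrow{\ \cong\ }H_{n+1}(M,Y_n;\R)$, choose a basis of $H_{n+1}(M,Y_n;\R)$ that is the image of a basis of $H_{n+1}(M;\R)$, and represent its elements by maps $(D^{n+1},S^n)\to(M,Y_n)$ using the rational relative Hurewicz theorem. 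Attaching $b_{n+1}$ cells along the resulting attaching maps $S^n\to Y_n$ and extending $g_n$ over them by the chosen nullhomotopies yields $g_{n+1}\colon Y_{n+1}\to M$. Since the chosen relative classes come from absolute classes, every attaching sphere is nullhomologous in $Y_n$, so the homology in degrees $\le n$ is neither created nor destroyed and $Y_{n+1}$ again has exactly $b_k$ cells in degree $k\le n+1$; the colimit $Y=\mathrm{colim}_n Y_n$ then has $b_n$ cells in each degree, and $g=\mathrm{colim}_n g_n$ induces an isomorphism on $H_*(-;\R)$, hence is a rational equivalence. Finally, a CW-complex with $b_n$ cells in degree $n$ for all $n$ has integral cellular chain groups $\mathbb{Z}^{b_n}$ whose homology has rank $b_n$ in every degree; a complex of free abelian groups with this property has zero differential, so $Y$ is a minimal CW-complex. (This clause is classical and could equally be quoted from the rational homotopy theory literature; I include the construction only to make the cell count transparent.)

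For the second clause I would apply Theorem \ref{thrmexistenceofalgebraiccwcomplexes} to the $1$-reduced finite-type CW-complex $Y$, obtaining an algebraic CW-complex $C$ which is a $C_{\infty}$-coalgebra model for $Y$ and has exactly one basis element for each cell of $Y$. Since $C$ models $Y$ we have $H_*(C)\cong H_*(Y;\R)$, while minimality of $Y$ gives that $\dim_{\R}C_n$ equals the number of $n$-cells of $Y$, which equals $\dim_{\R}H_n(Y;\R)=\dim_{\R}H_n(C)$, for every $n$. A chain complex of finite-dimensional vector spaces whose terms have the same dimension as its homology in every degree has zero differential, so $d_C=0$ and $C$ is a minimal algebraic CW-complex. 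Moreover $C$ is a $C_{\infty}$-model for $Y$, and since $g\colon Y\to M$ is a rational equivalence the functor $\mathcal{C}\lambda$ of Theorem \ref{thrmquillen} sends it to a quasi-isomorphism, so $C$ is also a $C_{\infty}$-model for $M$; this proves the theorem. (Alternatively one sees $d_C=0$ directly: in the construction underlying Theorem \ref{thrmexistenceofalgebraiccwcomplexes} the linear part of the differential of the Quillen Lie model of $Y$ is the cellular differential of $Y$, which vanishes for minimal $Y$.)

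The main obstacle is the bookkeeping in the inductive cell attachment: one must attach exactly $b_{n+1}$ cells — no more, lest spurious homology appear, and no fewer, lest homology be missed — without disturbing the homology already built in lower degrees. Both requirements rest on working rationally: the rational Hurewicz and rational relative Hurewicz theorems (equivalently, Serre's mod-$\mathcal{C}$ theory for the class of torsion groups) allow one to realise homology classes by genuine maps even though the pair $(M,Y_n)$ is only rationally $n$-connected, and the exactness isomorphism $H_{n+1}(M;\R)\cong H_{n+1}(M,Y_n;\R)$ allows the attaching spheres to be chosen nullhomologous, after which freeness of $H_n(Y_n;\mathbb{Z})$ upgrades this to integral minimality. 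A purely algebraic alternative, sidestepping this bookkeeping but giving only the second clause directly, is to take any $C_{\infty}$-coalgebra model of $M$ (for instance $\mathcal{C}\lambda(M)$), transfer its structure onto the homology $H_*(M;\R)$ by the Homotopy Transfer Theorem (Theorem \ref{thrmHTT}) to obtain an $\infty$-quasi-isomorphic $C_{\infty}$-coalgebra with zero differential, and then invoke Theorem \ref{propCWdecomp} to equip this finite-dimensional simply-connected coalgebra with an algebraic CW-decomposition, which is automatically minimal.
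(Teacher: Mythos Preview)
Your proof is correct. For the first clause you spell out the classical inductive construction that the paper simply quotes as Theorem~9.11 of \cite{FHT}; this is the same argument, just unpacked. For the second clause the routes diverge: the paper bypasses the topological minimal complex entirely and cites an external result (Corollary~2.14 of \cite{BL1}) that every simply-connected space admits a minimal quasi-free Lie model $(s^{-1}\mathcal{LIE}(H_*(M)),d)$, which is precisely a $C_\infty$-coalgebra structure on $H_*(M)$ with $d_{H_*(M)}=0$ and hence a minimal algebraic CW-decomposition by Theorem~\ref{propCWdecomp}. You instead feed the topological minimal complex $Y$ into the paper's own Theorem~\ref{thrmexistenceofalgebraiccwcomplexes} and then run a dimension count to force $d_C=0$. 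Your route has the virtue of being internal to the paper and of tying the two clauses together, whereas the paper's route is shorter and shows that the first clause is in fact not needed for the second. Your closing alternative---transfer the $C_\infty$-structure to $H_*(M;\R)$ via Theorem~\ref{thrmHTT} and invoke Theorem~\ref{propCWdecomp}---is essentially the paper's argument phrased without the external citation.
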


\begin{proof}
 The first part follows from Theorem 9.11 in \cite{FHT}. The second part follows from Corollary 2.14 of \cite{BL1}, which states that every simply-connected space $M$ has a minimal shifted Lie model of the form $(s^{-1}\mathcal{LIE}(H_*(M)),d)$, for a certain differential $d$. Since the existence of the minimal model  $(s^{-1}\mathcal{LIE}(H_*(M)),d)$ is the same as an algebraic CW-decomposition for  $H_*(M)$. Since $d_{H_*(M)}=0$, this implies that $H_*(M)$ has a minimal algebraic CW-decomposition.
\end{proof}

In this section we will from now on assume that $C$ is a $1$-reduced minimal algebraic CW-complex modeling $X$ and that $L$ is a minimal $s^{-1}L_{\infty}$-algebra modeling $Y$, i.e. $d_C=d_L=0$. These are not very severe restrictions, because of Theorem \ref{thrmminimlcwcomplex} every topological space is rationally equivalent to a minimal CW-complex.  The $s^{-1}L_{\infty}$-algebra $L$ can always be obtained by taking the homotopy groups of $Y$, with the appropriate $s^{-1}L_{\infty}$-structure. It is possible to drop the restriction that $C$ and $L$ are minimal, but this will make statements and techniques in this more tedious and less effective.

If $C$ is an algebraic CW-complex, we denote the number of $n$-cells by $k_n$. The  attaching map of the $(n+1)$-cells will be denoted  by $a_n:\bigoplus_{k_{n+1}} \mathfrak{S}^{n} \rightsquigarrow C_{\leq n}$. We will denote the inclusion map of the $n$-skeleton into the $(n+1)$-skeleton by $i_n:C_{\leq n} \rightarrow C_{\leq n+1}$.

\begin{proposition}
 Each attaching map $a_n$ induces a homotopy cofiber sequence of $C_{\infty}$-coalgebras
 $$ \bigoplus_{k_{n+1}}\mathfrak{S}^n \xrightarrow{a_n} C_{\leq n} \xrightarrow{i_n} C_{\leq n+1}.  $$
\end{proposition}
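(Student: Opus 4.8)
The plan is to recognize the push-out square defining the cell attachment that produces $C_{\leq n+1}$ from $C_{\leq n}$ as a \emph{homotopy} push-out in which the corner $\bigoplus_{k_{n+1}}\mathfrak{D}^{n+1}$ is weakly contractible; this exhibits $C_{\leq n+1}$ as the homotopy cofiber of $a_n$, which is exactly the assertion. Recall that by definition the $(n+1)$-skeleton sits in a push-out square with top row $\bigoplus_{k_{n+1}}\mathfrak{S}^n \to \bigoplus_{k_{n+1}}\mathfrak{D}^{n+1}$, left column $a_n\colon\bigoplus_{k_{n+1}}\mathfrak{S}^n\to C_{\leq n}$, and push-out $C_{\leq n+1}$ together with the inclusion $i_n\colon C_{\leq n}\to C_{\leq n+1}$. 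This is a genuine push-out of $C_{\infty}$-coalgebras because, although it was defined after applying the cobar functor $\Omega_{\pi}$, this functor is a left adjoint and hence preserves push-outs, so the coalgebra square is the one obtained by applying $B_{\pi}$, equivalently it is a push-out in $C_{\infty}$-coalgebras.

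First I would check that $i\colon\mathfrak{S}^n\hookrightarrow\mathfrak{D}^{n+1}$, and therefore its coproduct over the $(n+1)$-cells, is a degree-wise monomorphism, hence a cofibration in Vallette's model structure on $C_{\infty}$-coalgebras recalled above. In that model structure every object is cofibrant, since $0\to C$ is always a monomorphism; consequently the category is left proper and a push-out along a cofibration is automatically a homotopy push-out. Equivalently, since $\Omega_{\pi}\dashv B_{\pi}$ is a Quillen equivalence and all coalgebras are cofibrant, one may verify the homotopy push-out property after applying $\Omega_{\pi}$, where the square becomes an attachment of cells to the quasi-free $\mathcal{P}$-algebra $\Omega_{\pi}C_{\leq n}$ along a cofibration; this is exactly the type of cell attachment organized by Theorem 4.6 of \cite{CR1} and used in the proof of Theorem \ref{propCWdecomp}.

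Second I would verify that $\mathfrak{D}^{n+1}$, and hence $\bigoplus_{k_{n+1}}\mathfrak{D}^{n+1}$, is weakly equivalent to the zero coalgebra, so that $\mathfrak{D}^{n+1}$ is a model for the cone on $\mathfrak{S}^n$. Since $\mathfrak{D}^{n+1}$ carries the trivial coalgebra structure, the contribution of its decomposition map to the cobar differential vanishes, so $\Omega_{\pi}\mathfrak{D}^{n+1}$ is the free $\mathcal{P}$-algebra on a (desuspension of) $\mathfrak{D}^{n+1}$ equipped only with the internal differential induced by $d\beta=\alpha$; as this two-term complex is acyclic and we work over a field of characteristic zero, the Schur functor $\mathcal{P}\circ(-)$ preserves the acyclicity, by the Künneth formula together with the exactness of $\Sigma_n$-coinvariants. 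Hence $\Omega_{\pi}\mathfrak{D}^{n+1}$ has vanishing homology, so $0\to\mathfrak{D}^{n+1}$ is a weak equivalence of $C_{\infty}$-coalgebras. Substituting $\bigoplus_{k_{n+1}}\mathfrak{D}^{n+1}\simeq 0$ into the homotopy push-out square from the previous step turns it into the square computing the homotopy cofiber of $a_n$, and the proposition follows.

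The step I expect to be the main obstacle is the bookkeeping between the two model categories: one must be sure that the push-out defining $C_{\leq n+1}$ is a homotopy push-out of $C_{\infty}$-coalgebras and not merely of $\mathcal{P}$-algebras, and that the relevant notion of contractibility of $\mathfrak{D}^{n+1}$ is the coalgebraic one, since weak equivalences of coalgebras are defined through $\Omega_{\pi}$ rather than through the underlying homology. Both points are resolved by the computation that $\Omega_{\pi}\mathfrak{D}^{n+1}$ is acyclic together with the fact that every $C_{\infty}$-coalgebra is cofibrant; alternatively one can avoid the issue altogether by transporting the whole argument along the Quillen equivalence $\Omega_{\pi}\dashv B_{\pi}$ and appealing to the cellular decomposition of $\Omega_{\pi}C$ from \cite{CR1} recalled in Theorem \ref{propCWdecomp}, where an attachment of cells to a $\mathcal{P}$-algebra is manifestly a homotopy cofiber sequence.
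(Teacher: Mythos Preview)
Your argument is correct and follows essentially the same idea as the paper: $C_{\leq n+1}$ is by construction the mapping cone of $a_n$, hence its homotopy cofiber. The paper's proof is a single sentence to this effect, whereas you supply the verification that the defining push-out is a homotopy push-out (via cofibrancy in Vallette's model structure, or equivalently after transporting along $\Omega_{\pi}$) and that $\mathfrak{D}^{n+1}$ is weakly contractible; these details are left implicit in the paper.
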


\begin{proof}
 Since $C_{\leq  n+1}$ is the mapping cone of $a_n$ it is the homotopy cofiber of the map $a_n$. The sequence is therefore a homotopy cofiber sequence.
\end{proof}

After applying the functor $Hom_{\R}(-,L)$ we obtain a fibration sequence.

\begin{proposition}
 Let $L$ be a simply-connected $s^{-1}L_{\infty}$-algebra, then we have the following fibration sequence of $s^{-1}L_{\infty}$-algebras 
 $$Hom_{\R}(C_{\leq n+1},L) \twoheadrightarrow Hom_{\R}(C_{\leq n},L) \rightarrow Hom_{\R}(\bigoplus_{k_{n+1}}\mathfrak{S}^n,L).$$ 
\end{proposition}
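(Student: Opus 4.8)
The plan is to recognize this proposition as a mild extension of Theorem \ref{thrmlongexactsequencealgebraiccwcomplexes}, applied with $D=C_{\leq n}$ and the cell-attaching map $a_n$, and to add the small amount of bookkeeping needed because several $(n+1)$-cells are attached at once and because the statement is phrased at the level of $s^{-1}L_{\infty}$-algebras rather than of their nerves. The starting point is the previous proposition: $C_{\leq n+1}$ is by construction the mapping cone of $a_n$, so $\bigoplus_{k_{n+1}}\mathfrak{S}^n \xrightarrow{a_n} C_{\leq n} \xrightarrow{i_n} C_{\leq n+1}$ is a homotopy cofiber sequence of $C_{\infty}$-coalgebras; equivalently, after applying $\Omega_{s^{-1}Lie}$, there is a pushout square of shifted Lie algebras with $\Omega_{s^{-1}Lie}C_{\leq n+1}$ in the lower-right corner, obtained by pushing out the cofibration $\Omega_{\pi}(\bigoplus_{k_{n+1}}\mathfrak{S}^n)\hookrightarrow \Omega_{\pi}(\bigoplus_{k_{n+1}}\mathfrak{D}^{n+1})$ along $\Omega_{s^{-1}Lie}a_n$.

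First I would reduce to the case in which $L$ is a strict shifted Lie algebra, exactly as in the proof of Theorem \ref{thrmlongexactsequencealgebraiccwcomplexes}: replace $L$ by $\Omega_{s^{-1}Lie}B_{s^{-1}L_{\infty}}L$, which is connected to $L$ by an $\infty$-quasi-isomorphism, and invoke Theorem \ref{thrmmcthingyorsomething?} (applicable since $C_{\leq n}$, $C_{\leq n+1}$ and $\bigoplus_{k_{n+1}}\mathfrak{S}^n$ are finite dimensional and, under the running finiteness assumptions, $L$ and $\Omega_{s^{-1}Lie}B_{s^{-1}L_{\infty}}L$ are of finite type) to see that this replacement changes none of the relevant Maurer--Cartan simplicial sets up to weak equivalence. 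For $L$ a shifted Lie algebra I would then apply the simplicial mapping-space functor $\mathfrak{hom}_{\bullet}(-,L)$ of Theorem \ref{thrmHinichsresult} to the pushout square above, turning it into a pullback square of simplicial sets. Its upper-right corner $\mathfrak{hom}_{\bullet}(\Omega_{\pi}(\bigoplus_{k_{n+1}}\mathfrak{D}^{n+1}),L)$ is contractible, because $\mathfrak{D}^{n+1}$ is acyclic and hence $\Omega_{\pi}(\bigoplus_{k_{n+1}}\mathfrak{D}^{n+1})$ is weakly equivalent to the trivial Lie algebra, while the right-hand vertical map onto $\mathfrak{hom}_{\bullet}(\Omega_{\pi}(\bigoplus_{k_{n+1}}\mathfrak{S}^n),L)$ is a fibration by axiom SM7 applied to the cofibration $\Omega_{\pi}(\bigoplus_{k_{n+1}}\mathfrak{S}^n)\hookrightarrow \Omega_{\pi}(\bigoplus_{k_{n+1}}\mathfrak{D}^{n+1})$ and the fibration $L\twoheadrightarrow 0$. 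Hence $\mathfrak{hom}_{\bullet}(\Omega_{s^{-1}Lie}C_{\leq n+1},L)$ is the homotopy fiber of $a_n^{*}\colon \mathfrak{hom}_{\bullet}(\Omega_{s^{-1}Lie}C_{\leq n},L)\to \mathfrak{hom}_{\bullet}(\Omega_{s^{-1}Lie}(\bigoplus_{k_{n+1}}\mathfrak{S}^n),L)$ --- the usual mapping-cone/homotopy-fiber duality. Finally, Lemma \ref{lemsimplicialmodelcategories} identifies each $\mathfrak{hom}_{\bullet}(\Omega_{s^{-1}Lie}C,L)$ with $MC_{\bullet}(Hom_{\R}(C,L))$, giving the asserted fibration sequence after applying $MC_{\bullet}$; moreover the left-hand map $i_n^{*}$ is already a fibration at the algebra level, since $i_n\colon C_{\leq n}\hookrightarrow C_{\leq n+1}$ is a strict inclusion of $C_{\infty}$-coalgebras split over $\R$, so $i_n^{*}$ is a surjective strict morphism of convolution $s^{-1}L_{\infty}$-algebras.

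I do not expect a genuine obstacle: the argument is essentially that of Theorem \ref{thrmlongexactsequencealgebraiccwcomplexes}, and the two new ingredients are both routine --- the observation that $\Omega_{s^{-1}Lie}$ of a finite direct sum of coalgebras is a finite coproduct of shifted Lie algebras, so that the single-sphere proof goes through verbatim with $\bigoplus_{k_{n+1}}\mathfrak{S}^n$ in place of $\mathfrak{S}^n$, and the bookkeeping needed to phrase the conclusion at the level of $s^{-1}L_{\infty}$-algebras. If one prefers to avoid the wedge entirely, an alternative is to attach the $k_{n+1}$ cells one at a time --- legitimate because $C$ is minimal, so every component of $a_n$ factors through $C_{\leq n}$ --- apply Theorem \ref{thrmlongexactsequencealgebraiccwcomplexes} at each stage, and compose the resulting fibrations, using $MC_{\bullet}(Hom_{\R}(\bigoplus_{k_{n+1}}\mathfrak{S}^n,L))=\prod_{k_{n+1}}MC_{\bullet}(Hom_{\R}(\mathfrak{S}^n,L))$. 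The only point genuinely worth care is confirming that the rightmost term is the homotopy fiber rather than the homotopy cofiber, i.e. getting the duality oriented correctly via the pushout square.
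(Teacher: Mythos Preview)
Your argument is correct, but it is far more elaborate than what the paper actually does. The paper's proof is a single sentence: since $\bigoplus_{k_{n+1}}\mathfrak{S}^n \xrightarrow{a_n} C_{\leq n} \xrightarrow{i_n} C_{\leq n+1}$ is a cofibration sequence and the contravariant functor $Hom_{\R}(-,L)$ turns cofibration sequences into fibration sequences, we are done. No rectification of $L$, no simplicial mapping spaces, no appeal to SM7, no Lemma~\ref{lemsimplicialmodelcategories}.

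The difference in approach is that you are essentially reproving Theorem~\ref{thrmlongexactsequencealgebraiccwcomplexes} (the homotopy fibration at the level of $MC_{\bullet}$) and then extracting the algebra-level statement from it, whereas the paper treats this proposition as a purely formal consequence of the previous one and defers all the simplicial-set content to the tower-of-fibrations discussion that follows. Your route has the virtue of being self-contained and of making explicit why the first map is genuinely surjective (your remark that $i_n$ is a split inclusion, hence $i_n^{*}$ is a surjective strict morphism, is the real content behind the $\twoheadrightarrow$); the paper's route has the virtue of brevity and of not duplicating the machinery already deployed in Theorem~\ref{thrmlongexactsequencealgebraiccwcomplexes}. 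Both are valid, but for this proposition the one-line categorical argument suffices.
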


From now on we will use $p_n$ to denote the fibration  $p_n:Hom_{\R}(C_{\leq n+1},L) \rightarrow Hom_{\R}(C_{\leq n},L) $.

\begin{proof}
Since the sequence $ \bigoplus_{k_{n+1}}\mathfrak{S}^n \xrightarrow{a_n} C_{\leq n} \xrightarrow{i_n} C_{\leq n+1}$, is a cofibration sequence and the functor $Hom_{\R}(-,L)$ turns cofibration sequences into fibration sequences we get a fibration sequence.
\end{proof}

So using this proposition we get a tower of fibrations of Maurer-Cartan simplicial sets. Because of Lemma 1.4.6 of \cite{MP11} we can shift this fibration, to a fibration sequence
$$Hom_{\R}(\bigoplus_{k_{n+1}}\Sigma \mathfrak{S}^n,L) \rightarrow Hom_{\R}(C_{\leq n+1},L) \twoheadrightarrow Hom_{\R}(C_{\leq n},L) .$$
When we look at the corresponding tower, we get the following.

$$
 \xymatrix{
 {}   & ... \ar[d]^{p_{n+1}} \\
 MC_{\bullet}(Hom_{\R}(\bigoplus_{k_{n+2}} \mathfrak{S}^{n+1}),L) \ar[r] & MC_{\bullet}(Hom_{\R}(C_{\leq n+1},L)) \ar[d]^{p_{n}} \\
MC_{\bullet}( Hom_\R(\bigoplus_{k_{n+1}} \mathfrak{S}^{n},L) ) \ar[r] & MC_{\bullet}(Hom_{\R}(C_{ \leq n},L)) \ar[d]^{p_{n-1}} \\
{} & ... \ar[d]^{p_{3}} \\ 
 MC_{\bullet}(Hom_{\R}(\bigoplus_{k_{4}} \mathfrak{S}^3,L) )\ar[r] &  MC_{\bullet}( Hom_{\R}(C_{\leq 3},L))\ar[d]^{p_{2}} \\
{} & MC_{\bullet}(Hom_{\R}(C_{2},L)).
 }
 $$

If we assume that the algebraic CW-complex $C$ is of dimension $d$, i.e. $C=C_{\leq d}$, then the tower of fibrations stops when $n \geq d+1$. The Maurer-Cartan elements $mc(f)$ and $mc(g)$ are therefore elements of the space $Hom_{\R}(C_{ \leq d},L)$. 

\begin{definition}
The map  $q_n:MC_{\bullet}(Hom_{\R}(C_{ \leq d},L))\rightarrow MC_{\bullet}(Hom_{\R}(C_{ \leq n},L))$ is defined as the composite $q_n=p_n \circ p_{n+1} \circ ... \circ p_{d-1}$.
\end{definition}

The following lemma is a straightforward consequence of the fact that the maps $p_n$ are fibrations.

\begin{lemma}\label{lemmafibrations}
If $\tau, \kappa \in MC_0(Hom_{\R}(C_{ \leq d},L))$ are gauge equivalent Mau\-rer-Cartan elements, then $q_n(\tau)$ is gauge equivalent to $q_n(\kappa)$ for all $n \leq d$.
\end{lemma}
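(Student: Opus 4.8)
The plan is to deduce this directly from the fact that each $p_n$ is a fibration of Maurer-Cartan simplicial sets together with the fact that gauge equivalence is exactly the equivalence relation induced by the $1$-simplices $MC_1$. First I would recall that $q_n = p_n \circ p_{n+1} \circ \cdots \circ p_{d-1}$ is a composite of maps of simplicial sets, and that a map of simplicial sets always sends homotopic $0$-simplices to homotopic $0$-simplices: if $\tau,\kappa \in MC_0(Hom_{\R}(C_{\leq d},L))$ are gauge equivalent, then by definition there exists $z \in MC_1(Hom_{\R}(C_{\leq d},L))$ with $d_0(z)=\tau$ and $d_1(z)=\kappa$. Applying $q_n$ to $z$ yields $q_n(z) \in MC_1(Hom_{\R}(C_{\leq n},L))$, and since $q_n$ commutes with the face maps $d_0,d_1$ (being a simplicial map), we get $d_0(q_n(z))=q_n(\tau)$ and $d_1(q_n(z))=q_n(\kappa)$. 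Hence $q_n(\tau)$ and $q_n(\kappa)$ are gauge equivalent.

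Strictly speaking, one should check that $q_n$ is indeed a map of simplicial sets — but this is immediate: the maps $p_n\colon MC_{\bullet}(Hom_{\R}(C_{\leq n+1},L)) \to MC_{\bullet}(Hom_{\R}(C_{\leq n},L))$ are induced by applying the functor $MC_{\bullet}(Hom_{\R}(-,L))$ to the inclusions $i_n\colon C_{\leq n}\to C_{\leq n+1}$ of algebraic CW-complexes (which are strict $C_{\infty}$-coalgebra homomorphisms by the lemma preceding Section~7.3), and $MC_{\bullet}$ applied to a convolution $s^{-1}L_{\infty}$-algebra produces simplicial sets and simplicial maps by construction. So the composite $q_n$ is a morphism of simplicial sets. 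The statement that gauge equivalence on $MC_0$ is precisely "being connected by a $1$-simplex in $MC_{\bullet}$'' is exactly the definition recalled in Section~\ref{seclinfty} just before the definition of $\mathcal{MC}(L)$ (and it is an equivalence relation by Getzler's result cited there).

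There is essentially no obstacle here: the fibration property of $p_n$ is stated in the excerpt but is not even needed for this particular lemma — functoriality of $MC_{\bullet}$ and the simplicial definition of gauge equivalence suffice. If one wanted the stronger (and perhaps intended) statement that the fibers line up compatibly, the fibration hypothesis would matter, but for the stated conclusion the only thing to verify is that simplicial maps preserve the relation "connected by an edge,'' which is formal. I would therefore present the proof as: take the connecting $1$-simplex $z$ witnessing $\tau \sim \kappa$, push it forward along $q_n$, and observe that its faces are $q_n(\tau)$ and $q_n(\kappa)$, so these are gauge equivalent; then note that this holds for every $n \leq d$ since each $q_n$ is such a simplicial map.
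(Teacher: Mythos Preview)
Your proof is correct and matches the paper's own argument essentially verbatim: both observe that $q_n$ is a map of simplicial sets and that simplicial maps preserve path components (equivalently, the relation of being connected by a $1$-simplex), hence preserve gauge equivalence. Your additional remark that the fibration hypothesis is not actually needed for this particular lemma is also correct.
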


\begin{proof}
Since we assumed that $\tau$ and $\kappa$ are gauge equivalent, this means that $\tau$ and $\kappa$ are in the same path component of $MC_{\bullet}(Hom_{\R}(C_{\leq d},L))$.  Since the map $q_n$ is a map of simplicial sets, it preserves path components and therefore the elements $q_n(\tau)$ and $q_n(\kappa)$ are in the same path component. Two Maurer-Cartan elements are by definition gauge equivalent if and only if they are in the same path component, so therefore $q_n(\tau)$ and $q_n(\kappa)$ are gauge equivalent.
\end{proof}

As a corollary of Lemma \ref{lemmafibrations} and the tower of fibrations, we obtain  some sort of  filtration on the algebraic Hopf invariant $mc_{\infty}(f)$. 

\begin{definition}\label{defapproximaioonss}
Let $f:M \rightarrow N$ be a map between simply-connected manifolds $M$ and $N$, such that $M$ is compact. Then we define the degree $\leq n$ part of the Maurer-Cartan element $mc(f)$, as the image of $mc(f)$ under the map $q_n$. We will denote the degree $\leq n$ part of $mc(f)$ by $mc^n(f):=q_n \circ mc(f)$. Similarly we also define the the degree $\leq n$ part of the algebraic Hopf invariant $mc_{\infty}(f)$ as the element in the moduli space of Maurer-Cartan elements $\mathcal{MC}(Hom_{\R}(C_{\leq n},L))$ corresponding to $q_n(mc(f))$. We will denote the degree $\leq n$ part of $mc_{\infty}(f)$ by $mc_{\infty}^n(f)$.
\end{definition}

The following lemma is a straightforward consequence of Definition \ref{defapproximaioonss} and its proof will be omitted.

\begin{lemma}
 The element $mc^n_{\infty}(f)$ is an invariant of the homotopy class of the map $f$.
\end{lemma}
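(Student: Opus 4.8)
The statement to prove is that $mc^n_{\infty}(f)$ is an invariant of the homotopy class of $f$. The plan is to combine the main computational device of the previous section with the naturality of the tower of fibrations. First I would recall the two facts that do all the work here. By Theorem~\ref{thrmalghopfinv} (equivalently, by the definition of the algebraic Hopf invariant together with Theorem~\ref{thrmcoalhopfinv}), if $f$ and $g$ are real homotopic maps $M\to N$, then $mc_\infty(f)=mc_\infty(g)$ as elements of $\mathcal{MC}(Hom_\R(C_{\leq d},L))$; that is, the Maurer--Cartan elements $mc(f)$ and $mc(g)$ are gauge equivalent in $Hom_\R(C_{\leq d},L)$. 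The second fact is Lemma~\ref{lemmafibrations}: gauge equivalence is preserved by each of the maps $q_n\colon MC_\bullet(Hom_\R(C_{\leq d},L))\to MC_\bullet(Hom_\R(C_{\leq n},L))$, since these are maps of simplicial sets and therefore preserve path components, and gauge equivalence is precisely being in the same path component.

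Given these, the argument is essentially a two-line chase. Suppose $f\simeq g$ (real homotopic). Then by the above, $mc(f)$ and $mc(g)$ are gauge equivalent in $Hom_\R(C_{\leq d},L)$. Applying Lemma~\ref{lemmafibrations} with the chosen $n\leq d$, we conclude that $q_n(mc(f))$ and $q_n(mc(g))$ are gauge equivalent in $Hom_\R(C_{\leq n},L)$, i.e.\ $mc^n(f)$ and $mc^n(g)$ represent the same element of $\mathcal{MC}(Hom_\R(C_{\leq n},L))$. By Definition~\ref{defapproximaioonss}, this says exactly that $mc^n_\infty(f)=mc^n_\infty(g)$, so $mc^n_\infty$ factors through the set of homotopy classes $[M,N]$, which is what invariance means.

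There is essentially no obstacle once the definitions are unwound; the only point requiring a small amount of care is making sure the homotopy invariance input is the correct one. Strictly, Theorem~\ref{thrmalghopfinv} is about maps of $\mathcal{P}$-algebras $A\to B$ being homotopic in the model category; the bridge to topology is that two smooth maps $f,g\colon M\to N$ are real homotopic iff $\Omega^\bullet(f)$ and $\Omega^\bullet(g)$ are homotopic as CDGA maps (the definition of real homotopy in the Conventions section), together with the fact that $\Omega^\bullet(M)$ and $\Omega^\bullet(N)$ (or their finite-type homology/cohomotopy models via Proposition~\ref{proplinftyisomorphism}) are the algebraic models through which $mc$ is defined. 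So the clean way to write the proof is: unwind $mc^n_\infty(f)$ as $[\,q_n(mc(f))\,]$, invoke the chain ``$f\simeq g$ $\Rightarrow$ $mc_\infty(f)=mc_\infty(g)$'' (Theorems~\ref{thrmalghopfinv} and its coalgebra analogue), then invoke Lemma~\ref{lemmafibrations}, and finally re-read the result through Definition~\ref{defapproximaioonss}. No new estimates, no new constructions.
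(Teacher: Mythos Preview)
Your proof is correct and matches the paper's approach: the paper itself omits the proof, stating only that the lemma ``is a straightforward consequence of Definition~\ref{defapproximaioonss},'' and what you have written is precisely that straightforward consequence, combining the homotopy invariance of $mc_\infty$ with Lemma~\ref{lemmafibrations}.
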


Now suppose that we have two map $f,g:M \rightarrow N$. The idea is that, at least for small $n$, the invariants $mc^{n}_{\infty}(f)$ and $mc_{\infty}^{n}(g)$ are easier to compute than $mc_{\infty}(f)$ and $mc_{\infty}(g)$. Since $mc_{\infty}^d(f)=mc_{\infty}(f)$ and $mc_{\infty}^d(g)=mc_{\infty}(g)$, the invariants $mc^n_{\infty}(f)$ and $mc_{\infty}^n(g)$ can therefore be seen as approximations to $mc_{\infty}$ which get better as $n$ increases. 

We would now like to compute these approximations using the tower of fibrations. The first approximations $mc_{\infty}^2(f)$ and $mc_{\infty}^2(g)$ are easy to compute and this will be done in the following lemma.

\begin{lemma}
 The quotient map $\pi:MC_0(C_2,L) \rightarrow \mathcal{MC}(C_2,L)$ is an isomorphism.  The first approximation $mc_{\infty}^2(f)$ is equal to $mc^2(f)$ under this isomorphism and similarly $mc_{\infty}^2(g)$ is equal to $mc^2(g)$. In other words this mean that the coefficients $\lambda_{i,j}^f$ and $\lambda_{i,j}^g$ of $mc^2(f)$ and $mc^2(g)$ are invariants of the maps $f$ and $g$, where $i$ runs over a basis for $C_2$ and $j$ runs over a basis for $L_2$.
\end{lemma}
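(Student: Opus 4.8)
The plan is to show that the convolution $s^{-1}L_{\infty}$-algebra $Hom_{\R}(C_2,L)$ carries the zero $s^{-1}L_{\infty}$-structure, i.e.\ it is a graded vector space with no operations at all (not even a differential); the two assertions of the lemma then follow formally. I would organise the argument in three steps: first establish the vanishing of all the $l_n$ on $Hom_{\R}(C_2,L)$, then deduce that the gauge relation on $MC_0$ is trivial so that $\pi$ is a bijection, and finally read off the statements about $mc^2(f)$, $mc^2(g)$ and their coefficients.

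For the first step I would use the explicit description of the $s^{-1}L_{\infty}$-structure from Theorem \ref{thrmLinftyconv}: as recalled in the proof of Proposition \ref{proplinftyisomorphism}, for $n\geq 2$ the operation $l_n$ on $Hom_{\R}(C,L)$ factors through the arity-$n$ part $\Delta_n\colon C\to \mathcal{C}(n)\otimes_{\Sigma_n}C^{\otimes n}$ of the coproduct of $C$. For $C=C_2=\bigoplus_{k_2}\mathfrak{S}^2$ this coproduct is trivial by construction of the algebraic CW-complex --- the $2$-skeleton is a direct sum of sphere coalgebras, each of which has trivial coalgebra structure (compare Example \ref{Exs2xs2cw}) --- and in any case $\Delta_n=0$ for $n\geq 2$ for degree reasons, since $C_2^{\otimes n}$ is concentrated in degree $2n$ and a $1$-reduced coalgebra can have no co-operations landing in that range. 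Hence $l_n=0$ for all $n\geq 2$. For $n=1$ one has $l_1(\phi)=d_L\circ\phi\pm\phi\circ d_{C_2}$, and $d_L=0$ because $L$ is a minimal $s^{-1}L_{\infty}$-model, while $d$ vanishes on $C_2$ because $C$ is $1$-reduced; so $l_1=0$ as well.

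For the second step, since all $l_n$ vanish the Maurer-Cartan equation is vacuous, so $MC_0(Hom_{\R}(C_2,L))$ is exactly the degree-$0$ part of $Hom_{\R}(C_2,L)$. For the gauge relation I would use $MC_1(Hom_{\R}(C_2,L))=MC(Hom_{\R}(C_2,L)\otimes\Omega_1)$: by the extension-of-scalars formula, and since the base algebra has no operations, the only operation on $Hom_{\R}(C_2,L)\otimes\Omega_1$ is the differential induced by $d_{\Omega_1}$, so $MC_1$ is the set of $d_{\Omega_1}$-closed elements of degree $0$. Writing $\Omega_1=\R[t]\oplus\R[t]\,dt$ with $d(t)=dt$, such a closed element has the form $z=1\otimes v+(\text{terms in }\R[t]\,dt)$ with $v$ of degree $0$ and the polynomial coefficient of the first summand constant. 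The two face maps $d_0,d_1\colon MC_1\to MC_0$ are induced by the two vertex evaluations $t\mapsto 0$ and $t\mapsto 1$, each sending $dt\mapsto 0$, so $d_0(z)=v=d_1(z)$. Therefore any two gauge-equivalent Maurer-Cartan elements coincide, the gauge relation is trivial, and $\pi\colon MC_0(Hom_{\R}(C_2,L))\to\mathcal{MC}(Hom_{\R}(C_2,L))$ is a bijection.

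For the last step: by definition $mc^2(f)=q_2(mc(f))\in MC_0(Hom_{\R}(C_2,L))$, and $mc^2_{\infty}(f)$ is its image under $\pi$; since $\pi$ is the bijection just established, $mc^2_{\infty}(f)$ and $mc^2(f)$ are identified, and likewise for $g$. Because $mc^2_{\infty}(f)$ is a homotopy invariant of $f$ (by the preceding lemma) and $\pi$ is injective, $mc^2(f)$ itself is a homotopy invariant, hence so is every coordinate $\lambda^f_{i,j}$ of $mc^2(f)$ in the basis $\{\varphi_{i,j}\}$ of $Hom_{\R}(C_2,L)$ indexed by basis elements $i$ of $C_2$ and basis elements $j$ of $L_2$; the same applies to $g$. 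The only genuinely delicate point in the whole argument is the degree bookkeeping behind the vanishing of the higher brackets on $Hom_{\R}(C_2,L)$; everything else is routine.
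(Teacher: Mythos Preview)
Your proof is correct and follows essentially the same approach as the paper's. The paper argues in one line that, since $C_2$ is concentrated in degree $2$, the convolution algebra $Hom_{\R}(C_2,L)$ is abelian with zero differential, and hence has no nontrivial gauge equivalence; you spell out both halves of this in detail, in particular giving an explicit computation in $MC_1 = MC(Hom_{\R}(C_2,L)\otimes\Omega_1)$ showing that the two face maps agree on closed elements. The extra detail is harmless and perhaps clarifying, though your closing remark that the degree bookkeeping behind the vanishing of the higher brackets is the ``genuinely delicate point'' slightly overstates matters: the vanishing follows immediately from $C_2=\bigoplus_{k_2}\mathfrak{S}^2$ having trivial coproduct by definition, with no degree analysis needed.
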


\begin{proof}
 To prove the lemma, we need to show that the moduli space of Maurer-Cartan elements $\mathcal{MC}(Hom_{\R}(C_2,L))$ is isomorphic to the set of Maurer-Cartan elements in $MC_0(Hom_{\R}(C_2,L))$. Since the $C_{\infty}$-coalgebra $C_2$ is concentrated in degree $2$, it has the trivial $C_{\infty}$-structure. The convolution $s^{-1}L_{\infty}$-algebra $Hom_{\R}(C_2,L)$ is therefore abelian and has zero differential, which implies that every element in $Hom_{\R}(C_2,L)_0$ is a Maurer-Cartan element and that there is no non trivial gauge equivalence. Therefore we have an isomorphism between $\mathcal{MC}(Hom_{\R}(C_2,L))$ and  $MC_0(Hom_{\R}(C_2,L))$, so two Maurer-Cartan elements are gauge equivalent if and only if they are equal. Therefore we have an equality between $mc_{\infty}^2(f)$ and $mc^2(f)$ and an equality between $mc_{\infty}^2(g)$ and $mc^2(g)$.
\end{proof}

Now that we know how to compute $mc^2_{\infty}(f)$ and $mc_{\infty}^2(g)$, we proceed by induction. To do this we will use the long exact sequence from Theorem \ref{thrmlongexactsequencealgebraiccwcomplexes} and apply it to the fibration $p_2:MC_{\bullet}(Hom_{\R}(C_{\leq 3},L)) \rightarrow MC_{\bullet}(Hom_{\R}(C_2,L))$. The idea is that whenever $mc^2(f)=mc^2(g)$, then since $\pi_1(Hom_{\R}(\bigoplus_{k_{n+1}}\mathfrak{S}^n,L))$ acts transitively on the fiber of above $MC^2(f)$. The elements $mc^3(f)$ and $mc^3(g)$ will differ by an element $\gamma\in \pi_1(MC_{\bullet}(Hom_{\R}(\bigoplus_{k_3}\mathfrak{S}^2,L)))$ of the fundamental group of the fiber. This element $\gamma $ is not unique, but different choices of $\gamma$ will differ by elements of a certain subspace, which we have very good control over. The equivalence class of $\gamma$ will be an invariant of the maps $f$ and $g$. We will make this precise as follows.

\begin{proposition}\label{propsomethingsomething}
Let $f,g:M \rightarrow N$ be maps such that $mc^2(f)=mc^2(g)=\tau$, for a certain Maurer-Cartan element  $\tau \in MC_0(Hom_{\R}(H_2(M),\pi_2(N))$. The group $\pi_1(Hom_{\R}(\bigoplus_{k_3} \mathfrak{S}^2,L))$ acts on the set of Maurer-Cartan elements $MC_{\bullet}(Hom_{\R}(C_{\leq 3},L))$, such that there exists a $\gamma \in \pi_1(Hom_{\R}(\bigoplus_{k_3} \mathfrak{S}^2,L))$ such that $\gamma \cdot mc^3(f)=mc^3(g)$. The two Maurer-Cartan elements $mc^3(f)$ and $mc^3(g)$ are gauge equivalent if and only if $\gamma$ is an element of the subspace $Im((a_3^*)^{\tau})$, where $(a_3^*)^{\tau}$ is the map induced by the attaching map $a_3:\bigoplus \mathfrak{S}^2 \rightarrow C_{\leq 2}$ twisted by the Maurer-Cartan element $\tau$.
\end{proposition}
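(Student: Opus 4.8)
The plan is to feed the attaching map $a_3$ into the long exact sequence produced by Theorem~\ref{thrmlongexactsequencealgebraiccwcomplexes} and then read off the statement from Lemma~\ref{lemlesfibration}, after moving the relevant basepoints by a twist.

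First I would apply Theorem~\ref{thrmlongexactsequencealgebraiccwcomplexes} to the attaching map $a_3:\bigoplus_{k_3}\mathfrak{S}^2\rightsquigarrow C_{\leq 2}$ (whose cone is $C_{\leq 3}$) and to $L$, obtaining the homotopy fibration sequence $MC_{\bullet}(Hom_{\R}(C_{\leq 3},L))\xrightarrow{p_2}MC_{\bullet}(Hom_{\R}(C_{\leq 2},L))\xrightarrow{a_3^*}MC_{\bullet}(Hom_{\R}(\bigoplus_{k_3}\mathfrak{S}^2,L))$ together with its long exact sequence (Theorem~\ref{thrmlesfibrations}, Lemma~\ref{lemlesfibration}). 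Part~(1) of Lemma~\ref{lemlesfibration} furnishes the action of $\pi_1(MC_{\bullet}(Hom_{\R}(\bigoplus_{k_3}\mathfrak{S}^2,L)))$ on $\mathcal{MC}(Hom_{\R}(C_{\leq 3},L))$. Since $q_2=p_2\circ q_3$ by Definition~\ref{defapproximaioonss}, the hypothesis $mc^2(f)=mc^2(g)=\tau$ says exactly that $mc^3(f)$ and $mc^3(g)$ have the same image under $p_2$, so by part~(3) of Lemma~\ref{lemlesfibration} they lie in a single orbit and there is a $\gamma$ with $\gamma\cdot mc^3(f)=mc^3(g)$. This already gives the first half of the statement.

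For the criterion I would change the basepoint of the fibration sequence to $mc^3(f)$, i.e.\ twist $Hom_{\R}(C_{\leq 3},L)$, $Hom_{\R}(C_{\leq 2},L)$ and $Hom_{\R}(\bigoplus_{k_3}\mathfrak{S}^2,L)$ by $mc^3(f)$, $p_2(mc^3(f))=\tau$ and $a_3^*(\tau)$ respectively, using that an $\infty$-morphism together with a Maurer--Cartan element produces a twisted $\infty$-morphism. By Theorem~\ref{thrmchangeofbasepoint} this identifies the homotopy groups of these Maurer--Cartan spaces, based at those points, with the homology of the corresponding twisted convolution algebras, and identifies the map $\pi_1(MC_{\bullet}(Hom_{\R}(C_{\leq 2},L)),\tau)\to\pi_1(MC_{\bullet}(Hom_{\R}(\bigoplus_{k_3}\mathfrak{S}^2,L)),a_3^*(\tau))$ with the one induced by the twisted morphism $(a_3^*)^{\tau}$. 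Because $mc^3(f)$ lies in $\mathrm{Im}(p_2)$, exactness forces $a_3^*(\tau)$ to be gauge trivial, so its component is the basepoint component and the ambient group $\pi_1(MC_{\bullet}(Hom_{\R}(\bigoplus_{k_3}\mathfrak{S}^2,L)))$ is unchanged by the twist. In these twisted coordinates $mc^3(f)$ is the basepoint of the fibre, so the connecting map $\partial$ of the twisted long exact sequence satisfies $\partial(\gamma)=\gamma\cdot mc^3(f)=mc^3(g)$; by exactness at $\pi_1$ of the base (part~(4) of Lemma~\ref{lemlesfibration} with $x$ the basepoint), $\partial(\gamma)$ equals the basepoint if and only if $\gamma\in\mathrm{Im}((a_3^*)^{\tau})$. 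Since the basepoint of the fibre is $mc^3(f)$, this is exactly the claim that $mc^3(f)$ and $mc^3(g)$ are gauge equivalent iff $\gamma\in\mathrm{Im}((a_3^*)^{\tau})$. I would close by noting the criterion does not depend on the choice of $\gamma$: two choices differ by an element of the stabiliser of $mc^3(f)$, which the identification above shows is precisely the subgroup $\mathrm{Im}((a_3^*)^{\tau})$.

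I expect the basepoint change in the third paragraph to be the main obstacle: one has to keep track of which path-component of each Maurer--Cartan space the relevant basepoint lies in, translate the change of basepoint into twists of the convolution $s^{-1}L_{\infty}$-algebras via Theorem~\ref{thrmchangeofbasepoint}, and verify that $a_3^*(\tau)$ is gauge trivial so that the ambient group is left untouched. The identity $q_2=p_2\circ q_3$ and the passage from the topologically phrased Lemma~\ref{lemlesfibration} to the coalgebraic setting (already supplied by Theorem~\ref{thrmlongexactsequencealgebraiccwcomplexes}) are routine by comparison.
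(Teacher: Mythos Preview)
Your argument is correct and follows essentially the same route as the paper's proof: feed the cell attachment into Theorem~\ref{thrmlongexactsequencealgebraiccwcomplexes}, read off the action and transitivity from Lemma~\ref{lemlesfibration}, and use a Maurer--Cartan twist (Theorem~\ref{thrmchangeofbasepoint}) to move the basepoint so that parts~(3) and~(4) of Lemma~\ref{lemlesfibration} apply directly. The paper organises this slightly differently, first treating the case $\tau=0$ (where no twist is needed) and then reducing the general case to it by twisting the whole fibration sequence by $\tau$; you instead twist once by $mc^3(f)$, $\tau$, and $a_3^*(\tau)$ at the three levels and handle everything in one pass, which is arguably cleaner and makes explicit the point that $a_3^*(\tau)$ is gauge trivial so the acting group is unchanged.
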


\begin{proof}
 We will first prove this proposition when $mc^2(f)=mc^2(g)=0$. Since $0$ is the base point of $MC_{\bullet}(Hom_{\R}(C_{\leq 3},L)$ and $\tau$ is equal to $0$, we can  use the long exact sequences from Theorem \ref{thrmlongexactsequencealgebraiccwcomplexes} and  Lemma \ref{lemlesfibration}. Lemma \ref{lemlesfibration} now states that $\pi_1(Hom_{\R}(\bigoplus_{k_3}\mathfrak{S}^3,L))$ acts transitively on the fiber. Since $mc^2(f)=mc^2(g)$, the elements $mc^3(f)$ and $mc^3(g)$ lie in the same fiber. So in particular there is an element  $\gamma \in   \pi_1(Hom_{\R}(\bigoplus_{k_3}\mathfrak{S}^3,L))$, such that $\gamma \cdot mc^3(f)=mc^3(g)$. Lemma \ref{lemlesfibration} also states that, if $mc^3(f)$ and $mc^3(g)$ are elements of the fiber over the base point, then if $\beta, \gamma \in \pi_1(Hom_{\R}(\bigoplus_{k_3}\mathfrak{S}^3,L))$ such that $\gamma \cdot mc^3(f)=mc^3(g)$ and $\beta \cdot mc^3(f)=mc^3(g)$, then $\beta$ and $\gamma$ are gauge equivalent if and only if they differ by an element $\alpha \in \pi_1(Hom_{\R}(C_{\leq 2},L))$. So in particular, the fiber over the base point is isomorphic to $\pi_1(Hom_{\R}(\bigoplus_{k_3}\mathfrak{S}^3,L))/Im(a_n^*)$. The elements $mc^3(f)$ and $mc^3(g)$ are therefore gauge equivalent if and only if $\gamma\in Im(a_n^*)$. This proves the proposition when $\tau$ is the base point.
 
 When $mc^2(f)=mc^2(g) =\tau \neq 0$, we will use the twist from Theorem \ref{thrmchangeofbasepoint} to change the base point. In this case we get a fibration sequence of the twisted $s^{-1}L_{\infty}$-algebras
 $$Hom_{\R}(C_{\leq 3},L)^{\tau} \rightarrow Hom_{\R}(C_{\leq 2},L)^{\tau} \rightarrow Hom_{\R}(\bigoplus_{k_3}\mathfrak{S}^2,L)^{\tau}.$$
 Since we twisted all the $s^{-1}L_{\infty}$-algebras, the elements $mc^2(f)$ and $mc^2(g)$ now both map to the base point. We can therefore use the arguments from the previous part of this proof. It therefore follows that that the Maurer-Cartan elements $mc^3(f)$ and $mc^3(g)$ are gauge equivalent if and only if they differ by an element $\gamma \in Im((a_3^*)^{\tau})$.
\end{proof}

Because of this proposition, we can speak about the difference of two Maurer-Cartan elements $mc^3(f)$ and $mc^3(g)$ as long as they belong to the same fiber, i.e. $mc^2(f)=mc^2(g)$.


It is easy to generalize these arguments to the higher $n$, we will therefore proceed by inductively defining the higher invariants. With this we mean, suppose that we have computed $mc^n_{\infty}(f)$ and $mc^n_{\infty}(g)$ and they are the same, then we would like to compute $mc_{\infty}^{n+1}(f)$ and $mc_{\infty}^{n+1}(g)$ as we did with $mc^2$ and $mc^3$. There is one problem with this approach and that is that it can now be the case that $mc^n(f)$ and $mc^n(g)$ are gauge equivalent but are not the same. In this case our method fails and additional calculations are necessary. Fortunately our method works in many special cases as we will demonstrate in Section \ref{subsecsomexamples}. In the first example we will give, we give a complete invariant of maps from $S^n \times S^m$ to a general space $Y$. In the second example we will show how we can detect the real homotopy class of the constant map using this method. We first explain how to compute $mc^{n+1}_{\infty}(f)$ and $mc_{\infty}^{n+1}(g)$. 

\begin{proposition}\label{proppropprop}
 Let $f,g:M \rightarrow N$ be maps and  assume that we have computed $mc^n_{\infty}(f)$ and $mc_{\infty}^n(g)$. Further assume that $mc^n(f)=mc^n(g)=\tau$ for some $\tau \in MC_0(Hom_{\R}(C_{\leq n},L))$. The group  $\pi_1(Hom_{\R}(\bigoplus_{k_{n+1}}\mathfrak{S}^{n},L))$ acts transitively on the fiber over the point $\tau$, i.e. there exists a $\gamma \in \pi_1(Hom_{\R}(\bigoplus_{k_{n+1}} \mathfrak{S}^n,L))$ such that $\gamma \cdot mc^{n+1}(f)=mc^{n+1}(g)$.  The Maurer-Cartan elements  $mc^{n+1}(f)$ and $mc^{n+1}(g)$ are gauge equivalent if and only if $\gamma \in Im((a_{n+1}^*)^{\tau})$.
\end{proposition}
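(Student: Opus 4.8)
The proof will be the inductive analogue of that of Proposition \ref{propsomethingsomething}, with the attaching map $a_3$ replaced by $a_{n+1}:\bigoplus_{k_{n+1}}\mathfrak{S}^n\rightsquigarrow C_{\leq n}$ and the fibration $p_2$ replaced by $p_n$. First I would apply Theorem \ref{thrmlongexactsequencealgebraiccwcomplexes} to the cofibration sequence $\bigoplus_{k_{n+1}}\mathfrak{S}^n\xrightarrow{a_{n+1}}C_{\leq n}\xrightarrow{i_n}C_{\leq n+1}$, obtaining the homotopy fibration sequence
$$MC_\bullet(Hom_\R(C_{\leq n+1},L))\xrightarrow{p_n}MC_\bullet(Hom_\R(C_{\leq n},L))\xrightarrow{a_{n+1}^*}MC_\bullet(Hom_\R(\bigoplus\nolimits_{k_{n+1}}\mathfrak{S}^n,L))$$
together with its long exact sequence of homotopy groups and pointed sets (using the dimension shift of Lemma \ref{lemlesfibration}, i.e.\ Lemma 1.4.6 of \cite{MP11}). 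Since $q_n=p_n\circ q_{n+1}$, we have $p_n(mc^{n+1}(f))=mc^n(f)=\tau=mc^n(g)=p_n(mc^{n+1}(g))$, so $mc^{n+1}(f)$ and $mc^{n+1}(g)$ lie in the same fiber of $p_n$, the one over $\tau$; by Lemma \ref{lemlesfibration}(1),(3) the group $\pi_1(MC_\bullet(Hom_\R(\bigoplus_{k_{n+1}}\mathfrak{S}^n,L)))$ acts transitively on that fiber, which produces a $\gamma$ with $\gamma\cdot mc^{n+1}(f)=mc^{n+1}(g)$.

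To decide when $mc^{n+1}(f)$ and $mc^{n+1}(g)$ are gauge equivalent I would move $\tau$ to the base point by twisting, exactly as in the proof of Proposition \ref{propsomethingsomething}. Twisting all three convolution $s^{-1}L_\infty$-algebras by (compatible lifts of) $\tau$ — using Theorem \ref{thrmchangeofbasepoint} and the compatibility of the twisting construction with $\infty$-morphisms (Section 4 of \cite{Berg1}) — again produces, after applying $MC_\bullet$, a homotopy fibration sequence; the verification that it is one is the same as in the proof of Theorem \ref{thrmlongexactsequencealgebraiccwcomplexes}, via Theorem \ref{thrmmcthingyorsomething?}. The attaching map of this twisted sequence is the map $(a_{n+1}^*)^\tau$ of the statement, and now $mc^{n+1}(f)$ and $mc^{n+1}(g)$ both lie in the fiber over the base point. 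Applying Lemma \ref{lemlesfibration}(4) to this twisted fibration (as in the $n=2$ case) then shows that $mc^{n+1}(f)$ and $mc^{n+1}(g)$ lie in the same path component of $MC_\bullet(Hom_\R(C_{\leq n+1},L))$ — equivalently, are gauge equivalent — if and only if the element $\gamma$ above lies in $Im((a_{n+1}^*)^\tau)$. Undoing the twist gives the proposition.

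The only non-formal point, and the step I expect to require the most care, is precisely this basepoint bookkeeping: one must verify that the $\tau$-twisted sequence remains a homotopy fibration sequence of the relevant Maurer-Cartan simplicial sets (which is where the finiteness hypotheses and Theorems \ref{thrmmcthingyorsomething?} and \ref{thrmlongexactsequencealgebraiccwcomplexes} enter), and that under the identification $\pi_k(MC_\bullet(Hom_\R(-,L)),\tau)\cong H_k(Hom_\R(-,L)^\tau)$ of Theorem \ref{thrmchangeofbasepoint} the connecting and attaching maps of the twisted sequence really are the twisted maps named in the statement. Since all of this was already carried out for $n=2$ in Proposition \ref{propsomethingsomething}, I would write the general case by merely indicating the necessary changes rather than repeating the argument in full.
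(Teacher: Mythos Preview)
Your proposal is correct and follows exactly the approach the paper intends: the paper in fact omits the proof entirely, stating only that it is ``completely analogous to the proof of Proposition \ref{propsomethingsomething}'', and your write-up is precisely that analogy carried out with $a_{n+1}$ in place of $a_3$ and $p_n$ in place of $p_2$. Your identification of the basepoint-twisting step as the only point requiring care matches the structure of the paper's proof of Proposition \ref{propsomethingsomething}.
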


\begin{remark}
Note that since $\mathfrak{S}^n$ is abelian and $L$ is minimal, the group $\pi_1(Hom_{\R}(\bigoplus_{k_{n+1}}\mathfrak{S}^{n},L))$  is isomorphic to  $\bigoplus_{k_n} L_n\cong \bigoplus_{k_{n+1}}\pi_n(N) \cong \bigoplus_{k_{n+1}} H_n(L)$.
\end{remark}

The proof is completely analogous to the proof of Proposition \ref{propsomethingsomething} and will be omitted.

\begin{remark}
 For the method to work we need to assume that $mc^n(f)=mc^n(g)$ instead of $mc^n(f)$ and $mc^n(g)$ to be just gauge equivalent. If $mc^n(f)$ and $mc^n(g)$ are gauge equivalent but not equal this method will not work and it is necessary to do more explicit computations on the moduli space of Maurer-Cartan elements. In theory it should be possible to compare the fibers above gauge equivalent Maurer-Cartan elements but this might not necessarily be simpler than just computing the moduli space of Maurer-Cartan elements.
\end{remark}

Similar to the degree $2$ and $3$ case we can speak about the difference between two Maurer-Cartan elements $mc^{n+1}(f)$ and $mc^{n+1}(g)$ if they lie above the the same Maurer-Cartan element $\tau=mc^{n}(f)=mc^n(g)$. The following corollary is a straightforward consequence of Proposition \ref{proppropprop}.

\begin{corollary}
Under the hypotheses of Proposition \ref{proppropprop} the Maurer-Cartan elements $mc^{n+1}(f)$ and $mc^{n+1}(g)$ are gauge equivalent if and only if they differ by an element of $Im((a_{n+1}^*)^{\tau})$.
\end{corollary}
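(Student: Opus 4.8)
The plan is to derive the corollary directly from Proposition \ref{proppropprop}; essentially all that remains is to pin down the meaning of ``the difference of $mc^{n+1}(f)$ and $mc^{n+1}(g)$'' and to verify that it is a well-defined element of the quotient $G/Im((a_{n+1}^*)^{\tau})$, where $G:=\pi_1(Hom_{\R}(\bigoplus_{k_{n+1}}\mathfrak{S}^n,L))$.

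First I would pass to the twisted setting exactly as in the proof of Proposition \ref{propsomethingsomething}: using Theorem \ref{thrmchangeofbasepoint} I twist every $s^{-1}L_{\infty}$-algebra in the fibration sequence
$$Hom_{\R}(\textstyle\bigoplus_{k_{n+1}}\mathfrak{S}^n,L)^{\tau}\rightarrow Hom_{\R}(C_{\leq n+1},L)^{\tau}\rightarrow Hom_{\R}(C_{\leq n},L)^{\tau}$$
by $\tau$. This moves $\tau$ to the base point, replaces $a_{n+1}^*$ by $(a_{n+1}^*)^{\tau}$, and places both $mc^{n+1}(f)$ and $mc^{n+1}(g)$ in the fiber $F$ of $p_n$ over the base point. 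By Proposition \ref{proppropprop} the group $G$ acts transitively on $F$, so there is some $\gamma\in G$ with $\gamma\cdot mc^{n+1}(f)=mc^{n+1}(g)$, and $mc^{n+1}(f)$ and $mc^{n+1}(g)$ are gauge equivalent if and only if $\gamma\in Im((a_{n+1}^*)^{\tau})=:H$.

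Next I would record why $\gamma$ is determined modulo $H$, so that the class $[\gamma]\in G/H$ — the \emph{difference} of $mc^{n+1}(f)$ and $mc^{n+1}(g)$ — is a legitimate invariant. By the remark following Proposition \ref{proppropprop} the group $G\cong\bigoplus_{k_{n+1}}\pi_n(N)$ is abelian, so along the single orbit $F$ all point stabilizers for the $G$-action coincide; and by part $(4)$ of Lemma \ref{lemlesfibration}, read off from the connecting homomorphism $\partial$ of the twisted fibration, the stabilizer of the base point of $F$ is precisely $Im((a_{n+1}^*)^{\tau})=H$. Hence if $\beta,\gamma\in G$ both carry $mc^{n+1}(f)$ to $mc^{n+1}(g)$, then $\beta$ and $\gamma$ differ by an element of $H$. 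The corollary is now immediate: $mc^{n+1}(f)$ and $mc^{n+1}(g)$ are gauge equivalent $\iff\gamma\in H\iff[\gamma]=0$ in $G/H$, i.e. $\iff$ the difference of $mc^{n+1}(f)$ and $mc^{n+1}(g)$ is represented by an element of $Im((a_{n+1}^*)^{\tau})$. The only point requiring any care — and it is a very mild one — is this well-definedness of the difference, namely the identification of all stabilizers along $F$ with $H$, which uses the abelianness of $G$ together with the translation of Lemma \ref{lemlesfibration} through the base-point change of Theorem \ref{thrmchangeofbasepoint}; everything else is a direct reformulation of Proposition \ref{proppropprop}.
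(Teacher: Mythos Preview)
Your proposal is correct and follows the same approach as the paper, which simply declares the corollary ``a straightforward consequence of Proposition \ref{proppropprop}'' without giving any further argument. You supply more detail than the paper does---in particular the verification that the ``difference'' $[\gamma]\in G/H$ is well defined via the abelianness of $G$ and the stabilizer computation from Lemma \ref{lemlesfibration}(4)---but this is entirely in the spirit of the paper's discussion preceding the corollary, where the notion of difference is introduced informally by analogy with the degree $2$ and $3$ case.
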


\begin{remark}
Again it is important that $mc^n(f)=mc^n(g)$ and not just that $mc_{\infty}^n(f)=mc_{\infty}^n(g)$.
\end{remark}

What we have essentially done in this section is the following. First we used that there are projection maps from $p_{n}:Hom_{\R}(C_{\leq n+1},L) \rightarrow Hom_{\R}(C_{\leq n},L)$. So if we have a point $q_n(\tau) \in Hom_{\R}(C_{\leq n},L)$ we can describe the inverse image  of $q_n(\tau)$ under the map $p_n$ as a subspace of $Hom_{\R}(C_{\leq n+1},L)$ isomorphic to $Hom_{\R}(C_{n+1},L)$. Then we used the long exact sequence in homotopy to identify the subspace $p^{-1}_n(q_n(\tau))$ with $\pi_1(Hom_{\R}(\bigoplus_{k_{n+1}},L)$.  Then we used the attaching maps of $C$ to determine which elements in $\pi_1(Hom_{\R}(\bigoplus_{k_{n+1}},L)$ are gauge equivalent to each other.

\subsection{Some examples}\label{subsecsomexamples}

We will now give two examples to show how our method can be applied to decide whether maps are homotopic or not. In the first example we will take a relatively simple source space and show that it is fairly easy to decide whether two maps are homotopic or not.

In the second example we how to decide whether a Maurer-Cartan element is gauge equivalent to the zero Maurer-Cartan element or not. In particular we show that in this case it is not necessary to worry about gauge equivalence.

\subsubsection{Maps from $S^n \times S^m$ to a space $Y$}\label{secsxs}

We will now describe how we can obtain a complete invariant of maps from $S^n\times S^m$ to a general space $Y$. To do this we will first need an algebraic CW-decomposition for the homology of  $S^n \times S^m$. 

The following lemma is well known and its proof will be omitted. Recall that we assumed that all the homology is always reduced.

\begin{lemma}
The homology of $S^n \times S^m$ is given by the coalgebra $H_*(S^n \times S^m)$, which has as basis one element $\alpha$ of degree $n$, one element $\beta$ of degree $m$ and one element $\gamma$ of degree $n+m$. The coproduct is given by $\Delta(\gamma)=\alpha \otimes \beta +(-1)^{\mid \alpha \mid \mid \beta \mid}\beta \otimes \alpha$, the elements $\alpha$ and $\beta$ are primitive, i.e. $\Delta(\alpha)=\Delta(\beta)=0$. Since the space $S^n \times S^m$ is formal this is a model for  $S^n \times S^m$.
\end{lemma}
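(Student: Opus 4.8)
The plan is to compute $H_*(S^n\times S^m)$ as a graded coalgebra via the Künneth theorem and then to invoke formality for the final ``this is a model'' clause. First I would work with \emph{unreduced} homology and use that over the field $\R$ the homology cross product gives an isomorphism of graded coalgebras $H_*(S^n)\otimes H_*(S^m)\xrightarrow{\ \cong\ }H_*(S^n\times S^m)$, where the left-hand side carries the tensor-product coalgebra structure. Indeed the diagonal of $S^n\times S^m$ is the composite of $\Delta_{S^n}\times\Delta_{S^m}$ with the middle-swap, so on homology $\Delta_{S^n\times S^m}\circ(-\times-)=((-\times-)\otimes(-\times-))\circ(\mathrm{id}\otimes\tau\otimes\mathrm{id})\circ(\Delta_{S^n}\otimes\Delta_{S^m})$, i.e. the cross product is a morphism out of the tensor-product coalgebra.

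Each factor $H_*(S^\ell)$ has basis the unit $1$ in degree $0$ and the fundamental class $x_\ell$ in degree $\ell$, and since there is nothing in degrees $1,\dots,\ell-1$ we have $\Delta(x_\ell)=x_\ell\otimes 1+1\otimes x_\ell$. Setting $\alpha=x_n\otimes 1$, $\beta=1\otimes x_m$ and $\gamma=x_n\otimes x_m$, one then computes in the tensor-product coalgebra: $\Delta(\alpha)=\alpha\otimes(1\otimes 1)+(1\otimes 1)\otimes\alpha$ and likewise for $\beta$, so after passing to reduced homology (where $1\otimes1$ is killed) $\alpha$ and $\beta$ become primitive; and $\Delta(\gamma)=(\mathrm{id}\otimes\tau\otimes\mathrm{id})(\Delta x_n\otimes\Delta x_m)$ expands into four terms. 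Two of them, $\gamma\otimes(1\otimes1)$ and $(1\otimes1)\otimes\gamma$, are the grouplike part removed in the reduced coproduct; the term $x_n\otimes1\otimes1\otimes x_m$ yields $\alpha\otimes\beta$; and the term $1\otimes x_n\otimes x_m\otimes1$, after applying the symmetry $\tau$ with its Koszul sign $(-1)^{|x_n|\,|x_m|}=(-1)^{nm}=(-1)^{|\alpha|\,|\beta|}$, yields $(-1)^{|\alpha|\,|\beta|}\,\beta\otimes\alpha$. This is exactly the stated coproduct, and dropping the degree-$0$ class gives the claimed reduced coalgebra.

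For the last sentence, I would argue that $S^n$ is formal (its $A_{PL}$/de Rham model is quasi-isomorphic to $H^*(S^n)$ with zero differential — e.g. because $H^*(S^n)$ is intrinsically formal, or directly from its minimal Sullivan model), and that a finite product of formal spaces is formal; hence $S^n\times S^m$ is formal and $\mathcal{C}\lambda(S^n\times S^m)$ is connected by a zig-zag of quasi-isomorphisms to $H_*(S^n\times S^m)$ equipped with the trivial $C_\infty$-coalgebra structure, so the coalgebra above is a $C_\infty$-model in the sense of Section~\ref{secratmodels}. The only genuinely delicate point is the sign bookkeeping in the comultiplication of the tensor-product coalgebra together with the reduced-versus-unreduced passage; once the Koszul sign of the middle-swap $\tau$ is tracked correctly the sign $(-1)^{|\alpha|\,|\beta|}$ falls out, and the formality input is entirely standard and needs only a citation.
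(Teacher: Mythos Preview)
Your proof is correct; the paper itself does not give a proof of this lemma, stating only that it ``is well known and its proof will be omitted.'' Your K\"unneth-plus-formality argument is exactly the standard justification the paper is alluding to, and your sign bookkeeping for the reduced coproduct is accurate.
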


\begin{proposition}\label{propsnxsmcw}
Let $C=H_*(S^n \times S^m)$ be the homology of $S^n \times S^m$. Assume that $n \leq m$. An algebraic CW-decomposition of $C$ is given by first taking one $n$-cell $\alpha$, then we attach an $m$-cell $\beta$ via the zero map. The last $(n+m)$-cell  $\gamma$ is attached via the  $\infty$-morphism $\epsilon:\Omega_{\iota}\gamma \rightarrow \Omega_{\iota}(\R\alpha \oplus \R\beta)$ given by $\epsilon_2(\gamma)=[\alpha,\beta]$ and zero otherwise.
\end{proposition}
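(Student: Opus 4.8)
The plan is to compute the operadic cobar constructions through the prescribed sequence of cell attachments, to recognise the outcome as the classical minimal shifted Lie model of $S^n\times S^m$, and then to read off from that model (via Theorem \ref{thrmrossetastone}) the $C_\infty$-coalgebra structure on the top skeleton, checking that it is precisely $H_*(S^n\times S^m)$. Throughout, $\Omega_\iota$ denotes the cobar construction with respect to the canonical twisting morphism $C_\infty\to s^{-1}\mathcal{LIE}$, so that $\Omega_\iota$ of a $C_\infty$-coalgebra is a shifted Lie algebra, free whenever the coalgebra is (co)free, as already noted after the definition of a cell attachment.

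First I would record the cobar of the building blocks. Since $\mathfrak{S}^k$ carries the trivial coalgebra structure, $\Omega_\iota\mathfrak{S}^k$ is the free shifted Lie algebra on a single generator $x_k$ in degree $k$ with zero differential (the degree convention being forced by Theorem \ref{thrmquillenscanlfunctors}, which recovers $\pi_k(S^k)$ in degree $k$), and $\Omega_\iota\mathfrak{D}^{k}$ is the acyclic free shifted Lie algebra on generators in degrees $k-1$ and $k$ with the obvious differential. Starting from the $n$-cell $\alpha$ we thus have $\Omega_\iota C_{\leq n}= s^{-1}\mathcal{LIE}(x_\alpha)$ with $|x_\alpha|=n$; attaching the $m$-cell $\beta$ along the zero map, so that the top generator of $\Omega_\iota\mathfrak{D}^m$ acquires zero differential in the push-out, gives $\Omega_\iota C_{\leq m}= s^{-1}\mathcal{LIE}(x_\alpha,x_\beta)$ with $|x_\beta|=m$ and zero differential. (If $n=m$ one simply takes $C_{\leq n}=\mathfrak{S}^n\oplus\mathfrak{S}^m$ as the bottom skeleton, with the same outcome, exactly as in Example \ref{Exs2xs2cw}.)

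Next I would carry out the push-out defining $C(\epsilon)=C_{\leq n+m}$. As all three algebras in the defining square are free, the push-out is the free shifted Lie algebra on $C_{\leq m}\oplus\mathfrak{S}^{n+m}$; its differential restricts to that of $\Omega_\iota C_{\leq m}$ (namely zero on $x_\alpha,x_\beta$) and sends the new generator $x_\gamma$ in degree $n+m$ to the image under the attaching map $\epsilon$ of the boundary generator of $\mathfrak{S}^{n+m-1}$, which by hypothesis is $[x_\alpha,x_\beta]$ (note $|[x_\alpha,x_\beta]|=n+m-1$, as required). Hence
\[
\Omega_\iota C_{\leq n+m}\;=\;\big(s^{-1}\mathcal{LIE}(x_\alpha,x_\beta,x_\gamma),\,d\big),\qquad d x_\alpha=d x_\beta=0,\quad d x_\gamma=[x_\alpha,x_\beta],
\]
with $|x_\alpha|=n$, $|x_\beta|=m$, $|x_\gamma|=n+m$. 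This is exactly the minimal shifted Lie model of $S^n\times S^m$ (compare Section (e) of Chapter 24 of \cite{FHT}; equivalently it is the Lie model produced by the proof of Theorem \ref{thrmexistenceofalgebraiccwcomplexes} applied to the standard CW-structure on $S^n\times S^m$). In particular $C_{\leq n+m}$ is a $C_\infty$-model for $S^n\times S^m$ by Theorem \ref{thrmquillenscanlfunctors}.

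Finally I would identify the $C_\infty$-coalgebra $C_{\leq n+m}$ itself. By Theorem \ref{thrmrossetastone} its structure is encoded by the square-zero coderivation on $C_\infty(C_{\leq n+m})$ corresponding to the cobar differential $d$. Since $d$ has no linear part, the internal differential of $C_{\leq n+m}$ vanishes, so $C_{\leq n+m}$ is minimal with one basis vector $\alpha,\beta,\gamma$ per cell; and since the only non-trivial term of $d$ is the single bracket $d x_\gamma=[x_\alpha,x_\beta]$, only the binary (cocommutative) cooperation of $C_{\leq n+m}$ is non-zero, dual to that bracket, i.e. $\Delta(\gamma)=\alpha\otimes\beta+(-1)^{|\alpha||\beta|}\beta\otimes\alpha$, $\Delta(\alpha)=\Delta(\beta)=0$, with all higher $C_\infty$-cooperations zero. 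This is precisely the coalgebra $H_*(S^n\times S^m)$ of the preceding lemma, and since the cells are attached in non-decreasing dimension $n\le m<n+m$ (all $\ge 2$, so $1$-reduced), this exhibits the desired algebraic CW-decomposition of $C$. I expect the last step to be the main obstacle: one must verify, with the paper's shifted-Lie sign conventions, that the quadratic term $d x_\gamma=[x_\alpha,x_\beta]$ really dualises to the symmetrised coproduct above and that no higher $C_\infty$-cooperation is created in the bar--cobar passage. This is routine bar--cobar bookkeeping but needs care; keeping the degree conventions straight for $\Omega_\iota\mathfrak{S}^k$ (generator in degree $k$) is a secondary point.
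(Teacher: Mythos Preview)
Your proof is correct and follows essentially the same route as the paper: build the skeleta by the prescribed push-outs of free shifted Lie algebras and then verify that the resulting quasi-free Lie algebra encodes exactly the coalgebra $H_*(S^n\times S^m)$. The paper's own proof is much terser, simply asserting that ``it is easy to check that the attachment along the map $\epsilon\to[\alpha,\beta]$ gives us $H_*(S^n\times S^m)$''; you have spelled out this check via Theorem~\ref{thrmrossetastone}, and your intermediate identification of $(s^{-1}\mathcal{LIE}(x_\alpha,x_\beta,x_\gamma),d)$ with the minimal Lie model of $S^n\times S^m$ is a pleasant bonus but not strictly required by the statement.
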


\begin{proof}
To define an algebraic CW-complex for $C=H_*(S^n \times S^m)$, we proceed by induction on the skeleta of $H_*(S^n \times S^m)$. First we will start with an $n$-cell $\alpha$ and attach an $m$-cell $\beta$ along the zero map. This gives us the the trivial two dimensional coalgebra $C_{\leq m}$. To obtain $H_*(S^n \times S^m)$, we need to attach an $(n+m)$-cell $\gamma$ in such a way that $\Delta(\gamma)=\alpha \otimes \beta +(-1)^{\mid \alpha \mid \mid \beta \mid}\beta \otimes \alpha$. It is easy to check that the attachment along the map $\epsilon\rightarrow [\alpha,\beta]$ gives us $H_*(S^n \times S^m)$.
\end{proof}

With this algebraic CW-decomposition we can now prove the following theorem about the moduli space of Maurer-Cartan elements. Before we state the theorem we need to fix a basis for $L$. Let $\{\epsilon_{i_k}\}_{i_k \in I_k}$ be a basis for $L_{k}$, the degree $k$-part of $L$. A basis for $Hom_{\R}(H_*(S^n \times S^m),L)_0$ is then given by $\{\varphi_{\alpha,i_n}\}_{i_n\in I_n}\cup \{\varphi_{\beta,j_m}\}_{j_m\in I_m} \cup \{ \varphi_{\gamma,l_{n+m}} \}_{l_{n+m} \in I_{n+m}}$, where $\varphi_{\alpha,i_n}$ is the function given by $\varphi_{\alpha,i_n}(\alpha)=\epsilon_{i_n}$ and zero otherwise. The functions $\varphi_{\beta,j_k}$ and $\varphi_{\gamma,l_{n+m}}$ are defined similarly. The Maurer-Cartan elements $\tau$ and $\kappa$ can be expressed in this basis and we will denote the coefficient of the basis element $\varphi_{\alpha,i_n}$ of $\tau$ by $\lambda_{\alpha,i_n}^{\tau}$. Then we get the following statement about the coefficients $\lambda_{n+m,l}^{\tau}$ and $\lambda_{n+m,l}^{\kappa}$.

\begin{theorem}
Let $\tau,\kappa:Hom_{\R}(H_*(S^n \times S^m) , Y)$ be two Maurer-Cartan elements in the convolution algebra between $H_*(S^n \times S^m)$ and a simply-connected minimal $s^{-1}L_{\infty}$-algebra $Y$ of finite type. The Maurer-Cartan elements $\tau$ and $\kappa$ are gauge equivalent if and only if the following conditions are satisfied:

\begin{enumerate}
 \item The coefficients $\lambda_{\alpha,j_n}^{\tau}$ and $\lambda_{\alpha,j_n}^{\kappa}$ are equal for all $j_n \in I_n$.
 \item The coefficients $\lambda_{\beta,j_m}^{\tau} $ and $\lambda_{\beta,j_m}^{\kappa}$ are equal for all $j_m \in I_m$. We will denote the degree $\leq m$ part of the Maurer-Cartan  element $\kappa$ by $\theta=q_m(\kappa)$, explicitly this is given by $\theta=\sum_{j_n\in I_n} \lambda^{k}_{\alpha,j_n}+\sum_{j_m \in I_m}\lambda^{\kappa}_{\beta,j_m}$.
 \item The coefficients $\lambda_{\gamma,j_{n+m}}^{\tau}$ and $\lambda_{\gamma,j_{n+m}}^{\kappa}$ are equal in the quotient  $(\R\gamma \otimes L_{n+m})/ Im(\nu)$, where  $\nu_{\theta}:\pi_1(Hom_{\R}(C_{\leq m},L))\rightarrow \pi_1(Hom_{\R}(\mathfrak{S}^{n+m},  L))$ is the map given by $\nu_{\theta}(f)(x)=[\theta,x]$, for $x\in Hom_{\R}(C_{\leq n+m-1},L)$.
\end{enumerate}

\end{theorem}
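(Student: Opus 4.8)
The plan is to run the tower of fibrations of Section~\ref{secmodulispaces} for the skeletal filtration of $C=H_*(S^n\times S^m)$ given in Proposition~\ref{propsnxsmcw}, and to match the obstructions it produces against conditions (1)--(3). Since $C$ has cells only in degrees $n$, $m$ and $n+m$, the tower collapses to three stages: $C_{\leq n}=\mathfrak{S}^n$, then $C_{\leq m}=\mathfrak{S}^n\oplus\mathfrak{S}^m$ (the $m$-cell attached along the zero map), and finally $C_{\leq n+m}=C$ (the $(n+m)$-cell $\gamma$ attached along the $\infty$-morphism $\epsilon$ with $\epsilon_2=[\alpha,\beta]$). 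Although Section~\ref{secmodulispaces} is phrased for $mc(f),mc(g)$, the arguments use only that these are Maurer--Cartan elements, so they apply verbatim to the arbitrary Maurer--Cartan elements $\tau,\kappa$ here. I would treat $n<m$; when $n=m$ the argument is identical after merging (1) and (2) into a single statement about the degree-$n$ coefficients.

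First I would dispose of the bottom two stages at once. The coalgebra $C_{\leq m}$ has trivial reduced coproduct (both $\alpha$ and $\beta$ are primitive), so the convolution $s^{-1}L_\infty$-algebra $Hom_\R(C_{\leq m},L)$ is abelian with zero differential: every bracket $l_k$ with $k\geq 2$ is built from the coproduct of $C_{\leq m}$ and hence vanishes, and $l_1=\partial$ vanishes because $d_C=d_L=0$. As in the $C_2$-case treated above, this means every degree-$0$ element is Maurer--Cartan and distinct Maurer--Cartan elements are never gauge equivalent; hence $q_m(\tau)$ and $q_m(\kappa)$ are gauge equivalent if and only if they are equal, which is exactly conditions (1) and (2), and in that case we set $\theta:=q_m(\tau)=q_m(\kappa)$. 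For the forward direction of the theorem, Lemma~\ref{lemmafibrations} applied to $q_m$ shows that gauge equivalence of $\tau$ and $\kappa$ already forces (1) and (2).

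Next I would pass to the top stage. Assuming (1) and (2), so $q_m(\tau)=q_m(\kappa)=\theta$, Proposition~\ref{proppropprop}, applied to the passage from $C_{\leq m}$ to $C_{\leq n+m}$ with base point $\theta$, produces an element $\delta$ of $\pi_1(Hom_\R(\mathfrak{S}^{n+m},L))\cong\R\gamma\otimes L_{n+m}$ with $\delta\cdot q_{n+m}(\tau)=q_{n+m}(\kappa)$, and states that $q_{n+m}(\tau)$ and $q_{n+m}(\kappa)$ are gauge equivalent if and only if $\delta\in Im((\epsilon^*)^{\theta})$, where $\epsilon^*$ is the morphism induced by the attaching map $\epsilon$ of $\gamma$. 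Since $C=C_{\leq n+m}$ we have $q_{n+m}(\tau)=\tau$ and $q_{n+m}(\kappa)=\kappa$, and the class of $\delta$ in $(\R\gamma\otimes L_{n+m})/Im((\epsilon^*)^{\theta})$ is precisely the difference of the coefficient vectors $(\lambda^\tau_{\gamma,j_{n+m}})$ and $(\lambda^\kappa_{\gamma,j_{n+m}})$. Assembling this with the previous paragraph gives both implications of the theorem, once $Im((\epsilon^*)^{\theta})$ is identified with $Im(\nu_\theta)$.

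That identification is the one genuinely computational point, and the expected main obstacle. Using Definition~\ref{definfinitymorphism2}, the induced $\infty$-morphism $\epsilon^*$ has vanishing arity-$1$ component (because $\epsilon_1=0$) and an arity-$2$ component built solely from $\epsilon_2=[\alpha,\beta]$ and the bracket $l_2$ of $L$. Twisting by $\theta$ via $(\epsilon^*)^{\theta}_k=\sum_{l\geq 0}\frac{1}{l!}(\epsilon^*)_{k+l}(\theta,\dots,\theta,-)$ feeds exactly one copy of $\theta$ into this binary operation, so on the relevant $\pi_1$ the twisted map sends $f$ to $\bigl([\theta(\alpha),f(\beta)]+(-1)^{|\alpha||\beta|}[\theta(\beta),f(\alpha)]\bigr)$, i.e.\ to $\nu_\theta(f)$ up to the conventional signs dictated by the coproduct of $\gamma$. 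The work here is pure bookkeeping: tracking the signs from the $\Sigma_n$-actions and from the degree shift relating the boundary sphere $\mathfrak{S}^{n+m-1}$ to the fiber $\mathfrak{S}^{n+m}$, and using Theorem~\ref{thrmchangeofbasepoint} to identify $\pi_1$ of that fiber with $\R\gamma\otimes L_{n+m}$. Once $Im((\epsilon^*)^{\theta})=Im(\nu_\theta)$ is established, the theorem follows by combining the three stages of the tower.
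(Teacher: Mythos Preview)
Your proposal is correct and follows essentially the same approach as the paper: both run the skeletal tower of fibrations from Section~\ref{secmodulispaces} and invoke Proposition~\ref{proppropprop} at the top stage, then identify $Im((\epsilon^*)^\theta)$ with $Im(\nu_\theta)$ via Definition~\ref{definfinitymorphism2}. The only cosmetic difference is that you handle the two bottom stages in one shot by observing that $C_{\leq m}$ has trivial reduced coproduct (hence the convolution algebra is abelian), whereas the paper treats the $n$- and $m$-cells separately, using that the attaching map $a_m$ is zero so that $Im((a_m^*)^{q_{m-1}(\tau)})=0$; these are equivalent observations, and your explicit bookkeeping of the twist computation is a bit more detailed than the paper's ``straightforward application of Definition~\ref{definfinitymorphism2}''.
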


\begin{proof}
To prove the theorem we will use induction on the CW-decompo\-sition of $H_*(S^n \times S^m)$. The algebraic CW-decomposition of $H_*(S^n \times S^m)$ has three cells, so the space $Hom_{\R}(H_*(S^n \times S^m),L)_0$ can be decomposed as $\left(\R\alpha\otimes L_n\right) \oplus \left( \R\beta \otimes L_m \right)\oplus \left(\R\gamma \oplus L_{n+m}\right)$. Since $\alpha$ is the lowest dimensional cell there is no gauge equivalence possible on the space $\R\alpha\otimes L_n$. The elements $q_n(\tau)$ and $q_n(\kappa)$ are therefore invariants of $\tau$ and $\kappa$. 

 The next step is to determine whether $q_m(\tau)$ and $q_m(\kappa)$ are gauge equivalent or not. According to Proposition \ref{proppropprop}, $q_m(\tau)$ and $q_m(\kappa)$ are gauge equivalent if and only if they differ by an element \\ $\gamma \in Im((a^*_{m})^{q_m-1(\tau)})$. Since  $a_m$ is the zero map  and twisting the zero map also gives the zero map, the image of $(a^*_m)^{q_m(\tau)}$ is zero. The Maurer-Cartan elements $q_m(\tau)$ and $q_m(\kappa)$ are therefore gauge equivalent if and only if they are equal. The elements $q_m(\tau)$ and $q_m(\kappa)$ are therefore invariants of the gauge equivalence classes of $\tau$ and $\kappa$.
 
When $q_m(\tau)=q_m(\kappa)$, we need to compare $q_{n+m}(\tau)$ and $q_{n+m}(\kappa)$. To do this we will first denote  $q_m(\tau)$ by $\theta$. We know from Proposition \ref{proppropprop}, that $\pi_1(Hom_{\R}(H_*(S^n \times S^m,L)))$ acts transitively on the fiber over $\theta$. According to Proposition \ref{proppropprop}, two elements are gauge equivalent if and only if they differ by an element of $Im((a_{n+m}^*)^{\theta}_1)$. So all that is left is to compute $Im((a_{n+m}^*)^{\theta}_1)$. After a straightforward application of Definition \ref{definfinitymorphism2}, it follows that the degree one, arity one component of $(a_{n+m}^*)^{\theta}$ is given by the map $\nu$. So the Maurer-Cartan elements $\tau$ and $\kappa$ are gauge equivalent if and only if coefficients $\lambda_{\gamma,j_{n+m}}^{\tau}$ and $\lambda_{\gamma,j_{n+m}}^{\kappa}$ are equal in the quotient $(\R\gamma \otimes L_{n+m})/ Im(\nu)$, which proves the theorem.

\end{proof}

We will now give two examples of a target space $s^{-1}L_{\infty}$-algebra $L$ and the implications this has on  the moduli space of Maurer-Cartan elements.

\begin{example}
When the target $s^{-1}L_{\infty}$-algebra $L$ is abelian, then the products $l_n$ are zero. So in particular the image of $\nu$ is zero. From this it follows that that two Maurer-Cartan element $\tau$ and $\kappa$ are gauge equivalent if and only if  $\lambda_{i,j}^{\tau}=\lambda_{i,j}^{\kappa}$ for all $i$ and $j$.
\end{example}

\begin{example}
We will now describe the moduli space of Maurer-Cartan elements from $S^2 \times S^2$ to $M=S^2 \times S^2 \setminus \{*\}$. The homotopy $s^{-1}L_{\infty}$-algebra of $M$ is the same as the homotopy $s^{-1}L_{\infty}$-algebra $\pi_*(S^2 \vee S^2)$ and is given by $s^{-1}\mathcal{LIE}(x,y)$ the free shifted Lie algebra on two generators $x$ and $y$. The degree $2$ part of the moduli space of Maurer-Cartan elements is now $4$-dimensional and given by $Hom_{\R}(C_2,L_2)$. If we look at the image of the map  $\nu_{\theta}$, then it is straightforward see that $\nu_{\theta}$ is surjective  when $\theta \neq 0$ and $\nu_\theta$ is the zero map when $\theta$ is zero. From this we conclude that the Maurer-Cartan elements $\tau$ and $\kappa$ are gauge equivalent if and only if either 
\begin{enumerate}

\item We have the following equalities  $\lambda_{\alpha,x}^{\tau}=\lambda_{\alpha,x}^{\kappa}$, $\lambda_{\alpha,y}^{\tau}=\lambda_{\alpha,y}^{\kappa}$, $\lambda_{\beta,x}^{\tau}=\lambda_{\beta,x}^{\kappa}$ and $\lambda_{\beta,y}^{\tau}=\lambda_{\beta,y}^{\kappa}$ and at least one of the coefficients is non-zero.
\item Or if all the coefficients $\lambda_{\alpha,x}^{\tau}=\lambda_{\alpha,x}^{\kappa}=\lambda_{\alpha,y}^{\tau}=\lambda_{\alpha,y}^{\kappa}=\lambda_{\beta,x}^{\tau}=\lambda_{\beta,x}^{\kappa}=\lambda_{\beta,y}^{\tau}=\lambda_{\beta,y}^{\kappa}=0$ are zero and $\lambda_{\gamma,i}^{\tau}=\lambda_{\gamma,i}^{\kappa}$ for all basis elements $i \in L_4$. 
\end{enumerate}
\end{example}



When we fix a basis for $L$ we get the following corollary of this theorem. To do this we will use the same notation for the  basis as in the example from Section \ref{secsxs}. Then we get the following statement about the coefficients $\lambda_{n+m,l}^f$ and $\lambda_{n+m,l}^g$.

\begin{corollary}
Let $f,g:S^n \times S^m \rightarrow N$ be two maps  such that $N$ is a simply-connected manifold of finite type. Then the maps $f$ and $g$ are real homotopic if and only if the following conditions are satisfied. 
\begin{enumerate}
\item The coefficients $\lambda_{n,l}^f$ and $\lambda_{n,l}^g$ are equal for all $l \in I_n$.
\item The coefficients $\lambda_{m,l}^f$ and $\lambda_{m,l}^g$ are equal for all $l \in I_m$.
\item The coefficients $\lambda_{n+m,l}^f=\lambda_{n+m,l}^g$ for all $l \in (\R\gamma \otimes L_{n+m})/Im(a_{n+m}^{q_{m}})$.
\end{enumerate}
\end{corollary}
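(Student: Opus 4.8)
The plan is to obtain the corollary directly from the preceding theorem together with the completeness of the algebraic Hopf invariant. First I would recall that, by Theorem~\ref{thrmalghopfinv} combined with the identification of $mc$ and $mc^{\vee}$ and the isomorphism $\varphi$ from Proposition~\ref{proplinftyisomorphism}, two maps $f,g:S^n\times S^m\to N$ are real homotopic if and only if $\Omega^{\bullet}(f)$ and $\Omega^{\bullet}(g)$ are homotopic as CDGA maps, which in turn holds if and only if the Maurer-Cartan elements $mc(f)$ and $mc(g)$ are gauge equivalent in the convolution $s^{-1}L_{\infty}$-algebra $Hom_{\R}(H_*(S^n\times S^m),\pi_*(N))$. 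So everything reduces to translating this gauge equivalence into conditions on the coefficients $\lambda^f_{i,j}$ and $\lambda^g_{i,j}$.

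Next I would fix $L=\pi_*(N)$, equipped with the $s^{-1}L_{\infty}$-structure that makes it a minimal $s^{-1}L_{\infty}$-model for $N$; since $N$ is a simply-connected manifold of finite type, such an $L$ exists and is simply-connected, minimal, and of finite type, so the hypotheses of the preceding theorem are satisfied. Setting $\tau=mc(f)$ and $\kappa=mc(g)$ and applying that theorem, $f$ and $g$ are real homotopic if and only if the three conditions stated there hold for $\tau$ and $\kappa$. It then remains to rewrite those conditions using the basis $\{\varphi_{i,j}\}$ of $Hom_{\R}(H_*(S^n\times S^m),\pi_*(N))_0$ and the coefficients $\lambda^f_{i,j}=\lambda^{mc(f)}_{i,j}$, $\lambda^g_{i,j}=\lambda^{mc(g)}_{i,j}$, which may be computed via the integral formulas of Theorem~\ref{thrmhowtocomputeMaurarCartanelements}.

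Finally I would carry out the bookkeeping. Conditions (1) and (2) of the preceding theorem become the equalities $\lambda^f_{n,l}=\lambda^g_{n,l}$ for $l\in I_n$ and $\lambda^f_{m,l}=\lambda^g_{m,l}$ for $l\in I_m$, which are conditions (1) and (2) of the corollary and which guarantee that the degree $\leq m$ parts agree, $q_m(mc(f))=q_m(mc(g))=\theta$; condition (3) becomes the equality of $\lambda^f_{n+m,l}$ and $\lambda^g_{n+m,l}$ in the quotient $(\R\gamma\otimes L_{n+m})/Im(\nu_{\theta})$, which is condition (3) of the corollary once one notes that $(\R\gamma\otimes L_{n+m})/Im(a_{n+m}^{q_{m}})$ is simply shorthand for this quotient, with $Im(a_{n+m}^{q_m})$ denoting the image of the arity-one, degree-one component of the attaching map $a_{n+m}$ twisted by $\theta$, i.e.\ $Im(\nu_{\theta})$. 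I expect the only genuine point of care to be matching the two indexing conventions for the basis elements $\varphi_{\alpha,j_n},\varphi_{\beta,j_m},\varphi_{\gamma,j_{n+m}}$ and for the coefficients $\lambda^f_{n,l},\lambda^f_{m,l},\lambda^f_{n+m,l}$, and keeping track of the base-point change hidden in the twisted attaching map; there is no new argument here beyond this translation, so no substantial obstacle.
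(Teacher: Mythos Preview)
Your proposal is correct and matches the paper's approach: the paper does not give a separate proof of this corollary, presenting it as an immediate restatement of the preceding theorem in the basis notation once one invokes Theorem~\ref{thrmalghopfinv} to identify real homotopy classes with gauge equivalence classes of Maurer-Cartan elements. Your write-up simply makes explicit the translation the paper leaves to the reader.
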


\subsubsection{Detecting the zero Maurer-Cartan element}

In this example we will show how our methods can be used to detect the zero Maurer-Cartan element. In Lemma  \ref{lemtheconstantmapiszero} we show that the zero Maurer-Cartan element corresponds to the constant map. This gives us therefore a method to determine whether a map is real homotopic to the constant map or not. 

\begin{proposition}\label{propdetectingthezeromcelement}
Let $\tau$ be a Maurer-Cartan element in $Hom_{\R}(C,L)$, where we assume that $C$ and $L$ are both minimal, i.e. $d_C=d_L=0$. The Maurer-Cartan element $\tau$ is gauge equivalent to the zero Maurer-Cartan element if and only if it is the zero Maurer-Cartan element.  
\end{proposition}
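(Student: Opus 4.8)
The plan is to show that the gauge-equivalence class of the zero Maurer--Cartan element is the single point $\{0\}$; since gauge equivalence is an equivalence relation, in particular symmetric (see \cite{Getz1}), this is exactly the assertion of the proposition. The one structural input is that minimality forces the unary bracket of the convolution $s^{-1}L_{\infty}$-algebra $V:=Hom_{\R}(C,L)$ of Theorem \ref{thrmLinftyconv} to vanish: the bracket $l_1$ is the differential $f\mapsto d_L\circ f-(-1)^{\vert f\vert}f\circ d_C$ induced by the internal differentials, and $d_C=d_L=0$ by minimality, so $l_1=0$. I would also record that $V_0$ and $V_1$ are finite dimensional, since $C$ is a finite-dimensional algebraic CW-complex and $L$ is of finite type, and that $V$ is degree-wise nilpotent by the standing conventions, so that $MC_{\bullet}(V)=MC_{\bullet}(V\otimes\Omega_{\bullet})$ and the notion of gauge equivalence are defined.

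Next I would unwind what a gauge equivalence is as a $1$-simplex of $MC_{\bullet}(V)$. Writing $\Omega_1\cong\R[t]\oplus\R[t]\,dt$, such a $1$-simplex is an element $z=a(t)+b(t)\,dt$ with $a(t)\in V_0\otimes\R[t]$ and $b(t)\in V_1\otimes\R[t]$. The Maurer--Cartan equation for $z$ in $V\otimes\Omega_1$ splits into its part with no $dt$, which says that $a(t)$ is a Maurer--Cartan element of $V$ for each value of $t$, and its part proportional to $dt$, which is the gauge-flow equation
\[
a'(t)=-\sum_{k\geq 1}\frac{1}{(k-1)!}\,l_k\bigl(b(t),a(t),\dots,a(t)\bigr),
\]
where $a(t)$ occurs $k-1$ times (the signs play no role in what follows). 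The two Maurer--Cartan elements being compared are the endpoints $a(0)$ and $a(1)$.

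Now suppose $\tau$ is gauge equivalent to $0$. Reversing the $1$-simplex if necessary, I may assume $a(0)=0$ and $a(1)=\tau$. Since $l_1=0$, the $k=1$ term disappears from the right-hand side above, and every remaining summand $l_k(b(t),a(t),\dots,a(t))$ with $k\geq 2$ has at least one slot filled by $a(t)$; hence, viewing the right-hand side as a polynomial map $F(t,a)$ on the finite-dimensional space $V_0$, we have $F(t,0)=0$ for all $t$. Thus the constant path $a\equiv 0$ solves the gauge-flow equation with $a(0)=0$, and I would conclude $\tau=a(1)=0$ from uniqueness of the solution. Uniqueness is elementary here: if $a\not\equiv 0$, write $a(t)=\sum_{i\geq 1}a_i t^i$ and let $m\geq 1$ be minimal with $a_m\neq 0$; each monomial of $F(t,a(t))$ carries at least one factor $a(t)$, so $F(t,a(t))$ vanishes to order at least $m$ in $t$, while the coefficient of $t^{m-1}$ in $a'(t)$ is $m\,a_m\neq 0$, contradicting the gauge-flow equation. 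Hence $a\equiv 0$ and $\tau=0$.

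I expect the only step needing genuine care to be the reduction $l_1=0$, and here both minimality hypotheses are really used: if either $d_C$ or $d_L$ were nonzero then $l_1$ would be nonzero and the zero element would in general be gauge equivalent to a nonzero element, just as $d\lambda$ is always gauge trivial in a differential graded Lie algebra. Everything past that point is formal manipulation of the gauge action, legitimate because $V$ is degree-wise nilpotent.
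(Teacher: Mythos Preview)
Your argument is correct and takes a genuinely different route from the paper's.

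The paper proves the proposition by induction on the skeletal filtration of the algebraic CW-complex $C$, invoking Proposition \ref{proppropprop} at each stage: at the base point the fibre over $q_n(\tau)=0$ is controlled by the untwisted map $a_{n+1}^*$, and since $C$ is minimal the linear (arity-one) part of every attaching map vanishes, so $Im(a_{n+1}^*)=\{0\}$ and $q_{n+1}(\tau)$ is forced to be zero.

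You bypass the CW structure entirely. The observation that $d_C=d_L=0$ forces $l_1=0$ in the convolution algebra $V$ lets you read the $dt$-component of the Maurer--Cartan equation in $V\otimes\Omega_1$ as a polynomial ODE $a'(t)=F(t,a(t))$ in which every term of $F$ carries at least one factor of $a$. Your order-of-vanishing argument then gives uniqueness of the solution with $a(0)=0$, hence $\tau=a(1)=0$. This is really the general deformation-theoretic fact that in any degree-wise nilpotent $s^{-1}L_\infty$-algebra with vanishing differential the gauge orbit of $0$ is $\{0\}$; you never use that $V$ arises as a convolution algebra, nor that $C$ carries a CW decomposition.

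What each approach buys: your proof is shorter, self-contained, and logically independent of the machinery of Section \ref{secalgebraicCWcomplex}. The paper's proof is less economical for this particular statement but is written to exhibit the inductive method of that section in the simplest possible case; that method, and in particular the twisted attaching maps $(a_{n+1}^*)^\theta$, is what one actually needs when comparing two nonzero Maurer--Cartan elements, where no global ODE argument of your kind is available.
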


\begin{proof}
To prove the proposition we will use induction on the algebraic CW-decomposition of $C$. We will show that if $\tau$ is gauge equivalent to $0$, then all the coefficients of $\tau$ are zero. 

We first note that since $Hom_{\R}(C_{2},L)$ is abelian,  there is no gauge equivalence on $Hom_{\R}(C_2,L)$. Since $\tau$ is gauge equivalent to $0$, the element $q_2(\tau)$ has to be equal to zero.

Now assume $q_n(\tau)$ is zero, then we want to show that this implies that $q_{n+1}(\tau)$ is also equal to zero. In other words, we want to show that $q_{n+1}(\tau)$ is gauge equivalent to the zero Maurer-Cartan element if and only if $q_{n+1}(\tau)=0$. To show this we will use Proposition \ref{proppropprop}, which states that $q_{n+1}(\tau)$ and $0$ are gauge equivalent if and only if $\tau$ and $0$ differ by an element of the linear subspace $Im(a_{n+1}^*)$. So to prove the proposition we need to show that $Im(a_{n+1}^*)$ is equal to the zero subspace. Since we assumed that the algebraic CW-complex $C$ is a minimal CW-complex, the linear part of the attaching map is zero. Suppose that the linear part of the attaching map was not equal to zero, then  it would induce a differential on the CW-complex $C$ and therefore making it  no longer minimal. Since the linear part of the attaching map is zero, the image of the linear part is also zero, i.e. $Im(a_{n+1}^*)=\{0\}$. Now we can apply Proposition \ref{proppropprop}, which implies that, $q_{n+1}(\tau)$ and $0$ are gauge equivalent if and only if they differ by an element of the zero subspace, so they have to be equal. 

So because of this induction step, all the elements $q_n(\tau)$ have to be equal to zero for $\tau$ to be gauge equivalent to $0$. Since all the $q_n(\tau)$ have to be zero the Maurer-Cartan element $\tau$ has to be zero.
\end{proof}

One application of Theorem \ref{propdetectingthezeromcelement} is that it gives us a way to determine if a map is real homotopic to the constant map or not. To do this we first need to show that the Maurer-Cartan element of the constant map is $0$. Then we can apply Theorem \ref{propdetectingthezeromcelement} to show that a map $f:M \rightarrow N$ is real homotopic to the zero map if and only if all the coefficients $\lambda_{i,j}^f$ vanish.

\begin{lemma}\label{lemtheconstantmapiszero}
 Let $c:M \rightarrow N$ be the constant map between two simply-connected manifolds $M$ and $N$ such that $M$ is compact. The Maurer-Cartan element corresponding to the constant map is the zero Maurer-Cartan element, i.e. the coefficients $\lambda_{i,j}^{c}$ are all equal to zero.
\end{lemma}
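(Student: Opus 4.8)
The plan is to unwind the definition of the Hopf invariant map $mc^{\vee}$ (equivalently $mc$) applied to the constant map $c:M\rightarrow N$ and observe that the induced map on de Rham complexes kills everything of positive degree, so every term in the relevant bar-construction composite vanishes. Concretely, $\Omega^{\bullet}(c):\Omega^{\bullet}(N)\rightarrow\Omega^{\bullet}(M)$ factors through $\R=\Omega^0(pt)$, i.e. it sends every form $\omega\in\Omega^{\bullet}(N)$ of positive degree to $0$ and the constant $1$ to $1$. Since all our algebras and coalgebras are reduced (Convention \ref{convallobjectsaredg}: $\mathcal{P}(1)=\K$, and all cohomology is reduced), the induced map $B_{\pi}\Omega^{\bullet}(c):B_{\pi}\Omega^{\bullet}(N)\rightarrow B_{\pi}\Omega^{\bullet}(M)$ is the zero map in positive weights, hence so is the whole composite $\pi^*(N)\xrightarrow{q}B_{\pi}\Omega^{\bullet}(N)\xrightarrow{B_{\pi}\Omega^{\bullet}(c)}B_{\pi}\Omega^{\bullet}(M)\xrightarrow{B_{\pi}j}B_{\pi}H^*(M)$ that defines $mc^{\vee}(c)$.

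First I would recall from Definition \ref{defalghopfinv} that $mc^{\vee}(c)$ is the Maurer-Cartan element corresponding to the bar-construction composite above, and that as a linear map $\pi^*(N)\rightarrow H^*(M)$ it is obtained by projecting this composite onto cogenerators. Second, I would verify the key algebraic fact: the cobar/bar constructions are built out of $\Omega^{\bullet}(N)_{\geq 1}$ and $\Omega^{\bullet}(M)_{\geq 1}$ (the augmentation ideals, since everything is reduced), so a CDGA map which is zero on the augmentation ideal of the source induces the zero map on all positive-weight components of the bar construction. Thus the composite defining $mc^{\vee}(c)$ is identically zero, hence so is the associated Maurer-Cartan element. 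Third, translating through the isomorphism $\varphi$ of Proposition \ref{proplinftyisomorphism} between $Hom_{\R}(\pi^*(N),H^*(M))$ and $Hom_{\R}(H_*(M),\pi_*(N))$ — which is linear and sends $0$ to $0$ — gives $mc(c)=0$. Finally, expressing $0=mc(c)=\sum_{i,j}\lambda^c_{i,j}\varphi_{i,j}$ in the basis $\{\varphi_{i,j}\}$ forces every coefficient $\lambda^c_{i,j}$ to vanish, which is the statement.

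Alternatively, and perhaps more transparently given Theorem \ref{thrmhowtocomputeMaurarCartanelements}, one can argue directly from the integral formula: $\lambda^c_{i,j}=\eta_c(\alpha_i,\omega_j)$ is a sum of integrals over $\alpha_i$ of expressions each of which contains a factor $c^*q_n(\omega_j)$, where $q_n(\omega_j)$ is a linear combination of elements $\nu\otimes a_1\otimes\cdots\otimes a_n$ with the $a_k$ of positive degree (since $\pi^*(N)$ is built from positive-degree forms and $q$ lands in the reduced bar construction). Because $c^*$ annihilates all positive-degree forms on $N$, every such factor is zero, so the entire integrand vanishes and $\lambda^c_{i,j}=0$ for all $i,j$.

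The routine point to be careful about — and the only place that needs a sentence of justification rather than a one-line assertion — is that the bar construction $B_{\pi}$ really only sees the augmentation ideal, so that a CDGA morphism vanishing there induces the zero morphism of bar constructions in positive weights; this is where the reducedness conventions (Convention \ref{convallobjectsaredg} and the convention that all (co)homology is reduced) are used, and it is otherwise completely formal. There is no genuine obstacle here: the lemma is essentially the statement that the constant map has trivial Hopf invariant, which is immediate once the definitions are unwound.
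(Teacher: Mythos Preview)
Your proposal is correct, and your alternative argument via the integral formula of Theorem \ref{thrmhowtocomputeMaurarCartanelements} is precisely the paper's proof: the pullback $c^*$ kills all forms of degree $\geq 1$, so every integrand in the expression for $\lambda^c_{i,j}$ vanishes. Your first, more structural argument (that $\Omega^{\bullet}(c)$ vanishes on the augmentation ideal and hence $B_{\pi}\Omega^{\bullet}(c)$ is zero in positive weights) is just a repackaging of the same observation one level up in the bar construction, and is equally valid.
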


\begin{proof}
To compute the coefficients of the constant map we notice that the pullback of a differential $k$-form $\omega$ under the constant map is always equal to zero if $k \geq 1$. Since the explicit formulas for the  coefficients $\lambda_{i,j}^c$ all involve the pull back of  differential forms of degree greater than $1$ all these formulas will vanish. The Maurer-Cartan element of the constant map is therefore equal to zero.
\end{proof}

\begin{theorem}\label{thrmzeromap}
 Let $f:M \rightarrow N$ be a map between two simply-connected manifolds $M$ and $N$, such that $M$ is compact. The map $f$ is real homotopic to the constant map if and only if all the coefficients $\lambda_{i,j}^f$ of $mc(f)$ are equal to zero. This equivalent to the vanishing of all the integrals from Lemma \ref{lemconcreteformulas}. 
\end{theorem}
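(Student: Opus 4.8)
The plan is to assemble Theorem \ref{thrmzeromap} from three ingredients already established in the excerpt: the algebraic Hopf invariant is a complete invariant of the real homotopy class (Theorem \ref{thrmalghopfinv}, via the identification with $mc$ on $Hom_{\R}(H_*(M),\pi_*(N))$), the constant map has vanishing Maurer-Cartan element (Lemma \ref{lemtheconstantmapiszero}), and gauge equivalence to the zero Maurer-Cartan element forces equality to zero in the minimal setting (Proposition \ref{propdetectingthezeromcelement}). So the first step is to set up the minimal models: by Theorem \ref{thrmminimlcwcomplex} we may choose a $1$-reduced minimal algebraic CW-complex $C$ modelling $M$ and a minimal $s^{-1}L_{\infty}$-algebra $L = \pi_*(N)$ modelling $N$, so that $d_C = d_L = 0$ and Proposition \ref{propdetectingthezeromcelement} applies to $Hom_{\R}(C,L) \cong Hom_{\R}(H_*(M),\pi_*(N))$.

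Next I would run the chain of equivalences. The map $f$ is real homotopic to the constant map $c$ if and only if the induced CDGA maps $\Omega^{\bullet}(f)$ and $\Omega^{\bullet}(c)$ are homotopic, which by Theorem \ref{thrmalghopfinv} (transported through the isomorphism $\varphi$ of Proposition \ref{proplinftyisomorphism} so that everything lives in $Hom_{\R}(H_*(M),\pi_*(N))$) holds if and only if $mc_{\infty}(f) = mc_{\infty}(c)$, i.e.\ $mc(f)$ and $mc(c)$ are gauge equivalent. By Lemma \ref{lemtheconstantmapiszero}, $mc(c) = 0$, so this says precisely that $mc(f)$ is gauge equivalent to the zero Maurer-Cartan element. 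Now invoke Proposition \ref{propdetectingthezeromcelement}: since $C$ and $L$ are minimal, $mc(f)$ is gauge equivalent to $0$ if and only if $mc(f) = 0$, i.e.\ all coefficients $\lambda_{i,j}^f$ in the expansion $mc(f) = \sum_{i,j}\lambda_{i,j}^f \varphi_{i,j}$ vanish.

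Finally, the last sentence of the statement — that this is equivalent to the vanishing of all the integrals from Lemma \ref{lemconcreteformulas} — follows immediately from Theorem \ref{thrmhowtocomputeMaurarCartanelements}, which identifies each coefficient $\lambda_{i,j}^f$ with the integral $\eta_f(\alpha_i,\omega_j) = \sum_{n\geq 1}\int_{\alpha_i}\big((-1)^{|\nu|} p\, t^{\nu}\mathbf{h}_n(f^*q_n(\omega_j)) + \sum_{k}(-1)^{|\nu_k''|}(p^{\nu_k'}\circ_{e_k} t^{\nu_k''})\tau_k \mathbf{h}_n(f^*q_n(\omega_j))\big)$; so $\lambda_{i,j}^f = 0$ for all $i,j$ is literally the same condition as all these integrals being zero.

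I do not expect a genuine obstacle here: the theorem is essentially a corollary obtained by concatenating Theorem \ref{thrmalghopfinv}, Lemma \ref{lemtheconstantmapiszero}, Proposition \ref{propdetectingthezeromcelement} and Theorem \ref{thrmhowtocomputeMaurarCartanelements}. The one point that needs a line of care is the bookkeeping of which $s^{-1}L_{\infty}$-algebra the Maurer-Cartan elements live in — one must make sure that the identification between $Hom_{\R}(\pi^*(N),H^*(M))$ and $Hom_{\R}(H_*(M),\pi_*(N))$ (Proposition \ref{proplinftyisomorphism} and the subsequent convention) is used consistently, and that the minimality hypotheses of Proposition \ref{propdetectingthezeromcelement} are met by the chosen models, which is exactly what Theorem \ref{thrmminimlcwcomplex} guarantees. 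Everything else is a direct quotation of the cited results.
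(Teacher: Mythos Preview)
Your proposal is correct and follows essentially the same route as the paper's proof, which simply cites Proposition \ref{propdetectingthezeromcelement} and Lemma \ref{lemtheconstantmapiszero} and observes that $mc_{\infty}(f)=0$ is equivalent to all $\lambda_{i,j}^f$ vanishing. If anything, you are more careful than the paper in spelling out the supporting bookkeeping (the minimal models via Theorem \ref{thrmminimlcwcomplex}, the identification of Proposition \ref{proplinftyisomorphism}, and the integral formulas from Theorem \ref{thrmhowtocomputeMaurarCartanelements}).
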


\begin{proof}
This is a straightforward consequence of Proposition \ref{propdetectingthezeromcelement} and  Lemma \ref{lemtheconstantmapiszero}. Because of Lemma \ref{lemtheconstantmapiszero}, the constant map has the zero Maurer-Cartan element. Because of Proposition \ref{propdetectingthezeromcelement}, a Maurer-Cartan element is gauge equivalent to the zero Maurer-Cartan element if and only if it is the zero Maurer-Cartan element. A map $f:M \rightarrow N$ is therefore real homotopic to the zero Maurer-Cartan element if and only if $mc_{\infty}(f)=0$. This is the same as all the coefficients $\lambda_{i,j}^{f}$ being equal to zero.
\end{proof}

\part{Examples and other approaches}\label{partexample}

In the last part of this paper we will demonstrate how our Hopf invariants work in practice, by working out some concrete examples. For our examples it will be necessary to construct the morphisms $q$ and $j$ from Definition \ref{defalghopfinv}. We will first give a few ad hoc examples in which we do not need to use the Homotopy Transfer Theorem to define the map $j$. 

After these examples we will explain how we can use Hodge Theory to construct the map $j$. For the construction of the $\infty$-morphism $j$ from Sections \ref{secalgebraichopfinv} and \ref{secfromspacestomc} we want to apply the Homotopy Transfer Theorem. To do this, we need explicit formulas for a contraction as in Definition \ref{defcontraction2}. For compact oriented Riemannian manifolds without boundary, Hodge Theory provides us with such a contraction for the de Rham complex. In Section \ref{secHodgetheory},  we will therefore recall some facts from the Hodge theory of Riemannian manifolds and then show how to apply this to maps from compact oriented Riemannian manifolds without boundary to other (not necessarily compact oriented Riemannian) manifolds. 




\section{Examples}\label{secexamples}

We will now give a few examples to show that the invariants defined in this paper are in practice often very computable. We will give three maps from $S^2 \times S^2$ to another space and show that two of them are not homotopic to the constant map and the third map is real homotopic to the constant map.

To do this we will need a concrete description of $S^2 \times S^2$. We will therefore describe $S^2 \times S^2$ as the subset of $\R^6$ given by 
$$S^2 \times S^2=\{(x,y,z,u,v,w)\in \R^6 \mid x^2+y^2+z^2=1 \mbox{ and } u^2+v^2+w^2=1\} .$$

The homology of $S^2 \times S^2$ is given by $H_*(S^2\times S^2)=\R \alpha \oplus \R \beta \oplus \R \gamma$, where $\alpha$ and $\beta$ have degree $2$ and the degree of $\gamma$ is $4$. The coproduct $\Delta$ is given by $\Delta(\alpha)=\Delta(\beta)=0$ and $\Delta(\gamma)=\alpha \otimes \beta +\beta \otimes \alpha$. A representative for the homology class $\alpha$ is given by the submanifold given by $\{(x,y,z,1,0,0) \in \R^6 \mid x^2+y^2+z^2=1\}$ and a representative for the class $\beta $ is given by the submanifold $\{(1,0,0,u,v,w) \in \R^6 \mid u^2+v^2 +w^2=1 \}$. The class $\gamma$ is represented by $S^2 \times S^2$.

In the next two examples we will compute the Maurer-Cartan element of maps from $S^2 \times S^2 $ to $\R^3\setminus (0,0,0)$. Note that $\R^3\setminus (0,0,0)$ is homotopy equivalent to $S^2$ and has as Lie model $\mathcal{LIE}(\xi)$, the free Lie algebra on a generator $\xi$ of degree $2$. The space of Maurer-Cartan elements is in this case given by $Hom_{\R}(H_2(S^2 \times S^2),\pi_2(\R^3 \setminus (0,0,0)))$. It is easy to check that there is no gauge equivalence on this set. So the moduli space of Maurer-Cartan elements is isomorphic to $\R^2$ and is spanned by $\varphi_{\alpha,\xi}$ and $\varphi_{\beta,\xi}$, where the element $\varphi_{\alpha,\xi}$ is the basis element of $Hom_{\R}(H_2(S^2 \times S^2),\pi_2(\R^3 \setminus (0,0,0)))$ given by $\varphi_{\alpha,\xi}(\alpha)=\xi$ and $\varphi_{\alpha,\xi}(\beta)=0$ and $\varphi_{\beta,\xi}$ is defined similarly.

So in general, the real homotopy class of a map $f:S^2 \times S^2 \rightarrow \R^3 \setminus (0,0,0)$ is defined by a linear combination $\lambda^f_{\alpha,\xi} \varphi_{\alpha,\xi}+\lambda^f_{\beta,\xi} \varphi_{\beta,\xi}$. We would like to compute the coefficients of a map $f$ using Theorem \ref{thrmhowtocomputeMaurarCartanelements}. To do this we first need to fix the $\infty$-morphism $j:\Omega^{\bullet}(S^2\times S^2) \rightsquigarrow H^*(S^2 \times S^2)$ and the map $q:\pi^*(\R ^3 \setminus (0,0,0)) \rightarrow B_{s^{-1}Lie}\Omega^{\bullet}(\R^3 \setminus (0,0,0))$ from Section \ref{secalgebraichopfinv}. 

To define the $\infty$-morphism $j$ we will use the Homotopy Transfer Theorem from Section \ref{secHTT}. So we need to define an inclusion $i$ and projection $p$ of $H^*(S^2 \times S^2)$ to $\Omega^{\bullet}(S^2 \times S^2)$ and a contraction $H:\Omega^{\bullet}(S^2 \times S^2) \rightarrow \Omega^{\bullet}(S^2 \times S^2)$. We will first define $i$. We will do this by representing $H^2(S^2 \times S^2)$ by the following two differential forms
$$\omega=\frac{1}{2 \pi} \frac{x dy \wedge dz-y dx \wedge dz+ z dx \wedge dz}{x^2+y^2 +z^2},$$
$$\psi=\frac{1}{2 \pi} \frac{u dv \wedge dw-v du \wedge dw+ w du \wedge dv}{u^2+v^2 +w^2}.$$
The cohomology group $H^4(S^2 \times S^2)$ will be represented by $\omega \wedge \psi$. The inclusion map $i$ is defined by sending $\omega$, $\psi$ and $\omega \wedge \psi$ to the corresponding differential forms in $\Omega^{\bullet}(S^2 \times S^2)$. To define the map $p$ we fix an inner product on $\Omega^{\bullet}(S^2 \times S^2)$ and project onto the subspace spanned by $\omega$ ,$\psi$ and $\omega \wedge \psi$. The map $H$ will not be important for the example and it is enough to know it exists.

The map $q:\pi^*(\R^3 \setminus (0,0,0)) \rightarrow B_{s^{-1}Lie}\Omega^{\bullet}(\R^3 \setminus (0,0,0))$ will be defined as follows. The cohomotopy groups $\pi^*(\R^3 \setminus (0,0,0))$ are given by the free shifted Lie coalgebra on one gener ator $\zeta$ and has as basis $\zeta$ and $]\zeta,\zeta[$, where $]\zeta, \zeta[$ is the cobracket of $\zeta$ with itself. Then we define the differential form 
$$\Upsilon=\frac{1}{2 \pi} \frac{x dy \wedge dz-y dx \wedge dz+ z dx \wedge dz}{x^2+y^2 +z^2}$$
on $\R^3 \setminus (0,0,0)$. Then we define $q$ by $q(\zeta)=\Upsilon$ and $q(]\zeta,\zeta[)=]\Upsilon,\Upsilon[$.

\begin{remark}
For the examples we will consider it is not necessary to know the explicit contraction $H:\Omega^{\bullet}(S^2 \times S^2) \rightarrow \Omega^{\bullet}(S^2 \times S^2)$. In Section \ref{secHodgetheory} we will give formulas for the more general case.
\end{remark}

\begin{example}\label{exintegral1}
Let $p_1:S^2 \times S^2 \rightarrow \R^3 \setminus (0,0,0)$ be the projection onto the subspace of $\R^6$ spanned by $x$, $y$ and $z$ with the origin removed. In coordinates this map is given by $p_1(x,y,z,u,v,w)=(x,y,z)$. 

We would now like to compute the coefficients of the map $p_1$. According to Theorem \ref{thrmhowtocomputeMaurarCartanelements} these are given by the formulas given in that theorem, since we picked the maps $j$ and $q$ in such a way the coefficients become:
$$\lambda^{p_1}_{\alpha,\xi}= \int_{\alpha} p_1^* \Upsilon,  $$
$$ \lambda^{p_1}_{\beta,\xi}= \int_{\beta} p_1^* \Upsilon.  $$

Since $p_1^* \Upsilon$ is equal to $\omega$ we get the integrals
$$\lambda^{p_1}_{\alpha,\xi}= \int_{\alpha} \omega,  $$
$$ \lambda^{p_1}_{\beta,\xi}= \int_{\beta} \omega  .$$
If we use spherical coordinates we can compute the values of these integrals. The coefficients are given by $\lambda_{\alpha,\xi}^{p_1}=1$ and $ \lambda^{p_1}_{\beta,\xi}=0$. 

\end{example}

\begin{example}
Let $p_2:S^2 \times S^2 \rightarrow \R^3 \setminus (0,0,0)$ be the projection onto the subspace of $\R^6$ spanned by $u$, $v$ and $w$ with the origin removed. In coordinates this map is given by $p_2(x,y,z,u,v,w)=(u,v,w)$.

We would now like to compute the coefficients of the map $p_2$. Similar to Example \ref{exintegral1} we can use the formulas from Theorem \ref{thrmhowtocomputeMaurarCartanelements} and get the following integrals:
$$\lambda^{p_2}_{\alpha,\xi}= \int_{\alpha} p_2^* \Upsilon,  $$
$$ \lambda^{p_2}_{\beta,\xi}= \int_{\beta} p_2^* \Upsilon.  $$
Since $p_2 \Upsilon=\psi$ we get the following integrals
$$\lambda^{p_2}_{\alpha,\xi}= \int_{\alpha} \psi,  $$
$$ \lambda^{p_2}_{\beta,\xi}= \int_{\beta} \psi  .$$
Again by using spherical coordinates we find that the values of these integrals are given by $\lambda^{p_2}_{\alpha,\xi}=0$ and $ \lambda^{p_2}_{\beta,\xi}=1$.
\end{example}

\begin{corollary}
Since the coefficients $\lambda_{\alpha,\xi}^{p_1}$ and $\lambda_{\alpha,\xi}^{p_2}$ don't agree, the maps $p_1$ and $p_2$ are not homotopic.
\end{corollary}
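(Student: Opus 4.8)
The plan is to deduce the corollary directly from the completeness of the algebraic Hopf invariant together with the explicit computations of the two preceding examples. First I would recall Theorem~\ref{thrmalghopfinv} (equivalently Theorem~\ref{thrmcoalhopfinv} under the identification of Proposition~\ref{proplinftyisomorphism}): two maps $f,g:S^2\times S^2\to\R^3\setminus(0,0,0)$ are real homotopic if and only if $mc_{\infty}(f)=mc_{\infty}(g)$ in the moduli space $\mathcal{MC}\big(Hom_{\R}(H_*(S^2\times S^2),\pi_*(\R^3\setminus(0,0,0)))\big)$. So it suffices to show that $mc_{\infty}(p_1)\neq mc_{\infty}(p_2)$.

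Next I would use the observation recorded just before the examples: the degree-$0$ Maurer--Cartan elements of $Hom_{\R}(H_*(S^2\times S^2),\pi_*(\R^3\setminus(0,0,0)))$ form the vector space $Hom_{\R}(H_2(S^2\times S^2),\pi_2(\R^3\setminus(0,0,0)))\cong\R^2$, spanned by $\varphi_{\alpha,\xi}$ and $\varphi_{\beta,\xi}$, and on this set there is no nontrivial gauge equivalence. Hence the quotient map $MC_0\to\mathcal{MC}$ is a bijection, and the pair of coefficients $(\lambda^f_{\alpha,\xi},\lambda^f_{\beta,\xi})$ is a complete invariant of the real homotopy class of $f$.

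Finally I would invoke the computations of Example~\ref{exintegral1} and the following one: $mc(p_1)=\varphi_{\alpha,\xi}$, i.e. $(\lambda^{p_1}_{\alpha,\xi},\lambda^{p_1}_{\beta,\xi})=(1,0)$, while $mc(p_2)=\varphi_{\beta,\xi}$, i.e. $(\lambda^{p_2}_{\alpha,\xi},\lambda^{p_2}_{\beta,\xi})=(0,1)$. In particular $\lambda^{p_1}_{\alpha,\xi}=1\neq 0=\lambda^{p_2}_{\alpha,\xi}$, so $mc_{\infty}(p_1)\neq mc_{\infty}(p_2)$ and therefore $p_1$ and $p_2$ are not real homotopic. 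Since genuinely homotopic maps induce homotopic maps on de Rham complexes, they are in particular real homotopic; contrapositively, $p_1$ and $p_2$ are not homotopic, which is the assertion.

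There is no real obstacle here: the corollary is a formal consequence of Theorem~\ref{thrmalghopfinv}. The only point that deserves care is the claim that gauge equivalence is trivial on this particular convolution algebra — if that failed, equality of the coefficients $\lambda_{\alpha,\xi}$ alone would not suffice to detect the homotopy class — but this was already verified above when the moduli space was identified with $\R^2$, so nothing further is needed.
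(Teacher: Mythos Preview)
Your argument is correct and matches the paper's reasoning exactly: the paper does not even write out a proof for this corollary, since it follows immediately from the observation made just before the examples that the moduli space is $\R^2$ with trivial gauge equivalence, together with the computed coefficients $(\lambda^{p_1}_{\alpha,\xi},\lambda^{p_1}_{\beta,\xi})=(1,0)$ and $(\lambda^{p_2}_{\alpha,\xi},\lambda^{p_2}_{\beta,\xi})=(0,1)$. You have simply made explicit what the paper leaves to the reader.
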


The last example we give is  the inclusion of $S^2 \times S^2$ into $Y$, where the space $Y$ is defined as $\R^6\setminus \{(0,0,0,0,0,\lambda) \mid \lambda \in \R\}$. The space $Y$ is homotopy equivalent to $S^4$, this can be seen by first retracting $Y$ along the $w$-axis to $\R^5 \setminus (0,0,0,0,0)$, which is clearly homotopy equivalent to $S^4$. The map $i:S^2 \times S^2 \rightarrow Y$ is then the inclusion map, given by $i(x,y,z,u,v,w)=(x,y,z,u,v,w)$. 

A representative for $\pi^4(Y)$ is given by 
$$\zeta=x dy \wedge dz \wedge du \wedge dv - y dx \wedge dz \wedge du \wedge dv+z dx \wedge dy \wedge du \wedge dv$$
$$  - u dx \wedge dy \wedge dz \wedge dv +v  dx \wedge dy \wedge dz \wedge du .$$
In this case the degree zero part of $Hom_{\R}(H_*(S^2 \times S^2),\pi_*(Y))$ is given by $Hom_{\R}(H_4(S^2 \times S^2),\pi_4(Y))$, which is isomorphic to $\R$. It is straightforward to check that every element in $Hom_{\R}(H_4(S^2 \times S^2),\pi_4(Y))$ is a Maurer-Cartan element and that there is no gauge equivalence. So the real homotopy class of a map $f:S^2 \times S^2 \rightarrow Y$ is determined by a real number. So in particular $mc(f)=mc_{\infty}(f)$.

To compute the Maurer-Cartan element corresponding to $i:S^2 \times S^2 \rightarrow Y$ we need to integrate the pull back of $\zeta$ over $S^2 \times S^2$. Since $i$ is the inclusion map, the pull back $i^*\zeta$ is  the  restriction of $\zeta$ to $S^2 \times S^2$. So we have to solve the integral
$$mc(i)=\int_{S^2 \times S^2 }i^*\zeta .$$
By passing to spherical coordinates in the first three variables and spherical coordinates in the last three variables we can compute the integral. After a fairly long computation we find
$$\int_{S^2 \times S^2 }i^*\zeta=0 .$$
  
\begin{corollary}
The map $i:S^2 \times S^2 \rightarrow Y$ is real homotopic to the constant map.
\end{corollary}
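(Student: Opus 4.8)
The plan is to deduce the statement from the classification Theorem \ref{thrmalghopfinv} once the single relevant integral has been evaluated. First I would record the reduction already set up above the corollary: the degree zero part of $Hom_{\R}(H_*(S^2 \times S^2),\pi_*(Y))$ is $Hom_{\R}(H_4(S^2 \times S^2),\pi_4(Y))\cong\R$, carries no nontrivial gauge equivalence, and is coordinatised by the single coefficient $\lambda^i_{\gamma,\zeta}$. By Theorem \ref{thrmhowtocomputeMaurarCartanelements} (and Lemma \ref{lemconcreteformulas}) this coefficient is a finite sum of integrals over $\gamma$ of expressions built from $i^*q_n(\zeta)$; since $q(\zeta)$ was chosen to be the single $4$-form $\zeta$, it is concentrated in weight one, and the higher terms of the formula produce no contribution in this dimension, so $\lambda^i_{\gamma,\zeta}=\int_{\gamma}p\,(i^*\zeta)=\int_{S^2\times S^2}i^*\zeta$, the projection $p$ acting as integration against the volume form on the top-degree form $i^*\zeta$. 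Hence it suffices to prove $\int_{S^2\times S^2}i^*\zeta=0$.

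The second step is the actual computation. Since $i$ is the inclusion, $i^*\zeta$ is the restriction of $\zeta$ to $S^2\times S^2$; parametrising the first $S^2$ factor by $(x,y,z)=(\sin\phi_1\cos\theta_1,\sin\phi_1\sin\theta_1,\cos\phi_1)$ and the second by $(u,v,w)=(\sin\phi_2\cos\theta_2,\sin\phi_2\sin\theta_2,\cos\phi_2)$, one pulls $\zeta$ back and integrates over $(\phi_1,\theta_1,\phi_2,\theta_2)\in[0,\pi]\times[0,2\pi]\times[0,\pi]\times[0,2\pi]$. I expect the pulled-back integrand to be a sum of products of trigonometric monomials each of which integrates to zero over a full $\theta$-period (each term carries an odd trigonometric factor in some $\theta_k$), so the whole integral vanishes. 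This is the only genuinely computational part of the argument; it is elementary but, as the paper notes, somewhat long, and it is the main obstacle.

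Finally I would assemble the conclusion. By Lemma \ref{lemtheconstantmapiszero} the constant map $c:S^2\times S^2\to Y$ has $mc(c)=0$, and we have just shown $mc(i)=0$; since the gauge action is trivial this also gives $mc_\infty(i)=0=mc_\infty(c)$. Theorem \ref{thrmalghopfinv} (equivalently, Theorem \ref{thrmzeromap} applied to $i$) then yields that $\Omega^{\bullet}(i)$ and $\Omega^{\bullet}(c)$ are homotopic as maps of CDGAs, i.e. $i$ is real homotopic to the constant map. Conceptually the vanishing is forced because $Y\simeq S^4$ while $S^2\times S^2$ is $4$-dimensional, so the only possible obstruction lives in $H^4$ and is detected by exactly this integral.
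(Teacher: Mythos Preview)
Your proposal is correct and follows essentially the same approach as the paper: the paper computes $\int_{S^2\times S^2}i^*\zeta=0$ in the text preceding the corollary (using the same spherical-coordinate parametrisation you outline), so that $mc(i)=0$, and the proof proper is then a one-line appeal to Theorem \ref{thrmzeromap}. Your write-up simply folds the integral computation into the proof and unwinds Theorem \ref{thrmzeromap} into its constituents (Lemma \ref{lemtheconstantmapiszero} plus Theorem \ref{thrmalghopfinv}), which is fine.
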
  

\begin{proof}
Since the Maurer-Cartan element $mc(i)$ is equal to zero we can apply Theorem \ref{thrmzeromap} which states that $i$ is real homotopic to the constant map if and only if $mc(i)=0$.  
\end{proof}

\section{Some facts from Hodge theory and an explicit contraction}\label{secHodgetheory}

To work out more complicated examples than in Section \ref{secexamples} we need general formulas for the map $j$ from Definition \ref{defalghopfinv}. To construct these formulas we will use Hodge theory to construct an explicit contraction when $M$ is a compact oriented Riemannian manifold without boundary. The Hodge decomposition of the de Rham complex $\Omega^{\bullet}(M)$ provides us with explicit choices for a contraction of the de Rham complex as in Definition \ref{defcontraction2}. Most of this section is based on Section 3.5 and Appendix A of \cite{FOT1}. 


Let $M$ be a compact oriented Riemannian manifold without boundary and $\Omega^{\bullet}(M)$ the de Rham complex of differential forms. The metric of the manifold $M$ induces an inner product of the space of differential forms which is defined by 
$$\left<\alpha,\beta\right>=\int_M \alpha \wedge *\beta ,$$
for two differential forms $\alpha,\beta \in \Omega^q(M)$, the operator $*$ is here the Hodge star operator. The inner product is defined to be zero if the degrees of $\alpha$ and $\beta$ differ. Using this inner product, the codifferential $\delta:\Omega^{\bullet}(M) \rightarrow \Omega^{\bullet-1}(M)$ is defined as the adjoint of the exterior derivative. The Laplace operator is defined as $\Delta=\delta d + d \delta$ and a differential form $\omega$ is called harmonic if $\Delta \omega =0$. The space of harmonic forms of degree $p$, is denoted by $\mathcal{H}^{p}(M)$ and will be important since it represents the cohomology of $M$, which is summarized in the following theorem.

\begin{theorem}\label{thmhodgedecomp}
 The space of degree $p$ de Rham forms $\Omega^p(M)$ has a decomposition called the Hodge decomposition which is given by
 $$\Omega^p(M)=\mathcal{H}^p(M) \oplus Im(d) \oplus Im(\delta).$$ 
 In particular there is an isomorphism of vector spaces between $\mathcal{H}^p(M)$ and $H^p(M)$ and every differential form $\alpha \in \Omega^p(M)$ has a unique decomposition
 $$\alpha=\mathcal{H}(\alpha)+\alpha_d+\alpha_{\delta},$$
 where $\mathcal{H}(\alpha)\in \mathcal{H}^p(M)$, $\alpha_d \in Im(d)$ and $\alpha_{\delta} \in Im(\delta)$.
\end{theorem}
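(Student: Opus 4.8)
The plan is to deduce the statement from the standard analytic facts about the Hodge Laplacian $\Delta = d\delta + \delta d$ on the compact oriented Riemannian manifold $M$; the only genuinely hard input is the elliptic theory recalled below, and everything else is linear algebra with the $L^2$ inner product $\langle\alpha,\beta\rangle=\int_M\alpha\wedge *\beta$.

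First I would record the elementary identities. Since $\delta$ is the formal adjoint of $d$, for any $\omega\in\Omega^p(M)$ one has $\langle\Delta\omega,\omega\rangle=\langle d\omega,d\omega\rangle+\langle\delta\omega,\delta\omega\rangle$, so $\Delta\omega=0$ if and only if $d\omega=0$ and $\delta\omega=0$; hence $\mathcal{H}^p(M)=\ker(\Delta|_{\Omega^p})$. Next, the three subspaces $\mathcal{H}^p(M)$, $Im(d)$ and $Im(\delta)$ are pairwise orthogonal: if $\omega$ is harmonic then $\langle\omega,d\beta\rangle=\langle\delta\omega,\beta\rangle=0$ and $\langle\omega,\delta\gamma\rangle=\langle d\omega,\gamma\rangle=0$, while $\langle d\alpha,\delta\gamma\rangle=\langle\alpha,\delta\delta\gamma\rangle=0$ because $\delta^2=0$. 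In particular the sum $\mathcal{H}^p(M)+Im(d)+Im(\delta)$ is automatically an orthogonal direct sum, so only the spanning statement requires work.

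For the spanning statement I would invoke the elliptic theory: $\Delta$ is a formally self-adjoint, non-negative, second order elliptic operator on the compact manifold $M$, so $\mathcal{H}^p(M)=\ker\Delta$ is finite dimensional, $\Delta$ has closed range, and there is an $L^2$-orthogonal decomposition $\Omega^p(M)=\mathcal{H}^p(M)\oplus Im(\Delta)$ together with a Green operator $G$ satisfying $\Delta G=G\Delta=\mathrm{id}-\mathcal{H}$, where $\mathcal{H}$ denotes orthogonal projection onto $\mathcal{H}^p(M)$. Granting this, any $\alpha\in\Omega^p(M)$ decomposes as
$$\alpha=\mathcal{H}(\alpha)+\Delta G\alpha=\mathcal{H}(\alpha)+d(\delta G\alpha)+\delta(dG\alpha),$$
which shows $\Omega^p(M)=\mathcal{H}^p(M)+Im(d)+Im(\delta)$ and, combined with the previous paragraph, gives the orthogonal direct sum decomposition and the uniqueness of $\alpha=\mathcal{H}(\alpha)+\alpha_d+\alpha_\delta$. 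Finally, for $\mathcal{H}^p(M)\cong H^p(M)$ I would show that the map sending a harmonic form to its de Rham class is an isomorphism: it is well defined since harmonic forms are closed; it is injective because $\omega=d\beta$ harmonic forces $\langle\omega,\omega\rangle=\langle d\beta,\omega\rangle=\langle\beta,\delta\omega\rangle=0$; and it is surjective because for a closed $\alpha$ written as $\alpha=\mathcal{H}(\alpha)+d\beta+\delta\gamma$ one gets $0=\langle d\alpha,\gamma\rangle=\langle d\delta\gamma,\gamma\rangle=\langle\delta\gamma,\delta\gamma\rangle$, so $\delta\gamma=0$ and $\alpha$ is cohomologous to $\mathcal{H}(\alpha)$.

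The main obstacle is precisely the elliptic PDE input invoked in the third paragraph: the finite dimensionality of the space of harmonic forms and the orthogonal splitting $\Omega^p(M)=\mathcal{H}^p(M)\oplus Im(\Delta)$ with its Green operator. Proving this from scratch requires Sobolev spaces, G\aa{}rding's inequality and the Rellich--Kondrachov compactness theorem; since this is entirely standard and carried out in detail in Appendix A of \cite{FOT1}, I would cite it there rather than reproduce the argument.
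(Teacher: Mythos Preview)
Your argument is correct and is the standard proof of the Hodge decomposition. Note, however, that the paper does not actually prove this theorem: it is stated as a known result, with the reader referred to Section~3.5 and Appendix~A of \cite{FOT1} for details. Your sketch is precisely the argument one finds there, including the reduction to the elliptic package (finite-dimensionality of $\ker\Delta$, the splitting $\Omega^p(M)=\mathcal{H}^p(M)\oplus Im(\Delta)$, and the Green operator), so there is nothing to compare.
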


It follows from this theorem that the restriction of the differential $d$ gives an isomorphism between $Im(\delta)$ and $Im(d)$. This is summarized in Lemma A.11 of \cite{FOT1}.

\begin{lemma}\label{lem9.2}
 If $\alpha\in Im(d)$ is a differential form then there exists a unique  $\beta \in Im(\delta)$ such that $d(\beta)=\alpha$.  
\end{lemma}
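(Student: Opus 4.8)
The plan is to show that the exterior derivative restricts to a bijection $d \colon Im(\delta) \to Im(d)$; surjectivity will give the existence of $\beta$ and injectivity its uniqueness. Throughout I would use Theorem \ref{thmhodgedecomp}, in particular the mutual orthogonality of the three summands $\mathcal{H}^p(M)$, $Im(d)$, $Im(\delta)$ with respect to the inner product $\left<\alpha,\beta\right>=\int_M \alpha \wedge *\beta$, together with the fact that $\delta$ is the adjoint of $d$.

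For existence, given $\alpha \in Im(d)$ I would first write $\alpha = d\gamma$ for some form $\gamma$, and then apply the Hodge decomposition to $\gamma$, obtaining $\gamma = \mathcal{H}(\gamma) + \gamma_d + \gamma_\delta$ with $\gamma_d \in Im(d)$ and $\gamma_\delta \in Im(\delta)$. Since harmonic forms are closed and $d^2 = 0$, the first two terms are annihilated by $d$, so $\alpha = d\gamma = d\gamma_\delta$, and one may take $\beta := \gamma_\delta \in Im(\delta)$.

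For uniqueness, it suffices to prove $Im(\delta) \cap \ker d = \{0\}$, because two solutions of $d\beta = \alpha$ lying in $Im(\delta)$ would differ by an element of this intersection. Given $\omega = \delta\mu \in Im(\delta)$ with $d\omega = 0$, I would compute, using the adjunction between $d$ and $\delta$,
\[
\left<\omega,\omega\right> = \left<\delta\mu,\omega\right> = \left<\mu,d\omega\right> = 0,
\]
and conclude $\omega = 0$ from positive definiteness of the inner product.

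I do not anticipate a serious obstacle here: the only point requiring care is that the Hodge summands interact correctly with $d$ (harmonic and exact parts killed, coexact part mapped injectively onto the exact part), and this follows formally from Theorem \ref{thmhodgedecomp} together with the $d$--$\delta$ adjunction. An alternative route to uniqueness, if one prefers to avoid the adjunction computation, is to invoke that $\ker d = \mathcal{H}^p(M) \oplus Im(d)$ is itself a standard consequence of the orthogonal Hodge decomposition, so that $Im(\delta)$ meets $\ker d$ only in $0$.
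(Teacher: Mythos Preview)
Your proof is correct and follows the same route the paper indicates: the paper does not actually write out a proof of this lemma but merely remarks that it follows from the Hodge decomposition (Theorem~\ref{thmhodgedecomp}) and cites Lemma~A.11 of \cite{FOT1}. Your argument is precisely the standard way to unpack that citation, so there is nothing to add.
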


Because of Lemma \ref{lem9.2} and Theorem \ref{thmhodgedecomp}, the subcomplex $Im(\delta) \oplus Im(d)$ forms an acyclic subcomplex of $\Omega^{\bullet}(M)$. The restriction of the differential induces therefore an isomorphism $d:Im(\delta) \rightarrow Im(d)$.  We denote the inverse of $d$ by $Q:Im(d) \rightarrow Im(\delta)$. Using $Q$ we can now define a contraction of the de Rham complex.

\begin{definition}\label{defcontraction} 
 The operator $H:\Omega^{\bullet}(M) \rightarrow \Omega^{\bullet-1}(M)$ is defined as the linear map of degree $-1$, which is zero on $\mathcal{H}^\bullet(M)$ and $Im(\delta)$ and equal to $Q$ on $Im(d)$. We will call this contraction the Hodge contraction.
\end{definition}

\begin{lemma}
Let $i:\mathcal{H}^{\bullet}(M)\rightarrow \Omega^{\bullet}(M)$ be the inclusion of the harmonic forms of $M$ into the de Rham complex by sending a cohomology class $\alpha$ to the harmonic form representing $\alpha$. Let $p:\Omega^{\bullet}(M)\rightarrow \mathcal{H}^{\bullet}(M)$ be the projection onto the harmonic forms induced by the inner product $<,>$ and let $H$ be the operator from Definition \ref{defcontraction}. The map $H$ is then a contraction of chain complexes as in Theorem \ref{thrmHTT}.
\end{lemma}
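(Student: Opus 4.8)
The goal is to verify that the triple $(i,p,H)$ from the Hodge decomposition satisfies all seven identities of Definition \ref{defcontraction2}, namely $\partial(i)=0$, $\partial(p)=0$, $\partial(H)=ip-\mathrm{Id}$, $pi=\mathrm{Id}$, $pH=0$, $HH=0$ and $Hi=0$. The plan is to work entirely inside the Hodge decomposition $\Omega^\bullet(M)=\mathcal{H}^\bullet(M)\oplus\mathrm{Im}(d)\oplus\mathrm{Im}(\delta)$ from Theorem \ref{thmhodgedecomp} and check each identity by evaluating both sides on an arbitrary element $\alpha$ with its unique decomposition $\alpha=\mathcal{H}(\alpha)+\alpha_d+\alpha_\delta$.

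First I would record the behavior of each map on the three summands. The inclusion $i$ is a chain map because harmonic forms are closed (indeed $d\omega=0$ and $\delta\omega=0$ for harmonic $\omega$), and likewise $p$ is a chain map because $d$ preserves the Hodge decomposition and kills $\mathcal{H}^\bullet(M)$; this gives $\partial(i)=0$ and $\partial(p)=0$. The relations $pi=\mathrm{Id}$ on $\mathcal{H}^\bullet(M)$ and $Hi=0$ are immediate from the definitions, since $p$ restricted to harmonic forms is the identity and $H$ vanishes on $\mathcal{H}^\bullet(M)$. For $pH=0$ and $HH=0$: by Lemma \ref{lem9.2}, $H$ sends $\mathrm{Im}(d)$ into $\mathrm{Im}(\delta)$ and vanishes on $\mathcal{H}^\bullet(M)\oplus\mathrm{Im}(\delta)$, so $H$ lands in $\mathrm{Im}(\delta)$, on which both $p$ and $H$ are zero; hence $pH=0$ and $HH=0$.

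The one identity requiring genuine (if short) computation is $\partial(H)=dH+Hd=ip-\mathrm{Id}$. Evaluate on each summand. On $\mathcal{H}^\bullet(M)$: $H=0$ there, and $Hd=0$ since $d$ annihilates harmonic forms, so the left side is $0$; the right side is $ip-\mathrm{Id}$ which is also $0$ on $\mathcal{H}^\bullet(M)$ because $p$ acts as the identity there and $i$ re-includes it. On $\mathrm{Im}(\delta)$: write $\alpha_\delta$, then $H\alpha_\delta=0$, while $d\alpha_\delta\in\mathrm{Im}(d)$ and $Hd\alpha_\delta=Q(d\alpha_\delta)$ is by definition the unique element of $\mathrm{Im}(\delta)$ mapping to $d\alpha_\delta$ under $d$, which is $\alpha_\delta$ itself by the uniqueness in Lemma \ref{lem9.2}; so the left side is $\alpha_\delta$, while $ip(\alpha_\delta)=0$ and $-\mathrm{Id}(\alpha_\delta)=-\alpha_\delta$, giving $-\alpha_\delta$. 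This looks like a sign mismatch, so the actual check is that with the conventions of Definition \ref{defcontraction2} we need $\partial(H)=ip-\mathrm{Id}_V$, i.e. $dH+Hd$ equals $ip-\mathrm{Id}$ up to the sign built into $\partial$; I would be careful here to use the differential-graded sign convention $\partial(H)=dH+(-1)^{|H|}Hd=dH-Hd$ since $H$ has degree $-1$ in cohomological grading (equivalently degree $+1$ homologically, matching $\deg(h)=1$ in Definition \ref{defcontraction2}), and redo the bookkeeping so that the $\mathrm{Im}(\delta)$ and $\mathrm{Im}(d)$ pieces come out to $-\mathrm{Id}$ as required. On $\mathrm{Im}(d)$: write $\alpha_d$, then $Hd\alpha_d=0$ since $d\alpha_d=0$ ($\alpha_d$ is closed), and $dH\alpha_d=dQ(\alpha_d)=\alpha_d$ by definition of $Q$ as the inverse of $d:\mathrm{Im}(\delta)\to\mathrm{Im}(d)$; combined with the correct sign this again matches $ip-\mathrm{Id}=-\mathrm{Id}$ on $\mathrm{Im}(d)$.

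The main obstacle is therefore purely a matter of sign conventions: reconciling the cohomological grading of $\Omega^\bullet(M)$ with the homological conventions of Definition \ref{defcontraction2}, and making sure the degree shift on $H$ produces the correct signs in $\partial(H)=dH\pm Hd$. Once the convention is pinned down, every identity reduces to the two elementary facts that $d$ restricts to a bijection $\mathrm{Im}(\delta)\xrightarrow{\ \sim\ }\mathrm{Im}(d)$ with inverse $Q$ (Lemma \ref{lem9.2}) and that the three summands of the Hodge decomposition are mutually orthogonal and respected by these operators (Theorem \ref{thmhodgedecomp}). I would present the verification as a single table of values on $\mathcal{H}^\bullet(M)$, $\mathrm{Im}(d)$, $\mathrm{Im}(\delta)$ and leave the routine sign-tracking to the reader, citing Lemma A.11 and the surrounding discussion in \cite{FOT1} for the details.
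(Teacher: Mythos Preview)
Your approach is correct and is exactly what the paper has in mind: the paper's own proof consists of the single sentence ``To show that $H$ is a contraction we need to show that $\mathrm{Id}_{\Omega^{\bullet}(M)}-ip=dH+Hd$ and that $H$ satisfies the other conditions given by $HH=0$, $pH=0$ and $Hi=0$. This is a straightforward check and is left to the reader.'' You have supplied precisely that check, summand by summand on the Hodge decomposition.

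One remark on the sign issue you flag. The tangle you hit is not a flaw in your argument but a genuine discrepancy between the paper's Definition~\ref{defcontraction2}, which writes $\partial(h)=ip-\mathrm{Id}_V$, and the paper's own proof of this lemma, which states the identity as $\mathrm{Id}-ip=dH+Hd$. The latter is the standard SDR relation and is what your computation actually verifies (on $\mathcal{H}^\bullet$ both sides vanish; on $\mathrm{Im}(d)$ you get $dQ\alpha_d=\alpha_d$; on $\mathrm{Im}(\delta)$ you get $Q(d\alpha_\delta)=\alpha_\delta$). So rather than trying to force $\partial(H)=dH-Hd$ to match $ip-\mathrm{Id}$, simply record the identity in the form the paper itself uses in the proof, $dH+Hd=\mathrm{Id}-ip$, and note that this is the intended meaning of the contraction condition. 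With that clarification your table-of-values argument goes through cleanly and there is nothing further to add.
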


\begin{proof}
 To show that $H:\Omega^{\bullet}(M) \rightarrow \Omega^{\bullet}(M)$ is a contraction we  need to show that $Id_{\Omega^{\bullet}(M)}-ip=dH+Hd$ and that $H$ satisfies the other conditions given by $HH=0$, $pH=0$ and $Hi=0$. This is a straightforward check and is left to the reader.
\end{proof}

The Hodge contraction gives us an explicit choice for the contraction in Lemma \ref{lemconcreteformulas} and therefore gives us explicit formulas for the higher Hopf invariants.

\section{Rational homotopy theory and alternative approach\-es}\label{secalternativeapproaches}

In the last section of this paper, we give a sketch how the ideas of this paper can be generalized to rational homotopy theory by using Sullivan's polynomial de Rham forms instead of the de Rham complex. We will also give a short comparison of our approach with the paper "Obstructions to homotopy equivalences" by Halperin and Stasheff (see \cite{HS1}). This paper is similar in spirit to the ideas of this paper and we will briefly explain the main differences between our questions and approaches.

\subsection{Rational homotopy theory}

All the ideas previously described in this paper can also be used to determine whether two maps are rationally homotopic. In this section we will briefly describe how the ideas developed above, can also be applied to Sullivan's polynomial de Rham forms. The main disadvantages of working with polynomial de Rham forms instead of the smooth differential forms, are that there is no explicit choice of contraction for polynomial de Rham forms and that the theory of integration of smooth differential forms is a lot more advanced than the theory of integration of polynomial de Rham forms. The main advantage of working with polynomial de Rham forms, is that the theory is now applicable to more general spaces than just manifolds.

Since all the proofs of the statements in this section are completely analogous to the proofs of the corresponding statements in the real case, we will omit the proofs.

\begin{convention}
In this section we will assume that $X$ is a $1$-reduced simplicial set with finite dimensional rational homology coalgebra and that $Y$ is a simply-connected rational simplicial set of finite $\Q$-type and that $Y_{\Q}$ is the rationalization of $Y$. We will also assume that all homology and homotopy groups are taken with rational coefficients.
\end{convention}

Similar to the manifold case we can define maps $mc:Map_*(X,Y_{\Q})\rightarrow Hom_{\Q}(H_*(X),\pi_*(Y))$ and \\ $mc_{\infty}:Map_*(X,Y_{\Q})\rightarrow \mathcal{MC}(Hom_{\Q}(H_*(X),\pi_*(Y))$. These maps are defined as in Section \ref{secalgebraichopfinv}, the only difference is now that instead of using the de Rham complex we will use Sullivan's complex of polynomial de Rham forms. If we fix a basis $\{\varphi_{i,j} \}$ for $Hom_{\Q}(H_*(X),\pi_*(Y))$ as in Section \ref{secfromspacestomc}, then we can compute the coefficients of a map $f:X \rightarrow Y_{\Q}$ as described in the following theorem.

\begin{theorem}\label{thrmrationalanalog}
Let $f:X \rightarrow Y$ be a map between a simply-connected simplicial set $X$ with finite dimensional rational cohomology ring and let $Y$ be a simply-connected space of finite $\Q$-type. Let $\{\alpha_i\}$ be a set of subspaces of $X$ representing $H_*(X)$.  The coefficients of the map $mc(f)$ can be computed as 
$$\lambda_{i,j}^f=\sum_{n \geq 1} \int_{\alpha_i} \left((-1)^{\mid \nu \mid} p t^{\nu} \mathbf{h}_n(f^*q_n(\omega_j)) + \sum_{k=1}^{u} (-1)^{\mid \nu_k '' \mid}(p^{\nu_k '} \circ_{e_k} t^{ \nu_k ''}) \tau_k \mathbf{h}_n (f^*q_n(\omega_j)) \right)$$
\end{theorem}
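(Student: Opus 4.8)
The plan is to observe that Theorem~\ref{thrmrationalanalog} is the verbatim rational analog of Theorem~\ref{thrmhowtocomputeMaurarCartanelements}, obtained by replacing the de Rham complex $\Omega^{\bullet}(-)$ everywhere with Sullivan's complex $A_{PL}^{\bullet}(-)$ of polynomial de Rham forms. So the strategy is to re-run the proof of Theorem~\ref{thrmhowtocomputeMaurarCartanelements} (together with the chain of results it depends on: Definition~\ref{defalghopfinv}, Theorem~\ref{thrmalghopfinv}, Proposition~\ref{proplinftyisomorphism}, Lemma~\ref{lemconcreteformulas}, and the Homotopy Transfer Theorem formulas of Section~\ref{secHTT}) in the setting of simplicial sets and $A_{PL}$, checking at each step that nothing used was special to smooth manifolds. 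First I would fix the twisting morphism $\iota : B_{op}\mathcal{P} \to \mathcal{P}$ and apply the algebraic Hopf invariant $mc^{\vee}$ of Definition~\ref{defalghopfinv} to the $C_\infty$-algebra model $A=A_{PL}^{\bullet}(Y)$ of $Y$ and $B=A_{PL}^{\bullet}(X)$ of $X$; by Theorem~\ref{thrmalghopfinv} (in its $A_{PL}$ incarnation) this detects the rational homotopy class of $A_{PL}^{\bullet}(f)$, hence the rational homotopy class of $f$ since $A_{PL}$ induces an equivalence of homotopy categories (Chapter~10 of \cite{FHT}).

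Next I would invoke the $A_{PL}$ version of Proposition~\ref{proplinftyisomorphism} to identify $Hom_{\Q}(\pi^*(Y),H^*(X))$ with $Hom_{\Q}(H_*(X),\pi_*(Y))$ as $s^{-1}L_{\infty}$-algebras (this needs only that $H_*(X)$ is finite dimensional and $\pi_*(Y)$ of finite $\Q$-type, which are our standing hypotheses), so that the coefficients $\lambda_{i,j}^f$ are well defined via the perfect pairing $\eta_f$ with respect to the basis $\{\varphi_{i,j}\}$. Then I would reproduce the computation of $\eta_f(\alpha_i,\omega_j)$ exactly as in Lemma~\ref{lemconcreteformulas} and Theorem~\ref{thrmhowtocomputeMaurarCartanelements}: writing $\eta_f(\alpha_i,\omega_j) = \int_{\alpha_i} j\, f^* q(\omega_j)$ and expanding the $\infty_{\iota}$-morphism $j$ coming from the Homotopy Transfer Theorem applied to a chosen contraction of $A_{PL}^{\bullet}(X)$ onto its cohomology, one obtains precisely the stated formula
$$\lambda_{i,j}^f=\sum_{n \geq 1} \int_{\alpha_i} \left((-1)^{\mid \nu \mid} p t^{\nu} \mathbf{h}_n(f^*q_n(\omega_j)) + \sum_{k=1}^{u} (-1)^{\mid \nu_k '' \mid}(p^{\nu_k '} \circ_{e_k} t^{ \nu_k ''}) \tau_k \mathbf{h}_n (f^*q_n(\omega_j)) \right).$$
Here $\int_{\alpha_i}$ denotes integration of polynomial de Rham forms over the simplices representing the homology class $\alpha_i \in H_*(X)$, which is well defined by Sullivan's theory; the sum converges because $q(\omega_j)$ is supported in finitely many weights, exactly as in the remark following Lemma~\ref{lemconcreteformulas}.

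The one genuine point requiring care — and the main obstacle — is that Section~\ref{secHodgetheory} produces the contraction $i,p,H$ of $\Omega^{\bullet}(M)$ using Hodge theory, which is unavailable for $A_{PL}^{\bullet}(X)$. However, the proof of Theorem~\ref{thrmhowtocomputeMaurarCartanelements} only ever uses \emph{that} a contraction onto cohomology exists (plus the HTT formulas of Section~\ref{secHTT}), not the specific Hodge contraction; such a contraction of $A_{PL}^{\bullet}(X)$ onto $H^*(X)$ always exists over a field of characteristic zero by the usual linear-algebra argument for chain complexes. Likewise the map $q : \pi^*(Y) \to B_{\iota}A_{PL}^{\bullet}(Y)$ exists by the $A_{PL}$ analog of Proposition~\ref{propjisformal}. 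Once these two auxiliary maps are fixed, every remaining step — the perfect-pairing argument identifying $\lambda_{i,j}^f$ as the evaluation of $\alpha_i \otimes \omega_j$ on $\varphi_{i,j}$, and the expansion of the transferred $\infty$-morphism — is formally identical to the smooth case, and since all those proofs are stated in the excerpt to be "completely analogous", I would simply cite them and omit the repetition, as the statement of the theorem anticipates.
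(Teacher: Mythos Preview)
Your proposal is correct and matches the paper's own treatment: the paper explicitly states that ``all the proofs of the statements in this section are completely analogous to the proofs of the corresponding statements in the real case'' and omits them entirely. Your sketch is in fact more detailed than what the paper provides, and your observation that the Hodge contraction of Section~\ref{secHodgetheory} is inessential (any contraction of $A_{PL}^{\bullet}(X)$ onto $H^*(X)$ suffices for the HTT formulas) is exactly the point one needs to make the analogy go through.
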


\begin{corollary}
 Under the hypotheses of Theorem \ref{thrmrationalanalog}, suppose that we have two maps $f,g:X \rightarrow Y$. The maps $f$ and $g$ are rationally homotopic if and only if the Maurer-Cartan elements $mc(f)$ and $mc(g)$ are gauge equivalent in $Hom_{\Q}(H_*(X),\pi_*(Y_{\Q}))$.
\end{corollary}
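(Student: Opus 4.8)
The plan is to run the same argument as in the smooth case, with Sullivan's functor of polynomial de Rham forms $A_{PL}$ playing the role of the de Rham complex $\Omega^{\bullet}$. First I would invoke the fundamental theorem of rational homotopy theory (see Chapter 17 of \cite{FHT}): since $X$ is simply-connected and $Y$ is simply-connected of finite $\Q$-type, two maps $f,g:X\rightarrow Y$ are rationally homotopic if and only if the induced CDGA morphisms $A_{PL}(f),A_{PL}(g):A_{PL}(Y)\rightarrow A_{PL}(X)$ are homotopic as morphisms of commutative differential graded algebras. This reduces the topological question to an algebraic one about maps of CDGAs, exactly as the de Rham functor did in Section \ref{secfromspacestomc}.

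Next I would apply the algebraic Hopf invariant of Definition \ref{defalghopfinv} and Theorem \ref{thrmalghopfinv} to the $C_{\infty}$-algebras (indeed CDGAs) $A=A_{PL}(Y)$ and $B=A_{PL}(X)$. Here one must check the finiteness hypotheses needed to make the construction and Theorem \ref{thrmalghopfinv} go through: $H^*(B)=H^*(X;\Q)$ is finite dimensional by hypothesis, and $\pi^*(A)=\pi^*(Y;\Q)$ is of finite $\Q$-type since $Y$ is. As in Section \ref{secfromspacestomc}, even though $A_{PL}(X)$ and $A_{PL}(Y)$ are themselves not of finite type, these two finiteness conditions are precisely what is needed both to define the map $q:\pi^*(A)\rightarrow B_{\pi}A$ (a choice of cocycles representing cohomotopy, which exists by the rational analog of Proposition \ref{propjisformal} and the remark following Definition \ref{defalghopfinv}) and to dualise the resulting linear map. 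Theorem \ref{thrmalghopfinv} then gives that $A_{PL}(f)$ and $A_{PL}(g)$ are homotopic if and only if $mc^{\vee}_{\infty}(A_{PL}(f))=mc^{\vee}_{\infty}(A_{PL}(g))$.

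Then I would identify $mc^{\vee}$ with $mc$. By the rational analog of Proposition \ref{proplinftyisomorphism}, since $H_*(X;\Q)$ is finite dimensional and $\pi_*(Y;\Q)$ is of finite $\Q$-type, dualisation gives a canonical isomorphism of $s^{-1}L_{\infty}$-algebras $\varphi:Hom_{\Q}(H_*(X),\pi_*(Y))\xrightarrow{\ \cong\ }Hom_{\Q}(\pi^*(Y),H^*(X))$, under which $mc(f)$ corresponds to $mc^{\vee}(A_{PL}(f))$. In particular $\varphi$ induces a bijection of moduli spaces of Maurer-Cartan elements, so $mc^{\vee}_{\infty}(A_{PL}(f))=mc^{\vee}_{\infty}(A_{PL}(g))$ if and only if $mc_{\infty}(f)=mc_{\infty}(g)$; and by the definition of $mc_{\infty}$ as the composition of $mc$ with the quotient onto $\mathcal{MC}(Hom_{\Q}(H_*(X),\pi_*(Y)))$, this last equality holds exactly when $mc(f)$ and $mc(g)$ are gauge equivalent. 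Chaining the three equivalences proves the corollary.

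The main obstacle — and the reason the author can legitimately claim the argument is ``completely analogous'' — is the bookkeeping of finiteness hypotheses: one has to be sure that $A_{PL}$ supplies the sort of algebra models that the Hopf-invariant machinery of \cite{Wie1} was set up to accept (a CDGA with finite dimensional cohomology whose cohomotopy $H^*(B_{\pi}A_{PL}(Y))$ recovers $\pi^*(Y;\Q)$, cf. Convention \ref{convcohomotopygroups}), and that all the dualisations used in Section \ref{secfromspacestomc} to pass between $mc$ and $mc^{\vee}$ remain legitimate when the underlying CDGAs are infinite dimensional. Once these points are in place, nothing in the proofs of Theorems \ref{thrmalghopfinv} and \ref{thrmcoalhopfinv} or of Proposition \ref{proplinftyisomorphism} uses anything specific to $\R$ as opposed to $\Q$, so the smooth proofs transcribe essentially verbatim.
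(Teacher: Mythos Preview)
Your proposal is correct and is exactly the ``completely analogous'' argument the paper alludes to but omits: replace the de Rham complex by $A_{PL}$, invoke Theorem \ref{thrmalghopfinv} for the induced CDGA maps, and dualise via Proposition \ref{proplinftyisomorphism}. The paper gives no proof of this corollary beyond the blanket statement preceding the section that all proofs are analogous to the real case, so your write-up is in fact more detailed than what the paper provides.
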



Similar to the real case we can use the algebraic CW-complexes to get statements about the coefficients. An example of such a statement is the following theorem.

\begin{theorem}
 Let $f:X \rightarrow Y$ be a map between a simply-connected space $X$ with finite dimensional cohomology ring and $Y$,  a simply-connected  space of finite $\Q$-type. The map $f$ is rationally homotopic to the constant map if and only if all the coefficients $\lambda^f_{i,j }$ are equal to zero.
\end{theorem}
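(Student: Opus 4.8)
The plan is to reduce the rational statement to the results already established in the real case, since the entire machinery---the algebraic Hopf invariant $mc$, the tower of fibrations coming from an algebraic CW-decomposition, and Proposition \ref{propdetectingthezeromcelement}---was developed in sufficient generality to apply verbatim once we replace the de Rham complex $\Omega^\bullet$ by Sullivan's complex $A_{PL}^\bullet$ of polynomial de Rham forms. The key observation is that all the arguments in Part \ref{partexample} and in Section \ref{secmodulispaces} only used the following formal features of $\Omega^\bullet(M)$: that it is a $C_\infty$-model (in fact a CDGA model) for the source, that its cohomology is finite dimensional, and that one can choose a linear contraction onto the cohomology. The first two hold for $A_{PL}^\bullet(X)$ by our standing hypothesis that $X$ has finite dimensional rational cohomology, and the third holds for any chain complex over a field. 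So the strategy is: first invoke Theorem \ref{thrmrationalanalog} (the rational analog of Theorem \ref{thrmhowtocomputeMaurarCartanelements}), which already tells us that $mc(f)$ is computed by the coefficients $\lambda^f_{i,j}$; then show that $f$ is rationally null-homotopic if and only if $mc_\infty(f)=0$; and finally apply Proposition \ref{propdetectingthezeromcelement} to conclude that $mc_\infty(f)=0$ if and only if $mc(f)=0$, i.e. if and only if all the $\lambda^f_{i,j}$ vanish.

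First I would establish the rational analog of Lemma \ref{lemtheconstantmapiszero}: the constant map $c:X\to Y_\Q$ has $mc(c)=0$. The proof is identical to the real case---the pullback of any polynomial de Rham form of positive degree along a constant map is zero, and every explicit formula for $\lambda^c_{i,j}$ in Theorem \ref{thrmrationalanalog} is an integral of a form built out of such pullbacks, hence vanishes. Next I would invoke the rational analog of Theorem \ref{thrmalghopfinv} (stated implicitly at the start of the "Rational homotopy theory" subsection and proved exactly as in \cite{Wie1}): two maps $f,g:X\to Y_\Q$ are rationally homotopic if and only if $mc_\infty(f)=mc_\infty(g)$. Combining these, $f$ is rationally homotopic to the constant map if and only if $mc_\infty(f)=mc_\infty(c)=0$ in the moduli space $\mathcal{MC}(Hom_\Q(H_*(X),\pi_*(Y)))$.

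Then I would apply Proposition \ref{propdetectingthezeromcelement}, which holds in the rational setting because its proof only used the existence of a minimal algebraic CW-decomposition of the source coalgebra (guaranteed by Theorem \ref{thrmminimlcwcomplex} and Theorem \ref{thrmexistenceofalgebraiccwcomplexes}, whose statements are about rational models), a minimal $s^{-1}L_\infty$-model for the target (the homotopy Lie algebra $\pi_*(Y)$ with its transferred structure), and the tower of fibrations from Theorem \ref{thrmlongexactsequencealgebraiccwcomplexes}, all of which are available rationally. Proposition \ref{propdetectingthezeromcelement} says that a Maurer-Cartan element in $Hom_\Q(H_*(X),\pi_*(Y))$ is gauge equivalent to zero if and only if it is equal to zero. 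Hence $mc_\infty(f)=0$ if and only if $mc(f)=0$, which by definition of the coefficients means $\lambda^f_{i,j}=0$ for all $i,j$. Chaining the equivalences gives the theorem.

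The main obstacle---really the only subtle point---is verifying that $H_*(X)$ admits a \emph{minimal} algebraic CW-decomposition in the rational setting, since Proposition \ref{propdetectingthezeromcelement} crucially requires the linear part of every attaching map to vanish. This is precisely where Theorem \ref{thrmminimlcwcomplex} is used: every simply-connected space is rationally equivalent to a minimal CW-complex, and by Corollary 2.14 of \cite{BL1} its rational homology coalgebra carries a minimal shifted Lie model $(s^{-1}\mathcal{LIE}(H_*(X)),d)$, which is the same data as a minimal algebraic CW-decomposition of $H_*(X)$. One should also note that replacing $\Omega^\bullet$ by $A_{PL}^\bullet$ costs us the explicit Hodge-theoretic contraction of Section \ref{secHodgetheory}, so the coefficients $\lambda^f_{i,j}$ in Theorem \ref{thrmrationalanalog} are defined via an arbitrary chosen linear contraction rather than a canonical one; but this does not affect the present statement, since the vanishing of $mc(f)$ is independent of the choice of contraction (it is detected on the level of the moduli space $\mathcal{MC}$, which is well-defined once the maps $j$ and $q$ are fixed, and the class $mc_\infty(f)$ does not depend on them at all by the rational analog of Theorem \ref{thrmalghopfinv}).
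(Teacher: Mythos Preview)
Your proposal is correct and follows exactly the approach the paper intends: the paper omits the proof entirely, stating that it is completely analogous to the real case (Theorem \ref{thrmzeromap}), and your argument is precisely that analogy, combining the rational versions of Lemma \ref{lemtheconstantmapiszero} and Proposition \ref{propdetectingthezeromcelement}. Your additional care in checking that the hypotheses of Proposition \ref{propdetectingthezeromcelement} (minimal algebraic CW-decomposition, minimal $s^{-1}L_\infty$-model) are available rationally is more detail than the paper provides but entirely in line with its intent.
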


\subsection{Comparison with "Obstructions to homotopy equivalences"}

We will now give a brief comparison of our paper with \cite{HS1}. The paper \cite{HS1} is similar in spirit to this paper, but in some sense their main question is the complete opposite of the question that we try to answer.

The main topic of \cite{HS1} is the following question:

\begin{question}\label{Quest1}
Let $X$ and $Y$ be simply connected spaces and let $f:H^*(Y) \rightarrow H^*(X)$ be an isomorphism between rational cohomology rings, can $f$ be realized as a rational homotopy equivalence between $X$ and $Y$?
\end{question} 

Their approach to solving this problem is to first observe that the answer of this question is equivalent to constructing a map $\tilde{f}:M_Y \rightarrow M_X$ which induces $f$ in cohomology, the algebras $M_X$ and $M_Y$ are here the minimal models of $X$ and $Y$. Their solution is then to construct an obstruction theory to decide whether a map $\tilde{f}$ exists or not. 

The main question of this paper can be reformulated as the following question.

\begin{question}
Given a map $g:X \rightarrow Y$, what are the Maurer-Cartan elements $mc(g)$ and $mc_{\infty}(g)$? 
\end{question} 

We can reformulate Question \ref{Quest1} then as follows: Given an isomorphism $f:H^*(Y) \rightarrow H^*(X)$ can we find a Maurer-Cartan element $\hat{f}\in Hom_{\Q}(\pi_*(Y),M_X)$ such that $\hat{f}$ induces $f$ in cohomology? 

This question is in some sense the complete opposite of what we consider in this paper. Since we assume that we have a given map $g:X \rightarrow Y$ and try to find its Maurer-Cartan elements $mc(g)$ and $mc_{\infty}(g)$. A possible direction of future research is to see if it is possible to put the obstruction theory of \cite{HS1} in the Maurer-Cartan framework of this paper.

\bibliographystyle{plain}

\bibliography{bibliography}{}

\def\cprime{$'$}
\begin{thebibliography}{10}

\bibitem{BL1}
H.~J. Baues and J.-M. Lemaire.
\newblock Minimal models in homotopy theory.
\newblock {\em Math. Ann.}, 225(3):219--242, 1977.

\bibitem{Berg2}
Alexander Berglund.
\newblock Homological perturbation theory for algebras over operads.
\newblock {\em Algebr. Geom. Topol.}, 14(5):2511--2548, 2014.

\bibitem{Berg1}
Alexander Berglund.
\newblock Rational homotopy theory of mapping spaces via {L}ie theory for
  {$L_\infty$}-algebras.
\newblock {\em Homology Homotopy Appl.}, 17(2):343--369, 2015.

\bibitem{CR1}
Joana Cirici and Agust\'i Roig.
\newblock Sullivan minimal models of operad algebras.
\newblock {\em arXiv:1612.03862v1}.

\bibitem{DGMS1}
Pierre Deligne, Phillip Griffiths, John Morgan, and Dennis Sullivan.
\newblock Real homotopy theory of {K}\"ahler manifolds.
\newblock {\em Invent. Math.}, 29(3):245--274, 1975.

\bibitem{FHT}
Yves F{\'e}lix, Stephen Halperin, and Jean-Claude Thomas.
\newblock {\em Rational homotopy theory}, volume 205 of {\em Graduate Texts in
  Mathematics}.
\newblock Springer-Verlag, New York, 2001.

\bibitem{FOT1}
Yves F\'elix, John Oprea, and Daniel Tanr\'e.
\newblock {\em Algebraic models in geometry}, volume~17 of {\em Oxford Graduate
  Texts in Mathematics}.
\newblock Oxford University Press, Oxford, 2008.

\bibitem{Getz1}
Ezra Getzler.
\newblock Lie theory for nilpotent {$L_\infty$}-algebras.
\newblock {\em Ann. of Math. (2)}, 170(1):271--301, 2009.

\bibitem{HS1}
Stephen Halperin and James Stasheff.
\newblock Obstructions to homotopy equivalences.
\newblock {\em Adv. in Math.}, 32(3):233--279, 1979.

\bibitem{Hin1}
Vladimir Hinich.
\newblock Homological algebra of homotopy algebras.
\newblock {\em Comm. Algebra}, 25(10):3291--3323, 1997.

\bibitem{Hin2}
Vladimir Hinich.
\newblock D{G} coalgebras as formal stacks.
\newblock {\em J. Pure Appl. Algebra}, 162(2-3):209--250, 2001.

\bibitem{LV}
Jean-Louis Loday and Bruno Vallette.
\newblock {\em Algebraic operads}, volume 346 of {\em Grundlehren der
  Mathematischen Wissenschaften [Fundamental Principles of Mathematical
  Sciences]}.
\newblock Springer, Heidelberg, 2012.

\bibitem{MP11}
J.~P. May and K.~Ponto.
\newblock {\em More concise algebraic topology}.
\newblock Chicago Lectures in Mathematics. University of Chicago Press,
  Chicago, IL, 2012.
\newblock Localization, completion, and model categories.

\bibitem{Quil1}
Daniel Quillen.
\newblock Rational homotopy theory.
\newblock {\em Ann. of Math. (2)}, 90:205--295, 1969.

\bibitem{RN1}
Daniel Robert-Nicoud.
\newblock Deformation theory with homotopy algebra structures on tensor
  products.
\newblock {\em Doc. Math.}, 23:189--240, 2018.

\bibitem{RNW1}
Daniel Robert-Nicoud and Felix Wierstra.
\newblock Homotopy morphisms between convolution homotopy {L}ie algebras.
\newblock {\em To appear in the Journal of Noncommutative Geometry,
  arXiv:1712.00794}.

\bibitem{SW2}
Dev Sinha and Ben Walter.
\newblock Lie coalgebras and rational homotopy theory {II}: {H}opf invariants.
\newblock {\em Trans. Amer. Math. Soc.}, 365(2):861--883, 2013.

\bibitem{SS2}
Jim Stasheff and Mike Schlessinger.
\newblock Deformation theory and rational homotopy type.
\newblock {\em arXiv:1211.16472}.

\bibitem{Val1}
Bruno Vallette.
\newblock Homotopy theory of homotopy algebras.
\newblock {\em arXiv:1411.5533v3}.

\bibitem{Wie1}
Felix Wierstra.
\newblock Algebraic {H}opf invariants.
\newblock {\em arXiv:1612.07762}.

\end{thebibliography}

\end{document}